\documentclass[a4paper, 12pt, reqno]{amsart}

\usepackage{accents}
\usepackage{amsfonts, amsmath, amssymb, amsthm}
\usepackage{cancel}
\usepackage{color, framed}
\usepackage{graphicx}
\usepackage{mathtools}
\usepackage{relsize} 
\usepackage{tikz} 
\usepackage{tikz-cd}
\usepackage{verbatim}
\usepackage[all]{xy}
\usepackage{hyperref}

\addtolength{\textwidth}{28mm}\addtolength{\textheight}{25mm}
\addtolength{\oddsidemargin}{-16mm}\addtolength{\evensidemargin}{-10mm}
\addtolength{\topmargin}{-15mm}\addtolength{\footskip}{3mm}


\allowdisplaybreaks 

\definecolor{shadecolor}{gray}{0.8}

\newcommand{\flplus} 
{
\hspace{0.1cm}
\begin{tikzpicture}[baseline=-0.582ex]
    \draw [line width=0.24pt](-0.1129, 0) -- (0.1129, 0) -- (0, 0) -- (0, 0.1129) -- (0, -0.1129) arc (270:90:0.1129) -- (0, 0);
\end{tikzpicture}
\hspace{0.1cm}
}

\newcommand{\frplus} 
{
\hspace{0.1cm}
\begin{tikzpicture}[baseline=-0.582ex]
    \draw [line width=0.24pt](-0.1129, 0) -- (0.1129, 0) -- (0, 0) -- (0, 0.1129) -- (0, -0.1129) arc (-90:90:0.1129) -- (0, 0);
\end{tikzpicture}
\hspace{0.1cm}
}

\AtEndDocument{\vfill\eject\batchmode} 
\usepackage{amssymb}
\usepackage[T1]{fontenc}
\DeclareSymbolFont{script}{U}{eus}{m}{n}
\DeclareMathSymbol{\Wedge}{0}{script}{"5E}

\usepackage{t1enc}


\def\Bbb{\mathbb}
\def\Cal{\mathcal}

\newcommand{\Rho}{\mathsf{P}}


\DeclareMathOperator{\G}   {G}
\DeclareMathOperator{\GL}  {GL}

\DeclareMathOperator{\Hom} {Hom}
\DeclareMathOperator{\PSL} {PSL}

\DeclareMathOperator{\SL}  {SL}
\DeclareMathOperator{\SO}  {SO}
\DeclareMathOperator{\Sp}  {Sp}
\DeclareMathOperator{\SU}  {SU}

\DeclareMathOperator{\U}   {U}

\newcommand{\wt}[1]{\widetilde{#1}}

\newcommand{\hook}{\, \lrcorner \,}

\newcommand{\bbC}{\mathbb{C}}

\newcommand{\bbJ}{\mathbb{J}}

\newcommand{\bbP}{\mathbb{P}}
\newcommand{\bbR}{\mathbb{R}}

\newcommand{\bbT}{\mathbb{T}}
\newcommand{\bbV}{\mathbb{V}}

\newcommand{\eps}   {\epsilon}

\newcommand{\mbc}{\mathbf{c}}
\newcommand{\mbg}{\mathbf{g}}

\newcommand{\mbp}{\mathbf{p}}
\newcommand{\mbq}{\mathbf{q}}

\newcommand{\mcA}{\mathcal{A}}
\newcommand{\mcD}{\mathcal{D}}
\newcommand{\mcE}{\mathcal{E}}
\newcommand{\mcF}{\mathcal{F}}
\newcommand{\mcG}{\mathcal{G}}
\newcommand{\mcL}{\mathcal{L}}

\newcommand{\mcO}{\mathcal{O}}
\newcommand{\mcS}{\mathcal{S}}
\newcommand{\mcT}{\mathcal{T}}

\newcommand{\mfg}  {\mathfrak{g}}

\newcommand{\mfu}  {\mathfrak{u}}

\newcommand{\wtG}{\wt G}
\newcommand{\wtM}{\wt M}
\newcommand{\wtP}{\wt P}
\newcommand{\wtmcT}{\wt\mcT}
\newcommand{\wtRho}{\wt\Rho}
\newcommand{\wtnabla}{\wt\nabla}

\def\R{\mathbb{R}}
\newcommand{\na}{\nabla}



\newcommand{\ce}{{\Cal E}}

\newcommand{\cT}{{\mathcal T}}

\newcommand{\rpl}                         
{\mbox{$
\begin{picture}(12.7,8)(-.5,-1)
\put(0,0.2){$+$}
\put(4.2,2.8){\oval(8,8)[r]}
\end{picture}$}}


\def\cD{\mathcal{D}}

\def\cV{\mathcal{V}}

\def\cH{\mathcal{H}}

\def\R{\mathbb{R}}

\newcommand{\si}{\sigma}
\newcommand{\id}{\operatorname{id}}

\newcommand{\End}{\operatorname{End}}

\newtheorem{theorem}{Theorem}[section]
\newtheorem{lemma}[theorem]{Lemma}
\newtheorem{proposition}[theorem]{Proposition}
\newtheorem{corollary}[theorem]{Corollary}

\newtheorem*{theoremA}{Theorem A}
\newtheorem*{theoremB}{Theorem B}
\newtheorem*{theoremC}{Theorem C}
\newtheorem*{theoremD}{Theorem D}

\theoremstyle{definition}
\newtheorem{definition}[theorem]{Definition}

\theoremstyle{remark}
\newtheorem{remark}[theorem]{\rm\bf Remark}
\newtheorem*{definition*}{\rm\bf Definition}

\newcommand{\nn}[1]{(\ref{#1})}

\newcommand{\Ric}{\operatorname{Ric}}

\newcommand{\vol}{\displaystyle\mathlarger{\boldsymbol{\epsilon}}}


\def\sideremark#1{\ifvmode\leavevmode\fi\vadjust{\vbox to0pt{\vss
 \hbox to 0pt{\hskip\hsize\hskip1em
 \vbox{\hsize3cm\tiny\raggedright\pretolerance10000
  \noindent #1\hfill}\hss}\vbox to8pt{\vfil}\vss}}}%

                                                   %

\author{A.\ R. Gover, K. Neusser, \& T.\ Willse}

\title{Projective geometry of Sasaki--Einstein structures and their compactification}
\begin{document}

\address{
A.R.G.: Department of Mathematics\\
  The University of Auckland\\
  Private Bag 92019\\
  Auckland 1142\\
  New Zealand\\
K.N.: Department of Mathematics and Statistics\\
Masaryk University\\
Kotl\'a\v rsk\'a 267/2 \\
611 37 Brno\\
Czech Republic\\
T.W.: Fakult\"{a}t f\"{u}r Mathematik\\
	Universit\"{a}t Wien\\
	Oskar-Morgenstern-Platz 1\\
	1090 Wien\\
	Austria\\
}

\email{r.gover@auckland.ac.nz}
\email{neusser@math.muni.cz}
\email{travis.willse@univie.ac.at}

\subjclass[2010]{Primary 53A20, 53B10, 53C25, 53C29; Secondary 35Q76, 53A30, 53A40, 53C55}
\keywords{Projective differential geometry, Sasaki--Einstein manifolds, holonomy reductions of Cartan connection, conformal geometry, special contact geometries, K\"ahler manifolds, CR geometry}
\begin{abstract}
We show that the standard definitions of Sasaki structures have
elegant and simplifying interpretations in terms of projective
differential geometry. For Sasaki--Einstein structures we use
projective geometry to provide a resolution of such structures into
geometrically less rigid components; the latter elemental components
are separately, complex, orthogonal, and symplectic holonomy
reductions of the canonical projective tractor/Cartan connection.
This leads to a characterisation of Sasaki--Einstein structures as
projective structures with certain unitary holonomy reductions.  As an
immediate application, this is used to describe the projective
compactification of indefinite (suitably) complete non-compact
Sasaki--Einstein structures and to prove that the boundary at infinity
is a Fefferman conformal manifold that thus fibres over a
nondegenerate CR manifold (of hypersurface type). We prove that this
CR manifold coincides with the boundary at infinity for the
c-projective compactification of the K\"ahler--Einstein manifold that
arises, in the usual way, as a leaf space for the defining Killing field
of the given Sasaki--Einstein manifold.  A procedure for constructing
examples is given. The discussion of symplectic holonomy reductions of
projective structures leads us moreover to a new and simplifying
approach to contact projective geometry. This is of independent
interest and is treated in some detail.

\end{abstract}

\thanks{
The authors gratefully acknowledge support from several sources:
A.R.G.: the Royal Society of New Zealand, via Marsden Grants 13-UOA-018 and 16-UOA-051;
A.R.G., K.N., and T.W.: the Simons Foundation, via grant 346300 for IMPAN, and the Polish Government, via the matching 2015-2019 Polish MNiSW fund;
K.N. and T.W.: the Australian Research Council;
K.N.: the Czech Grant Agency, via grant P201/12/G028;
T.W.: the Austrian Science Fund (FWF), via project P27072--N25.
}

\maketitle
\pagestyle{myheadings}

\markboth{Gover, Neusser, Willse}{Projective geometry of Sasaki--Einstein structures and their compactification}

\section{Introduction}
A (pseudo-)Riemannian manifold $(M, g)$ is Sasaki if its standard
metric cone is (pseudo-)K\"ahler. It is Sasaki--Einstein if the metric
$g$ is also Einstein, which is well-known to be the case if and only
if the metric cone is (pseudo-)K\"ahler and Ricci-flat.  These structures
provide a natural intersection of Riemannian, contact and CR
structures, and are intimately linked to K\"ahler and quaternionic
K\"ahler geometry.  Being an important meeting place for all these
geometric structures, Sasaki manifolds play a distinguished role in
Riemannian geometry and hence have been intensively studied; see
e.g. \cite{AF, BoyerGalicki, BG-book, BT-F, MR, Sparks} and the references
therein. Moreover, they have turned out to be important in some
constructions in theoretical physics in the context of string theory
and the AdS/CFT correspondence \cite{FS,GMSW,HS,LPS}.

Recall that a projective structure on a manifold $M$ is an equivalence
class $\mbp$ of torsion-free connections on its tangent bundle $TM$
that share the same unparametrised geodesics. Evidently, any
(pseudo-)Riemannian metric $g$ gives rise to a projective structure
via its Levi-Civita connection $\nabla$.  Apart from the definition of
Sasaki structures via the metric cone, there is a well-known more
direct equivalent characterisation of these structures (see Definition
\ref{definition:Sasaki-structure}), which involves however a solution
$k^a\in\Gamma(TM)$ to a system of two overdetermined partial
differential equations, given by the Killing equation
$\nabla_{a}k_b+\nabla_b k_a=0$ and the equation $\nabla_a\nabla_bk^c
=-g_{ab} k^c + \delta^c{}_a k_b$. Suitably interpreted, the Killing
equation of a (pseudo-) Riemannian manifold $(M,g)$ exhibits
projective invariance and we shall see that the above second order
equation is linked to the equation of (infinitesimal) projective
symmetries of $(M,g)$.  In the present article we show that these
observations are part of a picture that reveals projective geometry as
a natural unifying framework for treating many aspects of Sasaki and
Sasaki--Einstein geometry. In particular, we shall see that the
projective viewpoint on Sasaki structures, which we develop and investigate here,
leads to natural geometric compactifications of some complete
non-compact (indefinite) Sasaki--Einstein structures.

In Section \ref{Section: Sasaki} we start by recalling the various
equivalent standard definitions of Sasaki structures before
recasting them, in Theorem \ref{Jpar}, in terms of objects fundamental
to projective geometry and its treatments.  We then prolong one of the
defining equation of a Sasaki structure, namely the Killing
equation, and show this yields a projectively invariant linear
connection $\nabla^{\textrm{prol}}$, on a certain vector bundle over
$(M, g)$, whose parallel sections are in bijection to the solutions of
the Killing equation (see Proposition \ref{prolongprop}). In fact, we
show in Section \ref{subsection:tractor-2-forms} that this connection
results from a simple projectively invariant modification of the
so-called normal tractor connection (which is equivalent to
the normal Cartan connection) of the projective structure induced by
$g$ on $M$.  In its standard presentation, the tractor connection is a
canonical connection $\nabla^{\mcT}$ on a certain vector bundle $\mcT$
of rank $n+2$, over any $n+1$-dimensional projective manifold
$(M,\mbp)$, and this connection  is flat if and only if $(M,\mbp)$ is locally
projectively equivalent to the flat projective structure on
$\bbR^{n+1}$. We recall its definition and basic properties in Section
\ref{tr}. In the case of Sasaki--Einstein structures we show that the
defining Killing vector field $k$ lifts to a parallel section of
$\nabla^{\textrm{prol}}$ that satisfies in addition some algebraic
equations that makes this section already parallel for the tractor
connection.  More specifically, we show in Section \ref{Section: Proof
  of Theorem A} that an Einstein--Sasaki structure induces a
$\nabla^{\mcT}$-parallel Hermitian structure on its projective tractor bundle
$\mcT$, recovering a result by Armstrong \cite{ArmstrongP1, ArmstrongP2}: \begin{theoremA} Suppose  $(M, g, k)$ is a Sasaki--Einstein manifold of signature $(2 p - 1, 2 q)$ with Levi-Civita connection $\nabla$. Then the projective tractor bundle $(\mcT, \nabla^{\mcT})$
of the induced projective structure $(M, [\nabla])$ admits a parallel Hermitian structure given by:
\begin{itemize}
\item a parallel complex
structure $\bbJ \in \Gamma(\End \mcT)$ on $\mcT$
\item a parallel metric $h \in \Gamma(S^2
\mcT^*)$ of signature $(2 p, 2 q)$ on $\mcT$ that is Hermitian with respect to $\bbJ$: $h(\,\cdot\,,\,\cdot\,) = h(\bbJ\,\cdot\,,\bbJ\,\cdot\,).$
\end{itemize}
If $M$ is in addition simply connected, then $(\mcT, \bbJ)$ (viewed as a complex vector bundle) also admits a parallel
tractor complex volume form $\vol_{\bbC}\in\Gamma(\Wedge^{p+q}_{\bbC}\mcT)$, where the subscript $\bbC$ indicates that the wedge product is taken over $\bbC$.
\end{theoremA}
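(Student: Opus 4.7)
The plan is to produce the three parallel tractor objects from separate pieces of the Sasaki--Einstein data and then verify their algebraic compatibility. The parallel Hermitian metric $h$ will come from the Einstein condition alone; the complex structure $\bbJ$ from the Killing field $k$ together with its second-order Sasaki equation; and the complex volume form $\vol_\bbC$ from a holonomy-reduction argument that uses simple connectedness.

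For $h$, I would invoke the standard projective tractor correspondence: Einstein metrics compatible with a projective class $\mbp$ are in bijection with parallel non-degenerate sections of $S^2 \mcT^*$. This uses only that $g$ is Einstein, not that it is Sasaki. A direct count, together with the fact that the Einstein constant of a Sasaki--Einstein metric has the sign making the metric cone Ricci-flat, then produces the signature $(2p, 2q)$ for $h$.

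For $\bbJ$, the starting point is the Killing field $k$. By Proposition~\ref{prolongprop}, the Killing equation prolongs $k$ to a parallel section $\wt k$ of the projectively invariant connection $\nabla^{\textrm{prol}}$ on a tractor $2$-form bundle. The additional Sasaki equation $\nabla_a \nabla_b k^c = -g_{ab} k^c + \delta^c{}_a k_b$ is precisely what supplies the algebraic terms needed to upgrade parallelism from $\nabla^{\textrm{prol}}$ to the unmodified tractor connection $\nabla^{\mcT}$. Raising one index of $\wt k$ with the already-constructed $h$ then yields an $h$-skew endomorphism $\bbJ \in \Gamma(\End \mcT)$; the Hermitian condition $h(\bbJ\pdot, \bbJ\pdot) = h(\pdot, \pdot)$ is equivalent to the skew-symmetry of $\wt k$ and is therefore automatic. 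The main obstacle will be verifying at the tractor level that $\bbJ^2 = -\id_\mcT$, i.e.\ that $\bbJ$ is a genuine complex, not merely symplectic, structure. I would carry this out by working in the Levi-Civita scale of $g$, decomposing $\mcT$ into its three graded pieces, expanding the slots of $\wt k$ in terms of the Sasaki tensors $(\Phi, k, \eta)$ via Theorem~\ref{Jpar}, and reducing the required identity to the standard Sasaki algebraic relations $\Phi^2 = -\id + \eta \otimes k$ and $\eta(k) = 1$ on $M$.

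Finally, in the simply connected case, the established parallelism of $h$ and $\bbJ$ reduces the holonomy of $\nabla^{\mcT}$ to a subgroup of $\U(p, q)$. The obstruction to a further reduction to $\SU(p, q)$ is the holonomy of the induced connection on the complex determinant line bundle $\Wedge^{p+q}_\bbC \mcT$; flatness of this connection follows from the Ricci-flatness of the K\"ahler cone (equivalently, the Sasaki--Einstein condition), and simple-connectedness of $M$ then trivialises the flat $\U(1)$-bundle, producing the parallel section $\vol_\bbC \in \Gamma(\Wedge^{p+q}_\bbC \mcT)$.
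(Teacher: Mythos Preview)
Your plan is essentially the paper's own route: build $h$ from the Einstein condition, lift $k$ via Proposition~\ref{prolongprop} to a parallel tractor $2$-form (the normality condition $W_{bc}{}^d{}_a k_d=0$ is exactly what closes the gap between $\nabla^{\mathrm{prol}}$ and $\nabla^{\mcT}$), raise an index to get $\bbJ$, and then square $\bbJ$ in the Levi-Civita scale and reduce to the Sasaki identities. Two small points: the projective standard tractor bundle has a \emph{two}-step filtration $\mcE(-1)\hookrightarrow\mcT\twoheadrightarrow TM(-1)$, not three---you may be thinking of the conformal tractor bundle---so your block computation for $\bbJ^2$ will be $2\times 2$, as in \eqref{tractor_J_squared}; and for $\vol_\bbC$ the paper argues intrinsically via Lemma~\ref{parallel_adjoint_tractor_and_curvature} (the $\bbJ$-trace of the tractor curvature vanishes because $\bbJ$ is a parallel adjoint tractor), which avoids having to identify the Thomas cone with the metric cone and is slightly cleaner than invoking Ricci-flatness of the K\"ahler cone.
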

A parallel Hermitian structure $(h, \bbJ)$ on the tractor bundle $(\mcT, \nabla^{\mcT})$ of a projective structure is equivalent to a reduction of the holonomy of
the tractor connection to the unitary group $\U(p, q)$ and the existence of a parallel tractor complex volume form gives rise to a further reduction of its holonomy to $\SU(p, q)$.
So Theorem A can be also rephrased as: The projective tractor connection of a Sasaki--Einstein manifold of signature $(2 p - 1, 2 q)$ has restricted holonomy contained in $\SU(p, q)$.

Theorem A naturally raises the question of the converse: Given a projective manifold $(M,\mbp)$ admitting a holonomy reduction of its tractor connection to $\SU(p, q)$ what is the nature of the underlying geometry on $M$?
Armstrong observed in \cite{ArmstrongP1, ArmstrongP2} that on the connected components of some open dense set of $M$ one obtains Sasaki--Einstein structures. In Theorems B, C, and D below, which will be proved in Section \ref{assembly}, we give a more refined answer, using the theory of holonomy reductions of Cartan connections and curved orbit decompositions from \cite{CGHjlms,CGH}.

In the following theorem and throughout the article, a {\em hypersurface} means
an embedded submanifold of codimension $1$.

\begin{theoremB}\label{B}
 Let $(M, \mbp)$ be a projective manifold of dimension at least $5$ equipped with a parallel tractor Hermitian structure $(h,\bbJ)$ on its tractor bundle $(\mcT, \nabla^{\mcT})$.

Then, $M$ is of odd dimension $2m + 1$ and $h$ has signature $(2p,2q)$ for some $p,q\in \mathbb{Z}_{\geq 0}$. Furthermore, $\bbJ$
 determines a nowhere vanishing vector field $k\in\Gamma(TM)$, and
  $M$ is stratified into a disjoint union of submanifolds
  $$M = M_+ \cup M_0 \cup M_-,
  $$
  according to the strict sign of $\tau := h(X,X)$, where $X$ is the \textit{canonical tractor}
\nn{euler} defined in Section \ref{tr}.
  The components $M_{\pm}$ and $M_0$ are
  each equipped with a geometry canonically determined by $(M, \mbp,
  h, \bbJ)$ as follows.
\begin{enumerate}
	\item The submanifolds $M_{\pm}$ are open and (if nonempty)
          are respectively equipped with Sasaki--Einstein structures
          $(g_{\pm}, k)$ with Ricci curvature $\emph{Ric}_{\pm}= 2 m g_{\pm}$
          and $k$ its defining Killing field; $g_+$ has signature $(2 p - 1, 2 q)$, and
          $g_-$ has signature $(2 q - 1, 2 p)$. The metrics $g_{\pm}$
          are compatible with the projective structure $\mbp$ in the
          sense that their respective Levi-Civita connections
          $\nabla^{\pm}$ satisfy $\nabla^{\pm} \in
          \mbp\vert_{M_{\pm}}$.
	\item The submanifold $M_0$ is (if nonempty) a smooth separating
          hypersurface and is equipped with an oriented (local)
          Fefferman conformal
          structure $\bold{c}$ of signature $(2 p - 1, 2 q - 1)$.

     \end{enumerate}
If $h$ is definite, the stratification is trivial, that is,
if $h$ is $\pm$-definite then, respectively, $M = M_\pm$.
\end{theoremB}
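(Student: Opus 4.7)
First, since $\mcT$ has rank $\dim M + 1$ and $\bbJ$ is a (real) complex structure on $\mcT$, that rank must be even, so $\dim M = 2m+1$ for some $m$. The Hermitian form $h$ then has signature $(2p, 2q)$ with $p + q = m + 1$, because on $(\mcT, \bbJ)$, viewed as a complex bundle, $h$ has some complex signature $(p, q)$. For the distinguished vector field $k$, I would take $k := \Pi(\bbJ X)$, where $\Pi \colon \mcT \to TM \otimes \mcE(-1)$ is the canonical projection and $X$ is the canonical tractor with its tautological density weight, so that $k$ is an honest (unweighted) vector field. Since $X$ pointwise generates $\ker \Pi$ and $\bbJ$ has no real eigenvalues, $\bbJ X$ is pointwise linearly independent from $X$; hence $k$ is nowhere zero.

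Next, the parallel Hermitian structure $(h, \bbJ)$ on $(\mcT, \nabla^{\mcT})$ is equivalent to a reduction of the holonomy of $\nabla^{\mcT}$, and hence of the normal projective Cartan connection, to $\U(p, q)$. By the general theory of holonomy reductions developed in \cite{CGHjlms, CGH}, this induces a canonical curved orbit decomposition of $M$ with strata indexed by the $\U(p, q)$-orbits on the flat projective model $\bbR\bbP^{2m+1}$. Those model orbits are classified by the sign of $h(v, v)$ on a representative $v$ of a projective line: positive, negative, or null, giving three orbits. The globally defined smooth function $\tau = h(X, X)$ records this sign on $M$, and the three strata are recovered as $M_+ = \{\tau > 0\}$, $M_- = \{\tau < 0\}$, and $M_0 = \{\tau = 0\}$. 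The sets $M_{\pm}$ are automatically open, and $M_0$ is a smooth embedded hypersurface because $d\tau$ is nonvanishing on it; this I would check directly by differentiating $\tau$ using $\nabla^{\mcT} h = 0$ together with the standard formula for $\nabla^{\mcT} X$, observing that the resulting covector pairs nontrivially with $k$ at any null point.

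On each open stratum $M_{\pm}$, the nonvanishing of $\tau$ furnishes a canonical trivialization of an appropriate power of the projective density bundle, and hence a distinguished connection $\nabla^{\pm}$ in $\mbp|_{M_{\pm}}$. Under the standard tractor-to-tangent correspondence, the parallel tractor metric $h$ descends in this scale to a pseudo-Riemannian metric $g_{\pm}$ on $M_{\pm}$ whose Levi-Civita connection is precisely $\nabla^{\pm}$. The signature of $g_{\pm}$ is read off from $h|_{X^{\perp}}$ with an appropriate sign correction on $M_-$, yielding $(2p-1, 2q)$ and $(2q-1, 2p)$ respectively. The specific normalization of the projective tractor connection then forces $g_{\pm}$ to be Einstein with $\Ric_{\pm} = 2m\, g_{\pm}$. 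Finally, the parallelism of $\bbJ$, combined with Theorem \ref{Jpar} and Proposition \ref{prolongprop} which articulate the projective content of the Sasaki axioms, shows that $(g_{\pm}, k)$ is a Sasaki--Einstein structure with $k$ as defining Killing field.

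On $M_0$, where $X$ is $h$-null, the stabilizer in $\U(p,q)$ of the $\bbJ$-invariant real plane $\bbR X \oplus \bbR \bbJ X$ is a parabolic subgroup whose homogeneous quotient is precisely the flat model of Fefferman conformal geometry in signature $(2p-1, 2q-1)$. The curved orbit theory thus endows $M_0$ with a canonical Fefferman conformal structure $\mbc$ of that signature. The closing assertion is immediate: a definite Hermitian form admits no null vectors, so $M_0 = \emptyset$ and all of $M$ coincides with the corresponding open stratum. The principal technical obstacles I anticipate are (a) pinning down the exact Einstein constant $2m$ from the tractor normalization conventions rather than some other multiple, and (b) identifying the parabolic isotropy arising on the null orbit precisely with the Fefferman parabolic, so that the induced Cartan geometry on $M_0$ is genuinely a Fefferman conformal structure rather than merely a superficially similar parabolic geometry.
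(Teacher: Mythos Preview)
Your overall strategy matches the paper's: use the curved orbit decomposition for the holonomy reduction to $\U(p,q)$, invoke Theorem \ref{theorem:orthogonal-reduction} for the Einstein metric on $M_\pm$, and then verify the Sasaki conditions via Theorem \ref{Jpar}. Two points deserve correction or comparison.

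First, your proposed check that $d\tau \neq 0$ on $M_0$ is wrong as stated. In fact $k^a\nabla_a\tau = 2h(X,\bbJ X) - 2\alpha\tau$, and $h(X,\bbJ X)=0$ by skewness of $\Omega = h(\,\cdot\,,\bbJ\,\cdot\,)$, so $d\tau(k)$ vanishes on $M_0$. The paper (Section \ref{SOred}) instead argues that $h_{AB}X^B = \tau Y_A + \tfrac12(\nabla_a\tau)Z_A{}^a$ is nowhere zero by nondegeneracy of $h$, forcing $\nabla_a\tau \neq 0$ wherever $\tau=0$. Alternatively, the hypersurface property comes for free from the curved orbit machinery.

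Second, for part (a) the paper does not merely cite Theorem \ref{Jpar} and Proposition \ref{prolongprop}; it explicitly verifies the three hypotheses of Theorem \ref{Jpar}. Expanding the compatibility $h_{C(A}\bbJ^C{}_{B)}=0$ in the scale $\tau$ yields $\nabla_c k^c=0$ and then $g_{e(c}\nabla_{d)}k^e=0$, i.e.\ $k$ is Killing. The Hermitian identity $h(X,X)=h(\bbJ X,\bbJ X)$ gives $g(k,k)=1$. Finally $W_{ab}{}^c{}_d k^d=0$ is the normality condition \eqref{equation:adjoint-normality-conditions} for $k=\Pi^{\mcA}(\bbJ)$. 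You should make these steps explicit rather than gesturing at the prolongation.

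Third, and most substantively, for part (b) the paper takes a different route from yours. Rather than identifying the isotropy $\U(p,q)\cap P_i$ abstractly as a Fefferman parabolic (your anticipated obstacle (b), which indeed involves a delicate normality comparison), the paper first uses Theorem \ref{theorem:orthogonal-reduction}(b) to obtain the conformal structure $\mbc$ on $M_0$ from $h$ alone, then invokes \cite[Theorem 3.2]{CGHjlms} to identify $\mcT\vert_{M_0}$ with the \emph{normal} conformal tractor bundle of $\mbc$. The restriction $\bbJ\vert_{M_0}$ is then a parallel complex structure on that conformal tractor bundle, and the characterisation of Fefferman spaces in \cite{CGFeffchar,Leitner} finishes the argument. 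This avoids having to analyse the induced Cartan geometry on $M_0$ directly and sidesteps the normality issue you flagged.
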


In fact, Theorem $B$ also holds in dimension $3$ with the adjustment
that $M_0$ is canonically equipped with a conformal M\"obius structure
equipped with a parallel complex structure on its tractor bundle. In
this dimension the assumptions of Theorem B however imply that
$(M,\mbp)$ is projectively flat. Note also that Theorems A and B
together immediately give a projective tractor characterisation of
Sasaki--Einstein structures of dimension $2 m + 1$: They are precisely
projective structures equipped with a reduction of the restricted
holonomy group of $\nabla^{\mcT}$ to $\SU(p, q)$, $p + q = m + 1$,
such that the canonical tractor $X$ is nowhere isotropic with respect
to the parallel tractor metric $h$. If $pq = 0$, that latter condition
is automatic and hence the converse of Theorem A holds without
additional assumption.  This characterisation of Sasaki--Einstein
resembles the one via the metric cone mentioned at the beginning and
indeed there is an intimate relation: the tractor connection of a
projective manifold $(M, \mbp)$ can also be alternatively viewed as a
Ricci-flat connection on the tangent bundle of a certain cone
manifold, called the \emph{Thomas cone}, over $(M,\mbp)$; see
e.g.\,\cite{CGHjlms, csbook}.  The tractorial characterisation of
Sasaki--Einstein structures translates then, in this picture, to the
statement that these structures are precisely those projective structures
whose Thomas cone admits a certain reduction of its restricted
holonomy to $\SU(p, q)$, and this  enables the identification of the Thomas cone
with the metric cone of a Einstein--Sasaki structure.

An immediate advantage of the projective characterisation of
Einstein--Sasaki structures is the availability of a resolution of these structures into
elemental geometric components. This resolution is unavailable in
the metric cone characterisation, because in that case the metric is
built into the construction.  These elemental components are (certain)
holonomy reductions of the projective tractor connection to
$\textrm{SO}(2p,2q)$, $\textrm{Sp}(2m+2,\bbR)$ and
$\textrm{GL}(m+1,\bbC)\cap \SL(2m+2,\bbR)$ that are compatible in the
sense that they determine a reduction to the unitary group
\begin{equation}\label{keydis}
\U(p, q) = \SO(2p, 2q) \cap \Sp(2m + 2, \bbR)\cap \GL(m + 1, \bbC).
\end{equation}
Since the intersection of any two groups on the right hand side equals
the unitary group it follows that two compatible reductions,
corresponding to any two of these groups, is sufficient to determine
the third reduction.  Depending on which of the reductions is
emphasised, one gets more emphasis on the metric, respectively
symplectic, point of view on Einstein--Sasaki geometry. Switching
between these can be of great use, in the same way as for the
analogous resolution of K\"ahler manifolds into elemental geometric
components. Note however that, although clearly analogous, the latter is
more immediately obvious because it involves only holonomy reductions
of an affine connection on the tangent bundle.

In Section \ref{hol-sec} we analyse the geometric implications of
these three projective holonomy reductions separately. We start first by discussing, in Section \ref{SOred}, holonomy reductions of projective
structures to $\SO(2p, 2q)$.
We do this rather briefly
since there exist already detailed accounts of this in the literature
\cite{CGcompact, CGHjlms, CGH}.
In Section \ref{Sp-reduction} we then
study projective holonomy reductions to the group $\Sp(2m + 2,
\bbR)$ given by parallel tractor symplectic forms. We first show in
Theorem \ref{projective_structure_with_parallel_symplectic_tractor}
that such a reduction induces on the underlying manifold $M$ an
interesting geometric structure known as a torsion-free contact
projective structure (see Definition \ref{def:proj_cont}).
Conversely, it was shown by Fox in \cite{Fox} that any contact
projective structure gives rise to a distinguished projective
structure on the same base manifold, which in the case of
torsion-freeness leads to a (in our language) projective structure
equipped with a holonomy reduction of its tractor connection to
$\Sp(2m + 2, \bbR)$. We shall give a more direct and for our current
purposes technically less demanding proof of his result in Theorem
\ref{Fefferman_type_construction} (see also Corollary
\ref{corollary:converse-of-symplectic-reduction} which establishes the
precise converse of Theorem
\ref{projective_structure_with_parallel_symplectic_tractor}). Our
considerations leading to the proof of Theorem
\ref{Fefferman_type_construction} will also shed some new light on
some key aspects of contact projective geometry and hence will be of
independent interest. In Section \ref{J-red-sect} we will be concerned
with oriented projective manifolds $(M,\mbp)$ equipped with a holonomy
reduction of its tractor bundle to $\GL(m + 1, \bbC)\cap
\SL(2m+2,\bbR)$ given by a parallel tractor complex structure
$\bbJ$. This will lead us to another important geometry, namely
c-projective geometry,---a complex analogue of projective geometry,
which has in recent years celebrated a revival; see the monograph
\cite{CEMN} for the history of the subject and its current
developments. We first show (see Proposition
\ref{proposition:complex-single-curved-orbit}) that $\bbJ$ determines
a nowhere vanishing vector field $k$ on $M$ (cf.\,also Theorem B),
which hence defines a foliation on $M$. Then we show that, in case $k$
is a projective symmetry of $(M,\mbp)$, any sufficiently small local
leaf space $\widetilde M$ of this foliation inherits from $(\mbp,
\bbJ)$ a c-projective structure, consisting of a complex structure
$\tilde J$ and a c-projective equivalence class $\tilde\mbp$ of
connections (see Definition \ref{Definition: c-projective}), and that
conversely (see Theorem \ref{theorem:c-projective-to-projective}) any
c-projective structure arises in this way. Theorem
\ref{theorem:projective-to-c-projective} and
\ref{theorem:c-projective-to-projective} were first proved on the
level of the Thomas cone, respectively tractors, by Armstrong in the
inspiring works \cite{ArmstrongThesis, ArmstrongP1, ArmstrongP2} (and he discusses there briefly the symplectic reduction).  We give here nevertheless
proofs of these theorems to provide a reasonably complete and, most
importantly, uniform development. Our treatment is also different as
it relates the projective and c-projective structure more directly
(rather than via their respective tractor connections). Apart from the
desire for clarity here, part of our motivation for the detailed treatment
is to lay essential foundations for planned future work on
non-Einstein Sasaki structures. This motivation has also driven our
particular and detailed treatment of the symplectic reduction in
Section \ref{Sp-reduction}. In Section \ref{assembly} we then put
these results together to establish Theorem B in a way that exposes
clearly the geometric dependencies and interactions of the involved
geometric structures and that exploits projective geometry as a natural unifying
framework to study them.

 Theorem B leads moreover to another significant gain from viewing
 Einstein--Sasaki structures as arising via appropriate holonomy
 reductions of projective structures. Namely it yields natural
 geometric compactifications based on projective geometry, as
 introduced in \cite{CGcompact, CG-cb}, for some (suitably) complete
 (indefinite) Sasaki--Einstein structures. Specifically, we obtain:

\begin{theoremC}\label{C} Assume the setting of Theorem B and that
 $M_0\neq \emptyset$ (so $p,q >0$). Then the manifold with boundary
  $(M\setminus M_{\mp},\mbp\vert_{M \setminus M_{\mp}})$ is a
  projective compactification (of order $2$) of the Sasaki--Einstein
  manifold, respectively, $(M_{\pm},g_{\pm}, k\vert_{M_{\pm}})$, with
  projective infinity the conformal Fefferman structure $(M_0, \pm
  \mbc)$.
  \end{theoremC}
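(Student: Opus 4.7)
The plan is to verify directly that $(M\setminus M_\mp,\mbp|_{M\setminus M_\mp})$ satisfies the definition of a projective compactification of order $2$ of $(M_\pm,g_\pm)$ in the sense of \cite{CGcompact, CG-cb}, and then to identify its projective infinity with the Fefferman structure already produced on $M_0$ by Theorem B. For Einstein metrics, the criterion of \cite{CGcompact} reduces the compactification claim to the existence of a parallel tractor metric defined on a neighbourhood of the boundary, together with a transversality condition on the boundary defining function $\tau := h(X,X)$. In our setting the parallel tractor metric $h$ is, by hypothesis, already given on \emph{all} of $M$ (including $M_0$), and by Theorem B its restriction to $M_\pm$ is precisely the tractor metric determined by the Einstein scale $g_\pm$, with $\Ric_\pm = 2mg_\pm$.

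The first substantive step is to show that $\tau$ is a smooth defining function for $M_0$. Parallelism of $h$ combined with the fundamental tractor identity relating $\nabla^{\mcT}X$ to the identity of $TM$ (in the canonical splitting) gives
\begin{equation*}
d\tau(\xi) \;=\; 2\, h(\xi, X) \qquad (\xi \in TM).
\end{equation*}
Along $M_0$ the tractor $X$ is $h$-isotropic; however, from the signature of $h$ and the fact (Theorem B) that $M_0$ is a smooth hypersurface carrying a Fefferman conformal structure, $X|_{M_0}$ is not in the radical of $h$, so $d\tau$ is nowhere zero on $M_0$. Hence $\pm\tau$ is a smooth defining function for $M_0$ from the $M_\pm$-side.

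The second step converts the parallel tractor metric into the desired asymptotic behaviour of $g_\pm$ near $M_0$. By the standard tractor expression of an Einstein metric in terms of its parallel Einstein scale (see \cite{CGcompact}), on $M_\pm$ one has $g_\pm = |\tau|^{-1}\tilde g_\pm$ for a tensor $\tilde g_\pm$ that extends smoothly across $M_0$ in $M\setminus M_\mp$. Setting $\rho := \sqrt{|\tau|}$ this reads $\rho^{2} g_\pm = \tilde g_\pm$, which is precisely the defining condition for a projective compactification of order $2$; the value of the Einstein constant $2m$ is what pins the order at $2$ rather than $1$.

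Finally, the projective infinity of such an order-$2$ compactification is, by definition, the conformal class on $M_0$ induced by the boundary values of $\rho^{2} g_\pm$, which in tractor terms is the conformal class on $X^\perp/\langle X\rangle|_{M_0}$ carried by $h$. But this is exactly the datum from which the Fefferman structure $(M_0,\mbc)$ in Theorem B is constructed, so the two conformal classes coincide, the orientation ambiguity $\pm\mbc$ being governed by the choice of side $M_\pm$ (equivalently, the sign of $\tau$). The main obstacle lies in the careful tractor-calculus bookkeeping of the splittings induced by $\tau$ as it degenerates along $M_0$—in particular, verifying that $\tilde g_\pm$ extends nondegenerately across $M_0$ in the directions transverse to the Reeb field $k$, where the parallel Hermitian structure forces a delicate mixing of the $X$-direction with the $\bbJ X$-direction that must be shown to behave smoothly in the $\rho$-scale.
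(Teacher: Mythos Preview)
Your overall strategy---to exhibit $\tau=h(X,X)$ as a defining function for $M_0$, appeal to the parallel tractor metric $h$, and then read off the projective infinity from $X^\perp/\langle X\rangle|_{M_0}$---is essentially the content of Theorem~\ref{theorem:orthogonal-reduction}(c), which the paper simply invokes together with Theorem~B as a one-line proof. So in broad outline you are re-deriving an ingredient that the paper takes as already established from \cite{CGcompact,CGHjlms,CGH}; nothing specific to the Hermitian reduction (as opposed to the bare orthogonal reduction) is needed for Theorem~C itself.

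There is, however, a genuine technical confusion in your second step. Projective compactness of order $\alpha$ is \emph{not} the condition that $\rho^\alpha g$ extends smoothly across the boundary; that is essentially conformal compactification. The definition used here (following \cite{CGcompact,CG-cb}, and recalled just after Theorem~\ref{theorem:orthogonal-reduction}) is that the projectively modified connection $\widehat\nabla=\nabla+\tfrac{1}{\alpha\rho}\,d\rho$ extends smoothly to the boundary for some local defining function $\rho$. In particular, your choice $\rho=\sqrt{|\tau|}$ is not a smooth defining function: since $d\tau\neq 0$ along $M_0$, the square root fails to be $C^1$ there, and the subsequent reasoning does not go through as written. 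The correct defining function is (up to sign) $\tau$ itself; the ``order $2$'' arises because $\tau$ has projective weight $2$ (so that $\nabla^\pm+\tfrac{1}{2\tau}\,d\tau$ lies in $\mbp$ and hence extends), not from any square-root construction.

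Your tractor description of the boundary conformal class is correct in spirit and matches what is done in \cite{CGHjlms}. But note that the paper's definition of the induced conformal class on $M_0$ is via $(\widehat\nabla d\rho)\vert_{TM_0}$ (equivalently $\tfrac12\nabla_a\nabla_b\tau|_{TM_0}$), not as the boundary value of ``$\rho^2 g_\pm$''; an additional identification would be needed to complete your argument along those lines. The ``delicate mixing'' you flag at the end is not actually an obstacle: once one works with the correct defining function $\tau$ and the connection-based definition, the argument is purely at the level of the orthogonal reduction and does not see $\bbJ$ at all.
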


 Note that $(M_0, \pm \mbc)$,
 being a conformal Fefferman structure, locally fibers over a CR manifold
 of dimension $2m-1$.  More can be said: suppose we are in the setting of Theorem C and let
 $(\widetilde M, \tilde J, \tilde\mbp)$ be the the (local) leaf space
 defined by $k$ with its canonical c-projective structure. Let us
 write $\pi: M\rightarrow \widetilde M$ for the natural projection
 (where, if necessary, $M$ is replaced by some sufficiently small open
 subset), and set $\widetilde M_{\pm}:=\pi(M_{\pm})$ and
 $\widetilde M_0:=\pi(M_0)$. It is well-known that $(M_{\pm},g_{\pm},
 k)$, by dint of being Sasaki--Einstein, induces respective
 K\"ahler--Einstein structures $(\tilde J, \tilde g_{\pm})$ on
 $\widetilde M_{\pm}$. It turns out that $\widetilde{M}_0$ is a hypersurface in the complex manifold $(\wt M, \tilde J)$ separating these, and as such it is naturally equipped with
 a CR structure (of hypersurface type), which we will show is nondegenerate. Recall that any nondegenerate CR manifold (of hypersurface
 type) determines a conformal structure on a certain circle bundle over it, called its conformal Fefferman structure; see
 \cite{BDS,CGFeffchar,Fefferman,Graham-Sparlings,LeeF,Leit}
 for characterisations of conformal Fefferman structures. Now a nice compatibility arises:

\begin{theoremD}\label{D}
Assume the setting of Theorem B and that $M_0\neq \emptyset$. Then:
\begin{enumerate}
	\item $\wtM_0$ is a smooth hypersurface of the complex manifold $(\wtM, \tilde J)$ (separating $\widetilde M_+$ and $\widetilde M_-$) and
	  as such it inherits a CR structure (of hypersurface type) from $\tilde J$. The latter is nondegenerate and its conformal Fefferman structure equals $(M_0, \pm\mbc)$.  \item The manifold with boundary $(\wtM\setminus \wtM_{\mp}, \tilde J\vert_{\wtM \setminus \wtM_{\mp}},\tilde\mbp\vert_{\widetilde M \setminus \widetilde M_{\mp}} )$ is a c-projective compactification (of order $2$) as defined in \cite{CG-cproj} of the K\"ahler--Einstein manifold, respectively, $(\wtM_{\pm},\tilde{g}_{\pm})$.
\end{enumerate}
  \end{theoremD}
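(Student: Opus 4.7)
The plan is to deduce Theorem D by combining Theorems B and C with the leaf-space description of the c-projective structure on $\wtM$ from Section \ref{J-red-sect} and the theory of c-projective compactification developed in \cite{CG-cproj}. The Killing field $k$ acts as the bridge between the projective geometry on $M$ and the c-projective geometry on $\wtM$.

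For (1), I begin by noting that $k$, being determined by the tractor data $(h, \bbJ)$, preserves the canonical tractor $X$ and hence the function $\tau := h(X,X)$; thus $k(\tau) = 0$, so $k$ is tangent to $M_0 = \{\tau = 0\}$ and $\tau$ descends to a function $\tilde\tau$ on $\wtM$ whose zero set is $\wtM_0$. Since $k$ is nowhere vanishing, this exhibits $\wtM_0 \subset \wtM$ as a smooth hypersurface that separates $\wtM_+$ and $\wtM_-$, and the inclusion $\wtM_0 \hookrightarrow (\wtM, \tilde J)$ endows $\wtM_0$ with its induced CR structure of hypersurface type. To identify its Fefferman conformal structure with $(M_0, \pm\mbc)$, I would show that $k|_{M_0}$ is a nowhere vanishing null infinitesimal symmetry of $\mbc$ spanning the vertical direction of $\pi|_{M_0}\colon M_0 \to \wtM_0$, with a horizontal subbundle modelling the contact hyperplane distribution of the CR structure; these facts follow from tracing through the filtration of $\mcT|_{M_0}$ induced by the parallel $\bbJ$ at this curved orbit. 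The tractor-theoretic characterisations of Fefferman conformal structures (cf.\ \cite{CGFeffchar, Leit}) then identify $(M_0, \pm\mbc)$ as the Fefferman space of the CR structure on $\wtM_0$ and, as a byproduct, force that CR structure to be nondegenerate.

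For (2), I descend the projective compactification from Theorem C to a c-projective compactification on $\wtM$. The compatibility of tractor data along $\pi\colon M \to \wtM$ was established in Section \ref{J-red-sect}, so the main work is to verify the boundary asymptotics. A $k$-invariant defining function $\rho$ for $M_0$ (constructed from $\tau$ after suitable normalisation, using $k(\tau) = 0$) descends to a defining function $\tilde\rho$ for $\wtM_0$ in $\wtM$. The order-$2$ projective compactification of Theorem C asserts that the Levi-Civita connection of $g_{\pm} = \pm\rho^{-2}\bar g$ lies in $\mbp$ near the boundary, for an auxiliary metric $\bar g$ smooth up to $M_0$; via the Sasaki--Einstein-to-K\"ahler--Einstein submersion this translates into the statement that the Levi-Civita connection of $\tilde g_{\pm} = \pm\tilde\rho^{-2}\bar{\tilde g}$ lies in $\tilde\mbp$ near $\wtM_0$, which is exactly the definition of c-projective compactification of order $2$ from \cite{CG-cproj}.

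The principal obstacle lies in part (1): identifying the Fefferman conformal structure on $M_0$ built abstractly from the tractor data in Theorem B with the one obtained by applying the classical Fefferman construction to the CR manifold $\wtM_0$. Both sides are ultimately governed by the parallel Hermitian structure $(h, \bbJ)$, but making the coincidence precise requires carefully tracking how the tractor filtrations on the curved orbit $M_0$ interact with the quotient by $k$, and invoking the canonical tractor characterisation of Fefferman conformal structures. Once (1) is settled, the remaining work for (2) is comparatively routine, reducing to the well-known compatibility between Sasaki--Einstein and K\"ahler--Einstein Levi-Civita connections under the submersion $\pi$.
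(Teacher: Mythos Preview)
Your approach to part (a) is broadly aligned with the paper's, and you correctly identify the main difficulty as matching the Fefferman structure on $M_0$ from Theorem~B with the classical Fefferman construction applied to the CR hypersurface $\wtM_0$. The paper's proof of this identification is indeed carried out by tracking the tractor filtration on $M_0$: one shows $K=\bbJ X$ is orthogonal to $X$ so that $k|_{M_0}$ is tangent and null, identifies $k|_{M_0}$ as the conformal Killing field of the Fefferman characterisation in \cite{CGFeffchar}, and then checks that the complex structure on the relevant subquotient $H_0$ of $TM_0/\langle k\rangle$ (inherited from $\bbJ$) descends via $\pi$ precisely to the CR structure on $\wtM_0$ induced by $\tilde J$. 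This is essentially what you outline.

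However, the paper organises the argument differently in a way that streamlines both parts. Rather than descending $\tau$ by hand via $k(\tau)=0$, it first establishes (Section~\ref{section_proj_cproj_tractors}) that the projective tractor bundle and connection descend along $\pi$ to the \emph{normal c-projective} tractor bundle and connection of $(\wtM,\tilde J,\tilde\mbp)$, carrying the parallel Hermitian metric $h$ to a parallel Hermitian metric $\tilde h$ on $\wtmcT$ with projecting part $\tilde\tau$ satisfying $\tau=\pi^*\tilde\tau$. The c-projective curved orbit decomposition of $\wtM$ determined by $\tilde h$ then yields directly, via \cite[Theorem~3.3]{CGH}, that $\wtM_0$ is a smooth separating hypersurface with \emph{nondegenerate} induced CR structure; no separate nondegeneracy argument is needed. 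Part~(b) is then a one-line appeal to \cite{CG-cproj}, which characterises c-projective compactifications of K\"ahler--Einstein metrics precisely in terms of such a parallel Hermitian metric on the c-projective tractor bundle.

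There is a genuine gap in your treatment of part~(b). Your claim that order-$2$ projective compactification amounts to $g_\pm = \pm\rho^{-2}\bar g$ for some $\bar g$ smooth up to the boundary is incorrect: that is the shape of a \emph{conformal} compactification. Projective compactness of order $2$ means that $\nabla^{g_\pm} + \tfrac{1}{2\rho}\,d\rho$ extends smoothly across $M_0$ as a connection in $\mbp$ (for Einstein metrics of positive scalar curvature the corresponding asymptotic form is $g \sim \tfrac{d\rho^2}{4\rho^2} + \tfrac{h}{\rho}$, not $\rho^{-2}\bar g$), and the analogous c-projective statement is not obtained by pushing down a $\rho^{-2}$ scaling. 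Your descent strategy could in principle be made to work, but it would require the correct asymptotic model together with a verification that the projective-to-c-projective correspondence of Section~\ref{J-red-sect} respects that model at the boundary. The paper sidesteps all of this by invoking the c-projective tractor picture and \cite{CG-cproj} directly.
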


The unit sphere $S^{2m+1}$ in $\mathbb{C}^{m+1}$, were $\mathbb{C}^{m+1}$ is equipped with its
usual definite Hermitian inner product, inherits from this inner product a Riemannian signature
Sasaki--Einstein structure, and indeed this is the standard model of such.
Examples of (pseudo-Riemannian) Sasaki--Einstein manifolds are abundant
in all odd dimensions $2 m + 1 \geq 5$ \cite[Corollaries B, C,
  E]{BoyerGalicki}, \cite{Sparks} and hence Theorem A guarantees that there are
many examples of projective structures equipped with parallel tractor
Hermitian structures. By construction, for the holonomy reductions
that arise from Einstein--Sasaki structures, $M = M_{\pm}$ and in particular $M_0$ is
empty. On the other hand, we devote significant attention (Theorem
B(b), Theorem C, Theorem D, and additional results in Section
\ref{assembly}) to the case when $M_0$ is not empty. If we consider again the sphere $M=S^{2m+1}$,
viewed now as the real ray projectivisation of $\mathbb{C}^{m+1}\setminus \{0\}$, and
equipped with its standard flat projective structure $\mbp$, then for a certain choice
of indefinite Hermitian inner product on $\mathbb C^{m+1}$ (equivalently, indefinite Hermitian metric on its flat tractor bundle)
$M_0$ is not empty.

However, a priori it is not obvious that there are any examples of
projective manifolds with parallel indefinite Hermitian structure on
its tractor bundle and $M_0\neq \emptyset$ that are not projectively
flat, that is, not locally projectively equivalent to $(S^{2m+1},
\mbp)$. In Section
\ref{section:examples} we resolve this by establishing the existence
of a large number of examples with $M_0$ nonempty, justifying the
consideration we give to that case. Any Levi-nondegenerate embedded
hypersurface $\wtM_0 \subset \bbC^m$, $m > 1$, inherits a CR structure
of hypersurface type, and we show that for those for which a certain
invariant weighted function, the \textit{CR obstruction}, vanishes
(again, there are many of these), we can construct examples of
non-flat projective structure with parallel tractor Hermitian
structures for which $M_0 \neq \emptyset$ and in particular for which
the CR structure underlying the conformal Fefferman structure on $M_0$
is the one we started with. This construction recasts Fefferman's
original procedure for constructing approximate solutions to a
particular Dirichlet problem naturally associated to CR geometry
\cite{Fefferman} in projective tractor language using ideas from
\cite{GPW, GrahamWillse}.

\tableofcontents

\section{Background on affine geometry and notation}\label{back}
Throughout manifolds and tensor on manifolds will be assumed smooth, and $M$ specifically will always denote a smooth manifold of
dimension $n+1\geq 2$. If $n$ is even, which will be the case in various parts of this article, we write $n=2m$ for $m\geq 1$. For simplicity of exposition, we also assume $M$ is connected.

When it is convenient, will use standard abstract index notation in
the sense of Penrose. As usual we use the Roman alphabet to index
tensors on $M$: we write $\ce^a$ (respectively $\ce_a$) for the
tangent bundle $TM$ (respectively cotangent bundle $T^*M$) of $M$
and $\xi^a$ (respectively $\omega_a$) for a vector field (respectively
a $1$-form) on $M$. Moreover, we write $\xi^a\omega_a$ for the
canonical pairing between vector fields and $1$-forms, using an
abstract index analogue of the Einstein summation convention, and
denote by the Kronecker delta $\delta^b{}_a$ the identity section of
the bundle $\textrm{End}(TM)$ of endomorphisms of $TM$. Indices
enclosed by round (respectively by square brackets) indicate
symmetrisation (respectively skew-symmetrisation) over the enclosed
indices. Similarly, we  use the Greek alphabet to index sections
of a hyperplane distribution $H\subset TM$ (see Section
\ref{Sp-reduction}) and capital Roman letters to index sections of the
 projective tractor bundle (see Section \ref{tr}).

\subsection{Affine and projective manifolds}\label{affine_manifolds}
A (torsion-free) linear connection $\nabla$ on the tangent bundle $TM$ of a manifold $M$ is called a \emph{(torsion-free) affine connection} on $M$ and we shall refer to such a pair $(M, \nabla)$ as a \emph{(torsion-free) affine manifold}.

Suppose now $(M,\nabla)$ is a torsion-free affine manifold of dimension $n+1\geq 2$.
The \emph{curvature} of $\nabla$ will be denoted by
$$R_{ab}{}^c{}_d\in\Gamma(\Wedge^2 T^*M \otimes \End(TM)),$$
where we adopt the convention $R_{ab}{}^c{}_d\xi^d:=(\nabla_{a}\nabla_b-\nabla_b\nabla_a)\xi^c$ for a vector field $\xi^a$. The \emph{Ricci tensor} of $\nabla$ is defined by $\Ric_{bd}:=R_{cb}{}^c{}_d $.

The totally trace-free part $W_{ab}{}^c{}_d$ of the curvature $R_{ab}{}^c{}_d$ is
called the {\em projective Weyl curvature}: By definition it has the same symmetries as the curvature, including the algebraic Bianchi identity
$W_{[ab}{}^c{}_{d]}=0$, but is, in addition, \emph{totally trace-free}, meaning that
\begin{equation}\label{trace-freeness_Weyl_curv}
W_{ab}{}^a{}_d=0\quad \textrm{ and }\quad W_{ab}{}^d{}_d=0.
\end{equation}
The symmetries of $W_{ab}{}^c{}_d$
immediately imply that it  vanishes identically if
$\dim M =2$.  Specifically, in any dimension $n+1\geq 3$ we have the following decomposition
\begin{equation}\label{decp}
R_{ab}{}^c{}_d= W_{ab}{}^c{}_d +2 \delta^c{}_{[a} \Rho_{b]d}+\beta_{ab}\delta^c{}_d,
\end{equation}
where
\begin{equation}\label{proj_Schouten}
\Rho_{ab}:=\tfrac{1}{n(n+2)}\left[ (n+1) \Ric_{ab}+\Ric_{ba}\right]
\end{equation}
is called the {\em projective Schouten tensor} and $\beta_{ab}=-2\Rho_{[ab]}$.

The differential Bianchi identity of the curvature of $\nabla$ moreover implies
\begin{equation}\label{diff_Bianchi_Weyl}
\nabla_c W_{ab}{}^c{}_{d}=(n-1)C_{abd},
\end{equation}
where
\begin{equation}\label{Cotton}
C_{abc}:= \nabla_a \Rho_{bc}-\nabla_b \Rho_{ac}
\end{equation}
is called the {\em
  projective Cotton tensor}.

We call a torsion-free affine connection $\nabla$ \emph{special}, if it preserves a volume density on $M$. In this case, one has $R_{ab}{}^c{}_c=0$ and hence the Bianchi identity $R_{[ab}{}^c{}_{d]}=0$ implies that the Ricci tensor of $\nabla$ and hence also the projective Schouten tensor are symmetric: $\Ric_{ab}=\Ric_{(ab)}=n\Rho_{(ab)}=n\Rho_{ab}$.

Recall that two affine connections $\nabla$ and $\widehat{\nabla}$ on a manifold are
said to be {\em projectively equivalent} if they have the same geodesics as unparameterised curves. Any two connections that differ
only by torsion are projectively equivalent, which motivates the following definition of a projective structure:

\begin{definition} Let $M$ be a manifold of dimension at least $2$.
\begin{itemize}
\item A \emph{projective structure} on $M$ is an equivalence class $\mbp$ of projectively equivalent torsion-free affine connections on $M$, and we refer to $(M,\mbp)$ as a \emph{projective manifold}.
If $M$ is orientable with fixed orientation we refer to $(M,\mbp)$ as an \emph{oriented projective manifold}.
\item
Two projective manifolds are called \emph{(locally) projectively equivalent}, if there exists a \emph{(local) projective diffeomorphism} between them, that is a (local) diffeomorphism mapping unparametrised geodesics to unparametrised geodesics.
\end{itemize}
\end{definition}

It is well known that for a torsion-free affine manifold $(M,\nabla)$
the projective Weyl curvature is \emph{projectively invariant}: $W_{ab}{}^c{}_d$ only depends on the
projective structure $\mbp:=[\nabla]$ induced by $\nabla$.  Moreover,
in dimension $n+1\geq 3$ its vanishing characterises projectively flat
torsion-free affine manifolds $(M,\nabla)$, where $(M,\nabla)$ is
called \emph{projectively flat} if it is locally projectively
equivalent to $\bbR^{n+1}$ with its canonical projective structure
given by the geodesics of its flat connection, namely the straight
lines. If $\dim M =2$, the projective Cotton tensor is projectively
invariant and takes over the role of the projective Weyl curvature in
that it is zero if and only if the structure is projectively flat.

For later use let us also mention that the standard \emph{compact
homogeneous flat model for oriented projective manifolds} is the
$(n+1)$-dimensional sphere arising as the ray projectivisation
$S^{n+1}:=\mathbb{P}_+(\mathbb{R}^{n+2})$ of $\mathbb{R}^{n+2}$
(i.e.\ the double cover of $\mathbb{RP}^{n+1}$), where the projective
structure on $S^{n+1}$ is given by the unparametrised geodesics of the
round metric on $S^{n+1}$, namely, the embedded great circles. It is projectively flat and the
group of orientation-preserving projective diffeomorphisms of
$S^{n+1}$ can be identified with the special linear group
$\textrm{SL}(n+2,\bbR)$ acting transitively on
$\mathbb{P}_+(\mathbb{R}^{n+2})$ in the standard way.

 \subsection{Density bundles and related objects}\label{notn}
Recall that on any manifold $M$ of dimension $n+1$ the line bundle
$\mathcal{K}:=(\Wedge^{n+1} TM)^{\otimes 2}$ of volume densities is canonically oriented and thus one may
take oriented roots of it:
Given $w\in \mathbb{R}$ we set
\begin{equation} \label{pdensities}
\ce(w):=\mathcal{K}^{\frac{w}{2n+4}} ,
\end{equation}
and
\begin{equation} \label{cdensities}
\ce[w]:=\mathcal{K}^{\frac{w}{2n+2}} .
\end{equation}
The notation $\ce(w)$ is convenient for projective geometry (for
example), while the notation $\ce[w]$ is more convenient and standard
in conformal geometry. Further we shall use the abbreviations
$TM(w):=TM\otimes\ce(w)$, respectively $TM[w]:=TM\otimes\ce[w]$, and a
similar notational shorthand for any tensor product of a vector bundle
with $\ce(w)$ respectively $\ce[w]$.  Note also that any affine
connection $\nabla$ on $M$ determines a connection on $\mathcal{K}$
and thus also on the roots $\ce(w)$ and $\ce[w]$. We use $\nabla$ to
denote any of these.

Observe that, if $M$ is oriented, then its orientation can be encoded by a tautological weighted volume form
$
\eps_{a_0 \cdots
  a_{n}} \in \Gamma(\Wedge^{n + 1} T^*M (n + 2))
$
that defines the identification
$$
\eps_{a_0 \cdots
  a_{n}}: \Wedge^{n + 1} TM\to \ce(n+2),
$$
with its dual
$
\eps^{a_0 \cdots
  a_{n}}\in \Gamma( \Wedge^{n + 1} TM (-n - 2) )
$
defined to satisfy $\eps^{a_0 \cdots
  a_{n}}\eps_{a_0 \cdots
  a_{n}}=(n+1)!$.
Note that  $
\eps_{a_0 \cdots
  a_{n}}$ and its dual are parallel for any affine connection $\nabla$.

\section{Sasaki structures}\label{Section: Sasaki}
Suppose $M$ is a manifold equipped with a (pseudo-)Riemannian metric
$g_{ab}\in\Gamma(S^2T^*M)$. Then $g_{ab}$ and its inverse
$g^{ab}\in\Gamma(S^2TM)$, which is characterised by
$g_{ab}g^{bc}=\delta^{c}{}_a$, can be used to raise and lower indices,
and we will do this without further comment. Moreover, we denote by
$\nabla$ the Levi-Civita connection of $g$ and
by $R_{ab}{}^c{}_d$ the curvature of $\nabla$ (as for any affine manifold).

 Recall that $k^a\in\Gamma(TM)$ is called a
\emph{Killing field} of $(M,g)$, if $k$ is a symmetry
of $g$, that is, if the Lie derivative of $g$ along $k$
vanishes. Expressed in terms of the Levi-Civita connection,
$k^a\in\Gamma(TM)$ is Killing if and only if
\begin{equation}\label{Killing_equation_1}
\nabla_{(a}k_{b)}=0.
\end{equation}

\begin{definition}\label{definition:Sasaki-structure}
A \emph{Sasaki structure} on a manifold $M$ consists of a (pseudo-)Riemannian metric
$g_{ab}\in\Gamma(S^2T^*M)$ and a Killing field $k^a \in \Gamma(TM)$ of $g$ such that
\begin{enumerate}
	\item $g_{ab} k^a k^b = 1$
	\item $\nabla_a\nabla_bk^c = -g_{ab} k^c + \delta^c{}_a k_b .$
\end{enumerate}
\end{definition}

Note that (a) implies in particular that $k$ vanishes nowhere.

The following proposition gives well-known equivalent
characterisations of Sasaki structures, see e.g.\,\cite{BG-book, Sparks}.

\begin{proposition}\label{proposition:Sasaki-characterizations}
Let $(M, g)$ be a (pseudo-)Riemannian manifold. Then the following are equivalent:
\begin{enumerate}
    \item There exists $k^a \in \Gamma(TM)$ such that $(M, g, k)$ is a Sasaki structure.
    \item There exists a Killing vector field $k^a \in \Gamma(TM)$ satisfying
        \[
			g_{ab} k^a k^b = 1
			\qquad \textrm{and} \qquad
			R_{bc}{}^a{}_d k^d = 2 \delta^a{}_{[b} k_{c]} \textrm{.}
        \]
      \item The metric cone $(C(M), \bar{g}) := (\bbR_+ \times M, dr^2 + r^2 g)$ is K\"{a}hler with respect to some complex structure on $C(M)$.
    \end{enumerate}
\end{proposition}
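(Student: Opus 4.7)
My plan is to establish the two equivalences (a) $\Leftrightarrow$ (b) and (a) $\Leftrightarrow$ (c) separately. The first is a short tensorial computation; the second is the classical metric cone correspondence and is where the substance of the argument lies.

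The implication (a) $\Rightarrow$ (b) drops out by skew-symmetrising the second-order equation (a)(ii) over the indices $a, b$. Since $g_{ab}$ is symmetric, the piece $-g_{ab} k^c$ vanishes, and the paper's convention $R_{ab}{}^c{}_d k^d = (\nabla_a\nabla_b - \nabla_b\nabla_a)k^c$ gives immediately
\[
R_{ab}{}^c{}_d k^d = \delta^c{}_a k_b - \delta^c{}_b k_a = 2\delta^c{}_{[a} k_{b]},
\]
which, after relabelling the free indices, is exactly the curvature identity of (b). The conditions $g(k,k) = 1$ and the Killing equation carry over unchanged. Conversely, (b) $\Rightarrow$ (a) uses the classical fact that for any Killing field the second covariant derivative $\nabla^2 k$ is algebraic in $k$ and the Riemann tensor. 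Combining the Killing equation $\nabla_{(a} k_{b)} = 0$ with cyclic permutations of the Ricci identity on the $1$-form $k_c$ yields the standard identity $\nabla_a\nabla_b k_c = -R_{cb}{}^d{}_a k_d$; applying the algebraic first Bianchi identity to re-express this contraction in the form $R_{\bullet \bullet}{}^c{}_d k^d$ allows substitution of the hypothesis of (b), and the result collapses to $-g_{ab} k^c + \delta^c{}_a k_b$, giving (a)(ii).

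For (a) $\Leftrightarrow$ (c), the strategy is to construct an explicit almost complex structure $\bar J$ on the cone $(C(M), \bar g) = (\bbR_+ \times M, dr^2 + r^2 g)$ and show that its $\bar\nabla$-parallelism and $\bar g$-compatibility translate exactly into the three pieces of Definition~\ref{definition:Sasaki-structure}. Define $\bar J$ by $\bar J(r\partial_r) := k$ and $\bar J(k) := -r\partial_r$, and on the $\bar g$-orthogonal complement of $\mathrm{span}\{\partial_r, k\}$ by $\bar J X := \Phi X$, where $\Phi^a{}_b := -\nabla_b k^a$ is the natural $(1,1)$-tensor on $M$ (lifted horizontally to $C(M)$). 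The Koszul formulas for the cone Levi-Civita connection,
\[
\bar\nabla_{\partial_r}\partial_r = 0, \qquad \bar\nabla_{\partial_r} X = \bar\nabla_X \partial_r = r^{-1} X, \qquad \bar\nabla_X Y = \nabla_X Y - r\, g(X, Y)\, \partial_r
\]
(for $X, Y$ horizontal), allow $\bar\nabla \bar J$ and $\bar g(\bar J\,\cdot\,, \bar J\,\cdot\,)$ to be computed blockwise. The radial block encodes $g(k, k) = 1$; the mixed radial-horizontal block encodes the Killing equation (equivalently, the skew-symmetry of $\Phi_{ab} := g_{ac}\Phi^c{}_b = -\nabla_b k_a$); and the purely horizontal block encodes the second-order equation (a)(ii). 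Conversely, given a Kähler structure on $(C(M), \bar g)$, the vector field $k := \bar J(r\partial_r)|_{\{r=1\}}$ on $M$ is easily checked to be a unit Killing field, and reversing the blockwise computation recovers (a)(ii).

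The main technical obstacle is the blockwise bookkeeping of this cone calculation: one must correctly handle the ``radial'' correction $-r\, g(X,Y)\partial_r$ in the Koszul formula, and show that the horizontal component of $\bar\nabla \bar J = 0$ corresponds precisely (and only) to the PDE (a)(ii)---no more and no less. Once this is in place, Hermiticity of $\bar J$ follows routinely from the Killing skew-symmetry of $\nabla k$, integrability of $\bar J$ is automatic (since $\bar\nabla$ is torsion-free and $\bar J$ is $\bar\nabla$-parallel), and the equivalence (a) $\Leftrightarrow$ (c) is complete.
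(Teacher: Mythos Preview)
Your argument for (a) $\Leftrightarrow$ (b) is correct and essentially the same as the paper's: the paper derives the single identity $\nabla_a\nabla_b k_c = R_{bc}{}^d{}_a k_d$ for any Killing field (via closedness of the exact $2$-form $\nabla_{[a}k_{b]}$) and reads off both directions at once, whereas you split it into skew-symmetrisation for (a) $\Rightarrow$ (b) and the Killing second-derivative identity for (b) $\Rightarrow$ (a)---but these are the same computation. One small inaccuracy: passing from $R_{bc}{}^d{}_a k_d$ to the form needed to substitute hypothesis (b) uses only the skew-symmetry $R_{bcda} = -R_{bcad}$ (equivalently $R_{abcd} = R_{ab[cd]}$), not the first Bianchi identity.

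For (a) $\Leftrightarrow$ (c) you actually go further than the paper, which simply cites the literature (e.g.\ Boyer--Galicki) for the cone characterisation and gives no argument. Your sketch via the explicit almost complex structure $\bar J(r\partial_r) = k$, $\bar J|_{k^\perp} = \Phi = -\nabla k$ and the Koszul formulas for the cone connection is the standard one and is correct; the blockwise correspondence you describe (radial $\leftrightarrow$ unit length, mixed $\leftrightarrow$ Killing, horizontal $\leftrightarrow$ equation (a)(ii)) is exactly how the classical proof runs.
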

 \begin{proof}
For later purposes we give here the proof of the equivalence $(a)\Leftrightarrow (b)$; for the equivalence $(b)\Leftrightarrow(c)$ see the above mentioned references.

Suppose $k^a\in\Gamma(TM)$ is a Killing field of $(M, g)$, which by \eqref{Killing_equation_1} is equivalent to $\nabla_ak_b=\nabla_{[a}k_{b]}$.
The $2$-form $\nabla_ak_b$ is evidently exact and so in particular closed, which is equivalent to $\nabla_{[a}\nabla_{b\phantom{]}} k_{c]}=0$. Therefore, we have
 	\begin{equation}\label{prolongation_Killing}
 	\nabla_{a}\nabla_b k_{c}=-\nabla_{b}\nabla_c k_{a}-\nabla_{c}\nabla_a k_{b}=-\nabla_{b}\nabla_c k_{a}+\nabla_{c}\nabla_b k_{a}=R_{bc}{}^d{}_ak_d.
 	\end{equation}
 	Since $R_{abcd}=R_{ab[cd]}$, we deduce that $\nabla_{a}\nabla_b k_{c}=-R_{bcad}k^d$, which establishes the equivalence of $(a)$ and $(b)$.
\end{proof}	
	
Note that by (c) any Sasaki manifold is necessarily odd-dimensional.

\subsection{A projective characterisation of Sasaki geometry}\label{psas}

From Proposition \ref{proposition:Sasaki-characterizations} we deduce
now a characterisation of Sasaki manifolds in terms of a unit length Killing
field and basic objects from projective differential geometry as
introduced in Section \ref{affine_manifolds}:
\begin{theorem}\label{Jpar}
A (pseudo-)Riemannian manifold $(M, g)$ is Sasaki if and only if there exists a Killing vector field $k^a\in\Gamma(TM)$ that satisfies
\begin{enumerate}
    \item $g_{ab} k^a k^b = 1$,
    \item $W_{ab}{}^c{}_d k^d = 0$, and
    \item $\Rho_{ab} k^a k^b = 1$,
\end{enumerate}
where $W_{ab}{}^c{}_d$ is the projective Weyl curvature and $\Rho_{ab}$ the projective Schouten tensor of the Levi-Civita connection $\nabla$ of $g$.
\end{theorem}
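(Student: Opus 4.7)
My plan is to treat the two directions separately, using Proposition \ref{proposition:Sasaki-characterizations}(b) ($R_{bc}{}^a{}_d k^d = 2\delta^a{}_{[b}k_{c]}$) as the working characterization of Sasaki, and the projective decomposition \eqref{decp} as the main algebraic tool. Since the Levi-Civita connection is special, $\beta_{ab}$ vanishes and $\Rho_{ab}$ is symmetric, so \eqref{decp} simplifies to $R_{ab}{}^c{}_d = W_{ab}{}^c{}_d + 2\delta^c{}_{[a}\Rho_{b]d}$.

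For the forward direction, assume $(M,g,k)$ is Sasaki. Condition (a) is immediate. Substituting the decomposition into the Sasaki identity and writing $\pi_c := \Rho_{cd}k^d$ rearranges it to
\[
W_{bc}{}^a{}_d k^d = 2\delta^a{}_{[b}(k_{c]} - \pi_{c]}).
\]
Tracing on $a = b$ and invoking the trace-freeness $W_{ab}{}^a{}_d = 0$ of \eqref{trace-freeness_Weyl_curv} forces $0 = n(k_c - \pi_c)$, hence $\pi = k$. This immediately recovers (b) and, paired with (a), also (c).

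For the converse, assume $k$ is a Killing field satisfying (a), (b), (c). I would begin with the Killing-field Ricci identity $\nabla_a\nabla_b k_c = R_{bc}{}^d{}_a k_d$ from \eqref{prolongation_Killing} and substitute the decomposition to split the right-hand side as $W_{bc}{}^d{}_a k_d + k_b\Rho_{ca} - k_c\Rho_{ba}$. The delicate point is that hypothesis (b) contracts $W$ on its \emph{fourth} index with $k^d$, whereas this expression contracts on the (upper) \emph{third} index; the projective Weyl lacks the last-pair antisymmetry of its Riemannian cousin, so the bridge cannot be effected using $W$ alone, but the full Riemann tensor does have last-pair antisymmetry (we are Levi-Civita), permitting a swap of the contractions. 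The outcome is $\nabla_a\nabla_b k_c = g_{ac}\pi_b - g_{ab}\pi_c$, which is the Sasaki identity modulo identifying $\pi$ with $k$. To deduce $\pi = k$ from (a) and (c), I would differentiate (a) twice to obtain
\[
-(\nabla_c k^b)(\nabla_a k_b) = k^b \nabla_c\nabla_a k_b = k_c\pi_a - g_{ca},
\]
where the second equality uses the previous formula and (c). The left-hand side is symmetric in $(c,a)$ because $F_{ab} := \nabla_a k_b$ is skew, so $F_c{}^b F_{ab} = -(F^2)_{ca}$; the resulting symmetry forces $k_c\pi_a = k_a\pi_c$, hence $\pi \parallel k$, and contracting with $k^c$ and using (a), (c) pins the proportionality to be $1$. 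The main obstacle is the third-vs-fourth index bridge just described, which genuinely needs the metric; a secondary obstacle is extracting $\pi = k$ from the scalar normalization $\Rho(k,k)=1$, which is achieved by the $FF^\top$ symmetry trick.
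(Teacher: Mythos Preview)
Your proof is correct. The forward direction is essentially identical to the paper's: both trace the decomposition \eqref{decp} applied to the Sasaki curvature identity to obtain $\Rho_{cd}k^d = k_c$, and then (b) and (c) drop out.

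For the converse you take a somewhat different route. The paper observes directly that $R_{abcd}k^ck^d = 0$ (last-pair antisymmetry), which after substituting \eqref{decp} and using (b) gives $k_{[a}\Rho_{b]d}k^d = 0$, i.e.\ $\pi \parallel k$, in one line; then (a) and (c) fix the proportionality constant to $1$, and substituting $\Rho_{cd}k^d = k_c$ back into \eqref{decp} with (b) yields the Sasaki curvature identity. You instead first derive the full formula $\nabla_a\nabla_b k_c = g_{ac}\pi_b - g_{ab}\pi_c$ (correctly noting that one must pass through $R$ rather than $W$ to swap the contracted index) and then extract $\pi \parallel k$ via your $F^2$-symmetry argument from differentiating $|k|^2 = 1$ twice. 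Both arguments hinge on the same ingredient (the last-pair antisymmetry of $R_{abcd}$, which $W_{abcd}$ lacks); the paper's version is shorter because it isolates $k \wedge \pi$ immediately, while yours produces the explicit second-derivative formula along the way, which is itself the content of Definition \ref{definition:Sasaki-structure}(b) once $\pi = k$ is known.
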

\begin{proof}
Suppose first that $(M,g,k)$ is a Sasaki manifold of dimension $2m+1$. By Proposition \ref{proposition:Sasaki-characterizations}(b), $k^a\in\Gamma(TM)$ is a Killing field of unit length
satisfying $R_{bc}{}^a{}_d k^d = 2 \delta^a{}_{[b} k_{c]}$. Hence, tracing over  $a,b$ yields $\Ric_{cd} k^d = 2m k_c$, or equivalently, $\Rho_{cd} k^d = k_c$, so $k^d$ is an eigenvector of $P^c{}_d$ with eigenvalue $1$. In particular,
$$\Rho_{ab} k^a k^b =g_{ab} k^a k^b=1.$$ Then, rearranging \eqref{decp} and substituting gives
\[
    W_{b c}{}^a{}_d k^d
        = R_{bc}{}^a{}_d k^d - 2 \delta^a{}_{[b} \Rho_{c]d} k^d
        = 2 \delta^a{}_{[b} k_{c]} - 2 \delta^a{}_{[b}k_{c]}
        = 0 \textrm{.}
\]
Conversely, suppose now $k^a\in\Gamma(TM)$ is a Killing field satisfying (a)-(c).  Since $R_{abcd}=R_{ab[cd]}$, we have
\begin{equation*}
0=R_{abcd}k^ck^d= W_{abcd}k^ck^d + 2 g_{c[a} \Rho_{b]d}k^ck^d=2k_{[a}\Rho_{b]d}k^d.
\end{equation*}
Hence, there exists a function $B$ on $M$ such that $\Rho_{bd} k^d = B k_b$ and, by $(a)$ and $(b)$, we must in fact have $B$ constant and equal to $1$.
Substituting back into \eqref{decp} therefore shows
\begin{equation*}
R_{bc}{}^a{}_dk^d = 2\delta^a{}_{[b} k_{c]}.
\end{equation*}
Hence, $(M,g,k)$ is Sasaki by Proposition \ref{proposition:Sasaki-characterizations}(b).
\end{proof}

Nondegeneracy of the projective Weyl tensor is thus an obstruction to the existence of the Levi-Civita connection of a Sasaki structure in a projective class:

\begin{corollary}\label{obstruction_existence_of_Sasaki_metric}
  If at some point $x$ on a projective manifold $(M, \mbp)$
    the map $\Psi_x: T_x M \to \Wedge^2 T^*_x M \otimes T_x M$ defined by $\xi^a \mapsto W_{ab}{}^c{}_d \xi^d$ has trivial kernel, then $\mbp$
is not induced by the Levi-Civita connection of a Sasaki structure.
\end{corollary}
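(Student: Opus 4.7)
The plan is to argue by contraposition, leveraging Theorem \ref{Jpar} directly. Specifically, suppose for contradiction that $\mbp = [\nabla^g]$ for the Levi-Civita connection $\nabla^g$ of some Sasaki structure $(g, k)$ on $M$. Since the projective Weyl curvature depends only on the projective class, the tensor $W_{ab}{}^c{}_d$ computed from any representative of $\mbp$ coincides with the one computed from $\nabla^g$.

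Next I would invoke Theorem \ref{Jpar}(b), which asserts that the Killing field $k^a$ satisfies $W_{ab}{}^c{}_d k^d = 0$ identically on $M$. Evaluating this at the distinguished point $x$, we obtain $\Psi_x(k^a_x) = 0$, i.e.\ $k_x \in \ker \Psi_x$. On the other hand, Theorem \ref{Jpar}(a) gives $g_{ab} k^a k^b = 1$ at every point, so $k_x \neq 0$. Hence $\ker \Psi_x$ contains a nonzero vector, contradicting the hypothesis that $\ker \Psi_x$ is trivial.

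There is really no obstacle here: the corollary is a one-line consequence of Theorem \ref{Jpar} together with the nowhere-vanishing of a unit length vector field. The only thing worth emphasising in the write-up is projective invariance of $W_{ab}{}^c{}_d$ (discussed in Section \ref{affine_manifolds}), which guarantees that the hypothesis on $\Psi_x$ — formulated at the level of the projective class — is the same condition one would test against the Levi-Civita representative $\nabla^g$.
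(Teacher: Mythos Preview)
Your proof is correct and is precisely the argument the paper has in mind: the corollary is stated immediately after Theorem \ref{Jpar} with no separate proof, as it is the obvious consequence of conditions (a) and (b) there together with the projective invariance of $W_{ab}{}^c{}_d$. There is nothing to add.
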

\begin{remark} For projective manifolds $(M,\mbp)$ determined by the Levi-Civita connection of a metric, satisfaction of the
  \emph{Weyl nullity condition}, namely the condition that the map $\Psi$ of Corollary \ref{obstruction_existence_of_Sasaki_metric} has nontrivial kernel at all points of $M$
  (cf. also condition (b) of Theorem \ref{Jpar}), implies the existence of strong tools, including subtle invariants and an alternative metric projectively invariant
  tractor connection. These ideas are developed and explored in \cite{CEMN, GMat}.
\end{remark}
\subsection{Prolonging the Killing equation}
The definition of a Sasaki structure includes a Killing
vector field. In this subsection, we consider a manifold equipped with
a special torsion-free affine connection $\nabla$ and prolong
the \emph{Killing-type equation}
\begin{equation}\label{equation:Killing-type}
\nabla_{(a} k_{b)} = 0
\end{equation}
to a closed, first-order system. This prolongation will expose a strong link with projective geometry.

Note that the deduction of \eqref{prolongation_Killing} from
\eqref{equation:Killing-type} in the proof of Proposition
\ref{proposition:Sasaki-characterizations} used that $\nabla$ is
torsion-free but not that it preserves a metric.  Hence, \eqref{equation:Killing-type} implies that
\begin{equation*}
\nabla_{a}\nabla_{b}k_c=R_{bc}{}^d{}_ak_d.
\end{equation*}
Thus, the Killing-type equation \eqref{equation:Killing-type} is  equivalent to the following closed, first-order system:
\begin{align*}
    0 &= \nabla_a k_b - \mu_{ab} \\
    0 &= \nabla_a \mu_{bc} - R_{bc}{}^d{}_a k_d \textrm{,}
\end{align*}
where $\mu_{ab}:=\nabla_{[a}k_{b]}\in\Gamma(\Wedge^2 T^*M)$.
Using the projective decomposition \eqref{decp} of the curvature tensor $R_{ab}{}^c{}_d$ of  an affine manifold $(M,\nabla)$ this observation can rephrased (cf. also\ \cite{BCEG}) as follows:

\begin{proposition}\label{prolongprop} On a manifold $M$ of dimension at least $2$ equipped with a torsion-free special affine connection $\nabla$,
solutions of the Killing-type equation
\[
    \nabla_{(a} k_{b)} = 0
\]
on $1$-forms $k_b$ are in bijective correspondence with sections $(k, \mu)$ of
$T^* M \oplus \Wedge^2 T^*M$ which are parallel with respect to the connection \begin{equation}\label{prol_step2}
\nabla^{\emph{prol}}_a\left(\begin{array}{c}
k_b\\
\mu_{bc}
\end{array}\right):=
\left(\begin{array}{c}
\nabla_a k_b - \mu_{bc}\\
\nabla_a \mu_{bc}+2 \Rho_{a[b}k_{c]}
\end{array}\right)
- \left(\begin{array}{c}
0\\
W_{bc}{}^d{}_a k_d
\end{array}\right).
\end{equation}
\end{proposition}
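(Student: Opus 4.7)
The plan is to prove Proposition \ref{prolongprop} by a direct prolongation argument, establishing the bijection $k_b \mapsto (k_b, \nabla_{[\cdot} k_{\cdot]})$ with inverse $(k_b, \mu_{bc}) \mapsto k_b$. The core of the argument will reproduce, in the torsion-free affine setting, the derivation leading to \eqref{prolongation_Killing}, with the projective decomposition \eqref{decp} doing the final work of matching the definition of $\nabla^{\mathrm{prol}}$.

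For the forward direction, suppose $k_b$ solves the Killing-type equation; then $\omega_{bc} := \nabla_b k_c$ is skew, and setting $\mu_{bc} := \omega_{bc} = \nabla_{[b} k_{c]}$ immediately annihilates the first component of $\nabla^{\mathrm{prol}}_a(k_b, \mu_{bc})$. For the second component, I will first observe that $\omega$ is closed (since $\mu_{ab} = \tfrac{1}{2}(dk)_{ab}$ is exact, torsion-freeness being used to identify $d$ with skew-symmetrisation of $\nabla$), and then cycle $\nabla_{[a} \omega_{bc]} = 0$ while re-expressing the cyclic pieces via antisymmetry of $\omega$. This yields exactly the identity of \eqref{prolongation_Killing} in the current affine setting, namely
\[
    \nabla_a \mu_{bc} = \nabla_a \nabla_b k_c = R_{bc}{}^d{}_a k_d.
\]

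The next step is to substitute the projective decomposition \eqref{decp} of $R_{bc}{}^d{}_a$ into this identity. Specialness of $\nabla$ enters decisively here: by the discussion in Section \ref{affine_manifolds}, $\Rho$ is symmetric, so the $\beta_{bc}\delta^d{}_a$ term in \eqref{decp} drops out. What remains, after using symmetry of $\Rho$, simplifies to
\[
    \nabla_a \mu_{bc} = W_{bc}{}^d{}_a k_d - 2\Rho_{a[b} k_{c]},
\]
which is precisely the vanishing of the second component of $\nabla^{\mathrm{prol}}_a(k_b, \mu_{bc})$ as defined in \eqref{prol_step2}.

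The converse direction is essentially tautological: if $(k_b, \mu_{bc})$ is $\nabla^{\mathrm{prol}}$-parallel, the first component equation reads $\nabla_a k_b = \mu_{ab}$, and the skew-symmetry of $\mu$ then forces $\nabla_{(a} k_{b)} = 0$. The two assignments are mutually inverse by construction. The only step with any subtlety is isolating where specialness is used — namely in killing the $\beta_{ab}$ term that would otherwise survive for a general torsion-free connection — and carefully tracking signs in the derivation of $\nabla_a \nabla_b k_c = R_{bc}{}^d{}_a k_d$ via the curvature convention for $1$-forms; neither presents a genuine obstacle, and the whole argument amounts to a few lines of index manipulation.
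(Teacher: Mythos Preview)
Your proposal is correct and follows essentially the same approach as the paper: it reproduces the derivation of \eqref{prolongation_Killing} (using closedness of $\mu = \nabla_{[\cdot}k_{\cdot]}$) and then inserts the projective curvature decomposition \eqref{decp}, exploiting specialness of $\nabla$ to drop the $\beta_{ab}$ term. The converse direction and the identification of where specialness enters are handled exactly as the paper intends.
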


We will see that the first term on the right-hand side is a canonical
connection, available on any projective manifold, namely the projective
tractor connection.

\section{Projective geometry}\label{proj-sec}
It is well known that two torsion-free affine connections $\nabla$ and $\widehat{\nabla}$
are projectively equivalent if and only if there exists a $1$-form $\Upsilon_a\in\Gamma(T^*M)$ such that
\begin{equation}\label{projective_change}
	\widehat{\nabla}_a\xi^b=\nabla_a\xi^b+\Upsilon_a\xi^b+\delta^b{}_a\Upsilon_c\xi^c
\end{equation}
for any vector field $\xi^a\in\Gamma(TM)$.
For two projectively equivalent affine connections $\widehat{\nabla}$ and $\nabla$ related as in \eqref{projective_change} the connections they induce on the projective density bundles $\mcE(w)$ are related by
\begin{equation}\label{projective_change_on_densities}
\widehat{\nabla}_a\sigma=\nabla_a\sigma+w\Upsilon_a.
\end{equation}
This shows that, given a projective manifold $(M,\mbp)$, for any fixed $0\neq w\in\bbR$ mapping a connection in $\mbp$ to its induced connection on $\mcE(w)$ induces an (affine)
bijection between connections in $\mbp$ and connections on $\mcE(w)$. In particular, this implies that any trivialisation of $\mcE(w)$, viewed as a  nowhere-vanishing section
$\sigma\in\mcE(w)$, gives rise to a connection $\nabla$ in $\mbp$ characterised by $\nabla_a\sigma=0$. For $0\neq w \in\bbR$ we call a nowhere vanishing section of $\mcE(w)$ as well as its corresponding connection in $\mbp$ a \emph{scale}. Note $\nabla\in\mbp$ is a scale if and only if it is special in the sense of Section \ref{affine_manifolds}.

\subsection{The projective tractor bundle and its connection}\label{tr}

On a general projective manifold $(M,\mbp)$ of dimension $n+1\geq2$ there is no distinguished
connection on $TM$. There is however, as already advertised, a projectively invariant connection (that is, one intrinsic to $\mbp$) on a related vector bundle $\cT$
of rank $n+2$, called the \textit{(normal) projective tractor connection}, which we now describe.

Consider the first jet bundle
$J^1\ce(1)\to M$ of the density bundle $\ce(1)$ (see, for example,
\cite{palais} for a general development of jet bundles).
There is a canonical bundle map called the {\em first jet projection map}
$J^1\ce(1)\to\ce(1)$, which at each point is given by mapping
a $1$-jet of a density to its evaluation at that point, and
we may identify the kernel of this map with $\ce_a(1)=T^*M (1)$.  We write $\cT^*$, or, in an
abstract index notation, $\ce_A$, for $J^1\ce(1)$, and $\cT$ or $\ce^A$
for the dual vector bundle. Then we can view the jet projection as a
canonical section $X^A \in \Gamma(\ce^A(1))$ called the \textit{canonical tractor}. Likewise,
the inclusion of the kernel of this projection can be viewed as a
canonical bundle map $\ce_a(1)\to\ce_A$, which we denote by
$Z_A{}^a$. In this notation the jet exact sequence (at $1$-jets) is
\begin{equation}\label{euler}
0\to \ce_a(1)\stackrel{Z_A{}^a}{\to} \ce_A \stackrel{X^A}{\to}\ce(1)\to 0.
\end{equation}

As mentioned above,
 any connection $\nabla \in \mbp$
determines a connection on $\ce(1)$ and vice versa. On the other hand,  by definition,
a connection on $\ce(1)$ is precisely a splitting
of the $1$-jet sequence \nn{euler}. Hence,
given a choice of connection $\nabla\in\mbp$, we get an identification
$\smash{\ce_A \cong \ce(1)\oplus \ce_a(1)}$,
and we write
 \begin{equation}\label{split}
 Y_A:\ce(1) \to \ce_A \qquad \mbox{and} \qquad W^A{}_a: \ce_A\to \ce_a(1),
 \end{equation}
for the corresponding bundle maps giving the splitting of \nn{euler}; thus,
$$
 X^AY_A=1, \qquad  Z_A{}^b W^A{}_a=\delta^b{}_a, \qquad \mbox{and} \qquad Y_AW^A{}_a=0.
$$

With
respect to a connection $\nabla\in\mbp$, equivalently a splitting \nn{split},
we define a connection on $\mathcal T^*$ by
\begin{equation}\label{pconn}
\nabla^{\mathcal{T}^*}_a \binom{\si}{\mu_b}
:= \binom{ \nabla_a \si -\mu_a}{\nabla_a \mu_b + \Rho_{ab} \si},
\end{equation}
where $\Rho_{ab}$ is the projective Schouten tensor of $\nabla\in \mbp$ as introduced in Section \ref{affine_manifolds}.

It turns out that \nn{pconn} is
independent of the choice $\nabla \in \mbp$, and so
$\nabla^{\mathcal{T}^*}$ is determined canonically by the projective
structure $\mbp$. We have followed the construction of \cite{BEG}, but
this {\em cotractor connection} was first introduced by Thomas in \cite{Thomas}.  We term $\cT^*=\ce_A$ the {\em cotractor bundle}, and we note that the dual
{\em tractor bundle} $\cT=\ce^A$
canonically carries the dual {\em tractor connection}: In terms of a
splitting \nn{split} this is given by
\begin{equation}\label{tconn}
\nabla^\cT_a \left( \begin{array}{c} \nu^b\\
\rho
\end{array}\right) =
\left( \begin{array}{c} \nabla_a\nu^b + \rho \delta^b{}_a\\
\nabla_a \rho - \Rho_{ab}\nu^b
\end{array}\right).
\end{equation}
Note that given a choice $\nabla\in \mbp$, by coupling with the
tractor connection we can differentiate tensors taking values in
tractor bundles as well as weighted tractors. In particular, $\nabla_aX^A=W^A{}_a$, so this gives an explicit computational handle
on the splitting determined by the choice $\nabla$.

The curvature $R=R^{\mcT}$ of $\nabla^{\mcT}$ is a $2$-form on $M$ with values in the bundle $\mathfrak{sl}(\mcT)$ of trace-free endomorphisms of $\mcT$. With respect to a choice $\nabla\in\mbp$ it may be written
as
\begin{equation}\label{tractor_curvature}
R_{ab}{}{}^C{}_{D}=W_{ab}{}^c{}_d W^C{}_c Z_{D}{}^d-C_{abd}Z_{D}{}^d X^C,
\end{equation}
where $W_{ab}{}^c{}_d$ is the projective Weyl curvature and $C_{abd}$ the projective Cotton tensor as introduced in Section \ref{affine_manifolds}.

From \eqref{diff_Bianchi_Weyl} and formula \nn{tractor_curvature}, it follows that the tractor curvature $R^{\mcT}$ vanishes identically
if and only if $W_{ab}{}^c{}_d$ does for $n+1\geq3$ or $C_{abc}$ does
for $n+1=2$. Thus $R^{\mcT}$ vanishes
identically if and only if $(M,\mbp)$ is projectively flat.

Let us also mention that for an oriented projective manifold $(M, \mbp)$ the tractor bundle $\mcT$ always admits a parallel \emph{tractor volume form}, that is,
a nowhere vanishing section $\vol_{A_1 \cdots A_{n + 2}}$ of $\Wedge^{n+2} \mcT^*$ that is parallel for the tractor connection. It is (with respect to any splitting of \eqref{euler}) given by
\begin{equation}\label{tractor_volume_form}
\vol_{A_0 \cdots A_{n + 1}}
	:=
		Y_{[A_0} Z_{A_1}{}^{a_1}... Z_{A_{n + 1}]}{}^{a_{n + 1}}
			\epsilon_{a_1 \cdots a_{n + 1}}
				\in\Gamma(\Wedge^{n+2} \mcT^*),
\end{equation}
where $\epsilon_{a_1 \cdots a_{n + 1}}\in\Wedge^{n+1}T^*M(n+2)$ is the weighted $(n+1)$-form encoding the orientation on $M$ as introduced in Section \ref{notn}.

Recall that any projective manifold $(M,\mbp)$ admits a \emph{canonical} (usually called its \emph{normal)} \emph{Cartan connection} (see \cite{csbook, Cartan}),
and this is equivalent to
its normal tractor connection \cite{CapGoTAMS}. For later purposes let us sketch this correspondence for oriented projective manifolds:
If $(M,\mbp)$ is oriented and of dimension $n+1$ its normal Cartan connection $\omega$ is of type $(\mathfrak {sl}(n+2,\bbR), P)$, where $P$ is the stabiliser in
$\textrm{SL}(n+2,\bbR)$ of a ray in $\bbR^{n+2}$. Thus $\omega$ is a certain $\mathfrak {sl}(n+2,\bbR)$-valued $1$-form on the total space
of a $P$-principal bundle $ \mathcal G\rightarrow M$ that defines a trivialisation $T\mcG\cong \mcG\times \mathfrak {sl}(n+2,\bbR)$ and is suitably
compatible with the principal right action of $P$ on $\mcG$; see \cite{csbook} for details.
The defining properties of a Cartan connection imply that $\omega$ extends to a
$G$-principal connection on the $G$-principal bundle $\hat\mcG:=\mcG\times_P G$, where $G:=\textrm{SL}(n+2,\bbR)$. Hence, for any $G$-module $\mathbb V$
the Cartan connection $\omega$ induces a linear connection $\nabla^{\cV}$
on any associated vector bundle
$$\cV:=\mcG\times_P\bbV \cong \hat\mcG\times_G\mathbb V.$$
For the standard representation $\mathbb V=\bbR^{n+2}$ of $G$ we have that $\cV=\mcT$ and that $\nabla^{\cV}$ coincides precisely with the canonical (normal) tractor connection $\nabla^{\mcT}$.
We refer to any vector bundle $\cV$ as above as a \textit{tractor bundle} of $(M,\mbp)$ and to $\nabla^{\cV}$ as the tractor connection on $\cV$. Since the tractor connection on any tractor bundle is induced by that on $\mcT$,
we also often just write $\nabla^\mcT$ (instead of $\nabla^\cV$) for the tractor connection on any tractor bundle $\cV$.
Conversely, the Cartan bundle $\mcG\rightarrow M$
can be recovered from $\mcT$ by forming an adapted frame bundle of $\mcT$, and $\omega\in\Omega^1(\mcG, \mathfrak{sl}(n+2,\bbR))$ can be reconstructed from
$\nabla^{\mcT}$; see \cite{CapGoTAMS, csbook}.

Finally, for the flat homogeneous model of oriented projective geometry, namely, $G / P \cong S^{n+1}$ equipped with its canonical ($G$-invariant) projective structure, the normal Cartan connection is the
Maurer--Cartan form on $G$, where $G$ is viewed as the total space of the $P$-principal bundle $G\rightarrow G/P$. For any $G$-module $\mathbb V$ the tractor bundle $\cV=G\times _P\mathbb V$
over $G/P$ can be trivialised via
\begin{align}\label{tractor_bundle_homog_model}
G\times _P\mathbb V \stackrel{\cong}{\to} G/P\times\mathbb V, \qquad [g, v]\mapsto (gP, gv) ,
\end{align}
and the tractor connection $\nabla^{\cV}$ is precisely the flat connection defined by this trivialisation.

\subsection{Parallel tractors and normal solutions} \label{parBGG-sec}

\newcommand{\cG}{\mathcal{G}}

Suppose $(M,\mbp)$ is an oriented projective manifold of dimension $n+1\geq 2$ and let $P\subset G$ be the groups as defined in Section \ref{tr}.
From \nn{euler} we conclude that there is a (projectively invariant) injective bundle map
\begin{equation}\label{injection_cotangent}
T^* M\to \End (\cT), \qquad u_b\mapsto X^A Z_{B}{}^b u_b .
\end{equation}
Sections of $\End (\cT)$ act on any tractor bundle $\mathcal V$ (in the obvious tensorial way), and hence, via the injection \eqref{injection_cotangent}, so does $T^*M$; this action in turn determines a sequence of bundle maps
\begin{equation}\label{Kostant}
\partial^* : \Wedge^k T^*M\otimes \cV \to  \Wedge^{k-1} T^*M\otimes \cV,\qquad k=1,\ldots,n+1,
\end{equation}
which together satisfy $\partial^*\circ \partial^*=0$. Hence they form a complex
and determine subquotient
bundles
$\cH_k(M,\cV) := (\operatorname{ker} \partial^*) / (\operatorname{im} \partial^*)
$ of the bundles $\Wedge^k T^*M\otimes \cV
$ of $\cV$-valued $k$-forms.

\begin{remark}
Via the canonical Cartan connection the bundles $\Wedge^k T^*M\otimes \cV$ can be identified with vector bundles associated to the Cartan bundle $\mcG\rightarrow M$
with standard fiber the $P$-module $\Wedge^k(\mathfrak g/\mathfrak p)^*\otimes \mathbb V\cong\Wedge^k\mathfrak p_+\otimes \mathbb V$, where $\mathfrak p_+$
is the nilradical of the Lie algebra $\mathfrak p$ of $P$ (for our $P$ the nilradical is abelian). The maps
$\partial^*$ can be identified with bundle maps induced from the chain complex computing the Lie algebra homology of $\mathfrak p_+$ with values in $\mathbb V$.
By a Theorem of Kostant \cite{Kostant}, the vector bundles $\cH_k(M, \mathcal V)$ are all weighted tensors bundles on $M$.
\end{remark}

In \cite{CD,CSS} it was shown that each irreducible $G$-module $\mathbb{V}$ determines a so-called \textit{BGG sequence}
$$
  \Gamma(\cH_0)\stackrel{\cD^{\cV}_0}{\rightarrow}\Gamma(\cH_1)\overset{\cD^{\cV}_1}{\rightarrow}\cdots
  \overset{\cD^{\cV}_{n}}{\rightarrow}\Gamma(\cH_{n+1}),
  $$
  where  $\cH_k := \cH_k(M,\cV)$ and
  each $\cD^{\cV}_i$ is a projectively invariant
  linear differential operator (that is, one intrinsic to $\mbp$).  For projectively flat structures
  $(M,\mbp)$ the BGG sequence corresponding to $\cV$ is a complex,
  which is dual, in a suitable sense, to the generalised
  Berstein--Gelfand--Gelfand resolution of $\mathbb V$ by generalised
  Verma modules.

  In this article we will only be interested in the \textit{first BGG operator} $\cD^\cV=\cD^V_0$,
  which defines an overdetermined system of PDEs and is closely related to the
  tractor connection $\na^{\cV}$ on $\cV$. Recall that the parabolic subgroup $P\subset G$ determines a filtration on $\bbV$ by $P$-invariant subspaces. Denoting the largest non-trivial filtration component by $\bbV^0$, which is given by
  $\bbV^0=\mathfrak p_+\bbV$, and setting $\cV^0:=\mcG\times_P\mathbb V^0$, then $\cH_0$ just equals the quotient  $\cV/\cV^0$.
  We denote by $\Pi^{\cV}:\Gamma(\cV)\to\Gamma(\cH_0)$ the natural projection.
  The key ingredient in the construction of
  $\cD^{\cV} : \Gamma(\cH_0)\to\Gamma(\cH_1)$ is a projectively invariant differential operator
  $$L^{\cV}:\Gamma (\cH_0)\to \Gamma (\cV),$$
  called the \emph{(first) BGG splitting operator}:
  It is uniquely characterised by the conditions that (1) $\Pi^{\cV} \circ L^{\cV}$ is the identity map on $\Gamma(\cH_0)$ and (2) the composition
   $\nabla^\cV \circ L^{\cV}$ takes values in $\ker \partial^*\subset \Gamma (T^*M\otimes \cV)$.
  For $\si\in \Gamma(\cH_0)$, $\mcD^{\cV}(\sigma)$ is given by projecting $\nabla^{\cV} (L^{\cV}(\si))$ to $\Gamma (\cH_1)$.

  It follows straightforwardly from the construction of $\cD^\cV$ and the characterising properties of $L^{\cV}$  that the bundle map $\Pi^{\cV}$ can be used to identify
  parallel sections of $\cV$ with special solutions of the first
  BGG operator $\cD^\cV$.
   More precisely, one has:

  \begin{proposition}[\cite{CGH}] \label{normp} Let $\mathbb{V}$ be an irreducible $G$-module and set $\cV: =\cG\times_P \mathbb{V}$.
  The bundle map
  $\Pi^{\cV}$ induces an injection from the space of $\nabla^{\cV}$-parallel sections of
  $\mathcal{V}$ to a subspace of $\ker \mathcal D^{\mathcal V} \subset \Gamma(\mathcal{H}_0)$.
\end{proposition}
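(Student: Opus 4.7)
The plan is to leverage the uniqueness in the defining properties of the BGG splitting operator $L^{\mathcal{V}}$. The central observation is that a $\nabla^{\mathcal{V}}$-parallel section $s \in \Gamma(\mathcal{V})$ must coincide with the splitting of its own projection, i.e.\ $s = L^{\mathcal{V}}(\Pi^{\mathcal{V}}(s))$. Once this identity is established, both the claim that $\Pi^{\mathcal{V}}(s) \in \ker \mathcal{D}^{\mathcal{V}}$ and the injectivity of $\Pi^{\mathcal{V}}$ on parallel sections follow formally.

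First I would fix a parallel $s \in \Gamma(\mathcal{V})$ and set $\sigma := \Pi^{\mathcal{V}}(s) \in \Gamma(\mathcal{H}_0)$. I would then verify that $s$ itself satisfies the two conditions characterizing $L^{\mathcal{V}}(\sigma)$: the equation $\Pi^{\mathcal{V}}(s) = \sigma$ holds by definition of $\sigma$, and $\nabla^{\mathcal{V}} s = 0$ trivially lies in $\ker \partial^* \subset \Gamma(T^*M \otimes \mathcal{V})$. By the stated uniqueness of $L^{\mathcal{V}}$, we conclude $s = L^{\mathcal{V}}(\sigma)$.

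With this identity in hand, the proposition is immediate. By the definition of the first BGG operator,
\begin{equation*}
\mathcal{D}^{\mathcal{V}}(\sigma) = \Pi\bigl(\nabla^{\mathcal{V}} L^{\mathcal{V}}(\sigma)\bigr) = \Pi(\nabla^{\mathcal{V}} s) = 0,
\end{equation*}
so $\sigma \in \ker \mathcal{D}^{\mathcal{V}}$, giving a well-defined map from parallel sections to $\ker \mathcal{D}^{\mathcal{V}}$. For injectivity, suppose $\Pi^{\mathcal{V}}(s) = 0$ for a parallel section $s$. Applying the identity just established and the linearity of $L^{\mathcal{V}}$ gives $s = L^{\mathcal{V}}(0) = 0$, so $\Pi^{\mathcal{V}}$ is injective on the space of parallel sections.

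The step requiring the most care is invoking the uniqueness portion of the characterization of $L^{\mathcal{V}}$. This uniqueness is precisely where the BGG machinery of \cite{CD,CSS} is used: one checks that the two conditions pin down a unique lift by an inductive argument along the $P$-filtration of $\mathbb{V}$, using that $\partial^*$ interacts in a controlled way with the filtration components. Since the excerpt already asserts this uniqueness as a defining property of $L^{\mathcal{V}}$, no further work is required; the rest of the argument is essentially a tautology. It is worth noting that the image will generally be a proper subspace of $\ker \mathcal{D}^{\mathcal{V}}$: an element $\sigma \in \ker \mathcal{D}^{\mathcal{V}}$ arises from a parallel tractor exactly when its BGG lift $L^{\mathcal{V}}(\sigma)$ is itself parallel, a strictly stronger condition that distinguishes the so-called \emph{normal} solutions of the overdetermined system $\mathcal{D}^{\mathcal{V}}\sigma = 0$.
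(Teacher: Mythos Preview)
Your proposal is correct and matches precisely the approach the paper indicates: the paper does not give a detailed proof but states just before the proposition that the result ``follows straightforwardly from the construction of $\mcD^\cV$ and the characterising properties of $L^{\cV}$,'' which is exactly the uniqueness argument you carry out.
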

We will call sections in the kernel of $\cD^{\cV}$ that arise, as in Proposition \ref{normp}, from applying the projection $\Pi^{\cV}$ to $\nabla^{\cV}$-parallel sections
\emph{normal solutions} of $\cD^\cV$ (cf.\ \cite{CGHpoly, Leitner}).

\subsection{Tractor $2$-forms}
\label{subsection:tractor-2-forms}
For an oriented projective manifold $(M,\mbp)$ consider the tractor
bundle $\Wedge^2 \mcT^*$ of tractor $2$-forms.
The splitting $\mcT^* \cong \mcE(1) \oplus T^*M(1)$ determined by a choice of connection $\nabla \in \mbp$ induces a splitting $\Wedge^2 \mcT^* \cong T^*M(2) \oplus \Wedge^2 T^*M(2)$: any section $\bbT_{AB} \in \Gamma(\Wedge^2 \mcT^*)$ decomposes as
\[
	\bbT_{AB} = 2 k_b Y_{[A} Z_{B]}^b + \mu_{ab} Z_A{}^a Z_B{}^b
\]
for some $k_a \in \Gamma(T^*M(2))$ and $\mu_{ab} \in \Gamma(\Wedge^2 T^*M(2))$; more compactly, we write
\[
	\bbT_{AB} = \begin{pmatrix}k_b\\ \mu_{bc}\end{pmatrix} .
\]
With respect to $\nabla\in\mbp$ the tractor connection on $\Wedge^2 \mcT^*$ is given by
\begin{equation}\label{equation:tractor-2-form-connection}
	\nabla^{\Wedge^2\mcT^*}_a
		\left(
			\begin{array}{c}
				  k_b    \\
				\mu_{bc}
			\end{array}
		\right)
			:=
		\left(
			\begin{array}{c}
				\nabla_a k_b - \mu_{bc}\\
				\nabla_a \mu_{bc}+2 \Rho_{a[b}k_{c]}
			\end{array}
		\right) .
\end{equation}
Comparing this equation with \eqref{prol_step2} shows a close relationship between $\nabla^{\Wedge^2\mcT^*}$ and the prolongation connection
defined there, which suitably interpreted can be seen as a connection on $\Wedge^2 \mcT^*$: For a choice $\nabla\in\mbp$ one may define a connection on $\Wedge^2 \mcT^*$ by
\begin{equation}\label{equation:prolongation-vs-tractor-connection}
	\nabla^{\textrm{prol}}_a
		\left(
			\begin{array}{c}
				k_b     \\
				\mu_{bc}
			\end{array}
		\right)
			:=
	\nabla^{\Wedge^2\mcT^*}_a
		\left(
			\begin{array}{c}
				k_b     \\
				\mu_{bc}
			\end{array}
		\right)
			-
		\left(
			\begin{array}{c}
				0\\
				W_{bc}{}^d{}_a k_d
			\end{array}
		\right) .
\end{equation}
Since  both terms on the right are projectively invariant, so is $\nabla^{\textrm{prol}}$. Moreover, note that if $\nabla\in\mbp$ is a scale, then we get an identifications of the bundles $T^*M$ and $\Wedge^2 T^*M$ respectively with the weighted bundles $T^*M(2)$ and $\Wedge^2 T^*M(2)$, and $\nabla^{\textrm{prol}}$ precisely coincides with the connection defined in \eqref{prol_step2} on the special affine manifold $(M,\nabla)$.

The projection $\Pi^{\Wedge^2 \mcT^*} : \Gamma(\Wedge^2 \mcT^*) \to \Gamma(T^*M(2))$ is $\bbT_{AB} \mapsto X^A W^B{}_b \bbT_{AB}$, or
\[
	\Pi^{\Wedge^2 \mcT^*}:
	\left(
		\begin{array}{c}
			k_b     \\
			\mu_{ab}
		\end{array}
	\right)
		\mapsto
	k_b ,
\]
and the splitting operator $L^{\Wedge^2 \mcT^*} : \Gamma(T^*M(2)) \to \Gamma(\Wedge^2 \mcT^*)$ is
\begin{equation}\label{equation:splitting-operator-2-form}
	L^{\Wedge^2 \mcT^*}:
	k_b
		\mapsto
	\left(
		\begin{array}{c}
			k_b     \\
			\nabla_{[a} k_{b]}
		\end{array}
	\right) .
\end{equation}
Together \eqref{equation:tractor-2-form-connection} and \eqref{equation:splitting-operator-2-form} give that the (first) BGG operator $\mcD^{\Wedge^2 \mcT^*} : \Gamma(T^*M(2)) \to \Gamma(S^2 T^*M(2))$ corresponding to $\Wedge^2\mcT^*$ is the Killing-type operator
\begin{equation}\label{equation:BGG-operator-2-form}
	\mcD^{\Wedge^2 \mcT^*} : k_a \mapsto \nabla_{(a} k_{b)} ;
\end{equation}
in particular $\mcD^{\Wedge^2 \mcT^*}(k) = 0$ is the (now appropriately projectively weighted) Killing-type equation \eqref{equation:Killing-type}.

\subsection{Adjoint tractors} \label{adj}
For an oriented projective manifold $(M,\mbp)$ another important
tractor bundle is the bundle  $\mcA:=\mathfrak{sl}(\mcT)$
of trace-free bundle endomorphisms of $\mcT$. It is called the \emph{adjoint tractor bundle},
since it is the bundle associated to the Cartan bundle via the adjoint representation of $\textrm{SL}(n+2,\bbR)$.

With respect to a choice $\nabla\in\mbp$, any section  $\bbT^A {}_B\in\Gamma(\mcA)$ decomposes as
\begin{equation}\label{equation:adjoint-decomposition}
    \bbT^A{}_B = \xi^a W^A{}_a Y_B + \alpha X^A Y_B + \phi^a{}_b W^A{}_a Z_B{}^b + \nu_b X^A Z_B{}^b,
\end{equation}
for some $\xi^a \in \Gamma(TM)$, $\alpha \in \Gamma(\mcE)$, $\phi^a{}_b \in \Gamma(\End TM)$, and $\nu_b \in \Gamma(T^* M)$, and trace-freeness of $\bbT^A{}_{B}$ forces $\alpha = -\phi^c {}_c$. We can write such a section more compactly in block matrix notation:
\[
\bbT^A{}_B
	=
		\begin{pmatrix}
			\phi^a{}_b &  \xi ^a     \\
			\nu     _b & -\phi^d{}_d \\
		\end{pmatrix} .
\]

Moreover, from \eqref{pconn} and \eqref{tconn} one deduces that the tractor connection on $\mcA$ with respect to $\nabla$ may be written as
\begin{equation}\label{adjoint_tractor_connection}
	\nabla_c^{\mcA} \bbT^A{}_B
		=
	\nabla^\mcA_c
	\begin{pmatrix}
		\phi^a{}_b &  \xi ^a     \\
		\nu     _b & -\phi^d{}_d
	\end{pmatrix} \\
		=
	\begin{pmatrix}
		   \nabla_c \phi^a{}_b + \Rho_{cb} \xi^a + \delta^a{}_c \nu_b
		&  \nabla_c \xi^a - \phi^a{}_c - \phi^d{}_d \delta^a{}_c \\
		   \nabla_c \nu_b - \Rho_{cd} \phi^d{}_b - \phi^d{}_d \Rho_{bc}
		& -\nabla_c \phi^d{}_d - \Rho_{cd} \xi^d - \nu_c
	\end{pmatrix} .
\end{equation}
For later, let us also record that, given another connection
$\widehat\nabla \in \mbp$ related to $\nabla$ by $\Upsilon$ as in
\eqref{projective_change}, the components $\widehat\xi^a,
\widehat\phi^a{}_b, \widehat\nu_b$ of $\bbT$, determined thereby,
satisfy
\begin{equation}\label{equation:adjoint-component-transformation}
	\widehat\xi ^a     = \xi^a \textrm{,}
		\qquad
	\widehat\phi^a{}_b = \phi^a{}_b + k^a \Upsilon_b - \Upsilon_c k^c \delta^a{}_b \textrm{,}
		\qquad
	\widehat\nu     _b = \nu_b - \Upsilon_c \phi^c{}_b - \phi^c{}_c \Upsilon_b + \Upsilon_c k^c \Upsilon_b .
\end{equation}

The projection $\Pi^{\mcA}: \Gamma(\mcA) \rightarrow \Gamma(TM)$ is $\bbT^A{}_B \mapsto Z_A{}^a X^B \bbT^A{}_B$, or
\[
	\Pi^{\mcA} :
	\begin{pmatrix}
		\phi^a{}_b &  \xi ^a \\
		\nu     _b & -\phi^d{}_d
	\end{pmatrix}
		\mapsto
	\xi^a ,
\]
and the splitting operator $L^{\mcA} : \Gamma(TM) \to \Gamma(\mcA)$ is
\begin{equation}\label{equation:splitting-operator-adjoint}
	L^{\mcA} : \xi^a \mapsto
		\begin{pmatrix}
			  \nabla_b \xi^a - \frac{1}{n +2} \delta^a{}_b \nabla_c \xi^c
			& \xi^a \\
			  -\frac{1}{n + 2} \nabla_b \nabla_c \xi^c{} - \Rho_{bc} \xi^c
			& -\frac{1}{n + 2} \nabla_c \xi^c
		\end{pmatrix} \textrm{.}
\end{equation}

From \eqref{adjoint_tractor_connection} and \eqref{equation:splitting-operator-adjoint} we conclude that the (first) BGG operator corresponding to $\mathcal A$ is the map $\mathcal D^{\mcA} : \Gamma(TM) \to \Gamma((S^2 T^*M \otimes TM)_{\circ})$ given by
\begin{multline}\label{equation:BGG-operator}
	\mathcal D^{\mcA} : \xi^a
		\mapsto
	\nabla_{(b} \nabla_{c)} \xi^a+ \Rho_{(bc)} \xi^a \\
		- \tfrac{1}{n + 2}\left[(\nabla_{(b} \nabla_{d)} \xi^d+ \Rho_{(bd)} \xi^d)\delta^a{}_c + (\nabla_{(c} \nabla_{d)} \xi^d+ \Rho_{(cd)} \xi^d)\delta^a{}_b\right]
\end{multline}
and that a solution $\xi^d$ of $\mathcal D^{\mcA}$ is normal if and only if it satisfies
\begin{equation}\label{equation:adjoint-normality-conditions}
\left\{
\begin{array}{rcl}
	W_{ab}{}^c{}_d \xi^d &\!\!\!=\!\!\!& 0 \\
	C_{abd} \xi^d        &\!\!\!=\!\!\!& 0
\end{array}
\right. .
\end{equation}
Here, $(S^2 T^*M \otimes TM)_{\circ}$ indicates the bundle of totally trace-free elements of $S^2 T^*M \otimes TM$.

For later purposes we also note:

\begin{lemma}\label{parallel_adjoint_tractor_and_curvature}
Let $(M,\mbp)$ be a projective manifold equipped with a $\nabla^{\mathcal A}$-parallel section $\mathbb{T}^{A}{}_{B}\in\Gamma(\mathcal {A})$.
Then the curvature $R\in\Gamma(\Wedge^2 T^*M\otimes \mathcal A)$ of the tractor connection satisfies
\begin{equation}
R_{ab}{}^{C}{}_D\,\mathbb T{}^D{}_C=0.
\end{equation}
\end{lemma}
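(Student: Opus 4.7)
The plan is to compute the scalar $R_{ab}{}^C{}_D\,\mathbb T{}^D{}_C$ explicitly in a splitting determined by some $\nabla\in\mbp$, using the formula \eqref{tractor_curvature} for the tractor curvature and the decomposition \eqref{equation:adjoint-decomposition} of the adjoint tractor, and then to conclude using the fact that the projecting slot of a $\nabla^{\mcA}$-parallel section is a \emph{normal} solution of $\mcD^{\mcA}$.

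First I would fix $\nabla\in\mbp$ and write $\mathbb T{}^A{}_B$ as in \eqref{equation:adjoint-decomposition} with components $\xi^a,\phi^a{}_b,\nu_b$ (and $\alpha=-\phi^d{}_d$). Using only the orthogonality relations of the splitting maps, namely $X^AY_A=1$, $Y_AW^A{}_a=0$, $Z_A{}^bW^A{}_a=\delta^b{}_a$, and $Z_A{}^aX^A=0$, together with the curvature formula \eqref{tractor_curvature}, a routine calculation collapses the contraction to the concrete expression
\[
R_{ab}{}^C{}_D\,\mathbb T{}^D{}_C = W_{ab}{}^c{}_d\,\phi^d{}_c - C_{abd}\,\xi^d.
\]
Thus it suffices to show both summands vanish. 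Since $\mathbb T$ is parallel, Proposition \ref{normp} says that $\xi^a=\Pi^{\mcA}(\mathbb T)$ is a normal solution of $\mcD^{\mcA}$, so by \eqref{equation:adjoint-normality-conditions} both $W_{ab}{}^c{}_d\xi^d=0$ and $C_{abd}\xi^d=0$; in particular, the second (Cotton) term vanishes at once.

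For the Weyl term, I would use the first component of the parallelism equation \eqref{adjoint_tractor_connection} (equivalently, the explicit BGG splitting operator \eqref{equation:splitting-operator-adjoint}) to express $\phi^d{}_c=\nabla_c\xi^d-\phi^e{}_e\,\delta^d{}_c$, so that the trace-freeness $W_{ab}{}^c{}_c=0$ from \eqref{trace-freeness_Weyl_curv} absorbs the scalar correction and gives $W_{ab}{}^c{}_d\,\phi^d{}_c=W_{ab}{}^c{}_d\,\nabla_c\xi^d$. Differentiating the normality identity $W_{ab}{}^c{}_d\,\xi^d=0$ and contracting with $\nabla_c$ then yields
\[
0=(\nabla_cW_{ab}{}^c{}_d)\,\xi^d+W_{ab}{}^c{}_d\,\nabla_c\xi^d=(n-1)\,C_{abd}\,\xi^d+W_{ab}{}^c{}_d\,\nabla_c\xi^d
\]
by the differential Bianchi identity \eqref{diff_Bianchi_Weyl}, and the vanishing of $C_{abd}\xi^d$ forces $W_{ab}{}^c{}_d\,\nabla_c\xi^d=0$, completing the argument. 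The computation is essentially routine; there is no real obstacle, only the bookkeeping. The one minor point worth flagging is the low-dimensional case $n+1=2$, where $W$ vanishes identically (so the Weyl term is automatically zero) and only the Cotton normality equation $C_{abd}\xi^d=0$ is needed.
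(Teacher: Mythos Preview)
Your proof is correct and follows essentially the same approach as the paper: both compute the contraction explicitly in a splitting, reduce to $W_{ab}{}^c{}_d\,\nabla_c\xi^d - C_{abd}\xi^d$ via the parallelism relation for $\phi$ and trace-freeness of $W$, and then kill both terms using the normality conditions \eqref{equation:adjoint-normality-conditions} together with the differential Bianchi identity \eqref{diff_Bianchi_Weyl}. The paper phrases the Weyl step as rewriting $W_{ab}{}^c{}_d\nabla_c\xi^d = \nabla_c(W_{ab}{}^c{}_d\xi^d) - (n-1)C_{abd}\xi^d$, while you differentiate the normality identity directly; these are the same product-rule computation read in opposite directions.
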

In particular, if $\mathbb J^{A}{}_{B}:=\mathbb{T}^{A}{}_{B}$ is a parallel complex structure on $\mathcal T$, then the tractor curvature has values in
the bundle $\mathfrak{sl}(\mathcal T, \mathbb J)\subset \mathcal A$ of trace-and-$\mathbb J$-trace free endomorphisms of $\mathcal T$.
\begin{proof}
 Suppose $\dim(M)=n+1$. We use again the notation of \nn{equation:adjoint-decomposition}.
Contracting the expression \eqref{equation:adjoint-decomposition} for $\bbT$ with the formula \eqref{tractor_curvature} for the tractor connection gives
\[
	\bbT^D{}_C R_{ab}{}^C{}_D
		= W_{ab}{}^c{}_d \nabla_c \xi^d - C_{abd} \xi^d.
\]
where we have used that $\phi^{a}{}_b=\nabla_b \xi^a - \frac{1}{n +2} \delta^a{}_b \nabla_c \xi^c$, by the parallelicity of $\mathbb T^{A}{}_B$, and that $W_{ab}{}^c{}_c=0$.
Rewriting the first term and using the second projective Bianchi identity \eqref{diff_Bianchi_Weyl} gives that the contraction equals
\[
	\bbT^D{}_C R_{ab}{}^C{}_D = \nabla_c (W_{ab}{}^c{}_d \xi^d) - n C_{abd} \xi^d,
\]
which vanishes according to \eqref{equation:adjoint-normality-conditions}, since $\Pi^{\mathcal A}(\mathbb J)=\xi$ is a normal solution of $\mathcal D^{\mathcal A}$.
\end{proof}

\subsection{Affine and projective symmetries}\label{proj_symm_section}
Suppose that $\nabla$ is a torsion-free affine connection on a manifold $M$.
Recall that the \emph{Lie derivative} of $\nabla$ along a vector field $\xi\in\Gamma(TM)$ is
the tensor field
$$
\mcL_\xi\nabla\colon TM\to T^*M\otimes TM
$$
characterised by
$$(\mcL_\xi\nabla)(\eta):=\mcL_\xi(\nabla \eta)-\nabla \mcL_\xi \eta$$ for any vector field $\eta\in\Gamma(TM)$.
Note that torsion-freeness of $\nabla$ implies that $\mcL_\xi\nabla$ defines a section
of $\textstyle{S^2 T^*M \otimes TM}$.
A straightforward computation shows that
\begin{equation}\label{Lie_deriv}
(\mcL_\xi\nabla)_{ab}{}^c=\nabla_a\nabla_b \xi^c+R_{da}{}^c{}_b \xi^d=\nabla_{(a}\nabla_{b)} \xi^c+R_{d(a}{}^c{}_{b)} \xi^d,
\end{equation}
where the second identity is a manifestation of the Bianchi identity $R_{[ab}{}^c{}_{d]}=0$.

A vector field $\xi$ on $M$ is called an \emph{affine symmetry} (respectively \emph{projective symmetry}) of $\nabla$,
if its (local) flow preserves $\nabla$ (respectively the projective class $[\nabla]$), which is the case if and only if $\mcL_\xi\nabla=0$ (respectively the trace-free part
of $\mcL_\xi\nabla$, equivalently the trace-free part $\mcL_\xi\widehat\nabla$ for any $\widehat\nabla\in[\nabla]$, equals zero).

Invoking the curvature decomposition \eqref{decp}, we deduce from \eqref{Lie_deriv}:

\begin{theorem}\label{theorem:projective-symmetry}
Suppose $(M,\bold p)$ is a projective manifold and let $\nabla\in\bold p$ be a connection in the projective class.
\begin{enumerate}
\item The differential operator $\cD^{\textrm{sym}} : \Gamma(TM) \to \Gamma((S^2 T^*M \otimes TM)_{\circ})$,
\begin{equation*}
\mathcal D^{\textrm{sym}}: \xi^c\mapsto (\nabla_{(a} \nabla_{b)} \xi^c + \Rho_{(ab)} \xi^c + W_{d(a}{}^c{}_{b)} \xi^d)_{\circ}
		=\mathcal D^{\mcA}(\xi)_{a}{}^c{}_{b} + W_{d(a}{}^c{}_{b)} \xi^d,
\end{equation*}
where $\cdot_{\circ}$ denotes taking the trace-free part, is projectively invariant, that is, independent of the choice of connection in $\bold p$.
\item A vector field $\xi \in \Gamma(TM)$ is a projective symmetry of $\bold p$ if and only if $\xi$ lies in the kernel of $\mathcal D^{\textrm{sym}}$.
\end{enumerate}
In particular, a (normal) solution $\xi$ of $\mathcal D^{\mcA}(\xi)=0$ is a projective symmetry if and only if $W_{d(a}{}^c{}_{b)} \xi^d=0$ (respectively $W_{da}{}^c{}_{b} \xi^d=0$).
\end{theorem}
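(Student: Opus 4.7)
The plan is to compute the trace-free part of the Lie derivative $\mcL_\xi\nabla$ using \eqref{Lie_deriv} and the projective curvature decomposition \eqref{decp}, and to observe that all but one of the Schouten-type contributions are pure traces that drop out. By the characterisation of projective symmetries recalled just before the theorem, $\xi$ is a projective symmetry of $\mbp=[\nabla]$ if and only if the trace-free part of $\mcL_\xi\nabla$ vanishes, so once the trace-free part is identified with $\mcD^{\mathrm{sym}}(\xi)$ part (b) follows immediately. The invariance statement (a) will then be essentially a bonus, since being a projective symmetry is intrinsic to $\mbp$.

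Concretely, I would substitute \eqref{decp} (together with $\beta_{ab}=-2\Rho_{[ab]}$) into $R_{d(a}{}^c{}_{b)}\xi^d$ to obtain
\[
  R_{d(a}{}^c{}_{b)}\xi^d
    = W_{d(a}{}^c{}_{b)}\xi^d
       + \Rho_{(ab)}\xi^c
       - \delta^c{}_{(a}\Rho_{b)d}\xi^d
       + \beta_{d(a}\xi^d\delta^c{}_{b)}.
\]
The last two summands have the pure-trace form $\delta^c{}_{(a}T_{b)}$, so they project to zero under the trace-free projection on $S^2T^*M\otimes TM$; combined with \eqref{Lie_deriv} this identifies the trace-free part of $\mcL_\xi\nabla$ with $\mcD^{\mathrm{sym}}(\xi)$, yielding (b). For (a), if $\widehat\nabla\in\mbp$ is related to $\nabla$ by $\Upsilon$ via \eqref{projective_change}, then $\widehat\nabla-\nabla$ is the tensor $2\delta^b{}_{(a}\Upsilon_{c)}$, whose Lie derivative along $\xi$ is again of this pure-trace form and therefore drops out when we pass to the trace-free part; hence $\mcD^{\mathrm{sym}}$ does not depend on the chosen representative of $\mbp$. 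Alternatively, invariance is automatic from (b) applied to two different members of $\mbp$.

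For the final \emph{in particular} clause, when $\mcD^{\mcA}(\xi)=0$ the formula in the theorem reduces $\mcD^{\mathrm{sym}}(\xi)$ to the trace-free part of $W_{d(a}{}^c{}_{b)}\xi^d$, and a brief check using the skew-symmetry of $W$ in its first two indices together with the total trace-freeness \eqref{trace-freeness_Weyl_curv} shows that $W_{d(a}{}^c{}_{b)}\xi^d$ is already trace-free; thus vanishing of $\mcD^{\mathrm{sym}}(\xi)$ is equivalent to $W_{d(a}{}^c{}_{b)}\xi^d=0$. For a \emph{normal} solution the stronger condition $W_{ab}{}^c{}_d\xi^d=0$ from \eqref{equation:adjoint-normality-conditions} holds by definition, which forces $W_{da}{}^c{}_b\xi^d=0$ and hence its symmetrisation, so every normal solution of $\mcD^{\mcA}=0$ is automatically a projective symmetry. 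The proof involves no real obstacle; the only piece of bookkeeping that needs care is verifying that $W_{d(a}{}^c{}_{b)}\xi^d$ is already trace-free, so that no further correction term appears in the equivalence.
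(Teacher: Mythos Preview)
Your treatment of parts (a) and (b) is correct and essentially matches the paper's proof: both substitute the curvature decomposition \eqref{decp} into \eqref{Lie_deriv} and observe that the Schouten-type terms other than $\Rho_{(ab)}\xi^c$ are pure traces. For (a) the paper argues slightly differently---it simply quotes the already-established projective invariance of the BGG operator $\mcD^{\mcA}$ and of $W$, together with the total trace-freeness of $W$---but your argument via the pure-trace form of $\mcL_\xi(\widehat\nabla-\nabla)$ is an equally valid alternative. (Your ``alternatively'' remark, however, is not a proof: two operators can share a kernel without being equal; discard it.)

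Your handling of the final clause for \emph{normal} solutions is wrong. You claim that the normality condition $W_{ab}{}^c{}_d\xi^d=0$ forces $W_{da}{}^c{}_b\xi^d=0$, hence that every normal solution of $\mcD^{\mcA}$ is automatically a projective symmetry. This is not what the theorem asserts, and the implication you state is false in general: contracting $\xi$ into the last slot of $W$ is genuinely different from contracting it into the first slot, since the projective Weyl tensor has no pair-exchange symmetry. What the theorem actually says is that, for normal solutions, the symmetrized criterion $W_{d(a}{}^c{}_{b)}\xi^d=0$ is \emph{equivalent} to the unsymmetrized one $W_{da}{}^c{}_b\xi^d=0$. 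The paper obtains this from the Bianchi identity $W_{ab}{}^c{}_d=-2W_{d[a}{}^c{}_{b]}$: contracting with $\xi^d$ and using normality gives $W_{d[a}{}^c{}_{b]}\xi^d=0$, so $W_{da}{}^c{}_b\xi^d$ is already symmetric in $a,b$ and thus equals its own symmetrization. Neither quantity is forced to vanish by normality alone.
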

\begin{proof}
The first statement just follows from the total trace-freeness of $W_{ab}{}^c{}_d$ and from the projective invariance of $\mathcal D^{\mcA}$ and  $W_{ab}{}^c{}_d$. The second statement follows from inserting
\eqref{decp} into formula \eqref{Lie_deriv}. Moreover,
the characterisation of solutions $\xi$ of $\mathcal D^{\mcA}(\xi)=0$ that are projective symmetries is evident and the one for normal solutions follows directly from the normality conditions \eqref{equation:adjoint-normality-conditions} and the Bianchi identity
$W_{ab}{}^c{}_d=-2W_{d[a}{}^c{}_{b]}$ of the projective Weyl curvature.
\end{proof}

\subsection{Proof of Theorem A}\label{Section: Proof of Theorem A}

\begin{proof}[Proof of Theorem A]
Suppose $(M, g, k)$ is a Sasaki--Einstein manifold, $g$ of signature $(2 p - 1, 2 q)$ and denote by $\nabla$ the Levi-Civita connection of $g$.

Now define the tractor metric $$h_{AB} := Y_A Y_B + Z_A{}^a Z_B{}^b g_{ab}\in\Gamma(S^2\mcT^*),$$
which has signature $(2 p, 2 q)$.
Equation \eqref{pconn} is equivalent to the formulas $\nabla_c Y_A = \Rho_{ca} Z_A{}^a$ and $\nabla_c Z_A{}^a = -\delta^a{}_c Y_A$, which together give that
$\nabla_c^{\mcT} h_{AB} = 0$, since $\Rho_{ab} = g_{ab}$ by dint of being Sasaki--Einstein (cf.\, also\ Theorem \ref{Jpar}).

Next, define the tractor $2$-form $$\Omega_{CB}:= 2 k_b Y_{[C} Z_{B]}{}^b + (\nabla_c k_b) Z_C{}^c Z_B{}^b \in \Gamma(\Wedge^2 \mcT^*).$$
Since $k$ is Killing, Proposition \ref{prolongprop} implies that $\nabla^{\textrm{prol}}_c \Omega_{AB} = 0$. On the other hand, since $g_{ab}=\Rho_{ab}$, we deduce from \eqref{decp}
that
$$W_{abcd}=R_{abcd}-g_{ac}g_{bd}+g_{ad}g_{bc},$$
where $R_{abcd}=R_{ab[cd]}$ is the curvature of $\nabla$. Since the right-hand side is skew in $c$ and $d$, so is $W_{abcd}$ (in fact this is true for the projective Weyl tensor of any Einstein metric).
Hence, condition (b) in the characterisation of Sasaki structures in Theorem \ref{Jpar} gives $W_{bc}{}^d{}_a k_d = 0$; substituting in \eqref{equation:prolongation-vs-tractor-connection} then gives $\nabla^{\mcT}_c \Omega_{AB} = 0$.

Now, define the $\nabla^\mcT$-parallel tractor endomorphism $\bbJ \in \Gamma(\End \mcT)$ by raising an index of $\Omega$ with $h$:
\[
	\bbJ^A{}_B
		:= h^{AC} \Omega_{CB}
		 =
		 	\begin{pmatrix}
		 		\nabla_b k^a & k^a \\
		 		        -k_b & 0
			\end{pmatrix} .
\]
To establish that $(h, \Omega, \bbJ)$ defines a $\nabla^{\mcT}$-parallel Hermitian structure on $\mcT$, it remains to show that $\bbJ$ is a complex structure: Squaring and using condition (a) of
the Definition \ref{definition:Sasaki-structure} of a Sasaki structure gives
\begin{equation}\label{tractor_J_squared}
	\bbJ^A{}_C \bbJ^C{}_B
		=
			\begin{pmatrix}
				(\nabla_c k^a) \nabla_b k^c - k^a k_b & k^c \nabla_c k^a \\
				k_c \nabla_b k^c & -1
			\end{pmatrix}.
\end{equation}

Differentiating condition (a) of Definition \ref{definition:Sasaki-structure} and using that $k$ is Killing yields $$0=(\nabla_ak_c)k^c=-(\nabla_{c}k_a)k^c,$$ which shows that the off-diagonal entries of \eqref{tractor_J_squared} vanish.
Differentiating this equation once more gives
\[
	0 = \nabla_b(k^c \nabla_c k^a) = (\nabla_b k^c) \nabla_c k^a + k^c \nabla_b \nabla_c k^a.
\]
Hence, (a) and (b) of Definition \ref{definition:Sasaki-structure}
imply that the top-left entry of \eqref{tractor_J_squared} equals
$-\delta^{a}{}_b$, which shows that $\bbJ$ is a complex structure on
$\mcT$.

To complete the proof of Theorem A it remains to show that if $M$ is simply connected, it also admits a parallel tractor complex volume form. For such $M$, the existence of such forms is equivalent to the vanishing of the trace $R_{ab}{}^C{}_C$ and the $\bbJ$-trace $R_{ab}{}^C{}_D \bbJ^D{}_C$, which follows from Lemma \ref{parallel_adjoint_tractor_and_curvature}.
\end{proof}

\section{Elemental geometries and projective holonomy}\label{hol-sec}
In this section we shall discuss in detail various holonomy reductions of oriented projective structures. We start by a brief review on the general theory of
such holonomy reductions, for details see \cite{CGH}, where such a theory was developed for holonomy reductions of general Cartan geometries; specifically
for the projective case see also \cite{CGHjlms}.

\subsection{Holonomy reductions of projective structures}
\label{subsection:holonomy-reductions-projective}
Suppose $(M,\mbp)$ is a connected oriented projective manifold of dimension $n+1\geq 2$ and let $P\subset G:=\textrm{SL}(n+1,\bbR)$ be the stabiliser of a ray in $\bbR^{n+1}$.
The \emph{(projective) holonomy} of $(M, \mbp)$ is defined to be the holonomy of the standard
tractor connection $\nabla^{\mcT}$ of $(M,\mbp)$, which is a conjugacy class of subgroups in $G$, since $\nabla^{\mcT}$ is induced from a $G$-principal connection as explained in Section \ref{tr}.
In the following sections we shall investigate the geometric implications of oriented projective structures admitting holonomy reductions that arise from certain parallel tractor fields,
equivalently from normal solutions of certain first BGG operators.

Suppose $s\in\Gamma(\cV)$ is a $\nabla^\cV$-parallel section of a
tractor bundle $\cV$ corresponding to a $G$-module $\mathbb V$.
First, since $s$ is a section of the associated bundle
$\hat{\mathcal{G}}\times_G \mathbb{V}$ it corresponds to a
$G$-equivariant map
$$
\underline{s}:\hat{\mathcal{G}}\to \bbV.
$$
Next, the parallelicity of $s$ implies that the $G$-orbit $\mathcal
O:=G\underline{s}(u)\subset \mathbb V$ for $u\in \mathbb{\hat{\mathcal{G}}}$ does
not depend on $u\in \hat{\mathcal{G}}$ (see e.g.\ \cite[Lemma 2.3]{CGH}); it is
referred to as the \emph{$G$-type} of $s$. Since the tractor
connection is not just induced by any $G$-principal connection but
rather a $G$-principal connection arising from a Cartan connection of
type $(G,P)$ by extension (see Section \ref{tr}), any point $x\in M$
has a well-defined \emph{$P$-type} with respect to $s$, given by the
$P$-orbit $P\underline{s}(u)\subset \mathcal O\subset\mathbb V$, where
$u\in \mathcal{G}\subset \hat{\mathcal{G}}$ is in the fibre $\mathcal{G}_x$ over $x$.  In
\cite{CGH} it was shown that this yields a decomposition according to
$P$-type of $M$ into a disjoint union of initial submanifolds---
\begin{equation}\label{curved_orbit_decomp}
M=\bigsqcup_{i\in P \backslash \mathcal O} M_i
\end{equation}
---where  $P \backslash \mathcal O$ denotes the set of $P$-orbits in $\mathcal O$ and $M_i$ denotes the set of all points in $M$ of $P$-type $i$. The initial submanifolds $M_i$ are called \emph{curved orbits} and
the decomposition \eqref{curved_orbit_decomp} the \emph{curved orbit decomposition} of $M$ determined by $s$, since in the case of the flat homogeneous model $\Sigma:=G/P=\mathbb P_+(\mathbb R^{n+2}) \cong S^{n + 1}$
the curved orbit decomposition can be identified with an actual decomposition of $G/P$ into orbits of a subgroup $H$ of $G$. Let us explain this a bit more: Fix an element $\alpha\in\mathcal O$ and let $H\subset G$ be the isotropy group of $\alpha\in\mathcal O\subset \mathbb V$ in $G$ and identify $\mathcal O=G/H$. In view of \eqref{tractor_bundle_homog_model}, it is easy to see (cf. \cite[Section 2.5]{CGH}) that associating to each point $x=gP\in G/P$ its $P$-type induces a bijection
$$H \backslash G / P \cong P \backslash \mathcal O = P \backslash G / H .$$
Hence the initial submanifolds $\Sigma_i\subset \Sigma=G/P$ of \eqref{curved_orbit_decomp} can be identified with the $H$-orbits in $G/P$. Moreover, note that any $\Sigma_i$ can be identified with a homogeneous space
$\Sigma_i=H/(H\cap P_i) $, where $P_i$  is conjugate to $P$. Hence, $\Sigma_i$ can be viewed as naturally equipped with a homogeneous geometric structure whose automorphisms are exactly given by left multiplication by $H$.
One says in this case that $\Sigma_i$ is a \emph{Klein geometry} (or \emph{flat Cartan geometry}) of the form $H/(H\cap P_i)$.

Suppose now again that $(M,\mbp)$ is a general projective manifold equipped with $\nabla^\cV$-parallel section $s$ of a tractor bundle $\cV$ and consider a parallel section of the (flat) tractor connection $\nabla^\cV$
on $\Sigma=G/P$ of the same $G$-type as $s$ (since $\nabla^\cV$ is flat on $\Sigma$ this always exists). In \cite[Theorem 2.6]{CGH} it is shown that the curved orbit decomposition \eqref{curved_orbit_decomp}
determined by $s$ is in a certain way locally diffeomorphic to the orbit decomposition of the model and that any initial submanifold $M_i$ comes naturally equipped with a (non-flat in general) Cartan geometry
of type $(H, H\cap P_i)$---the type of the Klein geometry $\Sigma_i=H/(H\cap P_i)$. In the following sections familiarity with Cartan connections will not be needed, all the projective holonomy reductions
will be discussed in terms of tractors and will see how the induced geometries on the curved orbits arise without explicit reference to the projective Cartan connection.

Finally, let us remark that the zero set of a normal BGG-solution (corresponding to a parallel tractor) is easily seen (from its definition) to always consist of a union of $P$-types (see \cite[Section 2.7]{CGH}) and
from the local identification of curved orbits with actual orbits in the model one sees that the forms of these zero sets can be read off from the model \cite{CGHjlms, CGH}.

\subsection{Reduction to \texorpdfstring{$\SO(p', q')$}{SO(p', q')}: Parallel tractor metric}
\label{SOred}

A simple example of holonomy reduction and curved orbits, which is
important for our discussion below, arises when a projective manifold
$(M,\mbp)$ is equipped with a parallel tractor metric of some
signature. The geometry of oriented projective structures equipped
with a parallel metric on its standard tractor bundle has been already
studied in detail in the literature \cite{CGcompact, CGHjlms, CGH}.
In this section we will therefore be brief, minimally illustrating the
above general theory and recording the facts that we will need later.

First we consider the homogeneous model for projective geometry $\Sigma:= S^{n+1}=\mathbb{P}_+(\mathbb{R}^{n+2})$.
Fix a nondegenerate symmetric bilinear form $h$ on $\Bbb R^{n + 2}$, say, of signature $(p', q')$, and consider its stabiliser $H = \SO(h) \cong \SO(p', q')$ in $\SL(n + 2, \bbR)$.
Then, the $H$-orbits are determined by the strict sign of the restrictions of $h$ to the rays $[X]_+ \in \mathbb{P}_+(\mathbb{R}^{n+2})$. In particular, if $h$ is definite
there is just one $H$-orbit; otherwise (that is, if $p', q' > 0$) then
there are three $H$-orbits:
$$
\Sigma = \Sigma_{+} \cup \Sigma_0 \cup \Sigma_-,
$$
where $\Sigma_\pm$ are open submanifolds and $\Sigma_0$ an embedded hypersurface of $\Sigma$. Moreover, $\Sigma_0$ separates $\Sigma_+$ and $\Sigma_-$, that is, $\Sigma\setminus \Sigma_0$ is disconnected and
$\Sigma_+$ and $\Sigma_-$ are both unions of connected components thereof.
\medskip

Now suppose that we have a projective manifold $(M,\mbp)$ of dimension $n+1$
and equipped with a parallel tractor metric $h$ of signature
$(p',q')$, and let $$\tau:= \Pi^{S^2 \mcT^*}(h) = h_{AB}X^AX^B\in\Gamma(\mcE(2))$$
denote its projecting part (where now $X$ is as defined in \nn{euler}).  If $h$ is definite (so that $p' = 0$ or $q' = 0$) then $\tau$ vanishes nowhere. If not, then the curved orbit decomposition of $M$  determined by $h_{AB}$ is given according to the strict sign of $\tau$ (which is well defined, as $\tau$ is section of an oriented line bundle):
\begin{equation}\label{decom_h}
M = M_+ \cup M_0 \cup M_-,
\end{equation}
where $M_\pm$ are open submanifolds (if not empty) and $M_0$ (if not empty) is an embedded hypersurface of $M$, as this is the case in the
model. In either case, on the open set where $\tau$ is nowhere zero,
$\tau$ determines a scale connection $\nabla\in\mbp$ such that
$\nabla\tau=0$, as explained in Section \ref{proj-sec}. Since $h$ is parallel, it
is the image $L^{S^2 \mcT^*}(\tau)$ of the BGG splitting operator $L^{S^2 \mcT^*} : \Gamma(\ce(2))\to
\Gamma(S^2\mcT^*) $ and, with respect to the splitting of $S^2\mcT^*$
determined by $\nabla$, it takes the form
\begin{equation}\label{tractor_metric}
	h_{AB} = \tau Y_A Y_B + \tau \Rho_{ab} Z_A{}^a Z_B{}^b,
		\qquad
	\textrm{or}
		\qquad
	\begin{pmatrix}
		\tau & 0 \\
		   0 & \tau\Rho_{ab},
	\end{pmatrix} ,
\end{equation}
where the Schouten tensor $\Rho_{ab}\in \Gamma(S^2T^*M)$ has evidently to be
nondegenerate. That $h$ is parallel then further implies that
\begin{equation}
\tau\nabla_{a}\Rho_{bc}=\tfrac{1}{n}\tau\nabla_{a}\Ric_{bc}=0,
\end{equation}
and so $\nabla^{\pm} := \nabla\vert_{M_{\pm}}$ is the Levi-Civita connection of the metric $g_{bc} := \Rho_{bc}\vert_{M_{\pm}}$.

If $M_0$ is not empty and hence an embedded hypersurface of $M$, then, as in the model, it also separates $M_+$ and $M_-$. This can for instance directly be seen as follows: With respect to any scale connection $\nabla\in\bold{p}$, the
condition that $h$ is parallel implies that $h_{AB} X^B = \tau Y_A +
\frac{1}{2} \nabla_a \tau Z_A{}^a$, and since $h$ is nondegenerate,
this quantity vanishes nowhere. In particular at any point $x_0$ of
$M_0 = \{x\in M~:~\tau(x) = 0\}$ we must have $\nabla_a \tau (x_0)
\neq 0$, which together with the fact that $\tau$ has different signs on $M_+$ and $M_-$ implies the claim.

The following theorem is the main result we need from this section. For part (a)
see also \cite[\S5]{ArmstrongP1}.
\begin{theorem}[\cite{CGcompact, CGHjlms, CGH}]\label{theorem:orthogonal-reduction}
Suppose $(M, \mbp)$ is a  projective manifold of
dimension $n+1 \geq 3$ equipped with a parallel tractor metric
$h_{AB}\in \Gamma(S^2\mcT^*)$ of signature $(p',q')$.  Set
$\tau:=h_{AB}X^AX^B\in\mcE(2)$. Then $M$ admits a stratification
according to the strict sign of
$\tau$ as in \eqref{decom_h}; if $p' = 0$ or $q' = 0$ it is trivial (i.e. $M = M_-$ or $M = M_+$, respectively). Moreover:
\begin{enumerate}
\item	If $M_+$ is nonempty, the metric
        $g^+_{ab}:=\Rho_{ab}\vert_{M_+} \in\Gamma(S^2 T^*M_+)$ on
        $M_+$ has signature $(p' - 1, q')$.  If $M_-$ is nonempty, the
        metric $g^-_{ab} := \Rho_{ab}\vert_{M_-} \in \Gamma(S^2
        T^*M_-)$ has signature $(q' - 1, p')$.  In both cases the
        metric $g^{\pm}$ is Einstein of positive scalar curvature, and
        the Levi-Civita connection $\nabla^{\pm}$ of $g^{\pm}$
        satisfies $\nabla^{\pm} \tau\vert_{M_{\pm}} = 0$ and $\nabla^{\pm}\in\bold{p}\vert_{M_\pm}$.

\item If $M_0$ is nonempty, it is a separating, embedded hypersurface and, in a scale $\nabla$
  along $M_0$, $\frac{1}{2}\nabla_a\nabla_b\tau $ induces a
  nowhere-degenerate section of $S^2T^*M(2)|_{TM_0}$ that defines a
  conformal structure $\mbc$ of signature $(p'-1,q'-1)$ on $M_0$.

\item The pseudo-Riemannian manifolds $(M_\pm, g^{\pm}_{ab})$ are
  projectively compactified respectively by $(M\setminus M_{\mp}, \mbp \vert_{M\setminus M_{\mp}})$. In
  both cases the projective compactification is of order $2$, and the
  projective infinity is $(M_0, \pm\mbc)$.
\end{enumerate}
\end{theorem}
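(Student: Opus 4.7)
My plan is to work primarily on the open locus $\{\tau\neq 0\}$ using the BGG perspective of Section \ref{parBGG-sec}. Since $h$ is a parallel section of $S^2\mcT^*$, it is the image $L^{S^2\mcT^*}(\tau)$ of its projecting part $\tau = h_{AB}X^AX^B \in \Gamma(\mcE(2))$ under the BGG splitting operator; and on the open set where it is nonvanishing, $\tau$ is a scale and so singles out a distinguished special connection $\nabla\in\mbp$ characterised by $\nabla\tau=0$. With respect to this splitting $h$ takes the form \eqref{tractor_metric}, and the argument then separates into three pieces corresponding to parts (a)--(c).

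For part (a), I would decompose $\nabla^{\mcT}h=0$ componentwise using \eqref{pconn}: once $\nabla\tau=0$ is absorbed, the only remaining condition reduces to $\nabla_a\Rho_{bc}=0$. Nondegeneracy of $h$ on $\mcT$ forces $\Rho_{ab}$ to be nondegenerate on $M_\pm$, so $g^\pm := \Rho\vert_{M_\pm}$ is a pseudo-Riemannian metric; since $\nabla$ is torsion-free and preserves it, $\nabla^\pm$ is the Levi-Civita connection of $g^\pm$, and $\nabla^\pm\in\mbp\vert_{M_\pm}$ holds by construction. Because $\nabla$ is special, $\Ric_{ab}=n\Rho_{ab}=n g^\pm_{ab}$, so $g^\pm$ is Einstein with positive Einstein constant. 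The signatures are read off directly from \eqref{tractor_metric}: the rank-one summand $\tau Y_AY_B$ contributes one direction of sign $\operatorname{sgn}(\tau)$, which accounts for the difference between the signature of $h$ and the signature of $\tau\Rho$ on the complementary subbundle.

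For part (b), I would start with the identity
\[
	h_{AB}X^B = \tau Y_A + \tfrac12 \nabla_a\tau\,Z_A{}^a,
\]
obtained from \eqref{tractor_metric} using $\nabla_c X^A = W^A{}_c$. Since $h$ is nondegenerate this tractor vanishes nowhere; at a point of $M_0=\{\tau=0\}$ this forces $d\tau\neq 0$, so $M_0$ is a smoothly embedded hypersurface, and since $\tau$ changes sign across it, $M_0$ separates $M_+$ from $M_-$. To produce the conformal structure, I would check using \eqref{projective_change_on_densities} that the weighted Hessian $\tfrac12\nabla_a\nabla_b\tau$, when restricted to $TM_0$ at points of $M_0$, transforms under $\widehat\nabla\in\mbp$ by terms proportional to $\nabla_{(a}\tau$, and these vanish on $TM_0$; hence it descends to a well-defined section of $S^2T^*M_0(2)$. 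Nondegeneracy of this form, together with the signature count to $(p'-1,q'-1)$, follows from the nondegeneracy of $h$: the tractor direction spanned by $X$ at $x\in M_0$ is transverse to $TM_0\hookrightarrow \mcT\vert_x$, so restricting $h$ along the complementary tractor slots gives exactly the asserted bilinear form.

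For part (c), I would invoke the definition of projective compactification of order $2$ from \cite{CGcompact, CG-cb}. The data already assembled---$\tau$ as a projectively invariant boundary-defining density for $M_0$, $\nabla^\pm \in \mbp\vert_{M_\pm}$ as the Levi-Civita connection of $g^\pm$, and the second-order nondegeneracy of $\tau$ along $M_0$---match the hypotheses of that definition, so the claim follows. The main technical hurdle is to verify that the conformal class $\mbc$ extracted in (b) from the Hessian of $\tau$ agrees with the boundary conformal class produced by the general projective-compactification formalism; this is the one place where one must really open up the definitions rather than simply assemble consequences of $\nabla^{\mcT}h=0$, and it is carried out in the cited references.
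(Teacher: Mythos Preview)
Your proposal is correct and follows essentially the same line as the paper. The paper does not give a self-contained proof of this theorem---it is stated with citations to \cite{CGcompact, CGHjlms, CGH}---but the discussion preceding the theorem statement sketches exactly the ingredients you describe: the scale $\nabla\tau=0$ on $\{\tau\neq0\}$, the block form \eqref{tractor_metric} of $h$, the consequence $\nabla_a\Rho_{bc}=0$, and the identity $h_{AB}X^B=\tau Y_A+\tfrac12\nabla_a\tau\,Z_A{}^a$ to establish that $M_0$ is a separating hypersurface.

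One small point of care in part (b): you say the identity for $h_{AB}X^B$ is ``obtained from \eqref{tractor_metric} using $\nabla_cX^A=W^A{}_c$'', but \eqref{tractor_metric} is written in the scale $\nabla\tau=0$, which does not exist at points of $M_0$. In that scale the identity degenerates to $h_{AB}X^B=\tau Y_A$. What you actually need (and what the paper uses) is the general-scale identity, which follows directly from $h=L^{S^2\mcT^*}(\tau)$ or from writing out $\nabla^{\mcT}h=0$ in an arbitrary scale. This is a cosmetic fix rather than a gap in the argument.
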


Here, a connection $\nabla$ on $M_{\pm}$ is \textit{projectively
  compact} (with boundary $M_0$) of order $\alpha > 0$ if and only if
for all $x \in M_0$ there is a local defining function $\rho$ in a
neighborhood $U$ of $x$ in $M_{\pm} \cup M_0$ so that the modified
(projectively equivalent) connection $\hat\nabla = \nabla +
\frac{1}{\alpha \rho} d\rho$ (where the notation means that $\nabla$
is projectively modified according to \eqref{projective_change} via
the $1$-form $\frac{1}{\alpha \rho} d\rho$) admits a smooth extension
from $U \cap M_{\pm}$ to all of $U$; this condition is independent of
the choice of defining function. A metric is projectively compact if
and only its Levi-Civita connection is projectively compact. In the
metric setting, and when $\alpha = 2$, for any defining function
$\rho$, $(\hat\nabla d\rho) \vert_{TM_0}$ is a nondegenerate metric on
$M_0$. The conformal class it defines is independent of the choice of
defining function and coincides with $\pm\mbc$.

Given an oriented projective manifold $(M, \mbp)$ equipped with a parallel tractor metric $h_{AB}$, by rescaling that tractor metric by a constant we may (and henceforth do) assume that the projective tractor volume form $\vol$ is a volume form for $H$, that is, that $\vol_x$ is a volume form on $\mcT_x$ for $H_x$ for all $x \in M$.

\subsection{Reduction to \texorpdfstring{$\Sp(2 m + 2, \bbR)$}{Sp(2m+2, R)}: Parallel tractor symplectic form}
\label{Sp-reduction}
In this section we discuss holonomy reductions of projective manifolds of dimension $2m+1$ induced by parallel tractor symplectic forms, that are, parallel sections
\[
	\Omega_{AB} \in \Gamma(\Wedge^2 \mcT^*)
\]
nondegenerate in the sense that $\Omega^{\wedge (m + 1)} \in \Gamma(\Wedge^{2 m + 2} \mcT^*)$ vanishes nowhere. We shall see that such reductions give rise to contact structures equipped with some additional geometric structure, making them contact projective structures in the sense of \cite{Fox}. We therefore start by reviewing some basic facts about contact and contact projective structures.

\subsubsection{Contact structures}
\label{subsec_contact_structures}
Suppose $M$ is a manifold of dimension $n+1\geq 2$ equipped with a hyperplane distribution $H\subset TM$, that is, a vector subbundle $H \subset TM$ of corank $1$. Let us write $Q:=TM/H$ for the quotient line bundle. It will be useful to introduce abstract indices for sections of $H$ respectively $H^*$; we shall use the Greek alphabet to index such sections, which means we write $X^\alpha$, respectively $\omega_{\alpha}$, for a section of $H$, respectively $H^*$, and similarly for sections of any tensor product of these bundles. Note that we have the following short exact sequences
\begin{equation}\label{short_exact_sequence}
0\rightarrow H\rightarrow TM\rightarrow Q\rightarrow 0\qquad\textrm{ and }\qquad
0\rightarrow Q^*\rightarrow T^*M\rightarrow H^*\rightarrow 0,\nonumber
\end{equation}
and let us write $\Pi_{\alpha}^a$ for the inclusion of $H$ into $TM$ respectively the projection of $T^*M$ to $H^*$. The projection $TM\rightarrow Q$ can be viewed as a $Q$-valued $1$-form,
which we denote by $k$ and call it the \emph{defining conformal $1$-form} of $(M,H)$. Moreover, for sections $\xi,\eta\in\Gamma(H)$ set
\begin{equation}\label{Levi_bracket}
\mathcal L(\xi,\eta):=-k([\xi,\eta]),
\end{equation}
where $[\cdot,\cdot]$ denotes the Lie bracket of vector fields.
Note that \eqref{Levi_bracket} defines a vector bundle map $\mathcal L: \Wedge^2 H\rightarrow Q$, which is called the \emph{Levi bracket} of $(M,H)$.
If $\mathcal L$ is nondegenerate at any point $x\in M$, then $H$ is called a \emph{contact distribution};
in this case $n=2m$ is forced to be even, and we will refer to $k$ also as the \emph{conformal contact form}. For a contact manifold $(M,H)$ non-degeneracy of the
Levi bracket $\mathcal L_{\alpha\beta}\in\Gamma(\Wedge^2H^*\otimes Q)$ implies that it has an inverse $\mathcal L^{\alpha\beta}\in\Gamma(\Wedge^2H\otimes Q^*)$ such that
\begin{equation*}
\mathcal L_{\alpha\beta}\mathcal L^{\beta\gamma}=-\delta^{\gamma}{}_{\alpha} .
\end{equation*}
Note that, via the inclusion $H\rightarrow TM$, this inverse $\mathcal L^{\alpha\beta}$ has a canonical extension to a section $\mathcal L^{ab}:=\Pi_{\alpha}^a\Pi_{\beta}^b\mathcal L^{\alpha\beta}\in\Gamma(\Wedge^2TM\otimes Q)$ with the property that $\mathcal L^{ab}k_a=-\mathcal L^{ba}k_a=0$.

Suppose now that $(M, H)$ is a contact manifold. Then one can identify $Q$ with a root of $\mathcal K=(\Wedge^{2m+1} TM)^{\otimes 2}$ provided that $(M,H)$ is \emph{co-orientable}, that is, that the line bundle $Q$ is orientable
(equivalently, trivialisable). To see that, note first that any (local) trivialisation of $Q$ can be viewed as a (local) $1$-form
$\theta\in\Gamma(T^*M)$ with annihilator $H$. Such forms are called \emph{(local) contact forms}. Having chosen a (local) contact form $\theta$, we have $(\theta\circ\mathcal L)|_{\Wedge^2 H}=d\theta|_{\Wedge^2 H}$ and
non-degeneracy of $\mathcal L$ is obviously equivalent to
\begin{equation}\label{volume_form_ass_to_contact_form}
\epsilon^{(\theta)}:=\theta\wedge \underbrace{d\theta\wedge\cdots  \wedge d\theta}_m\in\Gamma(\Wedge^{2m+1}T^*M)
\end{equation}
being nowhere vanishing (on its domain). In particular, the square of $\epsilon^{(\theta)}$  defines a (local) trivialisation of $\mathcal K$. Since any other (local) contact form can be written as $\tilde\theta=f\theta$
for some nowhere vanishing smooth function $f$, we see that $\epsilon^{(\tilde\theta)}=f^{m+1}\epsilon^{(\theta)}$ and hence $\mathcal K=Q^{2m+2}$. Therefore, if we assume that $Q$ is orientable and fix an orientation on $Q$,
in which case $(M,H)$ is called \emph{co-oriented}, we obtain an identification $Q \cong \mathcal E (2)$, and hence the conformal contact form $k$ can be identified with a section of $T^*M(2)$.

For any co-oriented contact manifold we call a contact form \emph{positive or compatible}, if it is (viewed as a section of $Q^*$) positive with respect to the orientation on $Q=\mathcal E(2)$.
Let us write $\mathcal E_+(2)$ for the positive elements of $\mathcal E(2)$ and call its nowhere vanishing sections \emph{positive scales of weight $2$}. Then mapping $\tau\in\Gamma(\mathcal E_+(2))$ to
$k\tau^{-1}\in \Gamma(T^*M)$ induces a bijection between positive scales of weight $2$ and positive contact forms. Moreover, note that any co-oriented contact manifold determines an orientation on $M$ induced by the volume form $\epsilon^{(\theta)}$  \eqref{volume_form_ass_to_contact_form} for any choice $\theta$ of positive contact form. As explained in Section \ref{notn}, such an orientation on $M$ can be encoded
by a nowhere-vanishing tautological weighted $(2m+1)$-form $\epsilon_{a_1 \cdots a_{2 m + 1}}\in\Gamma(\Wedge^{2m+1} T^*M(2m+2))$,
which in this case is characterised by
\begin{equation}\label{orientation_induced_by_contact_structure}
\epsilon_{a_1 \cdots a_{2 m + 1}}=\tau^{m+1}\epsilon^{(\theta)}_{a_1 \cdots a_{2 m + 1}},
\end{equation}
for any positive scale $\tau\in\mathcal E_+(2)$ with corresponding contact form $\theta=\tau^{-1}k$. Note that this shows in particular
 that co-orientable contact structures can only exist on orientable manifolds. In the sequel we shall always assume
that contact manifolds are co-oriented.

Suppose now that $(M,H)$ is a co-oriented contact manifold.
For later purposes let us remark that any choice  of positive scale $\tau\in\mathcal E_+(2)$, equivalently any choice of a positive contact form $\theta=\tau^{-1}k$,
determines a vector field $t\in\Gamma(TM)$, called  the \emph{Reeb vector field associated to $\theta$}, uniquely characterised by the properties
\begin{equation}\label{Reeb}
\theta(t)=1 \qquad\qquad\textrm{and}\qquad\qquad t \hook d\theta = 0.
\end{equation}
In particular any choice of scale $\tau\in\mathcal E_+(2)$ induces a splitting $\Pi_{a}^{\alpha}: TM\rightarrow H$ of \eqref{short_exact_sequence} characterised by
$$\Pi_{\alpha}^b\Pi_{a}^{\alpha}=\delta^b{}_a-\theta_{a}t^{b}.$$
Now set $\omega_{ab}:=(d\theta)_{ab}\in\Gamma(\Wedge^2 T^*M)$ and
$\omega_{\alpha\beta}:=\Pi_{\alpha}^a\Pi_{\beta}^b\omega_{ab}\in\Gamma(\Wedge^2H^*)$.
Then non-degeneracy of $\omega_{\alpha\beta}$ implies the existence of a unique section $\omega^{\alpha\beta}\in\Gamma(\Wedge^2H)$ such that
\begin{equation}\label{inverse_omega1}
\omega^{\alpha\beta}\omega_{\beta\gamma}=-\delta^{\alpha}{}_{\gamma},\end{equation}
equivalently
of a unique section $\omega^{ab}\in\Gamma(\Wedge^2TM)$ such that
\begin{equation}\label{inverse_omega2}
\omega^{ab}\omega_{bc}=-\delta^a{}_b+\theta_ct^a,\end{equation}
where $\omega^{ab}=\Pi_{\alpha}^a\Pi_{\beta}^{b}\omega^{\alpha\beta}$. Evidently, we have $\mathcal L_{\alpha\beta}=\tau\omega_{\alpha\beta}$ and
$\mathcal L^{\alpha\beta}=\tau^{-1}\omega^{\alpha\beta}$ respectively $\mathcal L^{ab}=\tau^{-1}\omega^{ab}$. The section
\begin{equation}\mathcal L_{ab}:=\tau\omega_{ab}\in\Gamma(\Wedge^2T^*M(2)),\end{equation}
is an extension of $\mathcal L_{\alpha\beta}\in\Gamma(\Wedge^2H^*(2))$ to a section of $\Wedge^2T^*M(2)$, which, in contrast to $\mathcal L^{ab}$, depends on the choice of scale $\tau$.
Explicitly, any other scale $\tilde\tau\in\mathcal E_+(2)$ can be written as $\tilde\tau=e^{-2u}\tau$ for a smooth function $u$ implying $\tilde\theta=e^{2u}\theta$ and hence we have
\begin{alignat}{3}
	\widetilde\omega_{ab}
		&= e^{2u}\omega_{ab}+4e^{2u}\Upsilon_{[a}\theta_{b]} \qquad
		&&\textrm{respectively}\qquad&
	\widetilde{\mathcal L}_{ab}
		&= \mathcal L_{ab}+4\Upsilon_{[a} k_{b]} , \label{change_mcL} \\
	\widetilde\omega^{ab}
		&= e^{2u}\omega^{ab} \qquad
		&&\textrm{respectively}\qquad&
	\widetilde{\mathcal L}^{ab}
		&= \mathcal L^{ab} , \\
	\tilde t^a
		&= e^{-2u}t^a+2e^{-2u}\Upsilon_b\omega^{ab} \qquad
		&&\textrm{respectively}\qquad&
	\tilde r^a
		&= r^a+2\Upsilon_b\mathcal L^{ab} , \label{change_Reeb}
\end{alignat}
where $\Upsilon=du$ and $r^a=\tau^{-1}t^a\in\Gamma(TM(-2))$.

\subsubsection{Contact projective structures}
Recall that a distribution $H\subset TM$ on a manifold $M$ is \emph{totally geodesic} for an affine connection $\nabla$,
if any geodesic of $\nabla$ is either tangent to $H$ at no times or tangent to $H$ at all times; we call geodesics of an affine connection $\nabla$ with the latter property
\emph{$H$-geodesics} of $\nabla$. Given a distribution $H$, an affine connection $\nabla$ for which $H$ is totally geodesic is said to be \emph{compatible}
with $H$.

Suppose now that $(M,H)$ is a contact manifold.
Then any compatible affine connection $\nabla$ on $M$ determines a so-called \emph{contact projective equivalence class} $\langle \nabla \rangle$ of $H$-compatible affine connections on $M$, where elements
in $\langle \nabla \rangle$  are characterised by having the same unparameterised $H$-geodesics as $\nabla$. Removing the torsion of an affine connection does not change its geodesics, which motivates the following
definition of a contact projective structure:

\begin{definition}\label{def:proj_cont}
A \emph{contact projective manifold} is a triple $(M,H, \mbq)$,
where $M$ is a manifold of odd dimension at least $3$, $H \subset TM$ a
contact distribution, and $\mbq$ a contact projective equivalence
class of $H$-compatible torsion-free affine connections.
\end{definition}

\begin{remark}
In \cite{Fox}  Fox defines a contact projective structure as an equivalence class of $H$-compatible affine connections. Thus, for any $H$-compatible torsion-free affine connection $\nabla$, Fox' equivalence class containing $\nabla$ contains all of the connections in our equivalence class $\langle\nabla\rangle$ as well as all modifications of these connections by arbitrary torsion.

In Theorem B of \cite{Fox} Fox associates to any contact form $\theta$ a connection in his equivalence class that preserves $\theta$ and $\omega$, and all results in \cite{Fox} are expressed in terms
of data associated to such a connection. Since they preserve $H$, connections so adapted necessarily have torsion, but they are convenient for the computations carried out there. For our purposes, however, it is more convenient to restrict to working with torsion-free connections (and these do not preserve $H$).
\end{remark}

Given a contact projective manifold $(M,H, \mbq)$, for any connection $\nabla\in\mbq$ the projective class $[\nabla]$ of $\nabla$ is evidently contained in $\mbq$.
For a projective structure $\mbp$ whose elements are compatible with a given contact distribution $H$, via a mild abuse of notation we shall also denote the contact projective equivalence class $\langle \nabla \rangle$ determined by some (equivalently, every) $\nabla \in \mbp$ by $\langle \mbp \rangle$.
Next we observe a step towards an alternative characterisation of
contact projective equivalence of connections.
\begin{lemma}\label{lemma:totally-geodesic-contact-Killing}\cite[Lemma 2.1.]{Fox}
Given an affine manifold $(M, \nabla)$, a contact distribution $H \subset TM$ is totally geodesic for $\nabla$ if and only if for some (equivalently, every) contact form $\theta_a \in \Gamma(T^*M)$ of $H$ the projection of $\nabla_{(a} \theta_{b)}$ to $S^2H^*$ vanishes identically (i.e. $\Pi_{\alpha}^a\Pi_{\beta}^b\nabla_{(a} \theta_{b)} = 0$).
 \end{lemma}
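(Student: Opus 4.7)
The plan is to translate the totally-geodesic condition into a pointwise algebraic condition on $\nabla\theta$ using the dual-connection Leibniz rule. First I would observe that $H$ is totally geodesic for $\nabla$ if and only if $\nabla_X X \in \Gamma(H)$ for every $X \in \Gamma(H)$: this is the standard reformulation, since along a geodesic $\gamma$ with $\dot\gamma(0) \in H_{\gamma(0)}$ the acceleration $\nabla_{\dot\gamma}\dot\gamma$ vanishes, so the second fundamental form of $H$ must vanish on the diagonal. Polarising the symmetric condition $\nabla_X X \in \Gamma(H)$ gives the equivalent form $\nabla_X Y + \nabla_Y X \in \Gamma(H)$ for all $X, Y \in \Gamma(H)$.

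Second, since a contact form $\theta$ is a nowhere-vanishing section of $T^*M$ with kernel exactly $H$, a vector field $V \in \Gamma(TM)$ is a section of $H$ if and only if $\theta(V) = 0$. Hence $H$ is totally geodesic for $\nabla$ if and only if $\theta(\nabla_X Y + \nabla_Y X) = 0$ for all $X, Y \in \Gamma(H)$.

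Third, I would apply the Leibniz rule for the connection induced on $T^*M$: for $X, Y \in \Gamma(H)$, the identity $\theta(Y) = 0$ gives $0 = X(\theta(Y)) = (\nabla_X \theta)(Y) + \theta(\nabla_X Y)$, hence $\theta(\nabla_X Y) = -(\nabla_X \theta)(Y)$. Adding the analogue with $X$ and $Y$ swapped yields
\[
\theta(\nabla_X Y + \nabla_Y X) = -2\, (\nabla_{(a}\theta_{b)}) X^a Y^b,
\]
and this vanishes for all $X, Y \in \Gamma(H)$ precisely when $\Pi^a_\alpha \Pi^b_\beta \nabla_{(a}\theta_{b)} = 0$. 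Combined with the previous step, this gives the stated equivalence for the chosen $\theta$.

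Finally, for the parenthetical ``some equivalently every'' clause, I would check independence of the chosen contact form: any other contact form has the shape $\tilde\theta = f\theta$ for some nowhere-vanishing $f \in C^\infty(M)$, and the Leibniz rule gives $\nabla_a \tilde\theta_b = (\partial_a f)\theta_b + f\,\nabla_a \theta_b$. Symmetrising in $ab$ and contracting with $\Pi^a_\alpha \Pi^b_\beta$, the extra term $(\partial_{(a} f) \theta_{b)}$ vanishes because $\theta$ annihilates $H$, so $\Pi^a_\alpha \Pi^b_\beta \nabla_{(a}\tilde\theta_{b)} = f\, \Pi^a_\alpha \Pi^b_\beta \nabla_{(a}\theta_{b)}$, which vanishes if and only if the original does. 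No step presents a real obstacle: the whole lemma is the dual-connection identity together with the observation that $\theta$ detects membership in $H$.
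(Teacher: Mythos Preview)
Your proof is correct and follows essentially the same route as the paper's: both rest on the dual-connection Leibniz identity, the paper applying it directly along a geodesic as $\tfrac{d}{dt}\theta(\gamma') = (\nabla_{\gamma'}\theta)(\gamma')$, while you route through the second-fundamental-form reformulation $\nabla_X X \in \Gamma(H)$ before applying the same identity. Your version is more explicit, in particular spelling out the ``some equivalently every'' clause via $\tilde\theta = f\theta$, which the paper leaves unstated.
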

\begin{proof} The claim follows from the fact that for any geodesic $\gamma$ of $\nabla$ we have
\begin{equation*}\label{nabla theta}
	\frac{d}{dt} \theta(\gamma'(t))=(\nabla_{\gamma'(t)}\theta)(\gamma'(t))+\theta(\nabla_{\gamma'(t)}\gamma'(t))=(\nabla_{\gamma'(t)}\theta)(\gamma'(t)) . \qedhere
\end{equation*}
\end{proof}

\begin{remark}
Note that the proof of Lemma \ref{lemma:totally-geodesic-contact-Killing} uses only that $H$ is a hyperplane distribution, and not that it is a contact distribution.
\end{remark}

Via the conformal contact form $k$, Lemma \ref{lemma:totally-geodesic-contact-Killing} admits also the following contact-invariant formulation.

\begin{lemma}\label{H-comp_proj_structure_1}
Suppose $(M,H)$ is a contact manifold and let $k_a\in\Gamma(T^*M(2))$ be the conformal contact form. Then an affine
connection $\nabla$ is $H$-compatible if and only if
\begin{equation}\label{nabla-k}
\Pi_{\alpha}^a\Pi_{\beta}^b\nabla_{(a}k_{b)} = 0.
\end{equation}
In particular, any representative of a contact projective equivalence class of $H$-compatible connections satisfies \eqref{nabla-k}.
 \end{lemma}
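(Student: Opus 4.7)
The plan is to reduce the statement to Lemma \ref{lemma:totally-geodesic-contact-Killing} by exploiting the relationship between the conformal contact form $k_a$ and any (local) positive contact form $\theta_a$. Concretely, the discussion in Section \ref{subsec_contact_structures} gives a bijection between positive scales $\tau\in\Gamma(\mathcal E_+(2))$ and positive contact forms $\theta_a\in\Gamma(T^*M)$, determined by $\theta_a=\tau^{-1}k_a$, equivalently $k_a=\tau\theta_a$. Since $k_a$ is a section of the weighted bundle $T^*M(2)$, it can be differentiated by $\nabla$ via its coupling with the connection on $\mathcal E(2)$ induced by $\nabla$.

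First I would apply the Leibniz rule to $k_a=\tau\theta_a$ and symmetrise to obtain
\[
\nabla_{(a}k_{b)}=\tau\,\nabla_{(a}\theta_{b)}+(\nabla_{(a}\tau)\,\theta_{b)}.
\]
Then I would project using $\Pi_{\alpha}^{a}\Pi_{\beta}^{b}$. Since $\theta$ annihilates $H$ by definition, $\Pi_{\alpha}^{a}\theta_a=0$, so the final term vanishes upon projection. This gives
\[
\Pi_{\alpha}^{a}\Pi_{\beta}^{b}\nabla_{(a}k_{b)}=\tau\,\Pi_{\alpha}^{a}\Pi_{\beta}^{b}\nabla_{(a}\theta_{b)}.
\]
As $\tau$ is nowhere vanishing, the left-hand side vanishes if and only if $\Pi_{\alpha}^{a}\Pi_{\beta}^{b}\nabla_{(a}\theta_{b)}=0$, which by Lemma \ref{lemma:totally-geodesic-contact-Killing} is precisely $H$-compatibility of $\nabla$ (locally; contact forms may only exist locally, but the condition is local, and the choice of $\tau$ does not affect it).

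For the ``in particular'' statement, any two connections in a contact projective equivalence class share the same unparametrised $H$-geodesics, so they are all $H$-compatible by definition, and hence each satisfies \eqref{nabla-k}. The only mild subtlety will be bookkeeping the weights and the passage between local and global contact forms, but since the equation \eqref{nabla-k} is manifestly intrinsic (formulated in terms of the globally defined conformal contact form $k_a$), no obstruction arises there; there is no serious technical difficulty in this argument.
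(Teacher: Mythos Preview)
Your proposal is correct and follows essentially the same argument as the paper: write $k_a=\tau\theta_a$, apply the Leibniz rule, observe that the $\theta$-term vanishes upon restriction to $H$, and invoke Lemma \ref{lemma:totally-geodesic-contact-Killing}. The paper's proof is just a terser version of exactly this computation.
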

 \begin{proof}
We have seen in Section \ref{subsec_contact_structures} that for any contact form $\theta$ for $H$ there exists $\tau\in\Gamma(\mathcal E_+(2))$
such that $\tau\theta=k$, which implies $\nabla_ak_b=(\nabla_a\tau)\theta_b+\tau\nabla_a\theta_b$ for any affine connection $\nabla$ on $M$. Hence,
Lemma \ref{H-comp_proj_structure_1} is an immediate consequence of Lemma \ref{lemma:totally-geodesic-contact-Killing}.
 \end{proof}

Note that for a connection $\nabla$ the identity \eqref{nabla-k} is equivalent to the existence of a $1$-form $\eta\in\Gamma(T^*M)$ such that
\begin{equation}\label{eta_eq}
\nabla_{(a}k_{b)}=k_{(a}\eta_{b)}.
\end{equation}

Since $\nabla_{(a}k_{b)}$ is projectively invariant (see \eqref{equation:BGG-operator-2-form}), the $1$-form $\eta$ associated to an $H$-compatible connection $\nabla$ by \eqref{eta_eq}
depends only on the projective class of $\nabla$. Hence, Lemma \ref{H-comp_proj_structure_1} implies:

\begin{corollary}\label{H_comp_proj_structure_2} Suppose $(M,H)$ is a contact manifold and let $k\in\Gamma(T^*M(2))$ be its conformal contact form. Then
 a projective structure $\mbp$ is compatible with $H$ if and only if for some (equivalently, every) $\nabla\in\mbp$ the identity \eqref{nabla-k} is satisfied, which is the case
 if and only if there exists a $1$-form $\eta\in\Gamma(T^*M)$ such that for every connection $\nabla\in\mbp$ we have
\begin{equation*}
\nabla_{(a}k_{b)}=k_{(a}\eta_{b)}
\end{equation*}
\end{corollary}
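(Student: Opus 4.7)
The plan is to reduce the corollary to Lemma~\ref{H-comp_proj_structure_1} by exploiting the projective invariance of the Killing-type operator $k_a\mapsto\nabla_{(a}k_{b)}$ recorded in \eqref{equation:BGG-operator-2-form}.

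For the first equivalence: by \eqref{equation:BGG-operator-2-form}, the operator $k_a\mapsto\nabla_{(a}k_{b)}$ is the first BGG operator $\mcD^{\Wedge^2\mcT^*}$ associated to $\Wedge^2\mcT^*$ and so is intrinsic to $\mbp$. Consequently, for the conformal contact form $k\in\Gamma(T^*M(2))$, the section $\nabla_{(a}k_{b)}\in\Gamma(S^2T^*M(2))$, and hence also its $H\otimes H$-component $\Pi_\alpha^a\Pi_\beta^b\nabla_{(a}k_{b)}$, is independent of the choice of $\nabla\in\mbp$. By Lemma~\ref{H-comp_proj_structure_1}, vanishing of this latter component is precisely $H$-compatibility of $\nabla$. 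Therefore \eqref{nabla-k} holds for some $\nabla\in\mbp$ if and only if it holds for every $\nabla\in\mbp$, which by definition is equivalent to $H$-compatibility of $\mbp$; this establishes the first equivalence and justifies the ``some (equivalently, every)'' phrasing.

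For the equivalence with the existence of $\eta$, I would use the pointwise linear-algebraic fact that, for $0\neq k_x\in T^*_xM$ with kernel $H_x\subset T_xM$, a symmetric tensor $S\in S^2T^*_xM$ satisfies $\Pi_\alpha^a\Pi_\beta^bS_{ab}=0$ if and only if $S_{ab}=k_{(a}\eta_{b)}$ for some $\eta\in T^*_xM$. The ``if'' direction is immediate from $k_a\Pi_\alpha^a=0$; for the converse, pick any $v^a\in T_xM$ with $k_av^a=1$ and verify directly that $\eta_b:=2S_{ab}v^a-(S_{cd}v^cv^d)k_b$ does the job, by evaluating both sides on pairs drawn from $H_x$ and $\langle v\rangle$. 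To promote this to a smooth global $\eta\in\Gamma(T^*M)$ applied to $S_{ab}:=\nabla_{(a}k_{b)}$, fix any positive scale $\tau\in\Gamma(\mcE_+(2))$ with associated Reeb field $t^a$, and take $v^a:=\tau^{-1}t^a\in\Gamma(TM(-2))$, for which $k_av^a=1$ in the appropriately weighted sense; the pointwise formula then yields a smooth $\eta$ of projective weight $0$ (both terms of $\eta_b$ balance to weight zero once one uses that $v^a$ has weight $-2$).

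Finally, because $\nabla_{(a}k_{b)}$ is the same section of $S^2T^*M(2)$ for every $\nabla\in\mbp$ by the projective invariance noted above, the resulting $\eta$ solves $\nabla_{(a}k_{b)}=k_{(a}\eta_{b)}$ simultaneously for all $\nabla\in\mbp$, completing the proof. The only real step is the projective invariance of $\nabla_{(a}k_{b)}$: once this is in hand, both the ``some vs.\ every'' reduction and the existence of a \emph{single} $\eta$ valid for every $\nabla\in\mbp$ follow automatically, and what remains is elementary linear algebra together with the bookkeeping of projective weights via a chosen scale.
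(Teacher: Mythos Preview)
Your argument is correct and follows essentially the same route as the paper: invoke the projective invariance of $k_a\mapsto\nabla_{(a}k_{b)}$ (as the first BGG operator for $\Wedge^2\mcT^*$) to reduce everything to Lemma~\ref{H-comp_proj_structure_1}, and then observe that the vanishing of the $H\otimes H$-component of a symmetric $2$-tensor is equivalent to it being of the form $k_{(a}\eta_{b)}$. The paper treats this last linear-algebraic equivalence as an observation recorded just before the corollary (equation~\eqref{eta_eq}) without further justification, whereas you supply an explicit formula $\eta_b=2S_{ab}v^a-(S_{cd}v^cv^d)k_b$ via the weighted Reeb field and check the weights; this is a welcome elaboration rather than a different method.
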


The following proposition shows that a contact projective structure on a manifold $M$ is a section of an affine bundle over $M$
modelled on a certain vector subbundle of $TM\otimes S^2T^*M$.

\begin{proposition}\label{proposition:contact-projective-difference-tensor} Suppose $(M,H)$ is a contact manifold.
Then two H-compatible affine connections $\widehat\nabla$ and $\nabla$ with the same torsion are contact-projectively equivalent if and only if
\begin{equation}\label{difference_tensor}
    \widehat\nabla_a u^b = \nabla_a u^b + \Lambda^b {}_{ca} u^c,
\end{equation}
where
\begin{equation}\label{equation:contact-projective-transformation}
	\Lambda^c{}_{ba} = 2 k_{(a} \phi^c{}_{b)} + 2 \delta^c {}_{(a} \Upsilon_{b)}
\end{equation}
for some tensor $\phi^a{}_b \in \Gamma(\End_{\circ}(TM)(-2))$ and some $1$-form $\Upsilon_a\in\Gamma(T^*M)$.
Here $\End_{\circ}(TM)$ denotes the bundle of trace-free endomorphisms of $TM$.
\end{proposition}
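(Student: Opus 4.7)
The plan is to work throughout with the difference tensor $\Lambda^b{}_{ca}$ defined by $\hat\nabla_a u^b = \nabla_a u^b + \Lambda^b{}_{ca} u^c$, which is symmetric in $(c, a)$ since $\hat\nabla$ and $\nabla$ have the same torsion. The proof exploits the single key observation that $k_a$ annihilates $H$, so that $k_a X^a = 0$ for every $X^a \in H$. For the ``if'' direction, substituting $X^a \in H$ into the asserted form of $\Lambda$ immediately yields
\[
    \Lambda^b{}_{ca} X^c X^a
        = 2(k_c X^c)(\phi^b{}_a X^a) + 2(\Upsilon_a X^a) X^b
        = 2(\Upsilon_a X^a) X^b,
\]
so for every (parameterised) $H$-geodesic $\gamma$ of $\nabla$ one has $\hat\nabla_{\gamma'}\gamma' \propto \gamma'$, and since $\gamma'$ remains in $H$ throughout, $\gamma$ is an unparameterised $H$-geodesic of $\hat\nabla$.

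For the ``only if'' direction, contact-projective equivalence translates pointwise into the condition that for each $X^a \in H_x$ there is a scalar $\lambda(X)$ with $\Lambda^b{}_{ca} X^c X^a = \lambda(X) X^b$. Since the left-hand side is homogeneous of degree $2$ in $X$ while $X^b$ is linear, $\lambda$ must itself be linear in $X$; so $\lambda(X) = 2 \Upsilon_\alpha X^\alpha$ for some $\Upsilon \in \Gamma(H^*)$. Polarising gives $\Lambda^b{}_{ca} X^c Y^a = \Upsilon_a X^a Y^b + \Upsilon_a Y^a X^b$ for $X, Y \in H$. Choosing any extension of $\Upsilon$ to a $1$-form on $M$, and setting $\Lambda'^b{}_{ca} := \Lambda^b{}_{ca} - 2\delta^b{}_{(c}\Upsilon_{a)}$, a direct check shows that $\Lambda'$ is symmetric in $(c, a)$ and vanishes whenever both of its lower indices are restricted to $H$.

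The remaining core step, which I expect to be the main obstacle, is the algebraic lemma that any such $\Lambda'^b{}_{ca}$ may be written in the form $2 k_{(c} \phi^b{}_{a)}$ for some $\phi^b{}_a \in \Gamma(\End(TM)(-2))$. I would prove this pointwise by fixing a positive contact form $\theta$ with Reeb field $t$, splitting $TM = H \oplus \langle t\rangle$, and reading off the components of $\phi$ from those components of $\Lambda'$ that have at least one lower index along $t$; the weight $-2$ on $\phi$ is dictated by the fact that $k_a$ has weight $2$. Finally, to arrange trace-freeness of $\phi$, I would split off the trace as $\phi^b{}_a = (\phi^b{}_a)_\circ + \tfrac{1}{n+1}\delta^b{}_a \phi^d{}_d$ and absorb the resulting contribution $2\delta^b{}_{(c}\Upsilon''_{a)}$, with $\Upsilon''_a := \tfrac{1}{n+1}\phi^d{}_d k_a$ a well-defined $1$-form on $M$, into a redefinition of $\Upsilon$.
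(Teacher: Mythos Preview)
Your proof is correct and follows essentially the same route as the paper's, which is much terser: the paper rewrites the same-$H$-geodesic condition as $u^a u^d \Lambda^{[b}{}_{da} u^{c]} = 0$ for all $u \in H$ and then simply asserts that this is ``easily seen to be equivalent'' to the claimed form of $\Lambda$, leaving to the reader exactly the steps you spell out (extracting $\Upsilon$ by polarisation on $H$, reading off $\phi$ from the components of $\Lambda'$ with a lower Reeb index, and absorbing the trace of $\phi$ into $\Upsilon$). One small remark: your inference that $\lambda$ is linear from its degree-$1$ homogeneity is too quick as stated---homogeneity alone does not give linearity---but the polarisation you carry out in the very next sentence is precisely what does.
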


\begin{proof}
Recall that the trace of a parameterised curve $\gamma(t)$ is an (unparameterised) geodesic for the connection $\nabla$ if and only if $\gamma'\wedge\nabla_{\gamma'}\gamma'=0$.
Hence, two $H$-compatible connections $\widehat\nabla $ and $\nabla$ that differ by a tensor of the form \eqref{equation:contact-projective-transformation} obviously have the same $H$-geodesics and hence by definition are contact-projectively equivalent. Conversely, assume $\widehat\nabla $ and $\nabla$ are two $H$-compatible connections with the same torsion and consider their difference tensor $\Lambda^b {}_{ca} := \widehat\nabla_a u^b-\nabla_a u^b \in\Gamma(TM\otimes T^*M\otimes T^*M)$. Since the connections have the same torsion, we must have $ \Lambda^b {}_{[ca]}=0$. The fact that $\widehat\nabla $ and $\nabla$ moreover have the same $H$-geodesics implies
\begin{equation}\label{equation:H-geodesic-condition}
    2 u^a u^d \Lambda^{[b}{}_{d a} u^{c]} = 0\qquad \textrm{ for all } u\in\Gamma(H),
\end{equation}
since any element in $H$ can be realised as the derivative of an $H$-geodesics. The identity \eqref{equation:H-geodesic-condition} is easily seen to be equivalent to \eqref{equation:contact-projective-transformation}, which completes the proof.
\end{proof}

\begin{remark}
In \eqref{equation:contact-projective-transformation}, the second term on the right-hand side controls projective equivalence, and the first term the structure of geodesics transverse to the contact distribution. Moreover, note that, for two $H$-compatible contact-projectively equivalent connections which not necessarily have the same torsion, the symmetric part of their difference tensor $\Lambda^c{}_{ba}$ is still of the form \eqref{equation:contact-projective-transformation}. The only difference is that $\Lambda^c{}_{ba}$ may then also a have a skew part, which precisely controls the change in torsion; one has $\widehat T_{ab}{}^c-T_{ab}{}^c=2\Lambda^c{}_{[ab]}$, where $\widehat T$ and $T$ are the respective torsion tensors of $\widehat\nabla$ and $\nabla$.
\end{remark}

Given a contact projective manifold $(M,H,\mbq)$, Corollary \ref{H_comp_proj_structure_2} shows that for any $\nabla\in\mbq$ there exists $\eta\in\Gamma(T^*M)$ such that $\nabla_{(a}k_{b)}=k_{(a}\eta_{b)}$, and that $\eta$ is a projective invariant. The following Proposition shows that $\mbq$ always contains connections with $\eta=0$.

\begin{proposition}\label{eta_Proposition} Suppose $(M,H)$ is a contact manifold. Then for any
$H$-compatible connection $\nabla$, there is a contact-projectively equivalent connection $\widehat\nabla$ for which $\hat\eta = 0$, that is, for which
$$\widehat\nabla_{(a} k_{b)} = 0.$$
Given an $H$-compatible connection $\nabla$ for which $\nabla_{(a} k_{b)} = 0$, the class of contact-projectively equivalent connections $\widehat\nabla$ with the same torsion for which $\widehat\nabla_{(a} k_{b)} = 0$ are exactly those that differ from $\nabla$ by difference tensors of the form
\begin{equation}\label{freedom_eta=0}
	\Lambda^c{}_{ba} = 2 k_{(a} \phi^c{}_{b)} + 2 \delta^c {}_{(a} \Upsilon_{b)},
\end{equation}
where $\phi^c{}_b\in \Gamma(\End_{\circ}(TM)(-2))$ satisfies $\phi^c{}_b k_c = 0$.
\end{proposition}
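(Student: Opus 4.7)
The plan is to compute directly the effect of a contact-projective modification $\widehat\nabla=\nabla+\Lambda$ on the symmetrized derivative $\nabla_{(a}k_{b)}$. By Proposition \ref{proposition:contact-projective-difference-tensor}, one may take $\Lambda^c{}_{ba}=2k_{(a}\phi^c{}_{b)}+2\delta^c{}_{(a}\Upsilon_{b)}$ for some trace-free $\phi^c{}_b\in\Gamma(\End_{\circ}(TM)(-2))$ and some $\Upsilon_b\in\Gamma(T^*M)$. The first thing to observe is that the purely projective piece $2\delta^c{}_{(a}\Upsilon_{b)}$ does not affect $\nabla_{(a}k_{b)}$, since the latter is precisely the projectively invariant first BGG operator on $\Gamma(T^*M(2))$ displayed in \eqref{equation:BGG-operator-2-form}. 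Only the contact piece $2k_{(a}\phi^c{}_{b)}$ matters, and, carefully accounting for the weight-$2$ density correction $\frac{2}{n+2}\Lambda^c{}_{ca}k_b$ in the change of connection applied to $k_b$, a short calculation will yield the key identity
\begin{equation}\label{plan-key-identity}
\widehat\nabla_{(a}k_{b)}=\nabla_{(a}k_{b)}-\frac{2(n+1)}{n+2}\,k_{(a}\phi^c{}_{b)}k_c.
\end{equation}

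Second, I will record the elementary but crucial fact that the linear map $v_b\mapsto k_{(a}v_{b)}$ from $\Gamma(T^*M)$ into $\Gamma(S^2T^*M(2))$ is injective: if $k_{(a}v_{b)}=0$, contracting with a Reeb field $r^a$ (so $k_a r^a=1$, which exists since $k$ is nowhere zero) forces $v_b=-(r^c v_c)k_b$, and resubstituting this into $k_{(a}v_{b)}=0$ then gives $v=0$.

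With \eqref{plan-key-identity} and this injectivity in hand, both parts of the proposition become easy. For the first assertion, Corollary \ref{H_comp_proj_structure_2} gives $\nabla_{(a}k_{b)}=k_{(a}\eta_{b)}$, whereupon $\widehat\nabla_{(a}k_{b)}=0$ reduces to the linear equation $\phi^c{}_b k_c=\tfrac{n+2}{2(n+1)}\,\eta_b$. A trace-free solution is produced explicitly: setting $v_b:=\tfrac{n+2}{2(n+1)}\,\eta_b$, the ansatz
\[
\phi^c{}_b \;=\; r^c v_b \;-\; \tfrac{r^d v_d}{n}\,\delta^c{}_b \;+\; \tfrac{r^d v_d}{n}\,r^c k_b
\]
is trace-free and satisfies $\phi^c{}_b k_c=v_b$, as desired. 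For the second assertion, the hypothesis $\nabla_{(a}k_{b)}=0$ collapses \eqref{plan-key-identity} to $k_{(a}\phi^c{}_{b)}k_c=0$, and one more application of injectivity promotes this to exactly $\phi^c{}_b k_c=0$; the $1$-form $\Upsilon$ is wholly unconstrained, producing precisely the stated characterization.

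The principal technical obstacle is the careful derivation of \eqref{plan-key-identity}: one must scrupulously track the density weight of $k$ and the attendant trace correction $\frac{2}{n+2}\Lambda^c{}_{ca}k_b$ appearing in the transformation of a weight-$2$ one-form under the change of connection. Once this identity is secured, the remainder is pure linear algebra driven by the nondegeneracy of $k$, which furnishes the needed injectivity.
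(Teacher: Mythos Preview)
Your proposal is correct and follows essentially the same approach as the paper: both compute the change in $\nabla_{(a}k_{b)}$ under a contact-projective modification by $2k_{(a}\phi^c{}_{b)}$ (noting the projective piece is irrelevant by invariance of the BGG operator), arrive at the same key identity (the paper's coefficient $\tfrac{2m+1}{m+1}$ with $\dim M=2m+1$ is your $\tfrac{2(n+1)}{n+2}$), and then solve $\phi^c{}_b k_c=\mathrm{const}\cdot\eta_b$. Your version supplies two details the paper leaves implicit---an explicit trace-free $\phi$ satisfying $\phi^c{}_b k_c=v_b$, and the injectivity of $v_b\mapsto k_{(a}v_{b)}$ via contraction with a Reeb vector---but the argument is otherwise the same.
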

\begin{proof} Suppose $\nabla$ is an $H$-compatible connection and let $\eta\in\Gamma(T^*M)$  be such that $\nabla_{(a}k_{b)}=k_{(a}\eta_{b)}$.
By Proposition \ref{proposition:contact-projective-difference-tensor}, it suffices to show that we can modify $\nabla$ by a difference tensor of the form
\[
	\Lambda^c{}_{ba} = 2 k_{(a} \phi^c{}_{b)} + 2 \delta^c {}_{(a} \Upsilon_{b)}
\]
such that $\widehat\eta= 0$ for the resulting connection $\widehat\nabla$. Since
the condition is projectively invariant and projective changes are
controlled by the second term on the right-hand side, we may
as well restrict our attention to difference tensors of the form
\begin{equation}\label{contact_proj_change_only}
	\Lambda^c{}_{ba} = 2 k_{(a} \phi^c{}_{b)} \textrm{.}
\end{equation}
If $\dim M = 2 m + 1$, the induced connection $\widehat\nabla$ on $T^*M(2)$ determined by a general
difference tensor $\Lambda^c{}_{ba}$ is
\[
	\widehat\nabla_a \zeta_b = \nabla_a \zeta_b - \Lambda^c{}_{ba} \zeta_c + \tfrac{2}{2m + 2} \Lambda^c{}_{ca} \zeta_b \textrm{.}
\]
For difference tensors of the form \eqref{contact_proj_change_only}, we have (using trace-freeness of $\phi$) that $\Lambda^c{}_{ca} = k_c \phi^c{}_a$, and therefore that
\begin{align}
	\widehat\nabla_{(a} k_{b)}
		&= \nabla_{(a} k_{b)} - 2k_{(a} \phi^c{}_{b)} k_c + \tfrac{1}{m + 1} (k_c \phi^c{}_{(a}) k_{b)}\nonumber \\
		&= \eta_{(a} k_{b)} - \tfrac{2 m + 1}{m + 1} k_{(a} \phi^c{}_{b)} k_c \textrm{.}\label{change_eta}
\end{align}
In particular, we may choose $\phi^c{}_b$ such that $\phi^c{}_bk_c=\frac{m + 1}{2m + 1}\eta_b$  and hence $\Lambda^c{}_{ba}$
so that $\widehat\nabla_{(a} k_{b)} = 0$, which proves the first claim. Note that Proposition \ref{proposition:contact-projective-difference-tensor}
and \eqref{change_eta} also immediately imply the second claim.
\end{proof}

We finish this subsection by introducing a fundamental invariant of a contact projective structure: Suppose $(M, H, \mbq)$ is a contact projective manifold and let
$\tau\in\mathcal E_+(2)$ be a positive scale corresponding to a contact form $\theta=\tau^{-1}k\in\Gamma(T^*M)$. For a connection $\nabla\in\mbq$ consider
$$\nu_{abc}:=\nabla_c\omega_{ab}\in\Gamma(\Wedge^2T^*M\otimes T^*M),$$
where $\omega=d\theta$ as in Section \ref{subsec_contact_structures}. Torsion-freeness of $\nabla$ and closedness of $\omega$ imply that $\nu_{[abc]} = \nabla_{[a}\omega_{bc]}=0$.
Hence, all the possible $1$-forms one can extract from $\nabla_{a}\omega_{bc}$ by contracting with $\omega^{de}$ coincide up to multiplication by a constant.
Thus the totally-trace free part of
\begin{equation}\label{nu_H}
\nu_{\alpha\beta\gamma}:=\Pi_\alpha^a\Pi_\beta^b\Pi_\gamma^c\nu_{abc}\in\Gamma(\Wedge^2H^*\otimes H^*)\end{equation}
with respect to $\omega^{\alpha\beta}\in\Wedge^2H$ is given
by
\begin{equation}\label{trace-free_nu}
\nu_{\alpha\beta\gamma}^{\circ}:=\nu_{\alpha\beta\gamma}+\tfrac{1}{2m+1}(2\lambda_{[\beta}\omega_{\alpha]\gamma}+2\lambda_{\gamma}\omega_{\alpha\beta}),
\end{equation}
where $\lambda_{\beta}=\omega^{\gamma\alpha}\nu_{\alpha\beta\gamma}=\Pi_{\beta}^b\omega^{ca}\nu_{abc}.$

\begin{lemma}\label{torsion_lemma}
Suppose $(M, H, \mbq)$ is a contact projective manifold of dimension $2 m + 1$. For a positive contact form $\theta=\tau^{-1}k\in\Gamma(T^*M)$
and a connection $\nabla\in\mbq$ the quantity
\begin{equation}\label{nu_trace_free}
\omega^{\gamma\epsilon}\nu_{\alpha\beta\epsilon}^{\circ}=
\omega^{\gamma\epsilon}\nu_{\alpha\beta\epsilon}+\tfrac{1}{2m+1}(2\lambda_{[\beta}\delta_{\alpha]}{}^\gamma+2\lambda_{\epsilon}\omega^{\gamma\epsilon}\omega_{\alpha\beta}),
\end{equation}
depends neither on the choice of contact form $\theta$ nor on the choice of connection $\nabla\in\mbq$.
\end{lemma}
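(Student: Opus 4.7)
The plan is to establish the two asserted invariances separately: first fix a connection $\nabla \in \mbq$ and vary the positive contact form $\theta$ (equivalently, the positive scale $\tau$); then fix $\theta$ and vary $\nabla$ within $\mbq$. In each step I would compute the variation of both $\nu_{\alpha\beta\gamma}$ and $\lambda_\beta$, and then verify that the trace-removal prescription \eqref{trace-free_nu} absorbs precisely those variations that are not already invariant, so that $\omega^{\gamma\epsilon}\nu^\circ_{\alpha\beta\epsilon}$ is unaffected.

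For the scale invariance, let $\tilde\theta = e^{2u}\theta$ and set $\Upsilon = du$. From \eqref{change_mcL}, $\tilde\omega_{ab} = e^{2u}\omega_{ab} + 4e^{2u}\Upsilon_{[a}\theta_{b]}$, and since $\theta$ annihilates $H$ the restriction gives $\tilde\omega_{\alpha\beta} = e^{2u}\omega_{\alpha\beta}$ and hence $\tilde\omega^{\alpha\beta} = e^{-2u}\omega^{\alpha\beta}$. Expanding $\tilde\nu_{\alpha\beta\gamma} = \Pi^a_\alpha\Pi^b_\beta\Pi^c_\gamma\nabla_c\tilde\omega_{ab}$ by the Leibniz rule produces, beyond the expected factor $e^{2u}\nu_{\alpha\beta\gamma}$ and a contribution from $\nabla u$, terms of the shape $\Upsilon_a\Pi^a_\alpha \cdot \Pi^b_\beta\Pi^c_\gamma\nabla_c\theta_b$; by $H$-compatibility of $\nabla$ (Lemma \ref{H-comp_proj_structure_1}) and torsion-freeness, $\Pi^b_\beta\Pi^c_\gamma\nabla_c\theta_b = \tfrac12(d\theta)_{\gamma\beta} = -\tfrac12\omega_{\beta\gamma}$. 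Collecting terms yields
\[
\tilde\nu_{\alpha\beta\gamma} \;=\; e^{2u}\bigl(\nu_{\alpha\beta\gamma} + 2\Upsilon_\gamma\omega_{\alpha\beta} - 2\Upsilon_{[\alpha}\omega_{\beta]\gamma}\bigr),
\]
from which contraction with $\tilde\omega^{\gamma\alpha}$ gives $\tilde\lambda_\beta = \lambda_\beta - (2m+1)\Upsilon_\beta$. Substituting into \eqref{trace-free_nu} the $\Upsilon$-corrections cancel exactly, and we obtain $\tilde\nu^\circ_{\alpha\beta\gamma} = e^{2u}\nu^\circ_{\alpha\beta\gamma}$; combined with $\tilde\omega^{\gamma\epsilon} = e^{-2u}\omega^{\gamma\epsilon}$ this proves scale invariance.

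For the connection invariance, by Proposition \ref{proposition:contact-projective-difference-tensor} two torsion-free connections $\nabla, \widehat\nabla \in \mbq$ differ by $\Lambda^c{}_{ba} = 2k_{(a}\phi^c{}_{b)} + 2\delta^c{}_{(a}\Upsilon_{b)}$. Since $k$ annihilates $H$, the $\phi$-dependent term drops out after pullback by $\Pi^a_\alpha\Pi^b_\beta\Pi^c_\gamma$, and only the projective part contributes; the standard identity for the connection on $2$-forms then gives
\[
\hat\nu_{\alpha\beta\gamma} - \nu_{\alpha\beta\gamma} \;=\; -2\Upsilon_\gamma\omega_{\alpha\beta} + 2\Upsilon_{[\alpha}\omega_{\beta]\gamma},
\]
the exact negative of the scale correction, whence $\hat\lambda_\beta - \lambda_\beta = (2m+1)\Upsilon_\beta$. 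The same cancellation inside \eqref{trace-free_nu} yields $\hat\nu^\circ = \nu^\circ$; since $\omega$ itself is unchanged, the conclusion follows. The main technical hurdle in both parts is the bookkeeping: identifying $\Pi\Pi\nabla\theta$ with $\tfrac12\omega$ via $H$-compatibility plus torsion-freeness, and then checking that the $\Upsilon$-corrections to $\nu$ and to $\lambda$ conspire to cancel inside the trace-free projector, is where all the signs and factors of $(2m+1)$ must line up exactly.
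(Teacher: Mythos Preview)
Your proposal is correct and follows essentially the same approach as the paper's proof: you compute the transformation of $\nu_{\alpha\beta\gamma}$ under change of contact form and under change of connection in $\mbq$ (obtaining identical formulas to the paper, up to the trivial rewriting $-2\Upsilon_{[\alpha}\omega_{\beta]\gamma} = 2\Upsilon_{[\beta}\omega_{\alpha]\gamma}$), observe that in both cases the correction is pure trace, and conclude that $\nu^\circ$ is unchanged (up to the $e^{2u}$ factor absorbed by $\omega^{\gamma\epsilon}$). The only cosmetic differences are that the paper treats the connection change first and the scale change second, and in the scale step the paper computes $\widetilde\omega^{\gamma\epsilon}\tilde\nu_{\alpha\beta\epsilon}$ directly rather than tracking $\tilde\lambda_\beta$ through the trace-free projector as you do; both routes amount to the same verification.
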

\begin{proof} Fix a positive contact form  $\theta\in\Gamma(T^*M)$.
Now let $\nu_{\alpha\beta\gamma}$ and $\hat\nu_{\alpha\beta\gamma}$ be defined as in \eqref{nu_H} with respect to $\omega$
and connections $\nabla\in\mbq$ and $\widehat\nabla\in\mbq$ respectively. Then we conclude from Proposition \ref{proposition:contact-projective-difference-tensor}
that $$\hat\nu_{\alpha\beta\gamma}-\nu_{\alpha\beta\gamma}=-2\Upsilon_{[\beta}\omega_{\alpha]\gamma}-2\Upsilon_{\gamma}\omega_{\alpha\beta},$$
for a section $\Upsilon_{\alpha}\in\Gamma(H^*)$, which shows that $\nu^{\circ}_{\alpha\beta\gamma}=\hat\nu^{\circ}_{\alpha\beta\gamma}$.
Hence, \eqref{nu_trace_free} is independent of the choice of connection $\mbq$.

Suppose now that $\tilde\theta=e^{2u}\theta$ is another positive contact form and fix a connection $\nabla\in\mbq$.
Differentiating \eqref{change_mcL} then shows that
\begin{equation}\label{nu_independent_of_contactform}
\nabla_{c}\widetilde\omega_{ab}=e^{2u}\nabla_c\omega_{ab}+ 2e^{2u}\Upsilon_{c}\omega_{ab}+8e^{2u}\Upsilon_c\Upsilon_{[a}\theta_{b]}+4e^{2u}(\nabla_c\Upsilon_{[a})\theta_{b]}+4e^{2u}(\nabla_c\theta_{[b})\Upsilon_{a]},
\end{equation}
where $\Upsilon=du$. Now let $\nu_{\alpha\beta\gamma}$ and $\tilde\nu_{\alpha\beta\gamma}$ be defined as in \eqref{nu_H} with respect to $\nabla$ and $\theta$ and $\tilde\theta$ respectively.
Torsion-freeness of $\nabla$ implies $\nabla_{[a}\theta_{b]}=\frac{1}{2}\omega_{ab}$ and hence we deduce from \eqref{nu_independent_of_contactform}
and Lemma \ref{lemma:totally-geodesic-contact-Killing} that
\begin{equation*}
\tilde\nu_{\alpha\beta\gamma}=e^{2u}\nu_{\alpha\beta\gamma}+2e^{2u}\Upsilon_{\gamma}\omega_{\alpha\beta}+2e^{2u}\Upsilon_{[\beta}\omega_{\alpha]\gamma},
\end{equation*}
where $\Upsilon_\alpha=\Pi_{\alpha}^a\Upsilon_a$.
Since $\widetilde\omega_{\alpha\beta}=e^{2u}\omega_{\alpha\beta}$ and $\widetilde\omega^{\alpha\beta}=e^{-2u}\omega^{\alpha\beta}$, we therefore get
\begin{equation*}
\widetilde\omega^{\gamma\epsilon}\tilde\nu_{\alpha\beta\epsilon}=\omega^{\gamma\epsilon}\nu_{\alpha\beta\epsilon}+2(\omega^{\gamma\epsilon}\Upsilon_{\epsilon}\omega_{\alpha\beta}+\Upsilon_{[\beta}\delta_{\alpha]}{}^\gamma)=
\omega^{\epsilon\gamma}\nu_{\alpha\beta\epsilon}+(\widetilde\omega^{\epsilon\gamma}\Upsilon_{\epsilon}\widetilde\omega_{\alpha\beta}+\Upsilon_{[\beta}\delta_{\alpha]}{}^\gamma)
\end{equation*}
which shows that \eqref{torsion} is also independent of the choice of contact form.
\end{proof}

\begin{definition} For any contact projective manifold $(M,H,\mbq)$ we call the tensor
\begin{equation}\label{torsion}
T_{\alpha\beta}{}^\gamma
	:= -2\omega^{\gamma\epsilon}\nu_{\alpha\beta\epsilon}^{\circ}\in\Gamma(\Wedge^2H^*\otimes H),
\end{equation}
the \emph{torsion} of $(M, H, \mbq)$.
\end{definition}

For later purposes let us record here that $T_{\alpha\beta\gamma}:=T_{\alpha\beta}{}^\epsilon\mathcal L_{\epsilon\gamma}\in\Gamma(\otimes^3 H^*(2))$ has the properties
\begin{equation}\label{torsion_symmetries}
T_{[\alpha\beta]\gamma}=T_{\alpha\beta\gamma},\qquad T_{[\alpha\beta\gamma]}=0,\qquad \mathcal L^{\alpha\beta}T_{\alpha\beta\gamma}=0, \qquad T_{\alpha\beta\gamma}T^{\alpha\beta\gamma}=0,
\end{equation}
where $T^{\alpha\beta\gamma}=\mathcal L^{\alpha\delta}\mathcal L^{\beta\epsilon} T_{\delta\epsilon}{}^\gamma$. Consequently, we also have
\begin{equation*}
T_{\alpha\beta\gamma}=2T_{\gamma[\beta\alpha]} , \qquad   \mathcal L^{\alpha\gamma}T_{\alpha\beta\gamma}=0 , \qquad   \mathcal L^{\beta\gamma}T_{\alpha\beta\gamma}=0.
\end{equation*}

An immediate consequence of \eqref{torsion_symmetries} is:
\begin{proposition}\emph{(cf.\,also \cite[Proposition 2.1]{Fox})} Any $3$-dimensional contact projective manifold is torsion-free.
\end{proposition}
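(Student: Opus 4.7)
The plan is to exploit the extreme low-dimensionality. When $\dim M = 3$ we have $2m + 1 = 3$, so $m = 1$, and the contact distribution $H$ has rank $2$. Consequently $\Wedge^2 H^*$ is a \emph{line} bundle, and moreover the weighted Levi form $\mathcal{L}_{\alpha\beta}\in\Gamma(\Wedge^2 H^*(2))$ is by definition a nowhere-vanishing section of the line bundle $\Wedge^2 H^*(2)$. This means $\mathcal{L}_{\alpha\beta}$ trivializes $\Wedge^2 H^*(2)$.

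Therefore, since $T_{\alpha\beta}{}^{\gamma}\in\Gamma(\Wedge^2 H^*\otimes H)$ is skew in $\alpha\beta$, I would write
\[
	T_{\alpha\beta}{}^{\gamma} = \mathcal{L}_{\alpha\beta}\,\xi^{\gamma}
\]
for a uniquely determined section $\xi^{\gamma}\in\Gamma(H(-2))$ (the weight $-2$ is forced in order to cancel the weight $+2$ carried by $\mathcal{L}$). Equivalently, lowering the last index with $\mathcal{L}$ gives $T_{\alpha\beta\gamma} = \mathcal{L}_{\alpha\beta}\,\xi_{\gamma}$ with $\xi_{\gamma}:=\xi^{\delta}\mathcal{L}_{\delta\gamma}$.

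Next I would invoke the trace-freeness $\mathcal{L}^{\alpha\beta} T_{\alpha\beta\gamma}=0$ from \eqref{torsion_symmetries}. Contracting the expression above with $\mathcal{L}^{\alpha\beta}$ and using $\mathcal{L}^{\alpha\beta}\mathcal{L}_{\alpha\beta}=-\delta^{\alpha}{}_{\alpha}=-2$ (the rank of $H$), one obtains
\[
	0 \;=\; \mathcal{L}^{\alpha\beta}T_{\alpha\beta\gamma} \;=\; \mathcal{L}^{\alpha\beta}\mathcal{L}_{\alpha\beta}\,\xi_{\gamma} \;=\; -2\,\xi_{\gamma},
\]
so $\xi_{\gamma}=0$ and hence $T_{\alpha\beta}{}^{\gamma}=0$.

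There is no real obstacle here: the only thing to be careful about is bookkeeping of the projective weights, making sure that $\xi^{\gamma}$ indeed lives in $H(-2)$ so that the right-hand side $\mathcal{L}_{\alpha\beta}\xi^{\gamma}$ has the correct weight $+2-2=0$ matching the weight of $T_{\alpha\beta}{}^{\gamma}\in\Wedge^2 H^*\otimes H$ (which is unweighted by definition). Once the weights are in place the computation is one line. In particular, one does not need the remaining symmetries $T_{[\alpha\beta\gamma]}=0$ or $T_{\alpha\beta\gamma}T^{\alpha\beta\gamma}=0$ in the three-dimensional case; skew-symmetry and the single $\mathcal{L}$-trace condition already suffice.
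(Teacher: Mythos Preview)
Your argument is correct and is exactly the kind of one-line verification the paper has in mind when it says the proposition is ``an immediate consequence of \eqref{torsion_symmetries}'': in rank $2$ the skew part is spanned by $\mathcal L_{\alpha\beta}$, and the $\mathcal L$-trace condition kills the remaining coefficient. The only slip is a harmless sign: from $\mathcal L_{\alpha\beta}\mathcal L^{\beta\gamma}=-\delta^{\gamma}{}_{\alpha}$ one gets $\mathcal L^{\alpha\beta}\mathcal L_{\alpha\beta}=-\mathcal L_{\alpha\beta}\mathcal L^{\beta\alpha}=+\,\delta^{\alpha}{}_{\alpha}=2m=2$, not $-2$; either way the constant is nonzero and $\xi_\gamma=0$ follows.
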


\begin{remark}
The torsion $T_{\alpha\beta}{}^\gamma\in\Gamma(\Wedge^2H^*\otimes H)$ of a contact projective manifold $(M,H,\mbq)$ coincides with the contact torsion of $(M,H,\mbq)$ as defined by Fox in \cite{Fox}
as a certain invariant part of the torsion of a family of affine connections that are adapted to the contact projective structure and parametrised by contact forms of $H$. This contact torsion in turn also
coincides with the torsion of the canonical Cartan connection of type $(\Sp(2m+2,\bbR), Q)$ of $(M,H,\mbq)$ as constructed in \cite{Fox}, where $Q$ is the stabiliser of a ray in $\bbR^{2m+2}$.
\end{remark}

\subsubsection{Projective structures equipped with a parallel tractor symplectic form}
Suppose $(M,\mbp)$ is a projective manifold of dimension $2m+1$ equipped with a parallel tractor symplectic $2$-form
$\Omega_{AB}\in\Gamma(\Wedge^2\mathcal T^*)$. It follows that the nowhere vanishing section
\begin{equation}\label{tractor_volume_form_associated_to_Omega}
\vol := \tfrac{1}{(m + 1)!} \underbrace{\Omega \wedge \cdots \wedge \Omega}_{m + 1} \in\Gamma(\Wedge^{2m+2}\mcT^*)
\end{equation}
is parallel, which we refer to as the \emph{tractor volume form associated to $\Omega$}. Moreover, the nowhere vanishing section
\begin{equation}\label{orientation_induced_by_Omega }
\epsilon_{a_1...a_{2m+1}}:=\vol_{A_{0}A_1...A_{2m + 1}} X^{A_0} W^{A_1}{}_{a_1} \cdots W^{A_{2m + 1}}{}_{a_{2m + 1}}\in\Gamma(\Wedge^{2m+1}T^*M(2m+2)),
\end{equation}
which is independent of the choice $\nabla\in\mbp$, gives
rise to an isomorphism $$\mathcal E(-(2m+2))\cong \Wedge^{2m+2} T^*M,$$ which in turn determines an orientation on $M$ (see also Section \ref{notn}).
Note that $\epsilon_{a_1...a_{2m+1}}$ is parallel for any connection $\nabla\in\mbp$.

Suppose $(M,\mbp)$ is an oriented projective manifold with parallel tractor volume form $\vol_{A_{1}...A_{2m + 2}}$ (see Section \ref{notn}), equipped with a parallel tractor symplectic form $\Omega_{AB}$. By rescaling $\Omega$ by a constant and possibly reversing the orientation of $M$, we may (and henceforth do) assume that $\frac{1}{(m + 1)!}\Omega^{\wedge (m + 1)} = \vol$. This convention ensures that if $(M, \mbp)$ is also equipped with a parallel tractor metric metric $h_{AB}$ compatible with $\Omega$ in the sense that $h^{AC} \Omega_{CB} \in \Gamma(\End \mcT)$ is a complex structure on $\mcT$, and so that $h$ and $\Omega$ together define a parallel tractor Hermitian structure on $\mcT$, then $\vol$ is a volume form for $h$ (see the discussion after Theorem \ref{theorem:orthogonal-reduction}).

\begin{proposition}
\label{proposition:symplectic-single-curved-orbit}
Let $(M, \mbp)$ be an oriented projective manifold of dimension $n = 2 m + 1 \geq 3$ equipped with a parallel tractor symplectic form $\Omega_{AB} \in \Wedge^2 \mcT^*$. Then, the curved orbit decomposition of $M$, determined by the holonomy reduction of $\nabla^{\mcT}$ to $\Sp(2m + 2, \Bbb R)$ corresponding to $\Omega$, is trivial (that is, consists of a single curved orbit).
\end{proposition}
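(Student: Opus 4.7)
The plan is to apply the general theory of curved orbit decompositions from \cite{CGH} as reviewed in Section \ref{subsection:holonomy-reductions-projective}. According to that framework, the curved orbits of $(M, \mbp)$ determined by $\Omega$ are in bijection with the double cosets $H \backslash G / P$, where $G = \SL(2m+2, \mathbb{R})$, $P \subset G$ is the stabiliser of a ray in $\mathbb{R}^{2m+2}$, and $H \subset G$ is the stabiliser of a chosen base point $\alpha$ in the $G$-orbit $\mathcal O \subset \mathbb V := \Wedge^2(\mathbb{R}^{2m+2})^*$ through the value of the equivariant function $\underline{\Omega}:\hat{\mathcal G}\to \mathbb V$ corresponding to $\Omega$.

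The first concrete step would be to identify $\mathcal O$ and $H$. The normalization $\tfrac{1}{(m+1)!}\,\Omega^{\wedge(m+1)} = \vol$ forces $\alpha$ to lie in the subset of nondegenerate $2$-forms on $\mathbb{R}^{2m+2}$ whose $(m+1)$-fold wedge equals a prescribed volume form on $\mathbb{R}^{2m+2}$; a linear Darboux argument combined with the constraint $\det g = 1$ then shows that $G$ acts transitively on this subset, with isotropy $H = \Sp(2m+2, \mathbb{R})$, so that $\mathcal O \cong G/H$. This reduces the problem to showing that $H \backslash G / P$ is a single point, which by the elementary bijection $H \backslash G / P \cong P \backslash G / H$ is equivalent to transitivity of $H$ on $G/P$.

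The crux is therefore the transitivity of $H = \Sp(2m+2, \mathbb R)$ on $G/P \cong \mathbb{P}_+(\mathbb{R}^{2m+2})$. This follows from the classical fact that $\Sp(2m+2, \mathbb{R})$ acts transitively on the nonzero vectors of its defining representation: any $v \neq 0$ extends to a symplectic basis of $\mathbb{R}^{2m+2}$, and a linear map carrying one symplectic basis to another automatically preserves the symplectic form and hence lies in $\Sp(2m+2, \mathbb R)$. Transitivity on nonzero vectors descends to transitivity on rays, yielding $|H \backslash G / P| = 1$ and hence a single curved orbit. I do not anticipate any genuine obstacle: the entire geometric content is captured by this standard transitivity statement, together with the routine identification of $\mathcal O$ as a single $G$-orbit under the given volume normalization of $\Omega^{\wedge(m+1)}$.
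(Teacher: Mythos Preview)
Your proposal is correct and follows essentially the same approach as the paper: both reduce the claim to the transitivity of $\Sp(2m+2,\mathbb{R})$ on the model space $S^{2m+1} = \mathbb{P}_+(\mathbb{R}^{2m+2})$ via the general curved orbit framework of Section~\ref{subsection:holonomy-reductions-projective}. The paper states this transitivity as an immediate fact, whereas you additionally supply the standard symplectic-basis justification and the identification of the $G$-type $\mathcal{O}$; these extra details are correct but not strictly needed.
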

\begin{proof}
This follows immediately from the transitivity of the standard action of $\Sp(2 m + 2, \bbR)$ on the model $S^{2 m + 1} = \Bbb P_+(\Bbb R^{2 m + 2})$ (see Section \ref{subsection:holonomy-reductions-projective}).
\end{proof}

Since $\Omega_{AB}\in\Gamma(\Wedge^2\mathcal T^*)$ is parallel, its underlying weighted $1$-form
\begin{equation}\label{Killing_eq}
	k_a := \Pi^{\Wedge^2 \mcT^*}(\Omega)_a \in \Gamma(T^*M(2))
\end{equation}
is a normal solution of the corresponding BGG equation
\begin{equation}\label{Killing_eq}
	\nabla_{(a} k_{ b)} = 0,
\end{equation}
where $\nabla$ is any connection in $\mbp$ (cf. Section \ref{subsection:tractor-2-forms});
here normality of a solution $k_a$ is equivalent to the additional algebraic condition 
\begin{equation}\label{normality_Killing}
	W_{ab}{}^d{}_c k_d = 0\textrm{.}
\end{equation}

\begin{remark}
Given a parallel symplectic form $\Omega_{AB} \in \Gamma(\Wedge^2 \mcT^*)$, one can also form the nonzero parallel tractor $(2j)$-forms $\Omega^{\wedge j} \in \Gamma(\Wedge^{2j} \mcT^*)$,
$j = 2, \ldots, m$, whose respective projecting parts
\begin{multline*}
	(\Omega^{\wedge k})_{A_0 \cdots A_{2 j - 1}} X^{A_0} W^{A_1}{}_{a_1} \cdots W^{A_{2 j - 1}}{}_{a_{2j - 1}} \\
		= k_{[a_1} \nabla_{a_2} k_{a_3} \cdots \nabla_{a_{2 j - 2}} k_{a_{2 j - 1}]}
			\in \Gamma(\Wedge^{2j - 1} T^*M (2j))
\end{multline*}
are normal solutions for the respective BGG operators $\mcD^{\Wedge^{\ell} \mcT^*} : \Gamma(\Wedge^{\ell - 1} T^*M (\ell)) \to \Gamma((\Lambda^{\ell - 1} T^*M \otimes T^*M)_{\circ}(\ell))$,
\[
	\mcD^{\Wedge^{\ell} \mcT^*} : \sigma_{a_1 \cdots a_{\ell - 1}} \mapsto \nabla_{(b} \sigma_{a_1) a_2 \cdots a_{\ell - 1}} ,
\]
for $\ell = 2j$. Here, $(\Lambda^{\ell - 1} T^*M \otimes T^*M)_{\circ}$ is the kernel of the alternating map $\Lambda^{\ell - 1} T^*M \otimes T^*M \to \Lambda^{\ell} T^*M$. The  operators $\mcD^{\Wedge^{\ell} \mcT^*}$ are respectively the Killing-Yano operators on weighted $(\ell - 1)$-forms.
\end{remark}

The nondegeneracy of $\Omega_{AB}$ immediately implies (and in fact is equivalent to) the maxmimal nondegeneracy of $k_b$, which together with \eqref{Killing_eq} and \eqref{normality_Killing} shows:
\begin{theorem}
\cite[Theorem 3.5]{ArmstrongP1}
\label{projective_structure_with_parallel_symplectic_tractor}
Suppose $(M,\mbp)$ is a projective (necessarily odd-dimensional) manifold equipped with a parallel symplectic form $\Omega_{AB}\in\Gamma(\Wedge^2\mcT^*)$, and denote by $k := \Pi^{\Wedge^2 \mcT^*}(\Omega)$ the corresponding normal solution of \eqref{Killing_eq}. Then
\begin{equation}\label{k-->H}
H:= \ker k = \{u^a\in TM: k_au^a=0\}\subset TM
\end{equation}
is a contact distribution and $\mbp$ is compatible with $H$, that is, $(M,H,\langle\mbp\rangle)$ is a contact projective manifold. Moreover, the torsion of $(M,H,\langle\mbp\rangle)$  vanishes identically.
\end{theorem}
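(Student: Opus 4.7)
\medskip

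\noindent\textbf{Proof plan.} Since $\Omega_{AB}$ is parallel, it is (by Proposition \ref{normp}) the image $L^{\Wedge^2\mcT^*}(k)$ of the splitting operator applied to its own projecting part $k_b \in \Gamma(T^*M(2))$. Formula \eqref{equation:splitting-operator-2-form} then shows that in any scale $\nabla\in\mbp$ we have
\[
\Omega_{AB} = 2k_b\,Y_{[A}Z_{B]}{}^b + \mu_{ab}\,Z_A{}^a Z_B{}^b,\qquad \mu_{ab}:=\nabla_{[a}k_{b]},
\]
and comparing $\nabla^{\Wedge^2\mcT^*}\Omega=0$ with \eqref{equation:tractor-2-form-connection} yields both $\nabla_a k_b=\mu_{ab}$ (so $\nabla_{(a}k_{b)}=0$) and $\nabla_a\mu_{bc}=-2\Rho_{a[b}k_{c]}$. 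Normality of $k$ as a solution of $\mcD^{\Wedge^2\mcT^*}$ corresponds, via \eqref{equation:prolongation-vs-tractor-connection}, to the algebraic condition \eqref{normality_Killing}, but in fact we will not need this condition for the current argument.

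\medskip

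\noindent\textbf{Contact condition.} The plan is to compute $\Omega^{\wedge(m+1)}$ in the split form and read off the nondegeneracy condition on $k$. Writing $\tilde k := k_b Z_{\pdot}{}^b$ and $\tilde\mu := \mu_{ab} Z_{\pdot}{}^a Z_{\pdot}{}^b$, we have $\Omega = 2Y\wedge \tilde k + \tilde\mu$. Since $Y\wedge Y = 0$, the binomial expansion truncates to
\[
\Omega^{\wedge(m+1)} = \tilde\mu^{\wedge(m+1)} + 2(m+1)\,Y\wedge\tilde k\wedge\tilde\mu^{\wedge m}.
\]
The first summand lies in the image of $\Wedge^{2m+2}T^*M(2m+2)\to\Wedge^{2m+2}\mcT^*$ induced by $Z$, hence vanishes since $\dim M=2m+1$. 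Nondegeneracy of $\Omega$ therefore forces $k\wedge\mu^{\wedge m}\neq 0$ everywhere on $M$. Picking a scale $\tau\in\Gamma(\mcE_+(2))$ with associated $\theta:=\tau^{-1}k\in\Gamma(T^*M)$, torsion-freeness of $\nabla$ gives $\mu_{ab}=\tfrac{\tau}{2}(d\theta)_{ab}$, so $k\wedge\mu^{\wedge m}\neq 0$ becomes $\theta\wedge(d\theta)^{\wedge m}\neq 0$. Hence $H=\ker k$ is a contact distribution. Compatibility of $\mbp$ with $H$ follows immediately from Corollary \ref{H_comp_proj_structure_2}, since the Killing-type equation $\nabla_{(a}k_{b)}=0$ certainly implies \eqref{nabla-k}.

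\medskip

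\noindent\textbf{Vanishing of the torsion.} Now I compute $\nu_{abc}:=\nabla_c\omega_{ab}$ for $\omega=d\theta=(2/\tau)\mu$ and use the prolongation identity $\nabla_a\mu_{bc}=-2\Rho_{a[b}k_{c]}$. Since $\nabla\tau=0$,
\[
\nu_{abc} = \tfrac{2}{\tau}\nabla_c\mu_{ab} = \tfrac{2}{\tau}\bigl(\Rho_{cb}k_a - \Rho_{ca}k_b\bigr).
\]
Every term on the right contains a factor of $k_a$ or $k_b$; but $H=\ker k$, so $\Pi_\alpha^a k_a=0$. Consequently $\nu_{\alpha\beta\gamma}:=\Pi_\alpha^a\Pi_\beta^b\Pi_\gamma^c\nu_{abc}=0$, which forces both $\lambda_\beta=\omega^{\gamma\alpha}\nu_{\alpha\beta\gamma}=0$ and $\nu^{\circ}_{\alpha\beta\gamma}=0$. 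By the definition \eqref{torsion}, the torsion $T_{\alpha\beta}{}^\gamma$ vanishes identically.

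\medskip

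\noindent The main obstacle is the first step: correctly identifying $\Omega^{\wedge(m+1)}$ with a weighted top form on $M$ built from $k$ and $d\theta$, so that the tractor-level nondegeneracy translates cleanly into the contact condition on $H=\ker k$. Once this translation is in place, the remaining assertions are short consequences of the parallelism equations for $\Omega$ together with $k\vert_H=0$.
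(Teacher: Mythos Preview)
Your proof is correct and follows essentially the same approach as the paper's: both identify the top component of $\Omega^{\wedge(m+1)}$ with (a multiple of) $k\wedge\mu^{\wedge m}$ to obtain the contact condition, invoke Corollary~\ref{H_comp_proj_structure_2} for compatibility, and then use the second component of $\nabla^{\Wedge^2\mcT^*}\Omega=0$ to see that $\nu_{\alpha\beta\gamma}=0$ after projecting to $H$. Your observation that one can read off $\nabla_a\mu_{bc}+2\Rho_{a[b}k_{c]}=0$ directly from tractor-parallelism, rather than via the detour ``prolongation identity $+$ normality $W_{bc}{}^d{}_a k_d=0$'' as the paper phrases it, is a harmless presentational shortcut---the two derivations yield the same equation.
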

\begin{proof}
Suppose $M$ has dimension $2 m + 1$. Since $k_a\in\Gamma(T^*M(2))$ is a solution of \eqref{Killing_eq}, its derivative $\nabla_ak_b=:\mu_{ab}$ is a section of $\Wedge^2 T^*M(2)$ for any connection $\nabla\in\mbp$.
Then the nondegeneracy of  $\Omega_{AB}$ is equivalent to the nonvanishing of the section
\begin{align*}
\epsilon_{a_1...a_{2m+1}}
	&= \tfrac{1}{(m + 1)!}(\Omega^{\wedge (m + 1)})_{A_0 A_1 \cdots A_{2 m + 1}} X^{A_0} W^{A_1}{}_{a_1} \cdots W^{A_{2m + 1}}{}_{a_{2 m + 1}} \\
	&= \tfrac{1}{m!} (k \wedge \mu^{\wedge m})_{a_1 \cdots a_{2 m + 1}} \\
	&= \tfrac{1}{m! 2^m} [k \wedge (dk)^{\wedge m}]_{a_1 \cdots a_{2 m + 1}} \textrm{,}
\end{align*}
(where we have employed the notation of Sections \ref{tr} and
\ref{subsection:tractor-2-forms}) which shows that the kernel $H$ of $k$, defined
as in \eqref{k-->H}, is a contact distribution with conformal contact
form $k$. Since $k$ is a solution of \eqref{Killing_eq}, Corollary
\ref{H_comp_proj_structure_2} moreover shows that $\mbp$ is compatible
with $H$ and hence, by definition, $(M,H,\langle\mbp\rangle)$ is a
contact projective manifold.  To see that the torsion of
$(M,H,\langle\mbp\rangle)$ vanishes, fix a positive scale
$\tau\in\mathcal E_+(2)$ with corresponding connection $\nabla\in\mbp$ such that
$\nabla\tau=0$. Then \eqref{prol_step2} and normality
\eqref{normality_Killing} of the solution $k$ imply
\begin{equation*}
0=W_{bc}{}^d{}_a k_d=\nabla_a\mu_{bc}+2\Rho_{a[b}k_{c]}=\tfrac{1}{2}\tau\nabla_{a}\omega_{bc}+2\Rho_{a[b}k_{c]},
\end{equation*}
where $\omega_{ab}=(d\theta)_{ab}=2\nabla_{[a}\theta_{b]}=2\nabla_{a}\theta_{b}=2\tau^{-1}\mu_{ab}$ for $\theta=\tau^{-1}k$.
Hence, projecting to $H$ yields $0=\nu_{\beta\gamma\alpha}=\Pi_{\beta}^b\Pi_{\gamma}^c\Pi_{\alpha}^a\nu_{bca}$, where $\nu_{\alpha\beta\gamma}$
is defined as in \eqref{nu_H} with respect to $\nabla$ and $\theta$. This in turn shows
that the torsion of $(M,H,\langle\mbp\rangle)$, defined as in \eqref{torsion}, vanishes identically.
\end{proof}

\subsubsection{The contact projective-to-projective Fefferman-type construction}

Given a contact projective manifold $(M,H,\mbq)$ of dimension $2 m + 1$, the projective class $[\nabla]$ of any connection $\nabla\in\mbq$ contains among its geodesics the $H$-geodesics of $\mbq$.
In \cite{Fox} Fox shows, however, that there is a distinguished projective structure $\mbp$ on $M$, containing among its geodesics the $H$-geodesics of $\mbq$, with the property that its Weyl curvature $W_{ab}{}^c{}_d$ satisfies
$$W_{\alpha\beta}{}^d{}_{\gamma}k_d:=\Pi_\alpha^a\Pi_\beta^bW_{ab}{}^d{}_{\gamma}k_d=-\tfrac{1}{4}T_{\alpha\beta}{}^{\epsilon}\mathcal L_{\epsilon\gamma}.$$ In Theorem \ref{Fefferman_type_construction}
we will give a construction of this distinguished projective structure that is more direct than the one presented in \cite{Fox} and which will be more suitable for our purposes. We start by recording the following preliminary observation:

\begin{lemma}\label{First_Step}
Suppose $(M,H,\mbq)$ is a contact projective manifold with torsion $T_{\alpha\beta}{}^\gamma\in\Gamma(\Wedge^2 H^*\otimes H)$. For any choice of positive scale $\tau\in\mathcal E_+(2)$ there exists a connection
$\nabla\in\mbq$ such that
\begin{enumerate}
\item $\nabla_{a}\tau=0$
\item $\nabla_{(a}k_{b)}=0$.
\end{enumerate}
For two connections $\widehat\nabla,\nabla\in\mbq$ that satisfies $(a)$ and $(b)$ their difference tensor $\Lambda^c{}_{ba}$ defined as in \eqref{difference_tensor} is of the form
\begin{equation}\label{remaining_freedom_(a-b)}
\Lambda^c{}_{ba} = 2 k_{(a} \phi^c{}_{b)},\end{equation}
where $\phi^c{}_b\in \Gamma(\End_{\circ}(TM)(-2))$ satisfies $\phi^c{}_b k_c = 0$. For any connection $\nabla\in\mbq$ satisfying $(a)$ and $(b)$ we have
\begin{equation}
\omega^{bc}\nabla_a\omega_{bc}=0,
\end{equation}
where $\omega=d\theta$ for $\theta=\tau^{-1}k$ the contact form corresponding to $\tau$, and thus in particular
\begin{equation*}
\omega^{\gamma\epsilon}\nu_{\alpha\beta\epsilon}=\omega^{\gamma\epsilon}\nu_{\alpha\beta\epsilon}^{\circ}=-\tfrac{1}{2}T_{\alpha\beta}{}^\gamma,
\end{equation*}
where $\nu_{\alpha\beta\gamma}$ is defined as in \eqref{nu_H} with respect to $\nabla$ and $\theta$.
\end{lemma}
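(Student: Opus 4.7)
The plan is to construct the required connection by successive modification within the contact projective class $\mbq$ and analyse the freedom using the results on difference tensors from Propositions \ref{proposition:contact-projective-difference-tensor} and \ref{eta_Proposition}. Start with any scale connection $\nabla_0\in\mbq$ for $\tau$, which exists by the discussion in Section \ref{proj-sec}. By Corollary \ref{H_comp_proj_structure_2} there is some $\eta\in\Gamma(T^*M)$ with $\nabla_0{}_{(a}k_{b)} = k_{(a}\eta_{b)}$. Following the argument for Proposition \ref{eta_Proposition}, I modify $\nabla_0$ by a difference tensor $\Lambda^c{}_{ba} = 2k_{(a}\phi^c{}_{b)} + 2\delta^c{}_{(a}\Upsilon_{b)}$, selecting trace-free weight-$(-2)$ $\phi^c{}_b$ with $k_c\phi^c{}_b = \tfrac{m+1}{2m+1}\eta_b$ to achieve (b) via \eqref{change_eta}. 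The very same difference tensor induces on the scale $\tau$ the change $\tfrac{1}{m+1}(k_c\phi^c{}_a+(2m+2)\Upsilon_a)\tau$; choosing $\Upsilon_a = -\tfrac{1}{2(m+1)}k_c\phi^c{}_a$ then restores $\nabla\tau = 0$ while leaving (b) intact, since $\nabla_{(a}k_{b)}$ is projectively invariant by \eqref{equation:BGG-operator-2-form} and is therefore unaffected by the $\Upsilon$-part.

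The description of the freedom is then immediate. If $\widehat\nabla,\nabla\in\mbq$ both satisfy (a) and (b) with the same torsion, Proposition \ref{eta_Proposition} already forces their difference tensor to take the form $\Lambda^c{}_{ba} = 2k_{(a}\phi^c{}_{b)} + 2\delta^c{}_{(a}\Upsilon_{b)}$ with $k_c\phi^c{}_b = 0$. Imposing that the induced action on $\tau$ vanishes reduces, using $k_c\phi^c{}_b = 0$, to $\Upsilon_a = 0$, yielding the stated form.

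For the vanishing identity $\omega^{bc}\nabla_a\omega_{bc} = 0$, the plan is to exploit parallelism of the tautological volume form. Since the weighted volume form $\epsilon\in\Gamma(\Wedge^{2m+1}T^*M(2m+2))$ is parallel for every affine connection, and by \eqref{orientation_induced_by_contact_structure} we have $\epsilon = \tau^{m+1}\,\theta\wedge\omega^{\wedge m}$ with $\nabla\tau=0$, it follows that $\nabla(\theta\wedge\omega^{\wedge m})=0$. From (a), (b), and torsion-freeness one computes $\nabla_c\theta_a = \tfrac{1}{2}\omega_{ca}$, and the identity $(\iota_X\omega)\wedge\omega^{\wedge m} = \tfrac{1}{m+1}\iota_X(\omega^{\wedge(m+1)}) = 0$ (because $\omega^{\wedge(m+1)}$ has degree $2m+2$ on a $(2m+1)$-manifold) yields $(\nabla_c\theta)\wedge\omega^{\wedge m}=0$. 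Hence $\theta\wedge\omega^{\wedge(m-1)}\wedge\nabla_c\omega = 0$, which via the standard wedge-to-trace correspondence on $\Wedge^2H^*$ is equivalent to $\omega^{ab}\nabla_c\omega_{ab} = 0$.

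Given this, $\lambda_\beta = \Pi^b_\beta\omega^{ca}\nabla_c\omega_{ab}$ vanishes (by a short manipulation using $d\omega=0$ and the skew-symmetries of $\omega$), so by \eqref{trace-free_nu} we have $\nu^\circ_{\alpha\beta\gamma} = \nu_{\alpha\beta\gamma}$, and the remaining equality $\omega^{\gamma\epsilon}\nu^\circ_{\alpha\beta\epsilon} = -\tfrac{1}{2}T_{\alpha\beta}{}^\gamma$ is the definition \eqref{torsion} of the torsion. The main obstacle is the bookkeeping in the first step: tracking the induced action of $\Lambda^c{}_{ba}$ on the weighted line bundle $\mcE(2)$ and cleanly separating the contributions of $\phi$ and $\Upsilon$ so that (a) and (b) can be solved simultaneously.
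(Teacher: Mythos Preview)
Your proof is correct and follows essentially the same approach as the paper. The only difference in the existence step is cosmetic: you start from a scale connection (so (a) holds), then adjust $\phi$ to obtain (b) and simultaneously choose $\Upsilon$ to restore (a), whereas the paper first invokes Proposition~\ref{eta_Proposition} to obtain (b) and then uses the projective freedom in \eqref{freedom_eta=0} to obtain (a); since $\nabla_{(a}k_{b)}$ is projectively invariant these are the same manoeuvre viewed from opposite ends. Your volume-form argument is likewise the same as the paper's: both use $\nabla(\theta\wedge\omega^{\wedge m})=0$, with your wedge identity $(\iota_X\omega)\wedge\omega^{\wedge m}=\tfrac{1}{m+1}\iota_X(\omega^{\wedge(m+1)})=0$ playing the role of the paper's explicit contraction with $\omega^{[b_1b_2}\cdots\omega^{b_{2m-1}b_{2m}}t^{b_{2m+1}]}$, and your passage from $\omega^{bc}\nabla_a\omega_{bc}=0$ to $\lambda_\beta=0$ via $\nabla_{[a}\omega_{bc]}=0$ is exactly the observation recorded just before \eqref{trace-free_nu}.
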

\begin{proof}
By Proposition \ref{eta_Proposition} we can find a connection $\nabla\in\mbq$ satisfying $(b)$ and the remaining freedom for the choice of such a connection in $\mbq$
is given by \eqref{freedom_eta=0}. Since $(b)$ is a projectively invariant equation, we can use the projective freedom in \eqref{freedom_eta=0} to change $\nabla$ to a
connection that satisfies $(a)$ and $(b)$. Moreover, any other connection $\widehat\nabla$ satisfying $(a)$ and $(b)$ then differs from $\nabla$ by a difference tensor
$\Lambda^c{}_{ba}$ of the form \eqref{freedom_eta=0} that in addition satisfies $\Lambda^a{}_{ba}=0$, which is equivalent to \eqref{remaining_freedom_(a-b)} as claimed.

To prove the last claim, note that $\nabla \tau=0$ is a equivalent to $$\nabla \epsilon^{(\theta)}=\nabla(\theta\wedge \mathcal \omega^m)=\nabla (\tau^{-(m+1)}\epsilon)= \epsilon\nabla \tau^{-(m+1)}=0,$$
where $\dim(M)=2m+1$ and $\epsilon\in \Gamma(\Wedge^{2m+1}T^*M(2m+2))$ is the canonical weighted volume form defined as in \eqref{orientation_induced_by_contact_structure}. For a connection $\nabla\in\mbq$ satisfying $(a)$ and $(b)$, we therefore have
\begin{equation}\label{trace_nabla_omega}
\nabla_a\omega_{[b_1 b_2} \cdots \omega_{b_{2 m - 1} b_{2 m}}\theta_{b_{2 m + 1}]}=0
\end{equation}
and contracting \eqref{trace_nabla_omega} with $\omega^{[b_1 b_2} \cdots \omega^{b_{2 m - 1} b_{2 m}}t^{b_{2 m + 1}]}$ implies $\omega^{bc}\nabla_a\omega_{bc}=0$.
\end{proof}

Suppose now $\nabla\in\mbq$ is a connection satisfying $(a)$ and $(b)$ of Lemma \ref{First_Step} for a choice $\tau\in\mathcal E_+(2)$ of scale. Then $2\nabla_ak_b=\tau\omega_{ab}=\mathcal L_{ab}$ and hence
$\nabla_{[a}\mathcal L_{bc]}=\tau\nabla_{[a}\omega_{bc]}=0$ implies
\begin{equation*}
-R_{ab}{}^d{}_ck_d=\nabla_a\nabla_b k_c -\nabla_b\nabla_ak_c=\tfrac{1}{2}(\nabla_a\mathcal L_{bc}+\nabla_b\mathcal L_{ca})=-\tfrac{1}{2}\nabla_c\mathcal L_{ab}.
\end{equation*}
 It follows that
\begin{equation}\label{crucial_equation}
\nabla_c\mathcal L_{ab}=2R_{ab}{}^d{}_ck_d=2W_{ab}{}^d{}_ck_d+4k_{[a}\Rho_{b]c},
\end{equation}
where $\Rho_{bc}=\Rho_{(bc)}=\frac{1}{2m}\Ric_{(ab)}=\frac{1}{2m}\Ric_{ab}$. Moreover, Lemma \ref{First_Step} and \eqref{crucial_equation} imply that
\begin{equation}\label{properties_Weyl_for_ab_connections}
0=W_{ab}{}^d{}_ck_d\mathcal L^{ab}=2W_{c[b}{}^d{}_{a]}k_d\mathcal L^{ab}=2W_{cb}{}^d{}_{a}k_d\mathcal L^{ab}
	\quad\textrm{and}\quad
W_{\alpha\beta}{}^d{}_{\gamma}k_d=-\tfrac{1}{4}T_{\alpha\beta}{}^{\epsilon}\mathcal L_{\epsilon\gamma}.
\end{equation}

Hence, for any connection $\nabla\in\mbq$ satisfying $(a)$ and $(b)$
of Lemma \ref{First_Step} its Weyl curvature has the property that the
restriction of $W_{ab}{}^d{}_{c}k_d$ to $\Wedge^2 H^*\otimes H^*(2)$
is (up to a constant multiple) essentially the torsion of the contact
projective structure.  Our aim is now, for any scale $\tau\in\mathcal
E_+(2)$, to single out a unique connection $\nabla^{\tau}\in\mbq$
satisfying $(a)$ and $(b)$ by spending the remaining freedom
\eqref{remaining_freedom_(a-b)} to fix $W_{ab}{}^d{}_{c}k_d$ also in
the direction transversal to $H$. We want the projective class
$[\nabla^{\tau}]$ to be independent of the choice of scale $\tau$. Note that
$W_{ab}{}^d{}_{c}k_d$ is projectively invariant. We are seeking
 an extension of
$-4W_{\alpha\beta}{}^d{}_{\gamma}k_d=T_{\alpha\beta}{}^{\epsilon}\mathcal
L_{\epsilon\gamma}$ to a tensor $E_{abc}\in\Gamma(\Wedge^2 T^*M\otimes
T^*M(2))$ which only depends on $[\nabla^{\tau}]$.

For that purpose let us now set
\begin{equation}\label{T^abc}
T^{abc}:=\Pi_{\alpha}^a\Pi_{\beta}^b\Pi_{\gamma}^c\mathcal L^{\alpha\delta}\mathcal L^{\beta\epsilon} T_{\delta\epsilon}{}^\gamma=\mathcal L^{a\delta}\mathcal L^{b\epsilon}T_{\delta\epsilon}{}^c.
\end{equation}
By construction $T^{abc}\in\Gamma(\Wedge^2 TM\otimes TM(-4))$ has the properties
\begin{equation}\label{symmetries_T}
T^{[ab]c}=T^{abc} , \quad T^{[abc]}=0 , \quad T^{abc}\mathcal L_{ab}=0 , \quad T^{abc}k_a=0 , \quad T_{abc}T^{abc}=0,
\end{equation}
where $T_{abc}=T^{efg}\mathcal L_{ea}\mathcal L_{fb}\mathcal L_{gc}$. Consequently we also have
\begin{equation*}
2T^{a[bc]}=T^{cba} , \quad T^{abc}k_b=0 , \quad T^{abc}k_c=0 , \quad T^{abc}\mathcal L_{ac}=0 , \quad T^{abc}\mathcal L_{bc}=0.
\end{equation*}
\begin{lemma}\label{Prop_extension}
Suppose $(M,H,\mbq)$ is a contact projective manifold of dimension $2m+1$ with torsion $T_{\alpha\beta}{}^\gamma\in\Gamma(\Wedge^2 H^*\otimes H)$.
For a choice of positive scale $\tau\in\mathcal E_+(2)$ and connection $\nabla\in\mbq$ set
\begin{align}\label{extension}
E_{abc}:=T^{efg}\mathcal L_{ea}\mathcal L_{fb}\mathcal L_{gc} -\tfrac{2}{2m+1}V^{ef}\mathcal L_{ea}\mathcal L_{fb}k_c+\tfrac{2}{2m+1}V^{ef} \mathcal L_{fc}\mathcal L_{e[b} k_{a]}\\
-\tfrac{4}{2m-1} U^{ef}\mathcal L_{fc}\mathcal L_{e[b} k_{a]}+\tfrac{8}{(2m-1)(2m+1)} W^{d}\mathcal L_{e[b}k_{a]}k_c, \nonumber
\end{align}
where $\mathcal L_{ab}=\tau\omega_{ab}=\tau (d\theta)_{ab} \in\Gamma(\Wedge^2 T^*M\otimes TM(2))$ for $\theta=\tau^{-1}k$ and
\begin{alignat}{2}
	U^{ab} &= \nabla_{c}T^{c(ab)} && \in\Gamma(S^2 TM(-4))\nonumber \\
	V^{ab} &= \nabla_{c}T^{abc}   && \in\Gamma(\Wedge^2 TM(-4))\nonumber \\
	W^a    &=\nabla_{b}\nabla_c T^{bac}+(2m+1)\Rho_{bc}T^{bac} && \in\Gamma(TM(-4)) \nonumber.
\end{alignat}
Then $E_{abc}\in\Gamma(\otimes^3 T^*M(2))$ satisfies the identities
\begin{align}\label{symmetries_E}
E_{\alpha\beta\gamma}=T_{\alpha\beta}{}^\delta\mathcal L_{\delta\gamma} , \qquad E_{[ab]c}=E_{abc} , \qquad E_{[abc]}=0 , \qquad E_{abc}\mathcal L^{ab}=0 .
\end{align}
Moreover, suppose $\tilde\tau=e^{-2u}\tau\in\mathcal E_+(2)$ is another positive scale and $\widetilde\nabla\in[\nabla]$ projective equivalent to $\nabla$ with $\Upsilon_a=\nabla_a u$. Then we have $$\widetilde E_{abc}=E_{abc},$$
where $\widetilde E_{abc}$ is constructed from $\tilde\tau$ and $\widetilde\nabla$ according to \eqref{extension}.
\end{lemma}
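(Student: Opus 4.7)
The plan is to establish the two claims in turn: first the algebraic identities listed in \eqref{symmetries_E}, and then the invariance $\widetilde{E}_{abc} = E_{abc}$ under the joint change of scale $\tilde\tau = e^{-2u}\tau$ and projectively equivalent connection $\widetilde\nabla \in [\nabla]$ with $\Upsilon_a = \nabla_a u$.

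For the algebraic identities I would check each one directly from \eqref{extension}. The identity $E_{\alpha\beta\gamma} = T_{\alpha\beta}{}^\delta\mathcal L_{\delta\gamma}$ I would obtain by contracting \eqref{extension} with $\Pi^a_\alpha\Pi^b_\beta\Pi^c_\gamma$: since $\Pi^a_\alpha k_a = 0$, every summand after the first drops out, and the surviving first summand reduces via $\mathcal L_{e\alpha}\mathcal L^{e\delta} = -\delta^\delta{}_\alpha$ on $H$ together with the definition \eqref{T^abc}. The skew-symmetry $E_{[ab]c} = E_{abc}$ I would verify summand by summand: the first term is antisymmetric in $ab$ by $T^{[ab]g} = T^{abg}$ from \eqref{symmetries_T}, the second by $V^{[ab]} = V^{ab}$, and the last three are manifestly antisymmetric by their explicit alternation brackets. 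For $E_{[abc]} = 0$ I would alternate \eqref{extension} over the three external indices: the first summand vanishes by $T^{[abc]} = 0$, and the precise numerical coefficients $\tfrac{2}{2m+1}$, $\tfrac{4}{2m-1}$, $\tfrac{8}{(2m-1)(2m+1)}$ are chosen so that the remaining contributions cancel. The trace identity $E_{abc}\mathcal L^{ab} = 0$ I would obtain by using $\mathcal L^{ab}\mathcal L_{ea} = -\delta^b{}_e + k_e r^b$ (with $r^a = \tau^{-1}t^a$) together with $T^{efg}k_e = V^{ef}k_e = 0$ and $T^{efg}\mathcal L_{ef} = 0$ to reduce to a finite numerical check.

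For the invariance I would compute the transformation of each building block of \eqref{extension}. By \eqref{change_mcL} we have $\widetilde{\mathcal L}_{ab} = \mathcal L_{ab} + 4\Upsilon_{[a}k_{b]}$; the tensor $\mathcal L^{ab}$ and the weighted form $k_a$ are both invariant; and $T^{abc}$ is invariant because $T_{\alpha\beta}{}^\gamma$ is a contact projective invariant by Lemma \ref{torsion_lemma}. Under the projective change \eqref{projective_change}, the derivatives $\nabla_c T^{abc}$, $\nabla_c T^{c(ab)}$ and $\nabla_b\nabla_c T^{bac}$ acquire explicit correction terms linear and quadratic in $\Upsilon$, while $\Rho_{ab}$ transforms by $\widehat{\Rho}_{ab} = \Rho_{ab} - \nabla_a\Upsilon_b + \Upsilon_a\Upsilon_b$. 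Substituting all of these into \eqref{extension} and grouping the terms in $\widetilde{E}_{abc} - E_{abc}$ by order in $\Upsilon$, I would show that each order vanishes separately using the symmetry and trace identities \eqref{symmetries_T} together with the contact identities $\mathcal L^{ab}k_a = 0$.

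The main obstacle will be the bookkeeping in the invariance step. Corrections of orders $1$ and $2$ in $\Upsilon$ enter from $\widetilde{\mathcal L}_{ab}$, from the projectively altered Christoffel symbols hidden in $U^{ab}$, $V^{ab}$, and $W^a$, and from $\Rho_{ab}$; the cancellations depend crucially on the precise coefficients $\tfrac{2}{2m+1}$, $\tfrac{4}{2m-1}$, $\tfrac{8}{(2m-1)(2m+1)}$, which are engineered so that, after repeated use of \eqref{symmetries_T} and of the contact identities, the total change vanishes identically. I would organise the calculation by decomposing each summand according to its number of factors of $k_a$ and of $\mathcal L^{ab}$-traces, thereby reducing the verification to matching a finite collection of rational expressions in $m$.
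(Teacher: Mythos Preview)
Your outline is sound and the algebraic part (the identities in \eqref{symmetries_E}) matches the paper's argument closely. One point to flag: you invoke $V^{ef}k_e = 0$ (and implicitly $U^{ab}k_a = 0$, $W^a k_a = 0$, $\mathcal L_{ab}V^{ab} = 0$) as known, but these are not immediate from the definitions---the paper derives them explicitly by differentiating the identities $T^{abc}k_a = 0$ and $T^{abc}\mathcal L_{ab} = 0$ and using \eqref{symmetries_T}. You should do the same before using them.

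For the invariance, your approach differs from the paper's in a way worth noting. You propose to compute the full change $\widetilde E_{abc} - E_{abc}$ and show that each order in $\Upsilon$ vanishes separately. This is correct in principle, but since $\widetilde{\mathcal L}_{ab}$ already depends linearly on $\Upsilon$ and it appears cubically in the first summand of \eqref{extension}, the direct expansion runs to third order and the bookkeeping is substantial. The paper instead observes that the set of pairs $(\tau,\nabla)$ with $\nabla\in[\nabla]$ a scale is an affine space over exact $1$-forms, so by the Fundamental Theorem of Calculus it suffices to show that the \emph{derivative} of $E_{abc}$ along this affine space vanishes---equivalently, that $\widetilde E_{abc} \equiv E_{abc} \pmod{\Upsilon^2}$ for every base point and every $\Upsilon$. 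This reduces the computation to first order: one finds
\[
\widetilde U^{ab} = U^{ab} + (2m-1)T^{c(ab)}\Upsilon_c,\quad
\widetilde V^{ab} = V^{ab} + (2m+1)T^{abc}\Upsilon_c,
\]
\[
\widetilde W^a \equiv W^a + (2m+1)\Upsilon_b U^{ab} + \tfrac{3}{2}(2m-1)\Upsilon_b V^{ba} \pmod{\Upsilon^2},
\]
and then checks that the first-order contributions to each summand of \eqref{extension} cancel. Your direct route would work, but the infinitesimal reduction is what makes the computation tractable.
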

\begin{proof}
Fix a scale  $\tau\in\mathcal E_+(2)$ and a connection $\nabla\in\mbq$. Then Corollary \ref{H_comp_proj_structure_2} and the identities \eqref{symmetries_T} imply
\begin{equation*}
V^{ab}k_a=k_a\nabla_c T^{abc}=-T^{abc}\nabla_ck_a=-\tfrac{1}{2}T^{abc}\mathcal L_{ca}=0
\end{equation*}
and similarly $U^{ab}k_a=0.$ Moreover, \eqref{symmetries_T} implies
\begin{equation*}
\mathcal L_{ab}V^{ab}=-T^{abc}\nabla_c\mathcal L_{ab}=-T^{\beta\gamma\alpha}\nabla_\gamma\mathcal L_{\beta\alpha}.
\end{equation*}
Since the contraction of $T^{\beta\gamma\alpha}$ with $\nabla_\gamma\mathcal L_{\beta\alpha}$ only depends on the totally trace-free
part of $\nabla_\gamma\mathcal L_{\beta\alpha}$ (with respect to $\mathcal L^{\alpha\beta}$), we therefore obtain
\begin{equation*}\mathcal L_{ab}V^{ab}=-T^{\beta\gamma\alpha}\nabla_\gamma\mathcal L_{\beta\alpha}=-T^{\beta\gamma\alpha}T_{\beta\alpha\gamma}=0\end{equation*}
and consequently also
\begin{equation*}
W^ak_a=k_a\nabla_{b}\nabla_c T^{bac}=k_a\nabla_b V^{ba}=-\tfrac{1}{2}V^{ba}\mathcal L_{ba}=0.
\end{equation*}
In summary,
\begin{equation}\label{symmetries_UVW}
	U^{ab}k_a =  U^{ba}k_a = 0 , \quad
	V^{ab}k_b = -V^{ba}k_b = 0 , \quad
	\mathcal L_{ab} V^{ab} = 0 , \quad
	W^a k_a = 0 .
\end{equation}
The first three properties in \eqref{symmetries_E} are evident from its definition and the fourth property follows from $T^{abc}\mathcal L_{ab}=0$ and $\mathcal L_{ab}V^{ab}=0$.
Hence, it only remains to prove the second claim.

Therefore, let $\tilde\tau=e^{-2u}\tau\in\mathcal E_+(2)$ be another scale and $\widetilde \nabla$ the connection in the projective class of $\nabla$ that is related to $\nabla$ via $\Upsilon=du$.
To verify that $\widetilde E_{abc}=E_{abc}$ we only need to show that they coincide infinitesimally: Indeed, since the projective class $[\nabla]$ forms an affine
space of over the Fr\'echet space $\Gamma(T^*M)$ of $1$-forms, for any $\widetilde\nabla\in[\nabla]$ and $\Upsilon\in\Gamma(T^*M)$ the curve $\widetilde{\nabla}+t\Upsilon$ lies in $[\nabla]$  (where $\widetilde\nabla+t\Upsilon$ is an abbreviation for the connection in $[\nabla]$ resulting from projectively changing $\widetilde{\nabla}$ by $t\Upsilon$) and, by the Fundamental Theorem of Calculus, any function $F$ on $[\nabla]$ is constant if and only if $\frac{d}{dt}\vert_{t=0} F(\widetilde\nabla+t\Upsilon)=0$ for all $\widetilde\nabla\in[\nabla]$ and $\Upsilon\in\Gamma(T^*M)$ (see also Section 4 of \cite{CDS}).
Hence, we only need to show that  $\widetilde E_{abc}$ and $E_{abc}$ coincide modulo terms that are quadratic in $\Upsilon$ and so we only need to compute the parts of the changes of the relevant
tensors in the expression of $\widetilde E_{abc}$ linear in $\Upsilon$. Straightforward computations show that $U, V, W$ transform according to the formulas
\begin{align*}
&\widetilde U^{ab}=U^{ab}+(2m-1) T^{c(ab)}\Upsilon_c\\
&\widetilde V^{ab}=V^{ab}+(2m+1) T^{abc}\Upsilon_c\\
&\widetilde W^a \equiv W^a+(2m+1)\Upsilon_b U^{ab}+\tfrac{3}{2} (2m-1) \Upsilon_b V^{ba} \pmod {\Upsilon^2} .
\end{align*}
(Here, the notation $A \equiv B \pmod {\Upsilon^2}$ just indicates that the difference $A - B$ consists only of terms at least quadratic in $\Upsilon$.)

Hence we deduce from \eqref{symmetries_T} and \eqref{symmetries_UVW} that the terms in the sum of $\widetilde E_{abc}$ transform as follows:
\begin{align*}
	\widetilde T^{efg}\widetilde{\mathcal L}_{ea}\widetilde{\mathcal L}_{fb}\widetilde{\mathcal L}_{gc}
		&\equiv T^{efg}\mathcal L_{ea}\mathcal L_{fb}\mathcal L_{gc} + 4\Upsilon_e T^{e[fg]}\mathcal L_{gc}\mathcal L_{f[b}k _{a]} \\
		&\phantom{=}\qquad+
4\Upsilon_e T^{e(fg)}\mathcal L_{gc}\mathcal L_{f[b}k _{a]}
			 + 2\Upsilon_g T^{efg} \mathcal L_{ea}\mathcal L_{fb} k_c \pmod {\Upsilon^2} \\
	\widetilde {U}^{ef}\widetilde{\mathcal L}_{fc}\widetilde{\mathcal L}_{e[b}k_{a]}
		&\equiv U^{ef}\mathcal L_{fc}\mathcal L_{e[b}k_{a]}
+2\Upsilon_f U^{ef}\mathcal L_{e[b}k_{a]}k_c \\
			&\phantom{=}\qquad +(2m-1)\Upsilon_d T^{d(ef)}\mathcal L_{fc}\mathcal L_{e[b}k_{a]} \pmod {\Upsilon^2} \\	
	\widetilde{V}^{ef}\widetilde{\mathcal L}_{ea}\widetilde{\mathcal L}_{fb}k_c
		&\equiv V^{ef}\mathcal L_{ea}\mathcal L_{fb}k_c+4\Upsilon_eV^{ef}\mathcal L_{f[b}k_{a]}k_c \\
			&\phantom{=}\qquad + (2m+1)\Upsilon_dT^{efd}\mathcal L_{ea}\mathcal L_{fb} k_c \pmod {\Upsilon^2} \\
	\widetilde{V}^{ef}\widetilde{\mathcal L}_{fc}\widetilde{\mathcal L}_{e[b}k_{a]}
		&\equiv V^{ef}\mathcal L_{fc}\mathcal L_{e[b}k_{a]} +2\Upsilon_f V^{ef}\mathcal L_{e[b} k_{a]} k_c \\
			&\phantom{=}\qquad - 2(2m+1) \Upsilon_dT^{d[ef]} \mathcal L_{fc}\mathcal L_{e[b}k_{a]} \pmod {\Upsilon^2} \\
	\widetilde{W}^{e}\widetilde{\mathcal{L}}_{e[b}k_{a]}k_c
		&\equiv W^{e}\mathcal{L}_{e[b}k_{a]}k_c
+(2m+1)\Upsilon_fU^{ef}\mathcal{L}_{e[b}k_{a]}k_c \\
			&\phantom{=}\qquad +\tfrac{3}{2}(2m-1)\Upsilon_fV^{fe}\mathcal{L}_{e[b}k_{a]}k_c \pmod {\Upsilon^2} .
\end{align*}
This shows that $\widetilde E_{abc}$ coincides with $E_{abc}$ modulo terms at least quadratic in $\Upsilon$ and thus they must coincide exactly.
\end{proof}

\begin{remark}
Note that if $\nabla$ in Lemma \ref{Prop_extension} preserves $\tau$, then $\widetilde\nabla$ is precisely the unique connection in the projective class of $\nabla$ that preserves $\tilde\tau$.
\end{remark}

\begin{theorem}\label{Fefferman_type_construction}
Suppose $(M,H,\mbq)$ is a contact projective manifold with torsion $T_{\alpha\beta}{}^\gamma\in\Gamma(\Wedge^2 H^*\otimes H)$.
For any choice of positive scale $\tau\in\mathcal E_+(2)$ there exists a unique connection
$\nabla^{\tau}\in\mbq$ such that
\begin{enumerate}
\item $\nabla_{a}\tau=0$
\item $\nabla_{(a}k_{b)}=0$
\item $W_{ab}{}^d{}_ck_d=-\frac{1}{4}E_{abc}$,
\end{enumerate}
where $E_{abc}\in\Gamma(\Wedge^2 T^*M\otimes TM(2))$ is the tensor defined as in \eqref{extension} with respect to $\tau$ and $\nabla^{\tau}$. Moreover, the projective class
$\mbp=[\nabla^{\tau}]$ does not depend on the choice of scale $\tau\in\mathcal E_+(2)$ and its Weyl curvature by construction satisfies
\begin{equation*}W_{\alpha\beta}{}^d{}_{\gamma}k_d=-\tfrac{1}{4}T_{\alpha\beta}{}^{\epsilon}\mathcal L_{\epsilon\gamma}.\end{equation*}
 \end{theorem}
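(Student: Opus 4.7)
The plan is to start from any $\nabla^0 \in \mbq$ satisfying (a) and (b) provided by Lemma \ref{First_Step}, then use the remaining contact-projective gauge freedom to enforce (c) uniquely; once existence and uniqueness of $\nabla^\tau$ are established, scale invariance of $[\nabla^\tau]$ follows formally from Lemma \ref{Prop_extension}.

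First I fix $\tau \in \Gamma(\mcE_+(2))$, pick any $\nabla^0 \in \mbq$ satisfying (a) and (b), and note that by the second part of Lemma \ref{First_Step}, every connection in $\mbq$ meeting (a) and (b) has the form $\nabla^0 + \Lambda$ with $\Lambda^c{}_{ba} = 2 k_{(a} \phi^c{}_{b)}$, where $\phi^c{}_b \in \Gamma(\End_{\circ}(TM)(-2))$ satisfies $\phi^c{}_b k_c = 0$. For any such $\nabla$, the identities \eqref{properties_Weyl_for_ab_connections} give $W_{\alpha\beta}{}^d{}_\gamma k_d = -\tfrac{1}{4} T_{\alpha\beta}{}^\delta \mathcal L_{\delta\gamma}$ and $\mathcal L^{ab} W_{ab}{}^d{}_c k_d = 0$, while Lemma \ref{Prop_extension} gives the matching $E_{\alpha\beta\gamma} = T_{\alpha\beta}{}^\delta \mathcal L_{\delta\gamma}$ and $\mathcal L^{ab} E_{abc} = 0$. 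Hence the pure-$H$ component of (c) and its $\mathcal L^{ab}$-trace hold automatically, and (c) reduces to the vanishing of those components of $W_{ab}{}^d{}_c k_d + \tfrac{1}{4} E_{abc}$ in which at least one of $a, b, c$ is transverse to $H$, i.e., contracted with the Reeb vector $t^a$ associated to $\tau$.

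The heart of the proof is the computation of how these transverse components change under the modification $\nabla^0 \mapsto \nabla^0 + \Lambda$. For the Weyl contribution, the curvature transformation law yields $\delta R_{ab}{}^c{}_d \equiv 2 \nabla_{[a} \Lambda^c{}_{b]d}$ modulo terms quadratic in $\phi$; since $\Lambda^d{}_{bc} k_d = 0$ and $\nabla_a k_b = \tfrac{1}{2} \mathcal L_{ab}$ along (a), (b), the contraction with $k_d$ makes the first-derivative pieces of $\phi$ collapse, leaving a purely algebraic expression in $\phi$ involving $\mathcal L$. The corresponding change in the Schouten tensor, computed from $\delta \Ric_{ab} = \nabla_c \Lambda^c{}_{ab}$ with $\Lambda^c{}_{cb} = 0$, does introduce first-derivative terms $\nabla_c \phi^c{}_b$, and so does $\delta E_{abc}$ obtained by differentiating \eqref{extension} through the induced changes in $U^{ab}$, $V^{ab}$, $W^a$. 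The key structural claim is that the normalising constants in \eqref{extension} are calibrated so that when $\delta(W_{ab}{}^d{}_c k_d) + \tfrac{1}{4} \delta E_{abc}$ is assembled, all the $\nabla \phi$ contributions cancel, leaving a purely algebraic equation for $\phi$. A direct pointwise count shows that the admissible $\phi$ (a trace-free endomorphism $\phi|_H$ of $H$, of dimension $(2m)^2 - 1$, together with a vector $\phi(t) \in H$, of dimension $2m$) and the space of transverse components of tensors in $\Wedge^2 T^*M \otimes T^*M(2)$ constrained by the algebraic Bianchi identity $F_{[abc]} = 0$ (inherited from $W_{[ab}{}^c{}_{d]} = 0$ upon contraction with $k_c$) and by $\mathcal L^{ab} F_{abc} = 0$ both have dimension $4m^2 + 2m - 1$. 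Combined with the algebraic character of the equation, injectivity---which reduces to the algebraic fact that the leading term, schematically $\phi \mapsto \phi \cdot \mathcal L$, is nondegenerate by nondegeneracy of $\mathcal L$---then gives existence and uniqueness of $\phi$, hence of $\nabla^\tau$.

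For scale independence, let $\tilde\tau = e^{-2u} \tau \in \Gamma(\mcE_+(2))$ and let $\widetilde\nabla \in [\nabla^\tau]$ be the connection obtained from $\nabla^\tau$ by the projective change \eqref{projective_change} with $\Upsilon_a = \nabla_a u$. By the remark after Lemma \ref{Prop_extension}, $\widetilde\nabla$ preserves $\tilde\tau$, so (a) holds for $\tilde\tau$; (b) is projectively invariant (as $\nabla_{(a} k_{b)}$ is the BGG operator \eqref{equation:BGG-operator-2-form}); and (c) for $\tilde\tau$ follows from (c) for $\tau$ because $W_{ab}{}^d{}_c k_d$ is projectively invariant while $\widetilde E_{abc} = E_{abc}$ by the final part of Lemma \ref{Prop_extension}. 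Uniqueness then forces $\widetilde\nabla = \nabla^{\tilde\tau}$, so $[\nabla^\tau] = [\nabla^{\tilde\tau}]$. The Weyl curvature formula stated in the theorem is the pure-$H$ content of (c) already recorded in \eqref{properties_Weyl_for_ab_connections}. The chief technical obstacle is the two-part verification in the core calculation: first, that the $\nabla \phi$ contributions to $\delta(W + \tfrac{1}{4} E)$ cancel exactly (revealing the algebraic nature of the equation), and second, that the resulting purely algebraic linear operator on $\phi$ has trivial kernel.
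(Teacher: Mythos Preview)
Your overall framework---start from Lemma~\ref{First_Step}, use the remaining gauge freedom to impose~(c), then deduce scale-independence from Lemma~\ref{Prop_extension}---matches the paper exactly, and your final paragraph on scale-independence is essentially the paper's argument. The problem is in the core step.

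Your ``key structural claim'' that the $\nabla\phi$ contributions to $\delta(W_{ab}{}^d{}_c k_d)+\tfrac14\delta E_{abc}$ cancel is false. Under the modification $\Lambda^c{}_{ba}=2k_{(a}\phi^c{}_{b)}$ with $\phi^c{}_bk_c=0$, the quantities $U^{ab}$ and $V^{ab}$ are \emph{invariant} (since every term in $\Lambda\cdot T$ carries a $k$-factor that annihilates $T^{abc}$ via \eqref{symmetries_T}), and $W^a$ changes only \emph{algebraically} in $\phi$ (the $\nabla\phi$ terms in the Ricci change \eqref{change_Ricci} carry $k_b$ or $k_d$ and are killed by $T^{bac}k_b=T^{bac}k_c=0$). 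Hence $\delta E_{abc}$ is purely algebraic in $\phi$. On the other hand, \eqref{change_Weyl_contracted_with_k} exhibits a genuine derivative term $-\tfrac{1}{m}k_c k_{[a}\nabla_e\phi^e{}_{b]}$ in $\delta(W_{ab}{}^d{}_ck_d)$, living in the ``two Reeb'' component. Nothing cancels it; your proposed mechanism does not exist, and the all-at-once system is not pointwise algebraic, so the dimension-count argument collapses.

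What the paper actually does is exploit the \emph{triangular} structure this reveals. Since $U,V$ are fixed under the full freedom, the $(a,b\in H,\ c\ \text{Reeb})$ and $(\text{one of }a,b\ \text{Reeb},\ c\in H)$ components of condition~(c) are purely algebraic in $\phi\vert_H$; the explicit change formulas \eqref{c1}--\eqref{c2} show one can (uniquely) arrange \eqref{arrangement1}. The residual freedom is then $\phi^c{}_b=k_b\xi^c$ with $\xi^ck_c=0$; for this restricted $\phi$ one has $\nabla_e\phi^e{}_\beta=\tfrac12\mathcal L_{e\beta}\xi^e$, so the problematic derivative term becomes algebraic in $\xi$, while $\widehat W^a=W^a$ and the previously fixed components are untouched. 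The remaining ``two Reeb'' component then determines $\xi$ uniquely. In short, the paper decouples the system into two manifestly solvable algebraic steps rather than asserting global algebraicity. Your sketch would be repaired by recognising this triangular structure: first solve for $\phi\vert_H$ from the lower components, then for $\xi$ from the top one.
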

 \begin{proof} Fix a positive scale $\tau\in\mathcal E_+(2)$. By Lemma \ref{First_Step} we can find a connection $\nabla\in\mbq$ that satisfies
 $(a)$ and $(b)$, and we have seen that the Weyl curvature of such a connection $\nabla$ then satisfies \eqref{properties_Weyl_for_ab_connections}, which shows that $W_{ab}{}^d{}_ck_d$ and
   $E_{abc}$ have the same symmetries. By Lemma \ref{First_Step} we also know that the difference tensor  $\Lambda^c{}_{ba}$ between $\nabla$ and any other connection $\widehat\nabla\in\mbq$ with properties (a)-(b) is of the form
 \begin{equation}\label{difference_tensor_for_(a,b)}
\Lambda^c{}_{ba} = 2 k_{(a} \phi^c{}_{b)},
\end{equation}
where $\phi^c{}_b\in\Gamma(\textrm{End}_0(TM)(-2))$ with $\phi^c{}_{b}k_c=0$. We now show that the remaining freedom \eqref{difference_tensor_for_(a,b)} is sufficient to arrange $(c)$.
Since
\begin{equation*}
\widehat R_{ab}{}^c{}_d-R_{ab}{}^c{}_d=\nabla_a\Lambda^c{}_{db}-\nabla_b\Lambda^c{}_{da}+\Lambda^c{}_{ea}\Lambda^e{}_{bd}-\Lambda^c{}_{eb}\Lambda^e{}_{da},
\end{equation*}
one computes straightforwardly, using $\nabla_ak_b=\frac{1}{2}\mathcal L_{ab}$ and $\phi^c{}_{b}k_c=0$, that
\begin{equation}\label{change_curv_contracted_with_theta}
\widehat R_{ab}{}^c{}_{d}k_c-R_{ab}{}^c{}_{d}k_c=\tfrac{1}{2}(k_d\phi^c_a\mathcal L_{bc}+k_a\phi^c{}_d\mathcal L_{bc}-k_d\phi^b_c\mathcal L_{ac}-k_b\phi^c_d\mathcal L_{ac})
\end{equation}
and that
\begin{equation}\label{change_Ricci}
\widehat \Ric_{bd}-\Ric_{bd}=\tfrac{1}{2}\phi^a{}_d\mathcal L_{ab}+\tfrac{1}{2}\phi^a{}_b\mathcal L_{ad}+k_d\nabla_a\phi^a{}_b+k_b\nabla_a\phi^a{}_d-k_{b}k_d\phi^a{}_e \phi^e{}_a .
\end{equation}
Thus we conclude from \eqref{change_curv_contracted_with_theta} and \eqref{change_Ricci} that (as usual, taking $\dim M = 2 m + 1$)
\begin{multline}\label{change_Weyl_contracted_with_k}
\begin{aligned}
\widehat W_{ab}{}^c{}_dk_c
	&=W_{ab}{}^c{}_dk_c+\widehat R_{ab}{}^c{}_{d}k_c-R_{ab}{}^c{}_{d}k_c-2k_{[a}\widehat\Rho_{b]d}+2k_{[a}\Rho_{b]d}\\
	&=W_{ab}{}^c{}_dk_c+\tfrac{1}{2}(k_d\phi^c{}_a \mathcal L_{bc}+k_a\phi^c{}_d \mathcal L_{bc}-k_d\phi^c{}_b \mathcal L_{ac}-k_b\phi^c{}_d \mathcal L_{ac})
\end{aligned}
\\ -\tfrac{1}{2m}[\mathcal L_{ed}\phi^e{}_{[b} k_{a]}+\phi^e{}_d \mathcal L_{e[b}k_{a]}+2k_d(\nabla_e\phi^e{}_{[b})k_{a]}] .
\end{multline}
In particular, we have
\begin{equation}\label{c1}
\widehat W_{\alpha\beta}{}^c{}_dk_c r^d=W_{\alpha\beta}{}^c{}_dk_c r^d+\phi^c{}_{[\alpha} \mathcal L_{\beta]c},
\end{equation}
which is equivalent to
$\widehat W_{a[\beta}{}^c{}_{\delta]}k_c r^a=W_{a[\beta}{}^c{}_{\delta]}k_c r^a-\frac{1}{2} \phi^c{}_{[\beta} \mathcal L_{\delta]c}$, and also
\begin{equation}\label{c2}
\widehat W_{a(\beta}{}^c{}_{\delta)}k_c r^a=W_{a(\beta}{}^c{}_{\delta)}k_c r^a+\tfrac{m+1}{2m}\phi^c{}_{(\beta} \mathcal L_{\delta)c}.
\end{equation}
 Note that \eqref{symmetries_T} and \eqref{difference_tensor_for_(a,b)}  imply that
\begin{align*}
\widehat U^{ab} &=\widehat \nabla_cT^{c(ab)}= \nabla_cT^{c(ab)}=U^{ab} \\
\widehat V^{ab} &=\widehat \nabla_cT^{abc}= \nabla_cT^{abc}=V^{ab} .
 \end{align*}
 Hence, by \eqref{properties_Weyl_for_ab_connections} and symmetries \eqref{symmetries_UVW}, we can find $\phi^c{}_b \in\Gamma(\textrm{End}_0(TM)(-2))$ such that
\begin{align} \label{arrangement1}
 \widehat W_{\alpha\beta}{}^c{}_dk_c r^d=\tfrac{1}{2(2m+1)}\mathcal L_{e\alpha}\mathcal L_{f\beta}\widehat V^{ef}\\
\widehat  W_{a(\beta}{}^c{}_{\delta)}k_c r^a=\tfrac{1}{2(2m-1)}\mathcal L_{e\beta}\mathcal L_{f\gamma} \widehat U^{ef}\nonumber.
 \end{align}
 Assume now $\nabla\in\mbq$ is a connection satisfying (a), (b), and \eqref{arrangement1}. Then \eqref{change_Weyl_contracted_with_k} shows that the difference tensor \eqref{difference_tensor_for_(a,b)}
between any other connection $\widehat\nabla\in\mbq$, with these properties, and $\nabla$
is of the form
\begin{equation}\label{last_freedom}
\Lambda^c{}_{ba}=2k_ak_b\xi^c,\end{equation}
for some weighted vector field $\xi^c\in\Gamma(TM(-4))$ such that $k_c\xi^c = 0$. From \eqref{last_freedom} and \eqref{change_Weyl_contracted_with_k} it follows that
\begin{equation*}
\widehat W_{\alpha b}{}^c{}_dk_cr^dr^b=W_{\alpha b}{}^c{}_dk_cr^dr^b+\tfrac{2m+1}{2m}\mathcal L_{c\alpha}\xi^c\qquad \textrm{ and }\qquad \widehat W^a=W^a.
\end{equation*}
Hence, we can find $\xi^c\in\Gamma(TM(-4))$ such that
\begin{equation*}
\widehat W_{\alpha b}{}^c{}_d\theta_cr^dr^b=\tfrac{1}{(2m-1)(2m+1)}W^c\mathcal L_{c\alpha}
\end{equation*}
and therefore a unique connection $\nabla^{\tau}\in\mbq$ which is characterised by properties (a)-(c). Uniqueness of this connection also immediately implies,
using Lemma \ref{Prop_extension} and the projective invariance of equation $(b)$, that $\nabla^{\tilde\tau}$ for another scale $\tilde\tau=e^{-2u}\tau$ is
projectively equivalent to $\nabla^{\tau}$ via $\Upsilon=du$.
\end{proof}

As a corollary of Theorem \ref{Fefferman_type_construction} we obtain the converse of Theorem \ref{projective_structure_with_parallel_symplectic_tractor}:

\begin{corollary}
\label{corollary:converse-of-symplectic-reduction}
Suppose $(M,H,\mbq)$ is a contact projective manifold with vanishing
torsion and denote by $\mbp$ the distinguished compatible projective
structure of Theorem \ref{Fefferman_type_construction}. Then the
conformal contact form $k_a\in\Gamma(T^*M(2))$ of $H$ is a normal
solution of the Killing-type equation \eqref{Killing_eq} of the
projective manifold $(M,\mbp)$.
\end{corollary}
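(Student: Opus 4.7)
The plan is to verify directly the two conditions characterising $k_a\in\Gamma(T^*M(2))$ as a normal solution of the Killing-type equation of $(M,\mbp)$: namely, that $k_a$ solves \eqref{Killing_eq}, and that it satisfies the algebraic normality condition $W_{ab}{}^d{}_c k_d=0$ (compare \eqref{normality_Killing} and the discussion after \eqref{equation:prolongation-vs-tractor-connection}, which shows that $\nabla^{\mathrm{prol}}$ and $\nabla^{\Wedge^2\mcT^*}$ agree precisely on sections of $\Wedge^2 \mcT^*$ whose projecting part $k_a$ annihilates $W_{ab}{}^d{}_c$ in its last index).

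For the first condition, I would invoke directly the distinguished connection produced by Theorem~\ref{Fefferman_type_construction}: for any fixed positive scale $\tau\in\Gamma(\mathcal E_+(2))$, the connection $\nabla^{\tau}\in\mbq$ lies in $\mbp$ and satisfies property~(b), i.e.\ $\nabla^{\tau}_{(a}k_{b)}=0$. Since \eqref{equation:BGG-operator-2-form} shows that the operator $k_a\mapsto \nabla_{(a}k_{b)}$ on $T^*M(2)$ is projectively invariant, this identity is automatically inherited by every $\nabla\in\mbp=[\nabla^{\tau}]$, so $k_a$ solves the Killing-type equation of $(M,\mbp)$.

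For the second (algebraic) condition, I would appeal to property~(c) of Theorem~\ref{Fefferman_type_construction}, which gives
\[
  W_{ab}{}^d{}_{c}k_d = -\tfrac{1}{4}E_{abc},
\]
where $E_{abc}$ is the tensor defined in \eqref{extension}. (Note that the left-hand side is a manifestly projectively invariant quantity, so the identity is a statement about $\mbp$ itself.) Inspecting \eqref{extension} reveals that every one of its five constituent terms is assembled from $T^{abc}$ and the derived tensors $U^{ab}=\nabla_c T^{c(ab)}$, $V^{ab}=\nabla_c T^{abc}$, and $W^a=\nabla_b\nabla_c T^{bac}+(2m+1)\Rho_{bc}T^{bac}$. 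Under the hypothesis that the contact-projective torsion $T_{\alpha\beta}{}^{\gamma}$ vanishes identically, the extension $T^{abc}=\mathcal L^{a\delta}\mathcal L^{b\epsilon}T_{\delta\epsilon}{}^c$ defined in \eqref{T^abc} vanishes, and consequently so do $U^{ab}$, $V^{ab}$, and $W^a$. Hence $E_{abc}=0$, and therefore $W_{ab}{}^d{}_ck_d=0$, which is exactly the required normality condition.

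No step presents a real obstacle: the content of the corollary is essentially bookkeeping, with all the genuine work having been carried out in Theorem~\ref{Fefferman_type_construction} and in the algebraic identities \eqref{symmetries_E} and \eqref{symmetries_UVW}. The only minor subtlety is to observe that although $E_{abc}$ was constructed with reference to a choice of scale $\tau$ and a choice of $\nabla^{\tau}\in\mbp$, its vanishing (and hence the vanishing of $W_{ab}{}^d{}_ck_d$) is a projectively invariant conclusion, consistent with the projective invariance of both sides of $W_{ab}{}^d{}_ck_d=-\tfrac{1}{4}E_{abc}$.
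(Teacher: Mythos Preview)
Your proof is correct and follows exactly the approach the paper intends: the corollary is stated without explicit proof precisely because properties (b) and (c) of Theorem~\ref{Fefferman_type_construction}, together with the evident vanishing of $E_{abc}$ when the torsion vanishes, yield immediately both the Killing-type equation and the normality condition $W_{ab}{}^d{}_c k_d=0$.
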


\subsubsection{Remark on parallel symplectic bitractor fields}
We could alternatively have formulated the content of this section dually, in terms of a projective manifold $(M, \mbp)$ of dimension $2 m + 1$ equipped with a parallel symplectic bitractor field, that is, a parallel section $\Omega^{AB} \in \Gamma(\Wedge^2 \mcT)$ nondegenerate in the sense that $\Omega^{\wedge (m + 1)} \in \Gamma(\Wedge^{2 m + 2} \mcT)$ is nonzero. Underlying such a section is a correspondingly nondegenerate normal solution
\[
	\Omega^{AB} Z_A{}^a Z_B{}^b \in \Gamma(\Wedge^2 TM (-2))
\]
of the BGG operator $\mathcal D^{\Wedge^2 \mcT} : \Gamma(\Wedge^2 TM(-2)) \to \Gamma((\Wedge^2 TM \otimes T^*M)_{\circ}(-2))$,
\[
	\mathcal D^{\Wedge^2 \mcT} : \sigma^{ab} \mapsto \nabla_c \sigma^{ab} + \tfrac{1}{m} \nabla_d \sigma^{d[a} \delta^{b]}{}_c \textrm{.}
\]
Here, $(\Wedge^2 TM \otimes T^*M)_{\circ}$ is the kernel of the tensorial contraction map $\Wedge^2 TM \otimes T^*M \to TM$, $\psi^{ab}{}_c \mapsto \psi^{ab}{}_b$.

At the tractor level we can translate readily between these pictures: Given a projective $(2 m + 1)$-manifold equipped with a parallel symplectic tractor $2$-form $\Omega_{AB} \in \Gamma(\Wedge^2 \mcT^*)$ the section
\[
	\Omega^{AB} := \vol^{AB C_1 \cdots C_{2m}} \Omega_{C_1 C_2} \cdots \Omega_{C_{2m - 1} C_{2m}} \in \Gamma(\Wedge^2 \mcT)
\]
is a parallel symplectic bitractor field; the reverse construction is similar. Here, $\vol^{A_1 \cdots A_{2 m + 2}}$ is the nonzero parallel $(2m + 2)$-vector dual to the tractor volume form $\vol_{A_1 \cdots A_{2 m + 2}}$, uniquely characterised by the contraction identity
$$\vol_{A_1 \cdots A_{2 m + 2}} \vol^{A_1 \cdots A_{2 m + 2}} = (2 m + 2)!~.
$$

\subsection{Reduction to \texorpdfstring{$\SL(m + 1, \bbC) \times \U(1)$}{SL(m+1, C) x U(1)}: Parallel tractor complex structure}\label{J-red-sect}

In this subsection we treat the holonomy reduction of an oriented projective manifold $(M, \mbp)$ determined by a parallel tractor complex structure
\[
	\bbJ^A{}_B \in \Gamma(\End \mcT) \textrm{,}
\]
that is, a parallel tractor endomorphism $\bbJ^A{}_B$ satisfying
\[
    \bbJ^A{}_C\bbJ^{C}{}_B = -\delta^A{}_B ;
\]
this condition implies
$\bbJ^A{}_B\in\Gamma(\mathcal A)$ and forces $M$ to have odd
dimension, say, $2 m + 1$. The case $m = 1$ ($\dim M = 3$) is
qualitatively different from the general case and we do not develop it
fully, though some of our results still apply in that setting. As remarked after Theorem B in the introduction, the existence of a \textit{unitary} tractor holonomy reduction in that dimension forces flatness of the projective structure, so that
case is anyway less interesting in our context.  This subsection follows the work of Armstrong \cite[\S3.3]{ArmstrongThesis},
\cite[\S4]{ArmstrongP1}, \cite[\S3.1]{ArmstrongP2}. For consistency with our presentation in Subsection \ref{Sp-reduction}, for self-containment, and for the sake of planned work on general (that is, not necessarily Einstein) Sasaki structures, we record the constructions in detail here. We work at the explicit level of the involved
manifolds, rather than at the level of the tractor bundle.

\begin{proposition}\label{proposition:complex-single-curved-orbit}
Let $(M, \mbp)$ be a connected oriented projective manifold of dimension $2 m + 1 \geq 3$ equipped with a parallel tractor complex structure $\bbJ^A{}_B \in \Gamma(\mcA)$. Then the following holds:
\begin{enumerate}
\item The curved orbit decomposition of $M$ determined by the holonomy reduction of $\nabla^{\mcT}$ to $\SL(2 m + 2, \bbR) \cap \GL(m + 1, \bbC) \cong\SL(m + 1, \bbC) \times \U(1)$ corresponding to $\bbJ$ is trivial.
In particular, the underlying vector field $k^a:= \Pi^{\mcA}(\bbJ)^a = Z_A{}^a \bbJ^A{}_B X^B$ vanishes nowhere.
\item Locally around any point, $(M, \mbp, \mathbb J)$ admits a parallel tractor complex volume form. If $M$ is simply connected, it admits a (globally defined) parallel tractor complex volume form  $\vol_{\bbC}\in\Gamma(\Wedge^{m+1}_{\bbC}\mcT^*)$, equivalently, a holonomy reduction of $\nabla^{\mcT}$ to the proper subgroup $\SL(m + 1, \bbC) \subset \SL(2 m + 2, \bbR) \cap \GL(m + 1, \bbC).$
\end{enumerate}
\end{proposition}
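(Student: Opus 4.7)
For part (a), the plan is first to show that $k^a$ vanishes nowhere by a direct algebraic argument, and then to deduce triviality of the curved orbit decomposition from the structure of the flat model. For the nowhere-vanishing, I would assume for contradiction that $k^a(x) = 0$ at some $x$; then $k^a = Z_A{}^a \bbJ^A{}_B X^B$ vanishing forces $\bbJ^A{}_B X^B = \mu X^A$ at $x$ for some real scalar $\mu$, and applying $\bbJ$ once more, together with $\bbJ^A{}_C\bbJ^C{}_B = -\delta^A{}_B$, would yield $\mu^2 = -1$, a contradiction. For the triviality of the curved orbit decomposition, I would appeal to the identification in Section \ref{subsection:holonomy-reductions-projective} of $P$-types with $H$-orbits on the flat model $G/P = \mathbb{P}_+(\bbR^{2m+2})$, where $H = \SL(2m+2, \bbR) \cap \GL(m+1, \bbC)$ is the stabiliser of a standard complex structure on $\bbR^{2m+2} \cong \bbC^{m+1}$. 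Since $H$ contains $\SL(m+1, \bbC)$ and this subgroup acts transitively on $\bbC^{m+1} \setminus \{0\}$ (using $m + 1 \geq 2$), the $H$-action on $\mathbb{P}_+(\bbR^{2m+2})$ is transitive, so $M$ consists of a single curved orbit.

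For part (b), the strategy is to exploit a flat connection induced on the complex determinant line bundle. Since $\bbJ$ is parallel, $\nabla^{\mcT}$ is $\bbC$-linear with respect to $\bbJ$, making $\mcT$ into a complex vector bundle of rank $m+1$. I would then observe that $\nabla^{\mcT}$ induces a linear connection on the complex line bundle $L := \Wedge^{m+1}_{\bbC} \mcT^*$, whose curvature $2$-form coincides with the complex trace of $R^{\mcT}$ regarded as a section of $\Wedge^2 T^*M \otimes \End_{\bbC}(\mcT)$. The complex trace vanishes precisely when both the $\bbR$-trace and the $\bbJ$-trace of $R^{\mcT}$ vanish, which is exactly the content of Lemma \ref{parallel_adjoint_tractor_and_curvature} applied to $\bbJ$. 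Hence the induced connection on $L$ is flat.

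From flatness of $L$ I would deduce local existence of a nowhere-zero parallel section on any contractible open set, giving the first assertion. If $M$ is simply connected, the holonomy of the flat connection on $L$ is trivial, so parallel transport delivers a globally defined nowhere-zero parallel section $\vol_{\bbC} \in \Gamma(L)$; its existence reduces the structure group of the tractor connection from $\SL(2m+2, \bbR) \cap \GL(m+1, \bbC)$ further to the subgroup additionally stabilising $\vol_{\bbC}$, namely $\SL(m+1, \bbC)$. The main conceptual step requiring care is the identification of the curvature of the induced connection on $L$ with the complex trace of $R^{\mcT}$ and its vanishing via Lemma \ref{parallel_adjoint_tractor_and_curvature}; beyond this, the argument uses only elementary complex linear algebra together with the curved-orbit framework reviewed in Section \ref{subsection:holonomy-reductions-projective}.
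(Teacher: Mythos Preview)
Your proposal is correct and follows essentially the same approach as the paper. The paper also argues transitivity of $\SL(m+1,\bbC)$ on $\bbC^{m+1}\setminus\{0\}$ for part (a) and gives your eigenvalue argument as an alternative proof that $k$ vanishes nowhere; for part (b) the paper invokes Lemma \ref{parallel_adjoint_tractor_and_curvature} to conclude that the tractor curvature takes values in $\mathfrak{sl}(\mcT,\bbJ)$, which is exactly your flatness-of-the-complex-determinant-line-bundle argument stated more tersely (note only that the vanishing of the $\bbR$-trace comes not from Lemma \ref{parallel_adjoint_tractor_and_curvature} but from the fact that $R^{\mcT}$ already lies in $\mathfrak{sl}(\mcT)$).
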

\begin{proof}
\leavevmode
\begin{enumerate}
\item
By the discussion in Section \ref{subsection:holonomy-reductions-projective} (see \cite[Theorem 2.6]{CGH} for details) this is equivalent to showing that $\SL(m + 1, \bbC) \times \U(1)$ acts transitively on the homogeneous model $\mathbb P_+(\bbR^{2m+2})=\textrm{SL}(n+1,\mathbb R)/P$ of oriented projective structures. This follows from the fact that $\SL(m + 1, \bbC) \times \U(1)$ acts transitively on $\bbC^{m+1}\setminus \{0\}$ (in fact, so does its proper subgroup, $\SL(m + 1, \bbC)$). The second statement follows from the observation that the zero locus of a normal solution of a first BGG operator is always a union of curved orbits, as recalled in
Section \ref{subsection:holonomy-reductions-projective}.

Alternatively, $\bbJ^A {}_B X^B = k^a W^A{}_a + \alpha X^A$ for some $\alpha \in C^{\infty}(M)$, so at a point $x \in M$ where $k_x = 0$, $\alpha_x$ is a (real) eigenvalue of $\bbJ_x$. Since $\bbJ_x$ is a complex structure, its only eigenvalues are $\pm i$.

\item By Lemma  \ref{parallel_adjoint_tractor_and_curvature} the tractor curvature has values in $\mathfrak{sl}(\mathcal T, \mathbb J)$, which is equivalent for $(M,\mbp)$
to admitting locally around any point a parallel complex tractor volume form. \qedhere
\end{enumerate}
\end{proof}

\subsubsection{$k$-adapted scales}
\label{subsubsection:k-adapted-scales}
Given a parallel tractor complex structure $\mathbb J$ on a projective manifold $(M,\mbp)$, the underlying vector field $k=\Pi^{\mcA}(\mathbb J)$ determines a subclass of preferred connections in $\mbp$: We say that $\nabla \in \mbp$ is \textit{$k$-adapted} if $k$ is divergence-free with respect to $\nabla$, that is, if $\nabla_c k^c = 0$.
The following proposition shows that at least locally there are always $k$-adapted scales in $\mbp$, cf.\ \cite[Lemma 4.4]{ArmstrongP1}.
\begin{proposition}\label{proposition:k-adapted-connections}
Let $(M, \mbp)$ be an oriented projective manifold of dimension $2m+1 \geq 3$ equipped with a parallel tractor complex structure $\bbJ^A{}_B \in \Gamma(\mcA)$.
\begin{enumerate}
	\item About any point $x \in M$ there is a neighbourhood $U$ and a $k$-adapted scale $\nabla \in \mbp\vert_U$.
	\item Given a $k$-adapted scale $\nabla \in \mbp$, the class of $k$-adapted scales in $\mbp$ consists exactly of the connections
		\begin{equation}\label{equation:k-adapted-class}
			\nabla'_b \xi^a := \nabla_b \xi^a + \xi^a \Upsilon_b + \Upsilon_c \xi^c \delta^a{}_b
		\end{equation}
		where $\Upsilon \in \Gamma(T^*M)$ is exact and satisfies $\Upsilon_c k^c = 0$. In particular, any such $\Upsilon$ is $k$-invariant.
\end{enumerate}
\end{proposition}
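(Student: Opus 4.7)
The plan is to reduce both claims to the transformation rule for the divergence of $k$ under projective change of connection, together with the standard fact that two scales in $\mbp$ differ by an exact $1$-form. First, I would trace the transformation formula \eqref{projective_change}: contracting $\widehat\nabla_a\xi^b = \nabla_a\xi^b + \Upsilon_a\xi^b + \delta^b{}_a\Upsilon_c\xi^c$ with $\delta^a{}_b$ in dimension $2m+1$ gives
\[
	\widehat\nabla_a\xi^a = \nabla_a\xi^a + (2m+2)\Upsilon_a\xi^a,
\]
so a projective change sends a $k$-adapted connection to another $k$-adapted connection precisely when $(2m+2)\Upsilon_c k^c = -\nabla_a k^a$. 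Using \eqref{projective_change_on_densities}, if $\nabla\tau=0$ for some $\tau\in\Gamma(\mcE(w))$ and $\widehat\nabla=\nabla+\Upsilon$, then $\widehat\nabla$ preserves $e^{-wf}\tau$ precisely when $\Upsilon=df$; hence $\widehat\nabla$ remains a scale exactly when $\Upsilon$ is exact.

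For (a), I would start with any local scale $\nabla\in\mbp\vert_U$ (available because any local trivialisation of $\mcE(w)$, $w\neq 0$, induces one, as recalled in Section \ref{proj-sec}) and seek $\Upsilon = df$ with $k(f) = -\tfrac{1}{2m+2}\nabla_a k^a$. Since $k$ vanishes nowhere by Proposition \ref{proposition:complex-single-curved-orbit}(a), the flow-box theorem provides local coordinates $(t, x^1,\ldots,x^{2m})$ on a neighbourhood of any given point in which $k = \partial/\partial t$; the equation then reduces to $\partial_t f = g$, which integrates directly. The resulting $\widehat\nabla = \nabla + df$ is a scale by the previous step, and is $k$-adapted by construction.

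For (b), any two $k$-adapted scales in $\mbp$ must differ by some exact $\Upsilon = df$ (since both are scales), and $k$-adaptedness of both forces $k(f)=0$, i.e.\ $\Upsilon_c k^c = 0$; conversely, any exact $\Upsilon$ with $\Upsilon(k)=0$ yields a $k$-adapted scale, again by combining the two steps above. The $k$-invariance statement is then immediate from
\[
	\mcL_k\Upsilon = \mcL_k(df) = d(\mcL_k f) = d(k(f)) = 0.
\]
The only moderately delicate point in the whole argument is the local solvability of $k(f) = g$ in step (a); the essential input there is the nonvanishing of $k$ furnished by Proposition \ref{proposition:complex-single-curved-orbit}(a), and everything else follows by direct computation using the projective transformation rules already recorded.
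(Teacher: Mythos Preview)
Your proposal is correct and follows essentially the same route as the paper's proof: both derive the divergence transformation $\nabla'_c k^c = \nabla_c k^c + (2m+2)\Upsilon_c k^c$, use flow-box coordinates for the nowhere-vanishing $k$ to solve the first-order ODE in part (a), and read off (b) directly from that transformation rule together with the Cartan identity for $\mcL_k\Upsilon$. Your write-up is in fact slightly more explicit than the paper's in justifying why the modified connection remains a \emph{scale} (namely because $\Upsilon$ is exact), which is implicit but not spelled out in the paper.
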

\begin{proof}
\leavevmode
\begin{enumerate}
	\item Fix a scale $\nabla \in \mbp$. Taking the trace of both sides of \eqref{projective_change} gives that the divergence $\nabla_c k^c$ transforms under a projective change as
	\begin{equation}\label{div_change}
			\nabla'_c k^c = \nabla_c k^c + (2m + 2) \Upsilon_c k^c \textrm{.}
	\end{equation}
Since $k$ is nowhere vanishing, we can find locally around any point coordinates $(x_1,..., x_{2m+1})$ such that $k=\frac{\partial}{\partial x_1}$.
Changing $\nabla$ projectively via $\Upsilon:=-\frac{1}{2m+2} df$, where $f:=\int^{x_1} \nabla_c k^c\, dt$, identity \eqref{div_change} implies that at least locally there always exist $k$-adapted scales as claimed.
	\item The first part also evidently follows from \eqref{div_change}. The second then follows from the identity $\mcL_k \Upsilon = d\Upsilon(k, \,\cdot\,) + d(\Upsilon(k))$. \qedhere
\end{enumerate}
\end{proof}

\begin{remark}
The proof of Proposition \ref{proposition:k-adapted-connections} uses that $k$ vanishes nowhere but no other properties of $k$.
\end{remark}

Since $\nabla^{\mcT} \bbJ = 0$ we have $\bbJ = L^{\mcA}(k)$ and so specializing \eqref{equation:splitting-operator-adjoint} to a $k$-adapted scale gives that (in such a scale)
\begin{equation}\label{equation:J-k-adapted}
	\bbJ^A {}_B
		=
	\begin{pmatrix}
		\nabla_b k^a & k^a \\
		-\Rho_{bc} k^c & 0
	\end{pmatrix} \textrm{.}
\end{equation}

Substituting \eqref{equation:J-k-adapted} in the complex structure equation $\bbJ^2 = -\id_{\mcT}$ yields (still in a $k$-adapted scale) the equivalent system
\begin{equation}\label{equation:complex-structure-components-k-adapted}
	\begin{array}{rcl}
		k^c \nabla_c k^a
			&\!\!\!=\!\!\!& 0 \\
		\nabla_c k^a \nabla_b k^c - k^a \Rho_{bc} k^c
			&\!\!\!=\!\!\!& -\delta^a{}_b \\
		\Rho_{cd} k^c k^d
			&\!\!\!=\!\!\!& 1 \\
		\Rho_{cd} k^d \nabla_b k^c
			&\!\!\!=\!\!\!& 0
	\end{array}
	 .
\end{equation}
The first equation simply says that the integral curves of $k$ are parameterised geodesics (again, for an arbitrary $k$-adapted scale $\nabla$). The third equation implies that the component $\nu_b = -\Rho_{bc} k^c$ of $\bbJ^A{}_B$ vanishes nowhere and that the tangent bundle splits as
\begin{equation}\label{equation:decomposition-TM}
	TM = \langle k \rangle \oplus H, \qquad \textrm{where} \qquad H := \ker \nu ;
\end{equation}
this decomposition evidently depends on the choice of $\nabla \in \mbp$. Moreover, $\nabla$ canonically defines a connection $\smash{\nabla^H}$ on $H$ by $\smash{\nabla^H_{\eta}} \xi := \pi_H (\nabla_{\eta} \xi)$ (for any $\eta \in \Gamma(TM)$ and $\xi \in \Gamma(H)$), where $\pi_H$ is the bundle projection $TM \to H$ determined by the decomposition \eqref{equation:decomposition-TM}.
The fourth equation in \eqref{equation:complex-structure-components-k-adapted} says that we may regard $\nabla k$ as a bundle map $TM \to H$, and hence the second equation implies that
\[
	J_H := \nabla k\vert_H\in \Gamma(\End H)
\]
is a complex structure on $H$ (cf. \cite[Lemma 4.2]{ArmstrongP1}).

\begin{remark}\label{complex_str_on_TM/k}
It is straightforward to check that the parallel complex structure $\mathbb J$ on $\mcT$ induces in fact a complex structure on $TM/\langle k \rangle$ independent of any choice of scale.
\end{remark}

Likewise, substituting the formula \eqref{equation:J-k-adapted} for $\bbJ$ into the parallelism condition $\nabla \bbJ = 0$ and using the formula \eqref{adjoint_tractor_connection} for the adjoint tractor connection yields (among other differential identities)
\begin{equation}\label{equation:components-J-parallel}
	\nabla_c \nabla_b k^a = -\Rho_{cb} k^a + \Rho_{bd} k^d \delta^a {}_c .
\end{equation}

We are particularly interested in the case that $k$ is a projective symmetry of $\mbp$; in that case, as was shown in \cite{ArmstrongP1}---and as we reprove in a different way in \S\ref{subsubsection:c-projective-leaf-space}---for $m \geq 2$ $(\mbp, \bbJ)$ canonically determines a so-called \textit{c-projective structure} on the (local) leaf space of $k$. For now we remark that if $k$ preserves $\mbp$, it preserves each $k$-adapted scale therein:

\begin{proposition}\label{proposition:k-preserves-k-adapted-scale}
Let $(M, \mbp)$ be an oriented projective structure of dimension $2m+1 \geq 3$ equipped with a parallel tractor complex structure $\bbJ^{A}{}_B\in\Gamma(\mcA)$. If the underlying vector field $k :=\Pi^{\mcA}(\bbJ)$ is a projective symmetry
of $\mbp$, then it is an affine symmetry of every $k$-adapted scale $\nabla \in \mbp$.
\end{proposition}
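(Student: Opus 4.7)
The plan is to reduce the statement to a short trace computation using the defining characterisation of projective symmetry from Theorem \ref{theorem:projective-symmetry}. Since $k$ is a projective symmetry of $\mbp$, for any $\nabla \in \mbp$ the tensor $\mcL_k \nabla \in \Gamma(S^2 T^*M \otimes TM)$ must have the ``pure trace'' form arising from projective change \eqref{projective_change}: there exists $\Upsilon \in \Gamma(T^*M)$ (a priori depending on $\nabla$) such that
\[
	(\mcL_k \nabla)_{ab}{}^c = \delta^c{}_a \Upsilon_b + \delta^c{}_b \Upsilon_a \textrm{.}
\]
The proposition then amounts to showing $\Upsilon = 0$ whenever $\nabla$ is $k$-adapted, since $\mcL_k \nabla = 0$ is by definition the affine symmetry condition.

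To isolate $\Upsilon$ I would contract the displayed identity over $c$ and $a$, yielding $(\mcL_k \nabla)_{ab}{}^a = (2m + 2)\Upsilon_b$. So it suffices to show that this particular trace vanishes. For this I would use the formula \eqref{Lie_deriv},
\[
	(\mcL_k \nabla)_{ab}{}^a = \nabla_a \nabla_b k^a + R_{da}{}^a{}_b k^d \textrm{,}
\]
and the Ricci identity $\nabla_a \nabla_b k^a - \nabla_b \nabla_a k^a = R_{ab}{}^a{}_d k^d = \Ric_{bd} k^d$ to rewrite this as
\[
	(\mcL_k \nabla)_{ab}{}^a = \nabla_b(\nabla_a k^a) + 2 \Ric_{[bd]} k^d \textrm{,}
\]
using $R_{da}{}^a{}_b = -\Ric_{db}$ in the curvature term.

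Now the two hypotheses finish the argument cleanly: because $\nabla$ is a scale it is special, whence by the discussion in Section \ref{affine_manifolds} its Ricci tensor is symmetric, so the curvature term $2\Ric_{[bd]} k^d$ vanishes; and the $k$-adapted hypothesis means exactly $\nabla_a k^a = 0$, so the derivative term vanishes as well. Thus $(\mcL_k\nabla)_{ab}{}^a = 0$, hence $\Upsilon = 0$, and therefore $\mcL_k \nabla = 0$.

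I do not expect a serious obstacle here; the proof is essentially a bookkeeping exercise. The one point deserving care is the sign and index convention for the contraction of the curvature used to identify $R_{da}{}^a{}_b$ with (a sign times) the Ricci tensor, and matching this against the curvature term that naturally appears after applying the Ricci identity. Both contributions combine to give $2\Ric_{[bd]}k^d$, and it is crucial that this antisymmetrisation is what we obtain, so that the ``special'' property of a scale---not merely torsion-freeness---is what delivers its vanishing.
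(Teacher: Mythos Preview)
Your proof is correct and follows the same strategy as the paper: since the trace-free part of $\mcL_k\nabla$ vanishes by the projective symmetry hypothesis, it suffices to show that its trace vanishes. The paper computes the trace $(\mcL_k\nabla)_{ab}{}^b$ and cites, in addition to $\nabla_a k^a = 0$ and $R_{ab}{}^c{}_c = 0$, the curvature decomposition \eqref{decp} and the identity \eqref{equation:components-J-parallel} (a consequence of $\nabla^{\mcA}\bbJ = 0$). Your route via the Ricci identity is slightly more economical: it reduces the trace to $\nabla_b(\nabla_a k^a) + 2\Ric_{[bd]}k^d$ and thereby uses only that $\nabla$ is a $k$-adapted scale (so $\nabla_a k^a = 0$ and $\Ric$ is symmetric), never invoking the parallelism of $\bbJ$. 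In particular your argument shows the more general fact that any projective symmetry of $\mbp$ is an affine symmetry of every scale in $\mbp$ for which it is divergence-free.
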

\begin{proof}
Recall from Section \ref{proj_symm_section} that $k$ is a projective symmetry of $\mbp$ if and only if for some (hence every) connection $\nabla\in\mbp$ the trace-free part of
\begin{align*}
	(\mcL_k\nabla)_{ab}{}^c
		&= \nabla_a \nabla_b k^c{} + R_{da}{}^c{}_b k^d=\nabla_{(a} \nabla_{b)} k^c{} + R_{d(a}{}^c{}_{b)} k^d .
\end{align*}
vanishes. Assuming that $k$ is a projective symmetry and $\nabla$ is a $k$-adapted scale, it hence suffices to show that the trace $(\mcL_k\nabla)_{ab}{}^b$ vanishes. This follows immediately from $\nabla_a k^a=0$,
 the fact the curvature of a scale satisfies $R_{ab}{}^c{}_c=0$, \eqref{decp}, and  \eqref{equation:components-J-parallel}.
\end{proof}

\subsubsection{C-projective geometry: a brief primer}
\label{subsubsection:c-projective-primer}

C-projective geometry is a natural analogue of real projective geometry in the setting of complex manifolds, which subsumes but is distinct from projective geometry in the holomorphic category.
Here we briefly review the notion of a c-projective structure; see the comprehensive article \cite{CEMN} for much more.

Any complex manifold $\wtM$ with complex structure $\tilde J \in \Gamma(\End T\wtM)$ admits a preferred class of affine connections, called \textit{complex connections}, that are, affine connections $\wtnabla$ on $\wtM$ compatible with the complex structure in the sense that $\wtnabla \tilde J = 0$. Any such complex connection in turn determines a distinguished class of curves (which contains but is larger than the class of its geodesics): Given a complex manifold $(\wtM, \tilde J)$ of real dimension $2m\geq 4$, a curve $\gamma$ on $\wtM$ is called
a \textit{$\tilde J$-planar curve}\footnote{In \cite[\S4]{ArmstrongP1} such a curve is called a \textit{generalised complex geodesic}.} with respect to a complex connection $\wtnabla$ if for some (hence every) parameterisation its acceleration $\wtnabla_{\dot\gamma} \dot\gamma$ is in the span $\langle \dot\gamma, \tilde J \dot\gamma \rangle$.
Two complex connections $\wtnabla$ and $\wtnabla'$ on $(\wtM, \tilde J)$  are called \emph{c-projectively equivalent}, if they have the same $\tilde J$-planar curves. If $\wtnabla$ and $\wtnabla'$ have the same torsion, having the same $\tilde J$-planar curves is equivalent to the existence of a (real) $1$-form $\tilde\Upsilon \in \Gamma(T^* \wtM)$ such that\footnote{For later convenience the formula for this transformation rule differs from that in \cite{CEMN} in that $\tilde\Upsilon$ in that reference has been replaced with $2 \tilde\Upsilon$ here.}
\begin{equation}\label{equation:c-projective-transformation}
	\wtnabla'_{\tilde\eta} \tilde\xi
		= \wtnabla_{\tilde\eta} \tilde\xi
			+ \tilde\Upsilon(\tilde\eta) \tilde\xi + \tilde\Upsilon(\tilde\xi) \tilde\eta
			- \tilde\Upsilon(\tilde J\tilde\eta) \tilde J\tilde\xi - \tilde\Upsilon(\tilde J\tilde\xi) \tilde J\tilde\eta
\end{equation}
for all $\tilde\xi, \tilde\eta \in \Gamma(T\wtM)$ \cite{OtsukiTashiro}.

\begin{definition}\label{Definition: c-projective}
A \emph{c-projective structure} on a complex manifold $(\wtM, \tilde J)$ of real dimension $2m\geq 4$ is an equivalence class $\tilde\mbp=\langle \wtnabla\rangle$ of c-projectively equivalent torsion-free complex connections.
\end{definition}

Recall that on a complex manifold $(\wtM, \tilde J)$ the line bundle $\Wedge^{2m} T\wtM$ is canonically oriented and hence we can take its $(m+1)$st positive root. Following the notation of \cite{CEMN},
we denote this root (in the context of c-projective geometry) by $\mcE_\bbR (1,1)$ and its complexification by $\mcE(1,1)$. Moreover, for $k\in\mathbb Z_{\geq0}$ we set $\mcE_\bbR(k,k):=\mcE_\bbR (1,1)^{\otimes k}$
and $\mcE_\bbR (-k,-k):=\mcE_\bbR (k,k)^*$. Given a c-projective manifold $(\wtM, \tilde J, \tilde\mbp)$, note that for any $k\neq 0$, mapping a connection in $\tilde\mbp$ to its induced connection on $\mcE_\bbR(k,k)$ induces an affine bijection  between connections in $\tilde\mbp$ and linear connections on the line bundle $\mcE_\bbR(k,k)$. In particular, any positive section of $\mcE_\bbR(k,k)$, called a \emph{c-projective scale (of weight $(k,k)$)}, determines a connection on $\mcE_\bbR(k,k)$ by decreeing that this positive section is parallel and hence determines a connection in $\tilde\mbp$.

Analogously as projective structures, c-projective structures admit canonical Cartan connections. Specifically, c-projective structures are equivalent to normal, torsion-free (real) parabolic geometries of type $(\PSL(m + 1, \bbC), \wtP')$, where
$\wtP'$ is the stabiliser in $\PSL(m + 1, \bbC)$ of a complex line in $\bbC^{m+1}$.

In the following sections, we will in addition always assume that
c-projective manifolds $(\wtM, \tilde J, \tilde\mbp)$ admit $(m +
1)$st roots of the complex line bundle $\smash{\Wedge^m T^{1, 0}
  \wtM}$, where $T^{1, 0} \wtM$ is the $+i$-eigenspace ($\dim_\bbC
T^{1, 0} \wtM = m$) of the complex-linear extension of $\tilde J$ on
the complexified tangent bundle $T\wtM \otimes \bbC$, and that we have
chosen such a root, which we denote by $\mcE(1, 0)$. Such a choice,
which always exists locally (though it need not exist globally),
corresponds to extending the canonical Cartan geometry of the
c-projective structure to a (again, normal, torsion-free [real])
parabolic geometry of type $(\SL(m + 1, \bbC), \wtP)$, where $\wtP$ is
the stabiliser in $\SL(m + 1, \bbC)$ of a complex line in
$\bbC^{m+1}$. Note that the choice of $\mcE(1,0)$ induces an
identification $\mcE(1,1)=\mcE(1,0)\otimes\mcE(0,1)$, where
$\smash{\mcE(0,1):=\overline{\mcE(1,0)}}$ denotes the complex
conjugate line bundle of $\mcE(1,0)$.

\subsubsection{The c-projective leaf space}\label{subsubsection:c-projective-leaf-space}

Suppose $(M, \mbp)$ is an oriented projective manifold of dimension $2 m + 1 \geq 5$ equipped with a parallel tractor complex structure $\bbJ \in \Gamma(\mcA)$ whose underlying vector field $k = \Pi^{\mcA}(\bbJ)$ is a projective symmetry. By Proposition \ref{proposition:complex-single-curved-orbit}(a) $k$ vanishes nowhere, so it defines a foliation of rank $1$ on $M$, and we may consider a local leaf space thereof, that is, an open subset $U\subseteq M$,
a smooth manifold $\wtM$, and a surjective submersion $\pi: U\rightarrow \wtM$ such that $\ker T_x\pi =\langle k_x\rangle$ for all $x\in U$. Note that for any point $x\in M$ we can find a local leaf space of $k$ defined on a neighbourhood $U$ of $x$.

\begin{theorem}\cite[\S4]{ArmstrongP1}\label{theorem:projective-to-c-projective}
Let $(M, \mbp)$ be an oriented projective manifold of dimension $2m+1 \geq 5$ equipped with a parallel tractor complex structure $\bbJ \in \Gamma(\mcA)$, and suppose that the underlying vector field $k := \Pi^{\mcA}(\bbJ)$ is a projective symmetry. Then any (sufficiently small) local leaf space $\wtM$ of the vector field $k$ inherits a canonical c-projective structure.
\end{theorem}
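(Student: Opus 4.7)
My plan is to construct $(\tilde J,\tilde\mbp)$ on any sufficiently small local leaf space $\pi\colon U\to\wtM$ by descending data from $(\mbp\vert_U,\bbJ\vert_U)$. For the complex structure, Remark \ref{complex_str_on_TM/k} already gives a complex structure $\hat J$ on the quotient bundle $TM/\langle k\rangle$ that is independent of any choice of scale, so it suffices to show $\mcL_k\hat J = 0$. By hypothesis $k$ is a projective symmetry, so Proposition \ref{proposition:k-preserves-k-adapted-scale} implies that $k$ is an affine symmetry of any $k$-adapted $\nabla\in\mbp$; the description \eqref{equation:J-k-adapted} of $\bbJ$ in such a scale shows $\bbJ$ is built purely from $\nabla$, $k$, and $\Rho$, each preserved by the flow of $k$, so $\mcL_k\bbJ=0$ and hence $\hat J$ descends to a complex structure $\tilde J\in\Gamma(\End T\wtM)$.

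Next, fix a $k$-adapted scale $\nabla\in\mbp$ and use the decomposition $TM=\langle k\rangle\oplus H$ of Section \ref{subsubsection:k-adapted-scales}. Because $k$ is an affine symmetry of $\nabla$, it preserves $H$, $\pi_H$, and $J_H = \nabla k\vert_H$, so $d\pi\vert_H$ identifies $H$ with $\pi^*T\wtM$ in a way intertwining $J_H$ with $\tilde J$. Define $\nabla^H_\xi\eta := \pi_H(\nabla_\xi\eta)$ for $\xi\in\Gamma(TM)$ and $\eta\in\Gamma(H)$; $k$-invariance of every ingredient ensures this descends to an affine connection $\wtnabla$ on $T\wtM$. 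Torsion-freeness of $\wtnabla$ follows at once from torsion-freeness of $\nabla$ and the vanishing of $[k,\,\cdot\,]$ on $k$-invariant sections of $H$; and $\wtnabla\tilde J=0$ reduces to the identity $\nabla^H J_H = 0$, which in turn follows from $J_H = \nabla k\vert_H$ together with the component equation \eqref{equation:components-J-parallel} extracted from the parallelism $\nabla^{\mcT}\bbJ = 0$.

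Finally I must check that the class $\tilde\mbp:=[\wtnabla]$ does not depend on the choice of $k$-adapted scale. By Proposition \ref{proposition:k-adapted-connections}(b), the freedom is an exact, $k$-annihilating, $k$-invariant $1$-form $\Upsilon=df$, and $f$ descends to a function $\tilde f$ on $\wtM$, with $\Upsilon$ the pullback of $\tilde\Upsilon:=d\tilde f$. The plan is to substitute \eqref{projective_change} into $\nabla'^{H'}_\xi\eta := \pi_{H'}(\nabla'_\xi\eta)$, simultaneously tracking two effects: the projective deformation of $\nabla$, and the deformation of the splitting, since $\nu_b=-\Rho_{bc}k^c$ and hence $H=\ker\nu$ depend on the scale. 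A short computation using $\Rho_{cd}k^ck^d=1$ and $\Upsilon(k)=0$ yields the key identity $\nu'_b-\nu_b = -\Upsilon_c(J_H)^c{}_b$ on $H$, equivalently: $H'$ is obtained from $H$ by shifting each $\xi\in H$ by $\Upsilon(J_H\xi)k$.

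The main technical obstacle is then assembling the two effects and matching them against the c-projective transformation law \eqref{equation:c-projective-transformation}: the real pair $\tilde\Upsilon(\tilde\eta)\tilde\xi+\tilde\Upsilon(\tilde\xi)\tilde\eta$ arises directly from the projective change, while the complex-conjugate pair $-\tilde\Upsilon(\tilde J\tilde\eta)\tilde J\tilde\xi-\tilde\Upsilon(\tilde J\tilde\xi)\tilde J\tilde\eta$ is generated exactly by the above splitting shift; the occurrence of $\tilde J$ in those latter terms is precisely where $J_H$ enters through $\nu'-\nu$, and the coefficients are fixed by $\Rho(k,k)=1$. Once the two expressions match, $\tilde\mbp$ is a well-defined c-projective structure on $\wtM$, completing the proof.
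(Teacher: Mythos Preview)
Your proposal is correct and follows essentially the same route as the paper: a $k$-adapted scale gives $H$, $J_H$, and $\nabla^H$, all of which are $k$-invariant by Proposition~\ref{proposition:k-preserves-k-adapted-scale} and therefore descend; torsion-freeness and $\wtnabla\tilde J=0$ are checked exactly as you indicate (the latter via~\eqref{equation:components-J-parallel}), and the scale-independence argument is the content of the paper's Proposition~\ref{proposition:induced-c-projective-structure}. The paper carries out the computation you sketch at the end explicitly there, and your identification of the mechanism---the projective change producing the ``real'' pair and the shift of $H$ (your formula $\nu'-\nu = -\Upsilon\circ J_H$ on $H$, equivalently the lift transformation~\eqref{equation:lift-tranformation}) producing the ``$\tilde J$-twisted'' pair---matches their derivation, up to a harmless sign in your description of the shift (the paper has $\xi'=\xi-\Upsilon(J_H\xi)k$).
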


We now prove this theorem in detail, using language somewhat different from that in \cite[\S4]{ArmstrongP1}, with a view toward proving its converse in an explicit way (Theorem \ref{theorem:c-projective-to-projective} below).
Suppose the assumptions of Theorem \ref{theorem:projective-to-c-projective} are satisfied, and let $\pi: U\rightarrow \wtM$ be a local leaf space of $k$ such that $\mbp|_U$ admits $k$-adapted scales. Recall that a
$k$-adapted scale $\nabla \in \mbp$ (on $U$) determines:

\begin{itemize}
\item a hyperplane distribution $H \subset TU$ transverse to $k$, and hence for each $x \in U$ an isomorphism $T_x \pi \vert_{H_x} : \smash{H_x \rightarrow T_{\pi(x)} \wtM}$ (its inverse gives a preferred lift of each $\tilde\xi \in T_{\pi(x)} \wtM$ to $T_x U$),
\item a complex structure $J_H$ on $H$, and
\item a connection $\nabla^H$ on $H$.
\end{itemize}

For later, we record how $H$ and the corresponding lift vary under a change of $k$-adapted scale: Assume that $\nabla, \nabla' \in \mbp$ are two such scales related by a $1$-form $\Upsilon_a \in \Gamma(T^* M)$ as in Proposition \ref{proposition:k-adapted-connections}. It then follows immediately from \eqref{equation:adjoint-component-transformation} and Proposition \ref{proposition:k-adapted-connections}(b) that the $1$-forms $\nu, \nu'$ defining $H, H'$ as in \eqref{equation:decomposition-TM} are related by
\[
	\nu'_b = \nu_b - \Upsilon_c \nabla_b k^c
\]
and, using the third equation of \eqref{equation:complex-structure-components-k-adapted}, that the corresponding preferred lifts $\xi, \xi' \in T_x U$ of a vector $\tilde\xi \in T_{\pi(x)} \wtM$ to $H_x = \ker \nu_x$ and $H'_x = \ker \nu'_x$
are related by
\begin{equation}\label{equation:lift-tranformation}
	(\xi')^a = \xi^a - (\Upsilon_c \xi^b \nabla_b k^c) k^a .
\end{equation}

By assumption $k$ is a projective symmetry and hence by Proposition \ref{proposition:k-preserves-k-adapted-scale} the flow of $k$ preserves any $k$-adapted scale $\nabla \in \mbp$.
Thus, it preserves invariants of $\nabla$, in particular the projective Schouten tensor $\Rho$ of $\nabla$, the $1$-form $\nu_a = -\Rho_{ab} k^b$, the hyperplane distribution $H= \ker \nu$, the complex structure $J_H \in \Gamma(\End H)$, and the connection $\nabla^H$. Now, $J_H$ descends to an almost complex structure $\tilde J \in \Gamma(\End T\wtM)$ on $\widetilde{M}$, and it follows from the definitions of $J_H$ and $\tilde J$ and \eqref{equation:adjoint-component-transformation} that $\tilde J$ does not depend on $\nabla$ (cf. \cite[Theorem 4.1]{ArmstrongP1}).
Likewise, $\nabla^H$ descends to an affine connection $\wtnabla$ on $\wtM$---which \textit{does} depend on $\nabla$---and which is torsion-free because $\nabla$ is. Moreover, it follows from the definition of $\tilde J$ and equation
\eqref{equation:components-J-parallel} that $\wtnabla$ preserves $\tilde J$. Since $\wtnabla$ is torsion-free, this implies in particular that $\tilde J$ is integrable (cf. \cite[Proposition 4.6]{ArmstrongP1}). In summary:

\begin{proposition}\label{proposition:compatibility-bar-nabla}
Suppose the assumptions of Theorem \ref{theorem:projective-to-c-projective} are satisfied. Then the projective structure $\mbp$ on $M$ induces on any (sufficiently small) local leaf space $\wtM$ of $k$ an (integrable) complex structure $\tilde J$. Moreover, any $k$-adapted scale $\nabla\in\mbp$ gives rise to a torsion-free connection $\wtnabla$ on $\wtM$ which is complex with respect to $\tilde J$.
\end{proposition}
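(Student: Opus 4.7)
The plan is to work on an open neighbourhood $U \subseteq M$ small enough that (i) a $k$-adapted scale $\nabla \in \mbp|_U$ exists, which is guaranteed by Proposition \ref{proposition:k-adapted-connections}(a), and (ii) the rank-one foliation by the orbits of $k$ admits a smooth local leaf space $\pi : U \to \wtM$. From the choice of $\nabla$ we have at our disposal the triple $(H, J_H, \nabla^H)$ introduced before the statement, together with the $1$-form $\nu_a = -\Rho_{ab}k^b$ cutting out $H$ as its kernel.

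The first step is $k$-invariance of this data. Since $k$ is a projective symmetry by hypothesis, Proposition \ref{proposition:k-preserves-k-adapted-scale} upgrades this to the stronger affine symmetry condition $\mcL_k \nabla = 0$. Consequently $\mcL_k$ annihilates every tensor canonically constructed from $\nabla$ and the parallel tractor $\bbJ$: in particular $\mcL_k \Rho = 0$, hence $\mcL_k \nu = 0$, hence $H$ is $k$-invariant, and therefore so are $J_H = \nabla k|_H$ and $\nabla^H$. Standard transverse-geometry arguments for foliations with an invariant transverse structure then produce a smooth almost complex structure $\tilde J \in \Gamma(\End T\wtM)$ satisfying $\pi_* \circ J_H = \tilde J \circ \pi_*$, together with a torsion-free affine connection $\wtnabla$ on $\wtM$ defined by $\wtnabla_{\tilde\eta} \tilde\xi := \pi_*(\nabla^H_\eta \xi)$ for any local $\pi$-related lifts $\eta, \xi \in \Gamma(H)$.

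The second step is compatibility $\wtnabla \tilde J = 0$. For $\xi, \eta \in \Gamma(H)$ we compute $\nabla_\eta J_H(\xi)^a = \eta^c \xi^b \nabla_c \nabla_b k^a + J_H(\nabla_\eta \xi)^a$ and invoke \eqref{equation:components-J-parallel}, which in a $k$-adapted scale reads $\nabla_c (\nabla_b k^a) = -\Rho_{cb}k^a + \Rho_{bd}k^d \delta^a{}_c$. Restricting $b$ to $H$ kills the second term on the right since $\Rho_{bd}k^d = -\nu_b$ vanishes on $H$, and the first term is a multiple of $k$, so applying $\pi_H$ kills it as well; what remains is $\nabla^H_\eta J_H(\xi) = J_H(\nabla^H_\eta \xi)$, which pushes forward to $\wtnabla \tilde J = 0$. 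The third step is scale independence of $\tilde J$: if two $k$-adapted scales are related by $\Upsilon$ as in Proposition \ref{proposition:k-adapted-connections}(b), the lift transformation \eqref{equation:lift-tranformation} shows that the two preferred lifts of any $\tilde\xi \in T_{\pi(x)}\wtM$ differ only by a multiple of $k_x$; combined with Remark \ref{complex_str_on_TM/k}, which asserts that $\bbJ$ induces a scale-independent complex structure on $TM/\langle k\rangle$, this implies the two candidate almost complex structures on $T\wtM$ coincide.

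Integrability of $\tilde J$ is then automatic from the classical fact that an almost complex structure admitting a torsion-free compatible affine connection has vanishing Nijenhuis tensor (one expresses $N_{\tilde J}$ in terms of $\wtnabla$, replaces Lie brackets by covariant derivatives using torsion-freeness, and then $\wtnabla \tilde J = 0$ collapses the expression to zero). The main point requiring care is ensuring the descent of the partial structures is genuinely smooth and that $\wtnabla$ is a bona fide linear connection on $\wtM$ rather than merely an operation on $\pi$-related sections; once the $k$-invariance of $(H, J_H, \nabla^H)$ is in hand via Proposition \ref{proposition:k-preserves-k-adapted-scale}, this becomes a routine but nontrivial check from foliation theory.
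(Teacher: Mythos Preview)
Your proposal is correct and follows essentially the same approach as the paper: invoke Proposition \ref{proposition:k-preserves-k-adapted-scale} to obtain $k$-invariance of $(\Rho, \nu, H, J_H, \nabla^H)$, descend these to $(\tilde J, \wtnabla)$ on the leaf space, and then use \eqref{equation:components-J-parallel} to verify $\wtnabla \tilde J = 0$ and deduce integrability from torsion-freeness. Your Step 2 in fact spells out the computation that the paper only alludes to; the one cosmetic difference is that for scale-independence of $\tilde J$ the paper appeals directly to the transformation rule \eqref{equation:adjoint-component-transformation}, whereas you route through \eqref{equation:lift-tranformation} and Remark \ref{complex_str_on_TM/k}, which amounts to the same thing.
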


In view of Proposition \ref{proposition:compatibility-bar-nabla}, in order to prove Theorem \ref{theorem:projective-to-c-projective} it remains to show:

\begin{proposition}\label{proposition:induced-c-projective-structure}
Suppose the assumptions of Theorem \ref{theorem:projective-to-c-projective} are satisfied and let $\pi: U\rightarrow\wtM$ be a sufficiently small local leaf space of $k$ (so that $\mbp$ admits $k$-adapted scales). Then for any two $k$-adapted scales  $\nabla, \nabla' \in \mbp|_U$ the induced complex torsion-free connections $\wtnabla, \wtnabla'$ on $(\wtM, \tilde J)$ are c-projectively equivalent.
\end{proposition}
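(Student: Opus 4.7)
The plan is to write down explicitly the difference between the connections $\wtnabla, \wtnabla'$ induced on $\wtM$ by two $k$-adapted scales $\nabla, \nabla' \in \mbp|_U$ and to verify directly that it has the form of a c-projective transformation \eqref{equation:c-projective-transformation}.

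By Proposition \ref{proposition:k-adapted-connections}(b), $\nabla$ and $\nabla'$ differ by an exact $1$-form $\Upsilon$ satisfying $\Upsilon_c k^c = 0$ and $\mcL_k\Upsilon = 0$; consequently $\Upsilon$ descends to a $1$-form $\tilde\Upsilon$ on $\wtM$. For $\tilde\xi \in \Gamma(T\wtM)$ write $\xi \in \Gamma(H)$, respectively $\xi' \in \Gamma(H')$, for its horizontal lift with respect to $\nabla$, respectively $\nabla'$; by \eqref{equation:lift-tranformation} they are related by
\[
    (\xi')^a = \xi^a - f_\xi\, k^a,
    \qquad
    f_\xi := \Upsilon_c \xi^b \nabla_b k^c = \Upsilon(J_H \xi),
\]
and $f_\xi$ descends to $\tilde\Upsilon(\tilde J \tilde\xi)$. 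Moreover, since by Proposition \ref{proposition:k-preserves-k-adapted-scale} $k$ is an affine symmetry of $\nabla$, it preserves $\Rho$ and hence the defining $1$-form $\nu = -\Rho(k,\pdot)$ of $H$; as $k$ also projects to the identity on $\wtM$, the horizontal lift of any vector field on $\wtM$ is automatically $k$-invariant, so in particular $[k,\xi] = 0$.

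The core computation is to apply the projective change formula \eqref{equation:k-adapted-class}, giving
\[
    \nabla'_{\eta'}\xi' = \nabla_{\eta'}\xi' + \Upsilon(\eta')\,\xi' + \Upsilon(\xi')\,\eta',
\]
and to expand $\nabla_{\eta'}\xi' = \nabla_{\eta - f_\eta k}(\xi - f_\xi k)$ by the Leibniz rule. Using $\nabla_k k = 0$ (first equation of \eqref{equation:complex-structure-components-k-adapted}), $\nabla_\xi k = J_H \xi$ (fourth equation of \eqref{equation:complex-structure-components-k-adapted}, which forces $\nabla_\xi k \in H$ for $\xi \in H$), and $\nabla_k\xi = [k,\xi] + \nabla_\xi k = J_H\xi$, this reduces to
\[
    \nabla_{\eta'}\xi' \equiv \nabla_\eta \xi - f_\xi\, J_H \eta - f_\eta\, J_H \xi \pmod{\langle k \rangle}.
\]
Combining with $\Upsilon(\xi') = \Upsilon(\xi)$ (since $\Upsilon(k) = 0$) and projecting by $T\pi$, which sends $J_H$-terms to $\tilde J$-terms and $\Upsilon$-contractions to $\tilde\Upsilon$-contractions, produces
\[
    \wtnabla'_{\tilde\eta}\tilde\xi - \wtnabla_{\tilde\eta}\tilde\xi
      = \tilde\Upsilon(\tilde\eta)\,\tilde\xi + \tilde\Upsilon(\tilde\xi)\,\tilde\eta
        - \tilde\Upsilon(\tilde J\tilde\eta)\,\tilde J\tilde\xi - \tilde\Upsilon(\tilde J\tilde\xi)\,\tilde J\tilde\eta,
\]
which is precisely the c-projective transformation \eqref{equation:c-projective-transformation} applied to $\tilde\Upsilon$.

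The main obstacle is the bookkeeping: one must track how the various vertical ($k$-)components cancel upon projection and ensure that the two $\tilde J$-terms in the c-projective transformation emerge with the correct signs from the two separate sources $\nabla_\eta k$ and $\nabla_k \xi$ (the latter via $k$-invariance of the horizontal lift and torsion-freeness). None of the individual manipulations is subtle, but the identifications of $\tilde\Upsilon$ and $\tilde J$ as $\pi$-descents of $\Upsilon|_H$ and $J_H$ must be handled carefully throughout.
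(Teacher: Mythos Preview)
Your proof is correct and follows essentially the same route as the paper: both reduce to expanding $\nabla'_{\eta'}\xi'$ using the projective change formula and the lift relation \eqref{equation:lift-tranformation}, then project by $T\pi$ to obtain exactly the c-projective transformation \eqref{equation:c-projective-transformation} with $\tilde\Upsilon$ the descent of $\Upsilon$. The only organizational difference is that the paper first computes $\nabla'_b(\xi')^a$ in abstract index form and then contracts with $(\eta')^b$, whereas you work directly in vector-field notation; in fact your treatment of the term $\nabla_k\xi$ via $[k,\xi]=0$ and torsion-freeness makes explicit a step the paper leaves implicit.
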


\begin{proof}
In fact we show more: Suppose $\nabla, \nabla' \in\mbp|_U$ are two $k$-adapted scales related by a $1$-form $\Upsilon\in\Gamma(T^*U)$ as in Proposition \ref{proposition:k-adapted-connections}(b). Since $\Upsilon$ is $k$-invariant and annihilates $k$, $\Upsilon = \pi^* \tilde \Upsilon$ for a unique $1$-form $\tilde\Upsilon \in \Gamma(T^* \wtM)$. We show that $\wtnabla$ and $\wtnabla'$ are exactly related via $\tilde \Upsilon$ in the c-projective equivalence equation \eqref{equation:c-projective-transformation}.

Suppose that $\tilde\xi$ and $\tilde\eta$ are vector fields on $\widetilde M$ and denote by $\xi,\eta$ and $\xi',\eta'$ their lifts to vector fields on $U$ determined by $\nabla$ and $\nabla'$ respectively. Recall that at any point $\tilde x\in \widetilde M$ one has by definition $(\wtnabla_{\tilde\eta} \tilde\xi)_{\tilde x} := T_x \pi \cdot \nabla_{\eta} \xi$ and $(\wtnabla'_{\tilde\eta} \tilde\xi)_{\tilde x} := T_x \pi \cdot \nabla'_{\eta'} \xi'$ for any point $x\in\pi^{-1}(\tilde x)$.
Together Proposition \ref{proposition:k-adapted-connections} and \eqref{equation:lift-tranformation} give (using that $\Upsilon_a k^a = 0$) that
\[
	\nabla'_b (\xi')^a=\nabla_b \xi^a - \Upsilon_d (\nabla_e k^d) \xi^e \nabla_b k^a + \xi^a \Upsilon_b + \Upsilon_c \xi^c \delta^a{}_b + k^a \psi_b
\]
for some $1$-form $\psi_a \in \Gamma(T^* M)$.
Contracting both sides of the equation with $(\eta')^b$, again using \eqref{equation:lift-tranformation}, and using that by \eqref{equation:complex-structure-components-k-adapted} $k^b \nabla_b k^a = 0$ moreover give that
\begin{align*}
	\nabla'_{\eta'} \xi'
		= \nabla_{\eta} \xi + \Upsilon(\eta) \xi + \Upsilon(\xi) \eta - \Upsilon(\nabla_{\eta} k) \nabla_{\xi} k - \Upsilon(\nabla_{\xi} k) \nabla_{\eta} k + f k
\end{align*}
for some function $f \in \Gamma(\mcE)$. Since for any $\tilde\zeta \in \Gamma(T\tilde M)$ with lift $\zeta\in\Gamma(TU)$ (determined by $\nabla$), one has by definition that
$T_x \pi \cdot (\nabla_{\zeta} k)_x = \tilde J_{\tilde x}(\tilde{\zeta}_{\tilde x})$ for any $x\in\pi^{-1}(\tilde x)$, we conclude that
\begin{equation*}
	\wtnabla'_{\tilde\eta} \tilde\xi
		= \wtnabla_{\tilde\eta} \tilde\xi
			+ \tilde\Upsilon(\tilde\eta) \tilde\xi + \tilde\Upsilon(\tilde\xi) \tilde\eta
			- \tilde\Upsilon(\tilde J\tilde\eta) \tilde J\tilde\xi - \tilde\Upsilon(\tilde J\tilde\xi) \tilde J\tilde\eta,
\end{equation*}
as claimed.
\end{proof}

\subsubsection{The c-projective--to--projective Fefferman-type construction}

We now describe the inverse of the construction in Section
\ref{subsubsection:c-projective-leaf-space}: We fix a c-projective
structure $(\wtM, \tilde J, \tilde\mbp)$ of dimension $2 m$ and
construct a canonical circle bundle $M \to \wtM$ whose total space is
equipped with a projective structure $\mbp$ and a
$\nabla^{\mcT}$-parallel tractor complex structure $\bbJ$. Our
treatment is formally similar that of the classical Fefferman
construction in \cite[\S2.4]{CapGoverCRTractors}.

As in Section \ref{subsubsection:c-projective-primer}, we fix a choice
of $(m + 1)$st root $\mcE(1, 0)$ of the complex line bundle
$\smash{\Wedge^m T^{1, 0} \wtM}$, and we denote its dual by $\mcE(-1,
0) := \mcE(1, 0)^{\ast}$. Then, we define the \textit{Fefferman space}
$M$ of $\wtM$ to be the fibrewise projectivization $\bbP_+(\mcE(-1,
0))$; with $\bbP_+(\mcE(-1, 0))_x$ the rays in $\mcE(-1, 0)_x$ for all
$x\in \wtM$. Denote by $\mcF$ the bundle obtained by removing from
$\mcE(-1, 0)$ the image of the zero section; then $\bbC^*$ acts freely
on $\mcF$ (on the right) and transitively on each fibre. We can
restrict this action to $\Bbb R^+$ and identify $M$ with $\mcF / \Bbb
R^+$, so the natural projection $\pi: M \to \wtM$ is a principal fibre bundle with structure
group $\bbC^* / \bbR^+ \cong \U(1)$. Denote by $k \in \Gamma(TM)$ the
infinitesimal generator of this action.

\begin{theorem}\label{theorem:c-projective-to-projective}
\leavevmode
\begin{enumerate}
	\item Let $(\wtM, \tilde J, \tilde\mbp, \mcE(1, 0))$ be a c-projective structure of dimension $2 m \geq 4$. Then, the Fefferman space $M$ inherits a canonical projective structure $\mbp$ for which
		\begin{enumerate}
			\item the infinitesimal generator $k$ of the $\U(1)$-action is an infinitesimal projective symmetry, and
			\item the adjoint tractor $\bbJ := L^{\mcA}(k) \in \Gamma(\mcA)$ is a parallel complex structure.
		\end{enumerate}
	\item The construction in (a) and that in Theorem \ref{theorem:projective-to-c-projective} are (locally) inverses of one another.
\end{enumerate}
\end{theorem}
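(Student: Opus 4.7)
The strategy is: for part (a), construct $\mbp$ explicitly by producing, for each c-projective scale on $\widetilde M$, a representative affine connection on $M$; for part (b), verify the inverse property directly, appealing to Proposition \ref{proposition:induced-c-projective-structure} in one direction and to uniqueness of the formulas in the other.

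\emph{Construction and well-definedness.} Fix a c-projective scale $\tilde\tau \in \Gamma(\mcE_\bbR(1,1))$, which determines a connection $\tilde\nabla\in\tilde\mbp$ with $\tilde\nabla\tilde\tau=0$ and, via $\mcE(1,1)=\mcE(1,0)\otimes\mcE(0,1)$, an induced linear connection on $\mcE(1,0)$. The latter endows the principal $\U(1)$-bundle $\pi\colon M\to\widetilde M$ with a principal connection, whose horizontal distribution $H_{\tilde\tau}\subset TM$ is transverse to $\langle k\rangle$; write $\bar\xi \in \Gamma(H_{\tilde\tau})$ for the horizontal lift of $\tilde\xi\in\Gamma(T\widetilde M)$. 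I then define a torsion-free affine connection $\nabla^{\tilde\tau}$ on $M$ by the requirements $\nabla^{\tilde\tau}_k k=0$, $\nabla^{\tilde\tau}_{\bar\xi} k = \nabla^{\tilde\tau}_k \bar\xi = \overline{\tilde J \tilde\xi}$, together with a formula of the shape $\nabla^{\tilde\tau}_{\bar\eta}\bar\xi = \overline{\tilde\nabla_{\tilde\eta}\tilde\xi} + \Phi(\tilde\eta,\tilde\xi)\, k$, where $\Phi \in \Gamma(T^*\widetilde M \otimes T^*\widetilde M)$ is built from the c-projective Schouten tensor $\tilde\Rho$ of $\tilde\nabla$; the torsion-free conditions (horizontal and vertical) force $\Phi$ to be symmetric and match the curvature of the $\U(1)$-connection, which is itself controlled by $\tilde\Rho$. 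Given a second c-projective scale $\tilde\tau'=e^{-2u}\tilde\tau$, the associated $\tilde\nabla'$ is related to $\tilde\nabla$ via \eqref{equation:c-projective-transformation} with $\tilde\Upsilon=du$, the distributions $H_{\tilde\tau'}$ and $H_{\tilde\tau}$ differ in the manner of \eqref{equation:lift-tranformation}, and substituting the c-projective transformation of $\tilde\Rho$ shows that $\nabla^{\tilde\tau'}-\nabla^{\tilde\tau}$ has the form \eqref{projective_change} with $\Upsilon=\pi^*\tilde\Upsilon$. Hence $\mbp:=[\nabla^{\tilde\tau}]$ is a well-defined projective structure on $M$.

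\emph{Claims \textrm{(i)} and \textrm{(ii)}.} All ingredients of $\nabla^{\tilde\tau}$ are $\U(1)$-invariant by construction, so the flow of $k$ preserves $\nabla^{\tilde\tau}$, proving (i). In particular $\nabla^{\tilde\tau}$ is $k$-adapted, so $\bbJ=L^{\mcA}(k)$ takes the simple form \eqref{equation:J-k-adapted} in this scale. The equation $\bbJ^2=-\mathrm{id}_{\mcT}$ is equivalent to the four relations \eqref{equation:complex-structure-components-k-adapted}: the first is immediate from $\nabla^{\tilde\tau}_k k=0$; the second restricted to $H_{\tilde\tau}$ reduces to $\tilde J^2=-\mathrm{id}$; and the two $\Rho$-conditions are precisely what $\Phi$ was designed to enforce, pinning $\Phi$ down in terms of $\tilde\Rho$. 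Parallelism $\nabla^{\mcT}\bbJ=0$ then reduces, via \eqref{equation:components-J-parallel}, to a second-order identity on $\nabla^{\tilde\tau}k$ and $\Rho$, which unpacks through the construction into the standard c-projective Cotton-like identity for $\tilde\nabla$ and so holds automatically.

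\emph{Part (b) and main obstacle.} For part (b), the Fefferman space $M$ is tautologically foliated by the orbits of its $\U(1)$-action, so $\pi\colon M\to\widetilde M$ is itself a local leaf space for $k$; the complex structure it induces on $\widetilde M$ (as in Remark \ref{complex_str_on_TM/k}) is $\tilde J$ by construction, and projecting the $k$-adapted scale $\nabla^{\tilde\tau}$ as in the proof of Proposition \ref{proposition:induced-c-projective-structure} returns $\tilde\nabla$, yielding the c-projective class $\tilde\mbp$. Conversely, given $(M,\mbp,\bbJ)$ with local leaf space $(\widetilde M,\tilde J,\tilde\mbp)$ and a local choice of $\mcE(1,0)$, a $k$-adapted scale $\nabla\in\mbp$ gives a local $\U(1)$-equivariant diffeomorphism between a tubular neighbourhood of a $k$-orbit in $M$ and the corresponding neighbourhood of $\bbP_+(\mcE(-1,0))$, and the formulas defining $\nabla^{\tilde\tau}$ reproduce $\nabla$ up to the projective freedom. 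The main obstacle is the book-keeping required to choose $\Phi$ explicitly so that torsion-freeness, the algebraic $\Rho$-conditions in \eqref{equation:complex-structure-components-k-adapted}, and parallelism of $\bbJ$ are all secured simultaneously; this rests on identifying the curvature of the $\U(1)$-connection induced on $\mcE(1,0)$ with an appropriate (skew) part of $\tilde\Rho$, which is precisely where the chosen root $\mcE(1,0)$ (rather than merely $\mcE_\bbR(1,1)$) enters essentially.
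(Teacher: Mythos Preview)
Your strategy matches the paper's: for each c-projective scale, write down an explicit torsion-free connection on $M$ using horizontal lifts and a correction term along $k$, then verify independence of scale, projective symmetry of $k$, and that $\bbJ=L^{\mcA}(k)$ is a parallel complex structure. Two points need repair.

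First, your claim that torsion-freeness forces $\Phi$ to be \emph{symmetric} is wrong. The horizontal torsion $T(\bar\xi,\bar\eta)$ has vertical part $\overline{[\tilde\xi,\tilde\eta]}-[\bar\xi,\bar\eta]$, which is exactly the $\U(1)$-curvature; vanishing of torsion therefore forces the \emph{skew} part of $\Phi$ to equal that curvature, not zero. The paper simply writes down $\Phi(\eta,\xi)=(\pi^*\wtRho)(\eta,J\xi)$, identifies the $\U(1)$-curvature with $-[\wtRho(\tilde\xi,\tilde J\tilde\eta)-\wtRho(\tilde\eta,\tilde J\tilde\xi)]$, and checks these match. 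Your indirect ``the $\Rho$-conditions pin $\Phi$ down'' is circular as stated: $\Rho$ is built from second derivatives of the connection, so depends on $\Phi$ and its derivatives, and you have not shown those algebraic constraints are even achievable, let alone uniquely. You should instead posit $\Phi$ explicitly and verify.

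Second, your parallelism step is too thin. Equation \eqref{equation:components-J-parallel} is only one component of $\nabla^{\mcA}\bbJ=0$, and the claim that the remaining content collapses to a ``c-projective Cotton-like identity that holds automatically'' is unsubstantiated. The paper proceeds differently: it computes the full curvature and Schouten tensor of $\nabla$ (obtaining $\Rho(k,k)=1$, $\Rho(k,\xi)=0$, $\Rho(\xi,\eta)=\pi^*\wtRho(\xi,\eta)$), shows the projective Weyl tensor satisfies $k^aW_{ab}{}^c{}_d=0$ and $W_{ab}{}^c{}_dk^d=0$, and then verifies directly that $\mathcal D^{\mcA}(k)=0$. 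By Theorem \ref{theorem:projective-symmetry} this simultaneously gives the projective symmetry and normality of $k$, whence $\bbJ=L^{\mcA}(k)$ is parallel. Your $\U(1)$-invariance argument for (i) is cleaner than the paper's and is fine, but for (ii) you should either follow the paper's BGG route or actually carry out the component-by-component check of $\nabla^{\mcA}\bbJ=0$.
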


See \cite[Theorem 3.7]{ArmstrongP1} for an approach to Theorem \ref{theorem:c-projective-to-projective} in terms of the Thomas cone.

\begin{remark}
While the definition of the Fefferman space over a c-projective manifold requires a choice of line bundle $\mcE(1, 0)$, this choice is inessential in the sense that different choices yield equivalent projective structures.
\end{remark}

\begin{proof}[Proof of Theorem \ref{theorem:c-projective-to-projective}]
\leavevmode
\begin{enumerate}
	\item \noindent
\textit{Step 1: Construct a connection $\nabla$ on $M$ from a connection $\wtnabla \in \tilde\mbp$.}
	
Fix a (c-projective) scale $\tilde\sigma\in\mcE(1, 1)$, and denote by $\wtnabla \in \tilde\mbp$  the corresponding connection in the c-projective class and by $\wtRho\in\Gamma(T^*M\otimes T^*M)$ its \textit{c-projective Rho tensor}: For any such scale, $\wtRho$ is characterised by \cite[Proposition 2.11]{CEMN}\footnote{The c-projective Rho tensor, $\wtRho$, is the negative of the ``complex rho-tensor'' in \cite[\S3.5.1]{ArmstrongThesis} and the ``complex projective rho tensor'' in \cite[\S4.1]{ArmstrongP1}, and half the ``rho tensor'' in \cite[\S2.4]{CEMN}.}
\begin{equation}\label{equation:Rho-tilde-characterization}
	\wt\Ric(\tilde \xi, \tilde\eta)
		= 2 m \wtRho(\xi, \eta) + 2 \wtRho(\tilde J \tilde\xi, \tilde J \tilde\eta) ,
\end{equation}
where $\wt\Ric$ is the Ricci tensor of $\wtnabla$; in particular $\wtRho$ is symmetric.

The scale $\tilde\sigma$ can be viewed as a Hermitian metric on
$\mcE(-1, 0)$; in particular we may identify $M:= \Bbb P_+(\mcE(-1,
0))$ with the unit circle subbundle of $\mcE(-1, 0)$ that this metric
determines.  Moreover, $\wtnabla\in\tilde\mbp$ induces a linear
connection on $\mcE(-1,0)$, that, by definition, preserves the
metric $\tilde\sigma$, that is, satisfies
$\wtnabla\tilde\sigma=0$. Hence, it determines a $\U(1)$-principal
connection $TM \to \mfu(1) \cong i \Bbb R$ on the unit circle bundle
$M\rightarrow \wtM$ of $\mcE(-1,0)$.

This principal connection can equivalently be encoded as a hyperplane distribution $H \subset TM$ everywhere transverse to the vertical (line) bundle $\ker T\pi = \langle k \rangle \subset TM$ and invariant under the flow of the infinitestimal generator $k$. Let $\Phi : H \to T\wtM$ denote the bundle isomorphism $T \pi\vert_{H}$, and set $J := \Phi^{-1} \circ \tilde J \circ \Phi$. For $\tilde\xi \in \Gamma(T\wtM)$ we write $\xi \in \Gamma(H) \subset \Gamma(TM)$ for its horizontal lift, that is, the unique section of $H$ such that $T_x \pi \cdot \xi_x = \tilde\xi_{\pi(x)}$ for all $x \in M$. Since the flow of $k$ preserves both the fibres of the principal bundle and the distribution $H$, it also preserves any such lift $\xi$: $\mcL_k \xi = 0$.

Using \cite[(45)-(47)]{CEMN} (and taking into account the difference in convention mentioned in the previous footnote), the curvature of the connection determined by $\wtnabla$ on $\mcE(-1, 0)$ is $(\tilde\xi, \tilde\eta) \mapsto -i [\wtRho(\tilde \xi, \tilde J \tilde\eta) - \wtRho(\tilde\eta, \tilde J \tilde\xi)]$, and so we may identify the curvature of the induced principal connection on the unit circle bundle with
\begin{equation}\label{curv_of_H}
	(\xi, \eta)
		\mapsto
	-\pi^*[  \wtRho(\tilde\xi, \tilde J \tilde\eta)
		   - \wtRho(\tilde\eta, \tilde J \tilde\xi)] k .
\end{equation}

Now define a connection $\nabla$ on $M$ by
\begin{equation}\label{equation:nabla-tilde-to-nabla}
	\begin{alignedat}{3}
		\nabla_k        k &= 0 &\qquad
		\nabla_k      \xi &= J \xi \\
		\nabla_{\eta}   k &= J \eta &\qquad
		\nabla_{\eta} \xi &= \Phi^{-1} (\wtnabla_{\tilde\eta} \tilde\xi) + (\pi^* \wtRho)(\eta, J\xi) k
	\end{alignedat}
\end{equation}
for $\xi,\eta\in\Gamma(H)$ lifting $\tilde\xi, \tilde\eta \in \Gamma(T\wtM)$.

Unwinding definitions gives that $\pi^*\mcE(1,1)\cong \mcE(2)$ and that, by construction, $\nabla \pi^*\tilde\sigma=0$.

\medskip
\noindent \textit{Step 2: Compute the affine invariants of $\nabla$.}

Computing directly gives that the torsion $T$ of $\nabla$ satisfies $T(k, \,\cdot\,) = 0$, so that $T$ vanishes if $T\vert_{H}$ does, and that
\[
	T(\xi, \eta) = \Phi^{-1}([\tilde\xi, \tilde\eta]) - [\xi, \eta] + [(\pi^*\wtRho)(\xi, J\eta) - (\pi^* \wtRho)(\eta, J\xi)] k ,
\]
for all $\tilde\xi, \tilde\eta \in \Gamma(T\wtM)$. The sum of the first and second terms on the right-hand side is by definition the curvature of the principal bundle connection determined by $H$. We can rewrite the third term as $\pi^*[\wtRho(\tilde\xi, \tilde J \tilde\eta) - \Rho(\tilde\eta, \tilde J \tilde\xi)] k$, but by \eqref{curv_of_H} this is precisely the negative of that curvature. Thus, $T\vert_{H} = 0$, and hence $\nabla$ is torsion-free.

Computing gives that the curvature of $\nabla$ decomposes with respect to the splitting $TM = \langle k \rangle \oplus H$ as
\begin{align*}
	R(k, \xi) k
		&= -\xi \\
	R(k, \xi) \eta
		&= (\pi^*\wtRho)(\xi, \eta) k \\
	R(\xi, \eta) k
		&= 0 \\
	R(\xi, \eta) \zeta
		&=
			\Phi^{-1} (\tilde R(\tilde\xi, \tilde\eta) \tilde\zeta)
				+ (\pi^* \wtnabla_{\tilde\xi } \wtRho)(\eta, J\zeta) k
				- (\pi^* \wtnabla_{\tilde\eta} \wtRho)(\xi , J\zeta) k \\
				& \qquad\qquad
				+ (\pi^* \wtRho)(\eta, J\zeta) J\xi
				- (\pi^* \wtRho)(\xi , J\zeta) J\eta \\
				& \qquad\qquad\qquad\qquad
				- (\pi^* \wtRho)(\xi , J\eta ) J\zeta
				+ (\pi^* \wtRho)(\eta, J\xi  ) J\zeta,
\end{align*}
and that the projective Schouten tensor $\Rho = \frac{1}{2m} \Ric$ of $\nabla$ decomposes as
\begin{align}
	\Rho(  k,    k) &= 1 \label{equation:Schouten-k-k} \\
	\Rho(  k,  \xi) &= 0 \label{equation:Schouten-k-H} \\
	\Rho(\xi, \eta) &= (\pi^*\wtRho)(\xi, \eta) \label{equation:Schouten-H-H}
\end{align}
where $\xi, \eta, \zeta \in \Gamma(H)$ are lifts of $\tilde\xi, \tilde\eta, \tilde\zeta \in \Gamma(T\wtM)$ respectively. These identities and the projective curvature decomposition \eqref{decp} give that any contraction of $k$ with the projective Weyl curvature $W$ vanishes:
\begin{equation}\label{equation:proof-k-annihilates-W}
	k^a W_{ab}{}^c{}_d = 0, \qquad W_{ab}{}^c{}_d k^d = 0 .
\end{equation}

\medskip
\noindent\textit{Step 3: Verify that the projective structure $\mbp := [\nabla]$ is independent of the choice $\wt\nabla$.}

Any other choice of connection $\wtnabla' \in \tilde\mbp$ can be
written in terms of $\wtnabla$, as in
\eqref{equation:c-projective-transformation}
for some real $1$-form $\tilde\Upsilon \in \Gamma(T\wtM)$, and determines a connection $\nabla'$ on $M$ by appropriately replacing the appropriate unprimed symbols with primed ones in \eqref{equation:nabla-tilde-to-nabla}. We show that $\nabla$ and $\nabla'$ are projectively equivalent, that is, that
\begin{equation}\label{equation:projective-equivalence-proof}
	\nabla'_b \zeta^a = \nabla_b \zeta^a + \Upsilon_b \zeta^a + \Upsilon_c \zeta^c \delta^a{}_b
\end{equation}
for some $\Upsilon \in \Gamma(T^*M)$ and all $\zeta \in \Gamma(TM)$, in fact for $\Upsilon := \pi^* \tilde\Upsilon$.

We again decompose \eqref{equation:projective-equivalence-proof} according to the splitting $\langle k \rangle \oplus H$ and show separately that each equation in the resulting system holds. Using that $\Upsilon(k) = 0$, the system is
\begin{align}	
	\nabla'_k k
		&= 0 \label{equation:invariance-k-k} \\
	\nabla'_k \xi
		&= J \xi + \Upsilon(\xi) k \label{equation:invariance-k-xi} \\
	\nabla'_{\eta} k
		&= J \eta + \Upsilon(\eta) k \label{equation:invariance-eta-k} \\
	\nabla'_{\eta} \xi
		&= \nabla_{\eta} \xi + \Upsilon(\eta) \xi + \Upsilon(\xi) \eta , \label{equation:invariance-eta-xi}
\end{align}
where $\xi, \eta \in \Gamma(H)$ are projectable.

The first equation is just the primed version of the first component of the system \eqref{equation:nabla-tilde-to-nabla}. To derive the others, we use the decomposition of a section $\zeta \in \Gamma(H)$ into the sum of section $\zeta' \in H'$ and a multiple of $k$: Using \eqref{equation:lift-tranformation} and the third equation in \eqref{equation:nabla-tilde-to-nabla} gives
\begin{equation}\label{equation:xi-xi-prime}
	\xi = \xi' + \Upsilon(J \xi) k .
\end{equation}

The left-hand side of \eqref{equation:invariance-k-xi} is
\[
	\nabla'_k \xi
		= \nabla_k' (\xi' + \Upsilon(J \xi) k)
		= \nabla_k' \xi' + (\nabla'_k[\Upsilon(J \xi)]) k + \Upsilon(J \xi) \nabla'_k k
		= J' \xi' .
\]
The second term in the third expression vanishes because the function $\Upsilon(J \xi)$ is $k$-invariant (as $\Upsilon, J, \xi$ all are), and the third term there vanishes because $\nabla'_k k = 0$. As $J \xi \in \Gamma(H)$ and $J' \xi' \in \Gamma(H')$ and are both lifts of $\tilde J\tilde\xi$, \eqref{equation:xi-xi-prime} gives $\nabla'_k \xi = J' \xi' = J \xi - \Upsilon(J(J \xi)) k = J \xi + \Upsilon(\xi) k$, which is the condition \eqref{equation:invariance-k-xi}.

The third condition, \eqref{equation:invariance-eta-k}, then just follows by torsion-freeness of $\nabla'$ from  \eqref{equation:invariance-k-xi}.

Showing the last condition, \eqref{equation:invariance-eta-xi}, is more involved. First, decomposing using \eqref{equation:xi-xi-prime} gives
\begin{equation}\label{equation:decomposition-nabla-prime-xi-eta}
	\begin{aligned}
		\nabla'_{\eta} \xi
			&= \nabla'_{\eta' + \Upsilon(J\eta) k} (\xi' + \Upsilon(J\xi) k) \\
			&= \underbrace{\nabla'_{\eta'} \xi'}_{S_1}
				+ \underbrace{\nabla'_{\eta'} [\Upsilon(J\xi) k]}_{S_2}
				+ \underbrace{\Upsilon(J\eta) \nabla'_k \xi'}_{S_3}
				+ \underbrace{\Upsilon(J\eta) \nabla'_k [\Upsilon(J\xi) k]}_{S_4} .
	\end{aligned}
\end{equation}
The first term, $S_1$, is the most complicated: Rewriting the last equation in the primed incarnation of \eqref{equation:nabla-tilde-to-nabla}, applying \eqref{equation:xi-xi-prime} to $\tilde\zeta = \wtnabla'_{\tilde\eta} \tilde\xi$, invoking \eqref{equation:projective-equivalence-proof}, and simplifying gives
\begin{align*}
	(\Phi')^{-1} (\wtnabla'_{\tilde\eta} \tilde\xi)
		&= \Phi^{-1} (\wtnabla_{\tilde\eta} \tilde\xi) + \Upsilon(\eta) \xi + \Upsilon(\xi) \eta - \Upsilon(J\eta) J\xi - \Upsilon(J\xi) J\eta \\
		&\qquad- \Upsilon(J \Phi^{-1} (\wtnabla_{\tilde\eta} \tilde\xi)) k - 2 \Upsilon(\eta) \Upsilon(J\xi) k - 2 \Upsilon(\xi) \Upsilon(J \eta) k .
\end{align*}
Now, rewriting \cite[Proposition 2.11]{CEMN} (and again accounting for the differences in the conventions for $\wtRho$ and $\tilde\Upsilon$) gives that the c-projective Rho tensors for $\wtnabla$ and $\wtnabla'$ are related by
\[
	\wtRho'(\tilde\eta, \tilde J \tilde\xi)
		= \wtRho(\tilde\eta, \tilde J \tilde\xi) - (\wtnabla_{\tilde\eta} \tilde\Upsilon)(\tilde J \tilde\xi) + \tilde\Upsilon(\tilde\eta) \tilde\Upsilon(\tilde J \tilde\xi) + \tilde\Upsilon(\tilde\xi) \tilde\Upsilon(\tilde J \tilde\eta) .
\]
Using that fact and that
\[
\qquad
\Upsilon(J \Phi^{-1} (\wtnabla_{\tilde\eta} \tilde\xi))
	= (\pi^* \tilde\Upsilon)(\Phi^{-1} (\tilde J \wtnabla_{\tilde\eta} \tilde\xi)) \\
	= \pi^*(\tilde\Upsilon(T\pi \cdot \Phi^{-1} (\tilde J \wtnabla_{\tilde\eta} \tilde\xi)))
	= \pi^*(\tilde\Upsilon(\tilde J \wtnabla_{\tilde\eta} \tilde\xi)),
\]
applying the Leibniz rule to the quantity $\wtnabla_{\tilde\eta} [\tilde\Upsilon(\tilde J \tilde\xi)]$, and invoking the last equation in \eqref{equation:nabla-tilde-to-nabla} gives
\begin{align*}
	S_1
		&= \nabla_{\eta} \xi + \Upsilon(\eta) \xi + \Upsilon(\xi) \eta - \Upsilon(J\eta) J\xi - \Upsilon(J\xi) J\eta \\
		&\qquad\qquad - \pi^* (\wtnabla_{\tilde\eta} [\tilde \Upsilon(\tilde J \tilde\xi)]) k - \Upsilon(\xi) \Upsilon(J\eta) k - \Upsilon(\eta) \Upsilon(J\xi) k
\end{align*}

Applying the Leibniz rule again, using $k$-invariance, and unwinding definitions gives that the remaining terms in \eqref{equation:decomposition-nabla-prime-xi-eta} can be written as
\begin{align*}
	S_2
		&= \pi^*(\wtnabla_{\tilde\eta} [\tilde\Upsilon(\tilde J\tilde\xi)])
			+ \Upsilon(J\xi) J\eta + \Upsilon(J\xi) \Upsilon(\eta) k \\
	S_3 &= \Upsilon(J\eta) J\xi + \Upsilon(\xi) \Upsilon(J\eta) k \\
	S_4 &= 0 .
\end{align*}

Substituting for $S_1, S_2, S_3, S_4$ in \eqref{equation:decomposition-nabla-prime-xi-eta} gives precisely \eqref{equation:invariance-eta-xi}, and so $\nabla$ and $\nabla'$ are related by \eqref{equation:projective-equivalence-proof} and in particular are projectively equivalent.

\medskip
\noindent \textit{Step 4: Verify that the adjoint tractor $\Bbb J := L^\mathcal A(k)$ is a parallel complex structure.}

Note that, by its definition, $\nabla$ is $k$-adapted (that is, that $\nabla_c k^c = 0$), and so \eqref{equation:splitting-operator-adjoint} gives
\begin{align*}
	\Bbb J^A{}_C \Bbb J^C{}_B
		&=
			\begin{pmatrix}
		 		\nabla_c k^a & k^a  \\
		 		-\Rho_{cd} k^d & 0
		 	\end{pmatrix}
		 	\begin{pmatrix}
		 		\nabla_b k^c & k^c \\
		 		-\Rho_{bd} k^d & 0 \\
		 	\end{pmatrix} \\
		&=
			\begin{pmatrix}
		 		\nabla_b k^c \nabla_c k^a - k^a \Rho_{bd} k^d & k^c\nabla_c k^a \\
		 		-\Rho_{cd} k^d \nabla_b k^c & -\Rho_{cd} k^c k^d
		 	\end{pmatrix} .
\end{align*}
The definition of $\nabla$ and \eqref{equation:Schouten-k-k} and \eqref{equation:Schouten-k-H} give that this is $-\id_{\mcT}$.

It remains to show that $\Bbb J$ is $\nabla^{\mathcal V}$-parallel and
that $k$ is a projective symmetry of $\mbp$. In view of Theorem
\ref{theorem:projective-symmetry} and the fact
\eqref{equation:proof-k-annihilates-W} that $W$ vanishes upon
insertion of $k$, it suffices to show that $k$ is a solution of the
adjoint BGG operator $\mathcal D^{\mcA}$
\eqref{equation:BGG-operator}. As usual, we decompose the condition,
in this case $\mathcal D^{\mcA}(k) = 0$, according to the splitting
$TM \cong \langle k \rangle \oplus H$, that is, we show that $\mathcal
D^{\mcA}(k)^a{}_{bc}$ vanishes upon contraction with any pair $k^b
k^c$, $k^b \xi^c$, $\xi^b \eta^c$, where $\xi, \eta$ are again lifts
determined by arbitrary sections $\tilde\xi, \tilde\eta \in
\Gamma(T\wtM)$.

Since $\nabla$ is $k$-adapted, the terms containing derivatives of
$\nabla_d k^d$ vanish. Then, using
\eqref{equation:nabla-tilde-to-nabla}, \eqref{equation:Schouten-k-k},
\eqref{equation:Schouten-H-H}, and the trace-freeness of $J$ gives the
second covariant derivative identities $k^b \nabla_b \nabla_c k^a =
0$, $\xi^b k^c \nabla_b \nabla_c k^a = \xi^a$, and $\xi^b \eta^c
\nabla_b \nabla_c k^a = -\Rho_{bc} \xi^b \eta^c k^a$, and the first
and second identities together give the trace identity $k^c \nabla_d
\nabla_c k^d = 2m$. These identities and \eqref{equation:Schouten-k-H}
are together enough to verify immediately upon substitution into
\eqref{equation:BGG-operator} (after setting $\xi = k$ in that
formula) that all three of the above contractions vanish, and hence
that $\mathcal D^{\mcA}(k) = 0$.
	\item In fact, even more is true: The constructions $\nabla \rightsquigarrow \wtnabla$ in \S\ref{subsubsection:c-projective-leaf-space} and $\wtnabla \rightsquigarrow \nabla$ in the proof of part (a) between individual connections are inverses.
	
Checking this in one direction is almost immediate: Given a c-projective scale $\wtnabla \in \tilde\mbp$ and vector fields $\tilde\xi, \tilde\eta \in \Gamma(T\wtM)$, by \eqref{equation:nabla-tilde-to-nabla} the resulting connection $\nabla$ satisfies $\nabla_{\eta} \xi = \Phi^{-1} (\wtnabla_{\tilde\eta} \tilde\xi) + (\pi^* \wtRho)(\eta, J\xi) k$. But by the characterisation in the proof of Proposition \ref{proposition:induced-c-projective-structure}, the c-projective connection induced by $\nabla$ is $(\tilde\eta, \tilde\xi) \mapsto T\pi \cdot [\Phi^{-1} (\wtnabla_{\tilde\eta} \tilde\xi) + (\pi^* \wtRho)(\eta, J\xi) k] = \wtnabla_{\tilde\eta} \tilde\xi$.

The reverse assertion follows from the claim for the direction just discussed together with the facts  (1) that, (again) from the proof of Proposition \ref{proposition:induced-c-projective-structure},  changing the projective representative (between $k$-adapted scales) via \eqref{projective_change} by the $1$-form $\Upsilon$ induces a change of c-projective representative via \eqref{equation:c-projective-transformation} by $\tilde\Upsilon$ (for $\Upsilon = \pi^* \Upsilon$), and (2) that the reverse holds, as observed in the proof of part (a).\qedhere
\end{enumerate}
\end{proof}

\subsubsection{Relation between projective and c-projective tractors}
\label{section_proj_cproj_tractors}
Suppose $(M, \mbp)$ is a projective manifold of dimension $2m+1\geq 5$ and
assume that its tractor bundle $\mcT$ is
equipped with a parallel complex structure
$\bbJ\in\Gamma(\mcA)$ such that the underlying nowhere
vanishing vector field $k:=\Pi^{\mcA}(\bbJ)$ is a projective
symmetry. We denote the (local) flow of $k$ by $\phi_s$.  As in Theorem
\ref{theorem:projective-to-c-projective}, consider a local leaf space
$\pi: U\rightarrow \wtM$ of $k$ and denote by $(\tilde J, \tilde\mbp)$
the induced c-projective structure on $\wtM$.  Recall that, by
Proposition \ref{proposition:complex-single-curved-orbit}(b), our
assumptions imply at least locally the existence of a parallel tractor complex volume form on $U$. By choosing the local leaf space
sufficiently small we shall assume that it is defined on all of $U$
and denote it by $\smash{\vol_{\bbC}}\in\Gamma(\Lambda_\bbC^{m+1} \mcT\vert_U)$.

As in the case of projective structures, c-projective structures
admit a canonical connection on a certain vector bundle over $\wtM$, called
the \emph{(normal) c-projective tractor connection} (see \cite{CEMN}).
We show in this section that the projective tractor bundle $(\mcT, \nabla^{\mcT})$ of $(M, \mbp)$
naturally descends via $\pi$ to a vector bundle $\wtmcT$ over
$\wtM$ with connection $\nabla^{\wtmcT}$ and that
$(\wtmcT, \nabla^{\wtmcT})$ coincides with the canonical
c-projective tractor bundle and connection of $(\wtM, \tilde J,
\tilde\mbp, \mcE(1, 0))$.

Let $\wtmcT$ be the quotient of $\mcT\vert_U$ by the equivalence
relation $\sim$, where $t_x\sim t_{x'}$ for $t_x, t_{x'}\in\mcT$ if and only if
(1) $x'=\phi_s(x)$ for some $s$ (i.e. $\pi(x)=\pi(x')$) and (2) $t_{x'}$ is
the result of parallel-transporting $t_x$ along $s\mapsto
\phi_s(x)$. We equip $\wtmcT$ with the initial topology of the
natural projection $\tilde{p}:\wtmcT\rightarrow \wtM$. Note also
that, by construction, the fibres of $\tilde p$ are naturally vector
spaces. Choosing local sections of $\pi$, it is easy to see that the
vector bundle charts of $\mcT\vert_U$ give rise to local
trivialisations of $\tilde p: \wtmcT\rightarrow \wtM$ and hence
that the latter can naturally be given the structure of a smooth
vector bundle such that the following diagram commutes:
\begin{center}
\begin{tikzcd}
  \mcT \arrow[r, "\mathrm{proj}"] \arrow[d, "p"]
    & \wtmcT \arrow[d, "\tilde p"] \\
  U \arrow[r, "\pi"]
& \widetilde M \end{tikzcd},
\end{center}
where $\mathrm{proj}:\mcT \rightarrow \wtmcT=\mcT/\sim$ denotes the projection. The universal property of the pullback bundle and the fact that
$\mathrm{proj}\vert_{\mcT_x}: \mcT_x\rightarrow \wtmcT_{\pi(x)}$ is an isomorphism for all $x\in U$ immediately imply that $\pi^*{\wtmcT}\cong \mcT$.

\begin{lemma}\label{c-proj_tractor_bundle_lemma}
\leavevmode
\begin{enumerate}
\item $\Gamma(\wtmcT)$ can be identified with the subsheaf of $\Gamma(\mcT\vert_U)$ consisting precisely of those sections $t\in\Gamma(\mcT\vert_U)$ that satisfy $\nabla_k^{\mcT} t=0$.
\item The restriction of the connection $\nabla^{\mcT}$ to $\Gamma(\wtmcT)\subset \Gamma(\mcT\vert_U)$ descends to a connection $\nabla^{\wtmcT}$ on $\wtmcT$.
\item The complex structure $\bbJ$ and the complex volume form $\vol_{\bbC}$ on $\mcT\vert_U$ descend to a $\nabla^{\wtmcT}$-parallel complex structure $\tilde\bbJ\in\Gamma(\End \wtmcT)$ and
a $\nabla^{\wtmcT}$-parallel complex volume form $\tilde\vol_{\bbC}\in\Gamma(\Wedge^{m+1}_\bbC \wtmcT)$ on $\wtmcT$ respectively. Moreover, $\bbJ$ gives rise to a $\tilde\bbJ$-invariant subbundle $\tilde \mcS\subset \wtmcT$
of (real) rank $2$.
\end{enumerate}
\end{lemma}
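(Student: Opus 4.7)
The plan is to treat the three parts in sequence, with the substantive work concentrated in part (b).

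For part (a), I will note that this is essentially a reformulation of the quotient construction. The isomorphism $\pi^*\wtmcT\cong\mcT|_U$ established just before the statement assigns to each section $\tilde s\in\Gamma(\wtmcT)$ a unique section $s\in\Gamma(\mcT|_U)$ with $\mathrm{proj}\circ s=\tilde s\circ\pi$. By the definition of the equivalence relation, compatibility with $\sim$ amounts to the requirement that $s(\phi_r(x))$ be the $\nabla^\mcT$-parallel transport of $s(x)$ along $r\mapsto\phi_r(x)$, which is precisely $\nabla^\mcT_k s=0$. Conversely, any such $s$ descends, and smoothness is inherited from the vector bundle chart construction of $\wtmcT$ sketched before the statement.

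For part (b), given $\tilde\xi\in\Gamma(T\wtM)$ and $\tilde s\in\Gamma(\wtmcT)$ with lift $s$ from (a), I will set
\[
\nabla^{\wtmcT}_{\tilde\xi}\tilde s:=\mathrm{proj}\bigl(\nabla^{\mcT}_{\xi}s\bigr),
\]
where $\xi\in\Gamma(TU)$ is any lift of $\tilde\xi$. Independence of $\xi$ is immediate: two lifts differ by $fk$, and $\nabla^\mcT_{fk}s=f\nabla^\mcT_k s=0$. For $\nabla^\mcT_\xi s$ itself to descend, I need $\nabla^\mcT_k\nabla^\mcT_\xi s=0$, and choosing a (locally existing) $k$-invariant lift $\xi$ so that $[k,\xi]=0$, the Ricci identity reduces this to $\nabla^\mcT_k\nabla^\mcT_\xi s=R^\mcT(k,\xi)s$. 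Thus well-definedness rests on the identity $R^\mcT(k,\cdot)=0$, which is the main obstacle. Using the explicit form \eqref{tractor_curvature} of the tractor curvature, this amounts to showing $W_{ab}{}^c{}_dk^a=0$ and $C_{abd}k^a=0$. For the first, I will combine the normality condition $W_{ab}{}^c{}_dk^d=0$ from \eqref{equation:adjoint-normality-conditions} with the projective-symmetry condition $W_{d(a}{}^c{}_{b)}k^d=0$ of Theorem \ref{theorem:projective-symmetry} and the algebraic Bianchi identity $W_{[ab}{}^c{}_{d]}=0$. For the second, I will work in a $k$-adapted scale, where $k$ is an affine symmetry by Proposition \ref{proposition:k-preserves-k-adapted-scale} and hence $\mcL_k\Rho=0$; combining this with the bottom-left component of the equation $\nabla^\mcA\bbJ=0$ (which implies that $(\nabla_c\Rho_{bd})k^d$ is symmetric in $b,c$) gives $C_{abd}k^a=0$ after a short computation. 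The Leibniz rule for $\nabla^{\wtmcT}$ is inherited from that of $\nabla^\mcT$.

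Part (c) then follows formally. Since $\nabla^\mcT\bbJ=0$ and $\nabla^\mcT\vol_{\bbC}=0$, in particular $\nabla^\mcT_k\bbJ=0$ and $\nabla^\mcT_k\vol_{\bbC}=0$, so part (a) produces descended sections $\tilde\bbJ\in\Gamma(\End\wtmcT)$ and $\tilde\vol_{\bbC}\in\Gamma(\Wedge^{m+1}_{\bbC}\wtmcT)$; the algebraic identities $\bbJ^2=-\id$ and the complex-volume-form conditions descend pointwise, while $\nabla^{\wtmcT}$-parallelism is built into the construction in part (b). For $\tilde\mcS$, I will take the rank-$2$ subbundle $\mcS\subset\mcT|_U$ spanned pointwise by the canonical tractor $X$ and $\bbJ X$; a direct computation in a $k$-adapted scale (using $\nabla_a X^A=W^A{}_a$ and the explicit form \eqref{equation:J-k-adapted} of $\bbJ$) gives $\nabla^\mcT_k X=\bbJ X$ and hence $\nabla^\mcT_k(\bbJ X)=\bbJ(\bbJ X)=-X$, so $\mcS$ is stable under $\nabla^\mcT_k$ and descends to the desired rank-$2$ $\tilde\bbJ$-invariant subbundle $\tilde\mcS\subset\wtmcT$.
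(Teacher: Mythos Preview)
Your proposal is correct and follows essentially the same approach as the paper. The only notable differences are presentational: for $R^{\mcT}(k,\cdot)=0$ in part (b), the paper cites Theorem~\ref{theorem:projective-symmetry} and \eqref{diff_Bianchi_Weyl} more tersely, whereas you unpack the Weyl and Cotton pieces separately (your argument for $C_{abd}k^a=0$ via the bottom-left slot of $\nabla^{\mcA}\bbJ=0$ combined with $\mcL_k\Rho=0$ is a nice explicit alternative); and in part (c), the paper carries out the computation $\nabla^{\mcT}_k X, \nabla^{\mcT}_k K \in \Gamma(\mcS)$ in an arbitrary scale (where $K=\bbJ X = k^aW^A{}_a+\alpha X^A$ picks up an $\alpha$-term), while you work in a $k$-adapted scale where $\alpha=0$, which is equally valid.
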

\begin{proof}
\leavevmode
\begin{enumerate}
	\item By definition of $\wtmcT$, for any section $\tilde t\in\Gamma(\wtmcT)$ the composition $\tilde t\circ\pi$ can be naturally interpreted as a section of $\mcT$; evidently the sections $t$ of $\mcT$ that arise in this way from sections $\wtmcT$ are precisely those that satisfy $\nabla^{\mcT}_kt=0$.
	\item Fix a vector field $\tilde\xi\in\Gamma(T\wtM)$ and a section $\tilde t\in\Gamma(\wtmcT)$, which we view as a section of $\mcT$ satisfying $\nabla_k^{\mcT} \tilde t=0$.
Let $\xi\in\Gamma(TU)$ be any lift of $\tilde\xi$ and consider $\nabla^{\mcT}_\xi \tilde t$. Note that $\nabla^{\mcT}_k\tilde t=0$ implies that $\nabla^{\mcT}_\xi \tilde t$ is independent of the choice of lift $\xi$.
Moreover, since $\xi$ is projectable as a lift of $\tilde \xi$, we have $T\pi \cdot [k,\xi]=0$ and hence also $\smash{\nabla^{\mcT}_{[k,\xi]}\tilde t=0}$. Moreover, by the formula for the tractor curvature \eqref{tractor_curvature}, Theorem \ref{theorem:projective-symmetry} and \eqref{diff_Bianchi_Weyl} we have $R^{\mcT}(k,\xi)(\tilde t)=0$. Therefore, expanding
the left-hand side of this identity yields $\smash{\nabla^{\mcT}_k\nabla^{\mcT}_\xi \tilde t=0}$. Hence, $\nabla^{\mcT}_\xi \tilde t$ defines a section of $\wtmcT$, which only depends on $\tilde\xi$ and $\tilde t$.
Denoting this section by $\nabla^{\wtmcT}_{\tilde\xi}\tilde t$, one verifies directly that $\nabla^{\wtmcT}_{\tilde\xi}\tilde t$  defines a linear connection $\nabla^{\wtmcT}: \Gamma(\wtmcT)\rightarrow \Gamma(T^*\wtM\otimes \wtmcT)$
on $\wtmcT$.
	\item The first statement follows immediately from the fact that $\bbJ$ and $\vol_{\bbC}$ are parallel and the definitions of $\wtmcT$ and $\nabla^{\wtmcT}$. For the second statement recall that $\mcT$ naturally comes equipped
with a real line subbundle
$\mcT^1:=\langle X \rangle \otimes \mcE(-1)\subset \mcT$.  Moreover, the complex structure $\bbJ$ gives rise to another copy of $\mcE(-1)$ inside $\mcT$ given by the real line bundle 
$\bbJ \mcT^1= \langle \bbJ X \rangle \otimes \mcE(-1)\subset \mcT$.
Now denote by $\mcS\subset \mcT$ the vector subbundle of real rank $2$ given by the direct sum of these two line bundles. By construction, $\mcS$ is preserved under the action of $\bbJ$, making $\mcS$ into a complex
vector subbundle of $\mcT$. We claim that $\mcS$ descends (via $\mathrm{proj}$) to a vector subbundle $\tilde\mcS$ of $\wtmcT$. By definition of $\mcT$, this amounts to showing that parallel transport
along the flow lines preserves $\mcS$, which is equivalent to $\nabla_k^{\mcT}$ preserving $\Gamma(\mcS)$. With respect to the choice of a scale, we have $\nabla_aX^A=W^A{}_{a}$ and
by \eqref{equation:adjoint-decomposition} $K^A:=\bbJ_{B}{}^{A}X^B=k^a W^A{}_{a}+\alpha X^A$. Therefore one gets $\nabla_k X^A=K^A-\alpha X^A$ and $\nabla_k K^A=\bbJ_{B}{}^A\nabla_kX^A=-X^A-\alpha K^A$,
which implies that $\nabla^{\tilde\mcT}_k$ preserves sections of $\mcS$. \qedhere
\end{enumerate}
\end{proof}

\begin{proposition}
The pair $(\wtmcT, \nabla^{\wtmcT})$ constructed in Lemma \ref{c-proj_tractor_bundle_lemma} coincides with the canonical (standard) c-projective tractor bundle with its normal tractor connection.
\end{proposition}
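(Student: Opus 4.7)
The plan is to identify $(\wtmcT, \nabla^{\wtmcT})$ with the standard c-projective tractor bundle and its normal tractor connection of $(\wtM, \tilde J, \tilde\mbp, \mcE(1,0))$ by comparing the two in compatible scales, using the Fefferman-type correspondence of Theorem~\ref{theorem:c-projective-to-projective}. Concretely, I would fix a c-projective scale $\wtnabla \in \tilde\mbp$ on $\wtM$ and the corresponding $k$-adapted projective scale $\nabla \in \mbp$ on $U$ furnished by that theorem. Under $\nabla$, the projective Schouten tensor satisfies the crucial identities \eqref{equation:Schouten-k-k}--\eqref{equation:Schouten-H-H}, most importantly $\Rho(\xi,\eta) = (\pi^*\wtRho)(\xi,\eta)$ for horizontal lifts $\xi,\eta \in \Gamma(H)$, and the splitting $\mcT \cong \mcE(-1)\oplus TM(-1)$ determined by $\nabla$ is $k$-invariant (since $k$ is an affine symmetry of $\nabla$ by Proposition~\ref{proposition:k-preserves-k-adapted-scale}), so it descends via Lemma~\ref{c-proj_tractor_bundle_lemma}(a) to a splitting of $\wtmcT$.

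Next, I would match this descended splitting with the standard c-projective splitting in the scale $\wtnabla$. The rank-$2$ subbundle $\tilde\mcS \subset \wtmcT$ of Lemma~\ref{c-proj_tractor_bundle_lemma}(c) is $\tilde\bbJ$-invariant and is the image of the $\bbJ$-invariant line $\langle X\rangle \oplus \langle \bbJ X\rangle \subset \mcT$; via the chosen root $\mcE(1,0)$ and the identification $\pi^*\mcE(1,1) \cong \mcE(2)$ recorded in the proof of Theorem~\ref{theorem:c-projective-to-projective}, this identifies $\tilde\mcS$ with the c-projective weighted density line bundle $\mcE(-1,0)$ (viewed as a real rank-$2$ bundle). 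The quotient $\wtmcT / \tilde\mcS$, transported by $T\pi|_H$, identifies with $T\wtM \otimes \mcE(-1,0)$, yielding the filtration characteristic of the c-projective tractor bundle.

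The third step is to compute $\nabla^{\wtmcT}$ in this splitting using its definition from Lemma~\ref{c-proj_tractor_bundle_lemma}(b) together with the formulas \eqref{tconn} and \eqref{equation:nabla-tilde-to-nabla}. Because $\wtnabla$ is precisely the connection recovered from $\nabla$ under the leaf-space construction, and by virtue of $\Rho = \pi^*\wtRho$ on horizontal lifts, the expression for $\nabla^{\wtmcT}$ reduces term-by-term to the standard formula for the c-projective tractor connection written in the scale $\wtnabla$ (see \cite{CEMN}).

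The main obstacle will be the careful bookkeeping needed to pass between real projective weights on $M$ and complex c-projective bi-weights on $\wtM$, and in particular to check that the $\bbJ$-eigenspace decomposition of $\wtmcT \otimes \bbC$ recovers the expected holomorphic/antiholomorphic structure of the c-projective tractor bundle. A cleaner alternative, which bypasses much of this explicit bookkeeping, is to argue via uniqueness of the normal Cartan connection: the projective Cartan bundle and Cartan connection on $M$ are $k$-invariant (since $k$ is a projective symmetry), hence descend to a Cartan connection on $\wtM$ of type $(\SL(m+1,\bbC), \wtP)$ whose associated tractor data are precisely $(\wtmcT, \nabla^{\wtmcT}, \tilde\bbJ, \tilde\vol_\bbC)$; it then suffices to verify torsion-freeness and normality of the descended connection, both of which transfer from the corresponding properties of the normal projective Cartan connection on $M$ using the identity $\Rho = \pi^*\wtRho$ on horizontal lifts. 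By uniqueness of the normal c-projective Cartan connection of a given underlying c-projective manifold, this identifies $(\wtmcT, \nabla^{\wtmcT})$ with the standard c-projective tractor bundle and connection.
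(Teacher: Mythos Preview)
Your alternative approach---descending the Cartan/tractor data to $\wtM$ and invoking uniqueness of the normal c-projective tractor connection---is essentially what the paper does. The paper works directly at the tractor level, appealing to the abstract characterisation of tractor connections in \cite{CapGoTAMS}: it shows that $\wtmcT$ has structure group $\wtP_\bbR$ (using $\tilde\bbJ$, $\tilde\vol_\bbC$, and the filtration $\tilde\mcS\subset\wtmcT$), that $\nabla^{\wtmcT}$ is a $\tilde\mfg_\bbR$-connection, that it is \emph{nondegenerate} (i.e.\ the map $T\wtM\to\Hom_{\tilde\bbJ}(\tilde\mcS,\wtmcT/\tilde\mcS)$ induced by $\nabla^{\wtmcT}$ is an isomorphism), and that its curvature satisfies the normality condition. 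The one ingredient you gloss over is this nondegeneracy check: in your Cartan-connection phrasing it is the verification that the descended form is genuinely a Cartan connection (not merely a principal connection of the right type), and the paper carries this out by an explicit computation with the soldering data. Your first, more computational route via matching the explicit splitting formulas in compatible scales would also work in principle, but involves exactly the weight bookkeeping you flag as the main obstacle; the paper's route avoids that entirely.
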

\begin{proof}
Set $\tilde\mfg:=\mathfrak{sl}(m+1,\bbC)$ and let $\wtP$ be the stabiliser in $\wtG:=\SL(m+1,\bbC)$ of the complex line $\ell$ generated by the first basis vector in $\bbC^{m+1}$. Moreover, denote by
$\tilde\mfg_{\bbR}$, $\wtP_\bbR\subset \wtG_\bbR$ the underlying real Lie algebra respectively groups. By the abstract characterisation of tractor bundles and tractor connections in \cite{CapGoTAMS} the c-projective (standard) tractor bundle with its normal tractor connection
is (up to natural equivalence) the unique vector bundle with structure group $\wtP_{\bbR}$ and standard fibre the $(\tilde \mfg_\bbR, \wtP_\bbR)$-module $\mathbb V_{\bbR}:=\bbR^{2m+2}$ (which we view as the underlying real vector space of $\mathbb V:=\bbC^{m+1}$) and
normal nondegenerate $\tilde\mfg_{\bbR}$-connection. Hence, we may prove this proposition by verifying all these properties for our pair $(\wtmcT, \nabla^{\wtmcT})$.

\textbf{Claim 1}: \textit{$\wtmcT$ is a vector bundle with structure group $\wtP_\bbR$ and standard fibre the $(\tilde \mfg_\bbR, \wtP_\bbR)$-module $\mathbb V_\bbR$.}

By construction $\wtmcT$ has (real) rank $2m+2$ and so its standard fibre can be identified with the vector space $\mathbb V_{\bbR}$. The claim is equivalent to the fact that the frame bundle of
$\wtmcT$ admits a reduction to $\wtP_{\bbR}\subset\GL(\mathbb V_{\bbR})$, which follows immediately from the existence of the complex structure $\tilde\bbJ$, the complex volume form $\tilde{\vol}_{\bbC}$,
and the filtration $\tilde\mcS\subset \wtmcT$ of complex vector bundles.

\textbf{Claim 2}: \textit{$\nabla^{\wtmcT}$ is a $\tilde\mfg_{\R}$-connection as defined in \cite{CapGoTAMS}.}

The projective tractor connection $\nabla^{\mcT}$ is the unique normal nondegenerate $\mathfrak{sl}(2m+2,\bbR)$-connection on $\mcT$. Since by assumption
the holonomy of $\nabla^{\mcT}$ reduces to $\tilde G_\bbR$, the structure group of $\mcT$ reduces from $P$ further to $P\cap \wtG_\R$ and $\nabla^{\mcT}$
to a $\tilde\mfg_{\R}$-connection on $\mcT$. Hence, the claim follows from the definition of $\nabla^{\wtmcT}$.

\textbf{Claim 3}: \textit{$\nabla^{\wtmcT}$ is nondegenerate (with respect to the filtration $\tilde\mcS\subset\wtmcT$).}

Write $\Theta^{\mcT}: \mcT\rightarrow \mcT/\mcS$ and $\Pi^{\wtmcT}: \wtmcT\rightarrow \wtmcT/\tilde\mcS$ for the natural projections.
Note that mapping a pair $(\tilde\xi, \tilde s)\in\Gamma(T\wtM)\times \Gamma(\tilde\mcS)$ to $\Pi^{\wtmcT}(\nabla^{\wtmcT}_{\tilde \xi} \tilde s)$ induces a vector bundle map
$$\tilde\Psi: T\wtM\rightarrow \Hom(\tilde\mcS, \wtmcT/\tilde\mcS),$$
which has values in the subspace $\Hom_{\tilde\bbJ}(\tilde\mcS, \wtmcT/\tilde\mcS)$ of $\tilde\bbJ$-linear vector bundle maps from $\tilde\mcS$ to $\wtmcT/\tilde\mcS$,
since $\nabla^{\wtmcT}\tilde \bbJ=0$. Nondegeneracy of $\nabla^{\wtmcT}$ means that $\tilde\Psi$ is injective, in which case rank considerations imply that it induces a vector bundle isomorphism
\begin{equation}\label{filtration_of_c_proj_tractor}
\tilde\Psi:  T\wtM\cong \Hom_{\tilde\bbJ}(\tilde\mcS, \wtmcT/\tilde\mcS).
\end{equation}
Note that $\Theta^{\mcT}(\nabla^{\mcT}_\xi s)$ induces a vector bundle map $\Psi: TM/\langle k \rangle\rightarrow \Hom_{\bbJ}(\mcS, \mcT/\mcS)$ and evidently $\tilde\Psi$ being injective is equivalent
to $\Psi$ being injective (and hence an isomorphism). Fix now $\xi\in\Gamma(TM)$ and a section $s^A=\rho X^A+\lambda K^A$ of $\mcS$, where $\rho,\lambda\in\Gamma(\mcE(-1))$. Then with respect to a scale
we have
\begin{align*}
\nabla_\xi^{\mcT} s^A&=(\nabla_\xi \rho) X^A+\rho \nabla_\xi^{\mcT} X^A+(\nabla_\xi\lambda) K^A +\lambda\nabla_\xi^{\mcT} K^A\\
&\equiv(\rho W^A{}_{a}+ \lambda\phi^b{}_{a} W^A{}_{b})\xi^a\pmod\mcS,
\end{align*}
where $\phi^b{}_a=\nabla_b k^a - \frac{1}{2m +2} \delta^a{}_b \nabla_c k^c$ by \eqref{equation:splitting-operator-adjoint}, which proves that $\Psi$ is injective.

\textbf{Claim 4}: $\nabla^{\wtmcT}$ is normal.

As we already noticed the tractor curvature $R^{\mcT}$ vanishes upon insertion of $k$ and hence it descends to a section $R^{\tilde{\mcT}}\in\Gamma(\Wedge^2 T^*\wtM\otimes \End(\wtmcT))$,
which evidently equals the curvature of $\nabla^{\wtmcT}$. Moreover, normality of $R^{\mcT}$ implies normality of $R^{\tilde{\mcT}}$.
\end{proof}

\begin{remark}
Let us write
$\wtmcT_\bbC:=\wtmcT\otimes_\bbR\bbC=\wtmcT^{1,0}\oplus\wtmcT^{0,1}$
for the complexification of the c-projective tractor bundle $(\wtmcT,
\tilde\bbJ)$ and its decomposition into eigenspaces of $\tilde\bbJ$.
Then $\tilde\Phi$ in \eqref{filtration_of_c_proj_tractor} can be also
seen as an isomorphism $T^{1,0}\wtM\otimes_\bbC \tilde \mcS^{1,0}\cong
\wtmcT^{1,0}/\tilde\mcS^{1,0}$, and one may show that
$\mcS^{1,0}\cong\mcE(-1,0)$. Otherwise put, $\tilde\mcS$ is
$\mcE(-1,0)$ viewed as a real vector bundle equipped with a complex
structure.
\end{remark}

\subsubsection{A brief aside on parallel tractor paracomplex structures}\label{subsubsection:paracomplex}
The results of the previous subsections have suitable analogues in the setting of an oriented projective manifold $(M, \mbp)$ of odd dimension $2 m + 1$ (restricted appropriately to $\dim M \geq 3$ or $\dim M \geq 5$) equipped with a parallel \textit{tractor paracomplex structure}; this is a section $\bbJ \in \Gamma(\mcA)$ that satisfies $\bbJ^2 = \id_{\mcT}$. At each $x \in M$ the eigenvalues of $\bbJ_x$ are $\pm 1$, and trace-freeness forces the $(+1)$- and $(-1)$-eigenbundles to have equal rank $m + 1$. The complex and paracomplex cases differ qualitatively in several ways, and we briefly outline those here; elsewhere, the sign difference between the definitions of complex and paracomplex structures leads to appropriate sign changes in some formulas.

Contracting gives $\bbJ^A{}_B X^B = k^a W^A{}_a - \frac{1}{2 m + 2}
\nabla_c k^c X^A$, where as usual $k := \Pi^{\mcA}(\bbJ)$ is the
underlying vector field. In particular $k$ vanishes at points $x$ where
$X_x$ is an eigenvector of $\bbJ_x$ (in contrast to the complex case, in
which by Proposition \ref{proposition:complex-single-curved-orbit} $k$
vanishes nowhere), and the zero locus $M^0$ of $k$ is partitioned into
two sets $M^0_{\pm}$, according to whether the eigenvalue of $X$ at
each point is $\pm 1$. In particular, if the zero locus is nonempty,
$(M, \mbp)$ admits no $k$-adapted sales.

We determine the curved orbit decomposition of the model: The stabiliser in $\SL(2 m + 2, \bbR)$ of a paracomplex structure on $\Bbb R^{2 m + 2}$ is $\textrm{S}(\GL(m + 1, \bbR) \times \GL(m + 1, \bbR))$, and the action of this group on the model space $\SL(2 m + 2, \Bbb R) / P \cong S^{2 m + 1}$, which we again may identify with the ray projectivisation $\Bbb P_+(\Bbb R^{2 m + 2}) \cong \Bbb P_+(\Bbb R^{m + 1} \times \Bbb R^{m + 1})$ of $\Bbb R^{2 m + 2} \cong \Bbb R^{m + 1} \times \Bbb R^{m + 1}$, has three orbits: The space of rays contained in the $+1$-eigenspace of $\bbJ$, the space of rays contained in the $-1$-eigenspace of $\bbJ$, and the complement of the union of these. This decomposition determines in turn the curved orbit decomposition of $(M, \mbp)$ determined by the parallel tractor complex structure, which thus consists of $M^0_{\pm}$ and the open set $M^* := M - M^0$:
\begin{itemize}
	\item On the open set $M^*$, one can locally choose $k \vert_{M^0}$-adapted scales $\nabla \in \mbp\vert_{M^*}$ and then largely proceed as in the case that $\bbJ$ is complex. Working locally, $\bbJ$ induces a paracomplex structure $\tilde J$ on any local leaf space $\wtM$ of $k\vert_{M^*}$. This structure is again compatible with a class $\tilde\mbp$ of connections induced by $\mbp\vert_{M^*}$, and $\tilde J$ and $\tilde\mbp$ together comprise a paracomplex analogue of a c-projective structure on $\wtM$.
	\item On the closed manifolds $M_0^{\pm}$, which turn out to have dimension $m$, any choice $\nabla \in \mbp$ of scale restricts to a connection on $TM^{\pm}_0$. Checking shows that the restrictions of any two choices of scale are projectively equivalent, and hence $M_0^{\pm}$ both canonically inherit a projective structure.
\end{itemize}

\subsection{Assembling the ingredients: Reduction to \texorpdfstring{$\SU(p, q)$}{SU(p, q)}}
\label{assembly}

In this section, we use the results of the previous three subsections to analyze projective structures equipped with all three holonomy reductions (which are algebraically compatible): Suppose $(M, \mbp)$ is a projective manifold of odd dimension $2 m + 1 \geq 3$ equipped with a parallel tractor Hermitian structure. Such a structure, which is equivalent to a holonomy reduction of the normal tractor $\nabla^{\mcT}$ to $\U(p, q)$, $p + q = 2 m + 2$, is equivalent to a (compatible) triple of reductions of holonomy to $\SO(2 p, 2 q)$, $\Sp(2 m + 2, \bbR)$, $\GL(m + 1, \bbC)$.\footnote{In fact, as in Section \ref{J-red-sect}, to $\SL(m + 1, \bbC) \times \U(1) \subset \GL(m + 1, \bbC)$.} Concretely, these reductions correspond respectively to a parallel tractor metric $h_{AB} \in \Gamma(S^2 \mcT^*)$, a parallel tractor symplectic structure $\Omega_{AB} \in \Gamma(\Wedge^2 \mcT^*)$, and a parallel complex structure $\bbJ^A{}_B \in \Gamma(\mcA)$, where any two of these determine the third, via the identity
\begin{equation}\label{equation:compatibility}
	\Omega_{AB} = h_{CA} \bbJ^C{}_B
\end{equation}

If $M$ is simply connected, Proposition \ref{proposition:complex-single-curved-orbit}(b) immediately gives that such a reduction automatically entails a further reduction to $\SU(p, q)$, equivalently, the existence of a parallel complex tractor volume form $(\vol_{\bbC})_{A_1 \cdots A_{m + 1}} \in \Gamma(\Lambda_\bbC^{m+1} \mcT^*)$.

\subsubsection{Proofs of Theorems B and C}
\label{subsubsection:proof-theorem-b}

\begin{proof}[Proof of Theorem B]
By Section \ref{subsection:holonomy-reductions-projective} the curved orbits determined by the holonomy reduction of $\nabla^{\mcT}$ to $\U(p, q)$ are parameterised by the orbit decomposition of the action of that group on the model space $S^{2 m + 1} = \bbP_+(\bbR^{2 m + 2}) \cong \SL(2 m + 2, \bbR) / P$. By \cite[Theorem 3.1(b)]{Wolf} the orbits of $\SU(p, q)$ coincide with those of the group $\SO(h) \cong \SO(2p, 2q)$ preserving $h$, and hence so do the orbits of the intermediate group $\U(p, q)$: So, by Theorem \ref{theorem:orthogonal-reduction} the curved orbit decomposition determined by $\U(p, q)$ or $\SU(p, q)$ has one curved orbit, $M_+$ if $q = 0$, or $M_-$ if $p = 0$, and otherwise it has three curved orbits $M_+, M_0, M_-$, corresponding to the strict sign of $\tau := h_{AB} X^A X^B$.

\begin{enumerate}
\item On the open orbits $M_{\pm}$, $\tau\vert_{M_{\pm}}$ determines a connection
  $\nabla^{\pm}\in\mbp\vert_{M_{\pm}}$, which, by Theorem \ref{theorem:orthogonal-reduction},
  is the Levi-Civita connection for the metric
  $g^{\pm}_{ab} := \Rho^{\pm}_{ab}$; in particular, $g^{\pm}$ is
  Einstein and the claimed signatures follow from Theorem
  \ref{theorem:orthogonal-reduction}. For readability we henceforth
  sometimes suppress the superscript ${}^{\pm}$ and notation for
  restriction to $M_{\pm}$.
Since $\bbJ^A{}_B$ is parallel, $\bbJ^A{}_B = L^{\mcA}(k)^A{}_B$, where $k^a := \Pi^{\mcA}(\bbJ)^a$. Expanding the compatibility condition $h_{C(A} \bbJ^C{}_{B)} = 0$ using the decomposition \eqref{tractor_metric} of $h_{AB}$ in the scale $\tau$ and formula \eqref{equation:splitting-operator-adjoint} for the splitting operator $L^{\mcA}$ gives
\[
	-\tfrac{1}{2 m + 2} \tau \nabla_e k^e Y_A Y_B
		+ \tau (\Rho_{e(a} \nabla_{b)} k^e - \tfrac{1}{2 m + 2} (\nabla_e k^e) \Rho_{ab}) Z_A{}^a Z_B{}^b
			= 0 .
                        \]
Contracting both sides with $X^A X^B$ gives that $\nabla_c k^c = 0$ (so that, incidentally, $\nabla$ is $k$-adapted). Then instead contracting with $W^A{}_c W^B{}_d$ gives $$0=\Rho_{e(c} \nabla_{d)} k^e =g_{e(c} \nabla_{d)} k^e,$$
which shows that $k$ is a Killing field of $g$.
Moreover, since $(h, \bbJ)$ is a
Hermitian structure, we have $\tau = h_{AB} X^A X^B = h_{AB}\bbJ ^A{}_C
X^C \bbJ^{B}{}_D X^D$. On the other hand, the formula \eqref{equation:splitting-operator-adjoint} for the splitting operator  $L^{\mcA}$ implies $(\bbJ X)^A = \bbJ^A{}_B X^B =
k^a W^A{}_a$ (with respect to the scale $\tau$). Hence, $\tau = \tau g_{ab} k^a k^b$, which implies that $1=g_{ab}k^ak^b=\Rho_{ab}k^ak^b$. By Theorem \ref{Jpar},
in order to show that $(M_\pm, g^\pm, k)$ is Sasaki--Einstein, it therefore remains to verify that $W_{ab}{}^c{}_{d}k^d=0$. The latter however immediately follows from the fact
that $k$ is a normal solution of the adjoint BGG operator and hence satisfies \eqref{equation:adjoint-normality-conditions}.

\item Theorem \ref{theorem:orthogonal-reduction}(b) establishes the existence of a conformal structure $\mbc$ on $M_0$. Like a projective structure, a conformal structure on a manifold $M_0$ of dimension $n\geq 3$
admits a canonical connection, called the \emph{normal conformal tractor connection}, on a certain vector bundle $\mcT_0$ over  $M_0$, which in the conformal case is of rank $\dim(M_0)+2=n+2$, see e.g.\,\cite{BEG}.
By \cite[Theorem 3.2]{CGHjlms}, we may canonically identify the restriction of the (projective) tractor bundle $\mcT$ to $M_0$ with the conformal tractor bundle $\mcT_0$ for $\mbc$, and the normal projective tractor connection $\nabla^{\mcT}$ of $(M, \mbp)$ restricts to a connection on $\mcT_0$, which coincides with $\nabla^{\mcT_0}$. Thus, we may regard the restriction $\bbJ \vert_{M_0}$ as a $\nabla^{\mcT_0}$-parallel complex structure on $\mcT_0$, and by \cite[Theorem 2.5]{CGFeffchar} or \cite{Leitner}, the existence of such a complex structure characterises the conformal structures that arise from the classical Fefferman construction.
\qedhere
\end{enumerate}
\end{proof}

\begin{remark}
In the setting of part (a) of Theorem B (in particular working in the scale $\tau$) contracting both sides of \eqref{equation:compatibility} with $\tau^{-1} X^A W^B{}_b$ gives that the respective projecting parts $\tau$, $\hat k_a \in \Gamma(T^*M(2))$, $k \in \Gamma(TM)$ of $h_{AB}$, $\Omega_{AB}$, $\bbJ^A{}_B$ are related by $\tau^{-1} \hat k_b = \Rho_{bc} k^c$. The left-hand side is the trivialization of $\hat k_b$ with respect to the scale $\tau$, and the right-hand side is just $k_b$---where we have lowered the index with the metric $g_{ab} = \Rho_{ab}$---justifying in our setting the use of the (undecorated) symbol $k$ for both objects.
\end{remark}

\begin{proof}[Proof of Theorem C]
This follows immediately from Theorem B and Theorem \ref{theorem:orthogonal-reduction}(c).
\end{proof}

\subsubsection{The K\"ahler--Einstein structures on the open leaf spaces $\wtM_{\pm}$}
\label{sec-Kaehler-Einstein}
Now fix a projective structure $(M, \mbp)$ of (odd) dimension $2 m + 1 \geq 3$ equipped with a parallel tractor Hermitian structure $(h, \bbJ)$. By \hyperref[b]{Theorem B} the open curved orbits determined by that structure are Sasaki manifolds $(M_{\pm}, g^{\pm}, k\vert_{M_{\pm}})$, where $k = \Pi^{\mcA}(\bbJ)$. In the Sasaki setting the integral curves of $k$ (we again suppress restriction notation) are geodesics (cf. \eqref{equation:complex-structure-components-k-adapted}), and the foliation of $M_{\pm}$ by those geodesics is called the \textit{Reeb foliation}. It is well known \cite[\S1.2]{Sparks} that the Sasaki structure determines a K\"ahler structure on any sufficiently small leaf space $\wtM_{\pm}$ of this foliation (this specialises Theorem \ref{theorem:projective-to-c-projective} to the Sasaki setting), and that if the Sasaki structure is Einstein, so is the K\"ahler structure; we briefly recover these latter facts here.

Let $\pi : M_{\pm} \to \wt M_{\pm}$ denote the canonical projection that maps a point to the integral curve of $k$ through it. Since $k$ is Killing, it preserves $g$ and hence the contact distribution given by the kernel of  $g_{ab}k^b$. Thus, $g$ descends to a smooth metric $\tilde g^{\pm}$ on the local leaf space $\wtM_{\pm}$ characterised by $\tilde g_{\pi(x)}(T_x\pi \cdot \xi, T_x\pi \cdot \eta) = g_x(\xi, \eta)$ for all $x \in M$ and $\xi, \eta \perp k_x$. Since $k$ is Killing, $J^a{}_b := \nabla_b k^a$ is $g$-skew, and hence its descent $\tilde J^{\pm}$ to $M_{\pm}$ defined in Section \ref{subsubsection:c-projective-leaf-space}, is $\tilde g$-skew; thus $(M_{\pm}, \tilde g^{\pm}, \tilde J^{\pm})$ is Hermitian.

Again because $k$ is Killing, $\omega_{ab} := g_{ac} J^c{}_b =
\nabla_b k_a$ is exact and so $d\omega = 0$. Moreover, $\omega =
\pi^*\widetilde\omega$, where $\widetilde\omega = \tilde g(\,\cdot\,,
\tilde J\,\cdot\,)$, so $d\tilde\omega = 0$, too. So, $(\wtM_{\pm},
\tilde g^{\pm}, \tilde J^{\pm})$ is K\"ahler.

Finally, a straightforward computation using the fact that the Ricci tensor of $g$ is $\Ric = 2 m g$, the definition of $\wt g$, the characterization \eqref{equation:Rho-tilde-characterization} of the c-projective Rho tensor $\wt\Rho$, and the identities \eqref{equation:Schouten-k-k} and \eqref{equation:Schouten-H-H} gives that $\wt g$ is (K\"ahler--)Einstein with Ricci curvature $\widetilde\Ric = (2 m + 2) \tilde g$.

\subsubsection{The CR structure on a local leaf space $\wt M_0$}
\label{subsubsection:compatibility-compactifications}
In addition to the assumptions of Section \ref{sec-Kaehler-Einstein}, we assume now furthermore that $M_0 \neq \emptyset$. Again we let $\pi : M \to \wt M$ denote the canonical projection onto the leaf space of $k$, and if necessary we replace $M$ with a sufficiently small open subset of an arbitrary point (in $M_0$). Note that under these assumptions $k$ is automatically a projective symmetry of $(M,\mbp)$, since it is one on the open dense subset $M\setminus M_0$ (note that on $M_\pm$ it is a Killing field of a metric compatible with $\mbp$).

We have seen in Section \ref{section_proj_cproj_tractors} that the normal projective tractor connection of $(M,\mbp, \mathbb{J})$ descends to the normal c-projective tractor connection of the induced c-projective structure $(\tilde J,\tilde\mbp)$ on $\wt M$. Hence, the parallel Hermitian metric $h$ of the tractor bundle of $(M,\mbp)$ induces a parallel Hermitian metric $\tilde{h}$ on the (standard) c-projective tractor bundle
$(\widetilde \mcT, \tilde{\mathbb J})$ of $(\wt M,\tilde J,\tilde\mbp)$. The metric $\tilde h$ determines a curved orbit decomposition $$\wtM=\wtM_+ \cup \wtM_0 \cup \wtM_-$$ of the c-projective manifold $(\wtM,\tilde J,\tilde\mbp)$---analogous to those described for projective structures in Section \ref{subsection:holonomy-reductions-projective}, instead taking $\wtG := \SL(m + 1, \bbC)$ and $\wtP$ as in Section \ref{subsubsection:c-projective-primer}---according to the strict sign of the projecting part of $\tilde h$, which is a section $\tilde\tau \in \Gamma(\mcE(1, 1))$; see \cite[Section 3.3]{CGH}. For $\tau=h(X,X)$ we have by construction $\tau = \pi^* \wt\tau$. Hence $\wtM_{\pm} = \pi(M_{\pm})$, respectively, and $\wtM_0 = \pi(M_0)$, which agrees with the use of those symbols in the introduction and the previous subsection.

By \cite[Theorem 3.3(2)]{CGH}, $\wtM_0$ is a smooth (real) hypersurface separating $\wt M_+$ and $\wt M_-$ in the complex manifold $(\wt M, \tilde J)$. As such it is naturally equipped with a CR structure of hypersurface type, that is, a distribution $\wt H_0\subset T\wtM_0$ of corank $1$ equipped with a complex structure with vanishing Nijenhuis tensor: The distribution is given by $$\wt H_0 := T \wt M_0 \cap \tilde J(T \wt M_0) \subset T \wt M_0$$ and the complex structure by $\tilde J_0 := \tilde J\vert_{\wt H_0}$. Moreover, Theorem 3.3 of \cite{CGH} shows that $\wt H_0$ is in fact a contact distribution and hence $(\wtM_0, \wt H_0, \tilde J_0)$ a so-called nondegenerate CR structure of hypersurface type, which proves the first claim of statement (a) in Theorem D. Statement (b) follows immediately from \cite{CG-cproj} (see Section 3).

\begin{proof}[Proof of Theorem D]
It only remains to prove the second statement of $(a)$.  We already
mentioned in the proof of Theorem $B$ that the restriction of the
tractor bundle $\mcT$ to $M_0$ can be identified with the standard
conformal tractor bundle of $(M_0, \bold c)$. Let us say a bit more
about this identification: Since $\tau=h_{AB}X^AX^B$ is zero on $M_0$,
the canonical tractor $X$ is null on $M_0$ and so $\mcT$ admits a
finer filtration (than \eqref{euler}) on $M_0$ given
by $$\mcT\vert_{M_0} \supset \mcT^0 \supset \mcT^1,$$ where $\mcT^1$
is the restriction of the line bundle $X\mcE(-1)\subset\mcT$ to $M_0$
and $\mcT^0:=(\mcT^1)^\perp$ its orthogonal complement (with respect
to $h$). This is the canonical filtration of the conformal
tractor bundle $\mcT_{\textrm{conf}}=\mcT\vert_{M_0}$ (as in \cite{BEG}) with
$\mcT^1\cong \mcE[-1]$ and $\mcT^0/\mcT^1\cong TM_0[-1]$ (see
\cite{CGHjlms, CGH}).
Moreover, note that $h$ induces a bilinear form on
$\mcT^0/\mcT^1\cong TM_0[-1]$, equivalently a nondegenerate section
$\bold g\in\Gamma(S^2 T^*M_0[2])$, which defines the conformal
structure on $M_0$ induced by $h$. Since $h$ is Hermitian,
$K^A=\mathbb J^{A}{}_BX^B$ is orthogonal to $X^A$, which shows that
$\mathcal U:=K^A\mcE(-1)\vert_{M_0}$ lies in $\mcT^0$. In particular,
the restriction of the projecting part $k^a:=\mathbb J^{A}{}_BX^B
Z_A{}^a=K^AZ_A{}^a$ of $\mathbb J^{A}{}_B$ to $M_0$ is tangent to
$M_0$. Moreover, $k\vert_{M_0}\in\Gamma(TM_0)$ is null with respect to
the conformal structure, since, on $M_0$,
$$\bold g(k,k)=h(K,K)=h(X,X)=\tau$$ vanishes, and by construction,
$k\vert_{M_0}$ is evidently the conformal Killing vector
field arising in the characterisation of conformal Fefferman
structures in \cite{CGFeffchar}. Now $\bold{g}(k,\cdot)$ (which can
also be identified with the restriction to $M_0$ of the projecting
part $k_b \in \Gamma(T^*M(2))$ of $\Omega_{AB}$) defines a nowhere
vanishing weighted $1$-form on $TM_0/\langle k\rangle$. Note that its
kernel $H_0$ can be also identified with $\mathcal U^{\perp}\cap
(\mcT^1)^{\perp}/(\mathcal U\oplus \mcT^1)$, which shows that $H_0$
inherits from the complex structure $\mathbb J\vert_{M_0}$ on
$\mcT_{\textrm{conf}}=\mcT\vert_{M_0}$ a complex structure $J_0$,
since $\mathbb J\vert_{M_0}$ preserves the subbundle $\mathcal U\oplus
\mcT^1$ (cf. also Section \ref{section_proj_cproj_tractors}). By
Section 4.5 of \cite{CapGoverCRTractors}, it is the pair $(H_0, J_0)$
that descends via $\pi$ to the CR structure $(\wt H_0', \tilde J_0')$
on $\wtM_0$ underlying the Fefferman conformal structure on
$M_0$. Note now that being induced by $\mathbb J$ the complex
structure $J_0$ must coincide with the restriction to $H_0$ of the
complex structure $J$ on $TM/\langle k\rangle$ from Remark
\ref{complex_str_on_TM/k}. Hence, $(TM_0 / \langle k \rangle) \cap
J(TM_0 /\langle k \rangle)$ must contain $H_0$ and so by dimensional
reason
$$H_0 = (TM_0 / \langle k \rangle) \cap J(TM_0 /\langle k \rangle).$$ Hence, $(\wt H_0, \tilde J_0)=(\wt H_0', \tilde J_0')$ as claimed.
\end{proof}

\subsubsection{Sasaki--Einstein strucures in dimension $5$ via conformal holonomy reduction}
In dimension $5$ one can also realise Sasaki--Einstein geometry of
signature $(3, 2)$ via an exceptional holonomy reduction of the normal
tractor connection $\nabla^{\mcT}_{\textrm{conf}}$ associated to an
(oriented) \textit{conformal} geometry. For a conformal structure $(M,
\mbc)$ of signature $(2, 3)$ with conformal metric $\mbg \in \Gamma(S^2 T^*M[2])$, the standard tractor bundle
$\mcT_{\textrm{conf}}$ is equipped with a canonical parallel tractor
metric $h_{\textrm{conf}} \in \Gamma(S^2 \mcT_{\textrm{conf}}^*)$ of
signature $(3, 4)$ \cite{BEG}.

As for conformal structures of general dimension $n \geq 3$, a
nonisotropic parallel section $I^A \in \Gamma(\mcT_{\textrm{conf}})$,
with projecting part, say, $\sigma \in \Gamma(\mcE[1])$, determines an
Einstein metric $g := \sigma^{-2} \mbg\vert_{M \setminus M_0}$
on the
complement of the zero locus $M_0$ of $\sigma$ (itself a smooth
hypersurface). If in signature $(2, 3)$ such an $I$ is spacelike, it
determines a holonomy reduction of $\nabla^{\mcT}_{\textrm{conf}}$ to
$\SO(2, 4)$ and a conformal structure $\mbc_0$ of signature $(1, 3)$
on $M_0$ \cite[\S3.5]{CGHjlms}, \cite{Gal}.

On the other hand, exclusive to this signature is an exceptional
canonical inclusion $\G_2 \hookrightarrow \SO(3, 4)$, where $\G_2$ is a split real form of the simple Lie group of type $\G_2$, and a concomitant
exceptional geometry: An oriented conformal structure of signature
$(2, 3)$ admits a holonomy reduction to $\G_2$ (equivalently a
parallel tractor $3$-form $\Phi_{ABC} \in \Gamma(\Wedge^3
{\mcT}_{\textrm{conf}}^*)$ suitably compatible with
$h_{\textrm{conf}}$) if and only if it arises via a construction of
Nurowski (itself a Fefferman-type construction) that canonically
associates to any generic $2$-plane distribution on a $5$-manifold a
conformal structure of that signature \cite[\S5]{HammerlSagerschnig},
\cite[\S5]{Nurowski}.

If a conformal structure of this signature admits both of these types
of parallel tractor objects, it determines a holonomy reduction of
$\nabla^{\mcT}_{\textrm{conf}}$ to $\SO(2, 4) \cap \G_2 = \SU(1,
2)$. The projecting part of the parallel adjoint tractor
$(\bbJ_{\textrm{conf}})^A{}_B := h^{AC} \Phi_{CDB} I^D$ is a vector
field $k \in \Gamma(TM)$. In the conformal version of the curved orbit
decomposition (analogous to the projective development in Section
\ref{subsection:holonomy-reductions-projective}, replacing there the
groups $G$ and $P$ respectively with $\SO(3, 4)$ and the stabiliser of
an isotropic ray therein), this realises the open curved orbits
$M_{\pm} := \{ \pm \sigma > 0 \}$ as Sasaki manifolds $(M_{\pm},
-g\vert_{M_{\pm}}, k\vert_{M_{\pm}})$. The restriction of
$\nabla^{\mcT}_{\textrm{conf}}$ to $M_0$ coincides with the normal
tractor connection of the conformal structure $\mbc_0$, which thus
enjoys a holonomy reduction to $\SU(1, 2)$ and so $\mbc_0$ locally
arises from a $3$-dimensional CR structure via the classical Fefferman
construction; $k\vert_{M_0}$ is the infinitesimal generator of the
$S^1$-action. This joint reduction is discussed in detail in
\cite[\S5]{SagerschnigWillse}.

This again realises $4$-dimensional Fefferman conformal structures as natural compactifying structures for Sasaki--Einstein structures of signature $(3, 2)$. In this setting, however, the compactification is mediated by conformal geometry rather than by projective geometry, and consequently the resulting compactifications are strictly different and have different asymptotics. (One can see this quickly by observing that the conformal structures $[g^{\pm}]$ determined by the metrics $g^{\pm}$ arising from the projective holonomy reduction do not extend across the zero locus $M_0$, whereas $[g]$ obviously extends to $\mbc$.) For more about these compactifications see \cite[\S3.3]{CGHjlms}.

Away from the hypersurface these perspectives are readily interchangeable: Given a Sasaki--Einstein structure $(M, g, k)$ of signature $(3, 2)$, one can form both the projective tractor bundle $\mcT$ associated to the projective structure $[\nabla^g]$ and the conformal tractor bundle $\mcT_{\textrm{conf}}$ associated to the conformal structure $[g]$. Then, $I$ determines a canonical identification $\mcT_{\textrm{conf}} \cong \mcT \oplus \langle I \rangle$; $\nabla^{\mcT}_{\textrm{conf}}$ preserves $\mcT$ and so restricts to a connection thereon, and this coincides with the normal projective tractor connection $\nabla^{\mcT}$ \cite[\S8.2]{GoverMacbeth}. Via this identification $h_{\textrm{conf}}$ and the parallel projective tractor metric $h$ determined by $g$ are related by $h_{\textrm{conf}} = h - I^{\flat} \otimes I^{\flat}$, where $\cdot{}^\flat$ denotes lowering of an index by $h_{\textrm{conf}}$, and the parallel (conformal) adjoint tractor $\bbJ_{\textrm{conf}}$ restricts to the parallel (projective) tractor complex structure $\bbJ$ on $\mcT$.

\subsubsection{Parallel tractor para-Hermitian structures}
We briefly outline what happens in the paracomplex analogue of our setting, or more precisely, when the standard tractor bundle of a projective structure $(M, \mbp)$ of odd dimension $2 m + 1 \geq 3$ (and again restricting to $2 m + 1 \geq 5$ as appropriate) is equipped with a para-Hermitian structure $(h, \bbJ)$ parallel with respect to the normal tractor connection $\nabla^{\mcT}$, specializing the discussion in Section \ref{subsubsection:paracomplex}. Here, a \textit{tractor para-Hermitian structure} is a tractor metric $\smash{h \in \Gamma(S^2 \mcT^*)}$ together with an $h$-skew para-complex structure $\bbJ$; this skewness forces $h$ to have neutral signature $(m + 1, m + 1)$.

Such a structure determines a holonomy reduction of $\nabla^{\mcT}$ to
the common stabiliser of $h$ and $\bbJ$ in $\SL(2 m + 2, \bbR)$,
namely, $\GL(m + 1, \bbR)$, the paracomplex analogue of the unitary
group. The argument in Proposition
\ref{proposition:complex-single-curved-orbit}  applies mutatis mutandis
to this case to establish (on any simply connected open subset) a
further reduction of the holonomy of $\nabla^{\mcT}$ to the stabiliser
of such a form in $\GL(m + 1, \bbR)$, namely, $\SL(m + 1, \bbR)$, the
paracomplex analogue of the special unitary group.

We briefly describe the curved orbit decomposition determined by this stronger holonomy reduction. Section \ref{subsubsection:paracomplex} showed that the parallel paracomplex structure $\bbJ$ alone determines a nontrivial decomposition $M = M^0_+ \cup M^* \cup M^0_-$ of the underlying space into curved orbits, where $M^0_{\pm}$ respectively consist of the points $x$ where $X_x$ is an eigenvector of $\bbJ_x$ of eigenvalue $\pm 1$. Thse turns out to interact in a simple way with the curved orbit decomposition $M = M_+ \cup M_0 \cup M_-$ determined by $h$ as in Theorem \ref{theorem:orthogonal-reduction}: At $x \in M^0_{\pm}$, we have $h(X_x, X_x) = -h(\bbJ X_x, \bbJ X_x) = -h(\pm X_x, \pm X_x) = -h(X_x, X_x)$ (the first equality is a consequence of the $h$-skewness of $\bbJ$) so $h(X_x, X_x) = 0$ and thus the curved orbits $M^0_{\pm}$ are subsets of the zero locus $M_0$ of $\tau := h(X, X)$. By analyzing the action of $\SL(m + 1, \bbR)$ on the model space $\bbP_+(\Bbb R^{2 n + 2}) \cong \bbP_+(\Bbb R^{m + 1} \times \Bbb R^{m + 1})$ we conclude that the considerations above account for all of the curved orbits.

So, the curved orbit decomposition of $(M, \mbp)$ determined by the reduction to that group is $M = M_+ \cup M_0^+ \cup M_0^* \cup M_0^- \cup M_-$ (here, $M_0^* := M_0 \cap M^*$):
\begin{itemize}
	\item On the open curved orbits $M^{\pm}$, the behavior is similar to that in the complex setting: These curved orbits respectively inherit para-Sasaki--Einstein structures $(M_{\pm}, g^{\pm}, k\vert_{M_\pm})$, and these spaces locally fibre along the integral curves of $k$ over a para-K\"ahler--Einstein manifold. (Para-Sasaki structures are defined as Sasaki structures are in Definition \ref{definition:Sasaki-structure} except for a reversal of sign in condition (b), so that $k$ satisfies $\nabla_a \nabla_b k^c = g_{ab} k^c - \delta^c{}_a k_b$.  A para-K\"ahler structure is defined as a K\"ahler structure is, except that one replaces the complex structure in the hypotheses with a paracomplex structure.)
	\item On the hypersurface $M_0^* \subset M$, the restriction $\smash{\mbc\vert_{M_0^*}}$ of the conformal structure $\mbc$ defined on $M_0$ arises (locally) via a paracomplex analogue of the classical Fefferman construction, which canonically assigns to any $(2 m - 1)$-dimensional Lagrangean contact structure, $m \geq 2$, an $\SO(1, 1)$-bundle over its underlying manifold and a conformal structure of neutral signature $(m, m)$ on its $(2 m)$-dimensional total space.
	\item The closed manifolds $M_0^{\pm}$, which again have dimension $m$ (and hence are in the closure $\smash{\overline{M_0^*}}$), do not inherit from the reduction to $\SL(m + 1, \bbR)$ intrinsic structure beyond the projective structure determined by $\bbJ$ alone (see Section \ref{subsubsection:paracomplex}). In particular, $M^{\pm}$ are maximal totally isotropic submanifolds with respect to $\mbc$.
\end{itemize}

\section{Realisation}\label{section:examples}

By Theorem A any Sasaki--Einstein manifold $(M, g, k)$, where $g$ has signature, say, $(2p - 1, 2 q)$, determines (on any simply connected open subset of $M$) a holonomy reduction to $\SU(p, q)$ of the normal tractor connection of the projective structure $[\nabla^g]$ determined by the Levi-Civita connection $\nabla^g$ of $g$. Since Sasaki--Einstein manifolds are abundant in all odd dimensions at least $5$ \cite[Corollaries B, C, E]{BoyerGalicki}, \cite{Sparks}, so are projective structures admitting such holonomy reductions in those dimensions. By construction, for holonomy reductions that arise this way $M = M_{\pm}$ and in particular the hypersurface curved orbit (where the conformal structure arises) is empty.

On the other hand, some of the most interesting behavior of a projective structure $(M, \mbp)$ equipped with a special unitary holonomy reduction of the normal tractor connection occurs on or along the hypersurface curved orbit. We thus here establish the existence of projective structures so equipped for which the hypersurface curved orbit is nonempty (and which are not locally equivalent to the flat model). Among other things, this justifies our investigation of that behavior.

In fact, there are many examples: To any CR structure $(\wtM_0, \widetilde H_0, \tilde J_0)$ of hypersurface type whose Levi form is nondegenerate (say, of signature $(p - 1, q - 1)$) one can associate a scalar CR invariant $\mcO$ called the \textit{CR obstruction}.  We show in Section \ref{subsection:existence} that for any embedded, real-analytic CR structure for which the CR obstruction vanishes---again, there are many such CR structures---one can freely specify an additional datum and produce a projective structure $(M, \mbp)$ and a holonomy reduction of $\mbp$ to $\SU(p, q)$ (both of which depend on the extra datum) for which the hypersurface curved orbit is the total space of the Fefferman bundle $M_0 \to \wtM_0$ for $(\wtM_0, \widetilde H_0, \tilde J_0)$ and the conformal structure induced thereon by the curved orbit decomposition is the canonical Fefferman conformal structure; in particular the CR structure underlying it is the CR structure one started with.

\subsection{Constructing examples with nonempty hypersurface curved orbit}
\label{subsection:existence}
Given an embedded, real-analytic CR structure as above, the existence of a suitable $\nabla^h$ on an appropriate bundle admitting this holonomy reduction is essentially established in \cite[Proposition 4.9]{GrahamWillse}, which we briefly follow. Using \cite[Theorem 6.5]{GPW} we translate that existence result into natively projective language by identifying $\nabla^h$ with the normal projective tractor connection of some projective geometry $(M, p)$. By construction, the CR structure arising via the resulting curved orbit decomposition will be the one that the construction starts with.

Fix a real-analytic real hypersurface $\wtM_0 \subset \Bbb C^m$, $m > 1$, for which the Levi form is nondegenerate, say, of signature $(p - 1, q - 1)$, so that it determines a CR structure $(\widetilde H_0, \tilde J_0)$ of hypersurface type on $\wtM_0$, and suppose that the CR obstruction $\mcO$ of this structure vanishes. We construct (with the choice of an additional datum) a projective structure $(M, \mbp)$ equipped with a special unitary holonomy reduction for which the hypersurface curved orbit and its conformal structure is the total space of the Fefferman bundle $M_0 \to \wtM_0$ equipped with the Fefferman conformal structure canonically associated to $(\wtM_0, \widetilde H_0, \tilde J_0)$. In this setting the Fefferman circle bundle is trivial, and we identify its total space $M_0$ with $S^1 \times \wtM_0 \subset \bbC^* \times \wtM_0 \subset \bbC^* \times \bbC^m$.

Fefferman showed \cite{Fefferman} that (1) there is a smooth defining function $u$ for $\wtM_0$ defined in a neighbourhood of $\wtM_0$ in $\Bbb C^m$ that solves the Dirichlet-type problem
\begin{equation}\label{equation:Dirichlet-problem}
	K(u) = 1 + O(u^{m + 1}), \qquad K(u) := (-1)^{j + 1} \det \begin{pmatrix}u&u_{\bar b}\\u_a&u_{a\bar b}\end{pmatrix} ,
\end{equation}
$1 \leq a, b \leq m$, and (2) such a solution $u$ is determined uniquely modulo $O(u^{m + 2})$---in this embedded setting the CR obstruction $\mcO$ can be identified with a nonzero multiple of $(\partial_u^{m + 1} K) \vert_{u = 0}$. For any solution $u$,
\begin{equation}\label{equation:Monge-Ampere-to-Kahler}
	h := \partial^2_{\alpha \bar\beta} (-|z_0|^2 u) dz^{\alpha} d\bar z^{\beta},
\end{equation}
is a K\"ahler metric on a suitable neighbourhood $A$ of $\bbC^* \times \wtM_0$ in $\bbC^* \times \bbC^m$, and the conformal structure $\mbc$ on $S^1 \times \wtM_0$ is represented by the pullback $g$ of $h$ to that subspace (this conformal structure does not depend on the solution $u$); here, $z_0$ denotes the standard coordinate on $\bbC^*$ and $0 \leq \alpha, \beta \leq m$ . The Ricci curvature of $h$ is
\begin{equation}\label{equation:CR-ambient-Ricci}
	R
		= (\partial^2_{\alpha \bar\beta} \log |{\det h}|) dz^{\alpha} d\bar z^{\beta}
		= (\partial^2_{a \bar b} \log K(u)) dz^a d\bar z^b .
\end{equation}

Now fix a real-analytic reference solution $u$ of \eqref{equation:Dirichlet-problem}. Specializing \cite[Theorem 2.11]{Graham} to the real-analytic setting and to the case that the CR obstruction vanishes gives that for each $a \in C^{\omega}(\wtM_0)$ there is a unique adjusted real-analytic solution $v$ satisfying
\[
	v = u + a u^{m + 2} + O(u^{m + 3})
		\qquad
	\textrm{and}
		\qquad
	K(v) = 1 .
\]
For any such $v$, \eqref{equation:CR-ambient-Ricci} implies that the K\"ahler structure $h$ is Ricci-flat, which shows that we may regard $h$ as an \textit{ambient metric} for $\mbc$ in the sense of \cite{GPW}\footnote{NB that the definition of ambient metric in \cite{GPW} differs from the one used in \cite{FeffermanGraham} and \cite{GrahamWillse}: In the former such metrics are required to be Ricci-flat, whereas in the latter the condition is somewhat weaker, even (for even-dimensional metrics) in the real-analytic case.} Then, Theorem 6.5 of that reference gives that (1) there is a canonically determined projective structure $\mbp$ on the quotient $M := A / \Bbb R_+$ (here, $\Bbb R_+$ acts on the $\bbC^* \times \wtM_0$ by dilation on the first factor), (2) $A$ is the projective Thomas Cone associated to $(M, \mbp)$, (3) the identification in (2) identifies the normal tractor connection $\nabla^{\mcT}$ of $\mbp$ on the standard tractor bundle $\mcT$ with the Levi-Civita connection $\nabla^h$ of $h$, and so (4) we may identify $h$ with a parallel tractor metric on $\mcT$. In particular, $h$ determines a holonomy reduction of $\nabla^{\mcT}$ to $\SO(2 p, 2 q)$ and in turn, via Theorem \ref{theorem:orthogonal-reduction}, a corresponding curved orbit decomposition; by construction the hypersurface curved orbit is the total space $S^1 \times \wtM_0$ of the canonical Fefferman bundle and the conformal structure induced there is $\mbc$.

It remains to show that $(M, \mbp)$ admits a further reduction of holonomy to $\SU(p, q)$, but this is almost immediate: Since $h$ is K\"ahler, the complex structure $\bbJ$ on $A$ given by restricting the standard complex structure on $\bbC^* \times \bbC^m$ is parallel and compatible with $h$. Via the identification in the previous paragraph, we may identify $\bbJ$ with a parallel, $h$-compatible tractor endomorphism, determining a holonomy reduction of $\nabla^{\mcT}$ to $\U(p, q)$. Finally, computing directly gives that the $(m + 1, 0)$-form $\vol_{\bbC} := (z^0)^m dz^0 \wedge dz^1 \wedge \cdots \wedge dz^m$ is parallel, and again identifying it with a parallel tractor object establishes a holonomy reduction of $\nabla^{\mcT}$ to $\SU(p, q)$.

We summarise this discussion in the following proposition:
\begin{proposition}\label{proposition:realisation}
Let $(\wt M_0, \wt H_0, \wt J_0)$ be a real-analytic embedded CR structure of codimension $1$ in $\bbC^m$ with nondegenerate Levi form for which the CR obstruction $\mcO$ vanishes. Then, there is a projective structure $(M, \mbp)$ equipped with a parallel tractor Hermitian structure $(h, \bbJ)$ and parallel tractor complex volume form $\vol_{\bbC}$ for which $(\wtM_0, \wt H_0, \wt J_0)$ is the CR manifold underlying the hypersurface curved orbit.
\end{proposition}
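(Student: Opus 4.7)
The plan is to carry out the construction already sketched in the paragraphs preceding the proposition statement, promoting it from an outline into a genuine proof. The starting point is the Dirichlet-type problem \eqref{equation:Dirichlet-problem}: Fefferman's result produces a real-analytic defining function $u$ for $\wtM_0$ satisfying $K(u) = 1 + O(u^{m+1})$, and by the assumed vanishing of the CR obstruction $\mcO$, I would invoke \cite[Theorem 2.11]{Graham} (in the real-analytic category) to select, for any freely specified $a \in C^\omega(\wtM_0)$, a unique real-analytic adjustment $v = u + a\,u^{m+2} + O(u^{m+3})$ with $K(v) = 1$ identically. This exact solution is the key extra ingredient that vanishing of $\mcO$ provides; without it, equation \eqref{equation:CR-ambient-Ricci} would only yield a Ricci-flat ambient metric to finite order.

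Next, using $v$ in place of $u$, I would form the K\"ahler metric $h$ as in \eqref{equation:Monge-Ampere-to-Kahler} on a suitable open neighbourhood $A$ of $\bbC^* \times \wtM_0$ in $\bbC^* \times \bbC^m$. Because $\log K(v) \equiv 0$, formula \eqref{equation:CR-ambient-Ricci} shows that $h$ is Ricci-flat, so it qualifies as an ambient metric in the sense of \cite{GPW}. I would then apply \cite[Theorem 6.5]{GPW} to obtain a canonical projective structure $\mbp$ on the quotient $M := A/\bbR_+$ (with $\bbR_+$ acting by dilation on the $\bbC^*$-factor), an identification of $A$ with the projective Thomas cone of $(M,\mbp)$, and, crucially, an identification of $\nabla^h$ with the normal projective tractor connection $\nabla^{\mcT}$. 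Under this identification, $h$ becomes a $\nabla^{\mcT}$-parallel tractor metric of signature $(2p,2q)$, giving the first reduction in the compatible triple of Section \ref{assembly}.

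The remaining two parallel tractor objects come for free from the complex geometry of $\bbC^* \times \bbC^m$: the standard complex structure $\bbJ$ on $A$ is $\nabla^h$-parallel and $h$-skew (since $h$ is K\"ahler), and the holomorphic $(m+1,0)$-form $\vol_\bbC := (z^0)^m\, dz^0 \wedge dz^1 \wedge \cdots \wedge dz^m$ is $\nabla^h$-parallel (a direct check using $K(v)=1$). Translating these through the identification from \cite{GPW}, I obtain a parallel tractor complex structure $\bbJ$ compatible with $h$, and a parallel tractor complex volume form, hence a holonomy reduction of $\nabla^{\mcT}$ to $\SU(p,q)$.

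It remains to identify the hypersurface curved orbit and its induced geometry with the given CR data. By Theorem \ref{theorem:orthogonal-reduction} the hypersurface curved orbit is the zero locus of $\tau = h_{AB}X^AX^B$, which in the Thomas-cone picture corresponds to $\{v = 0\}/\bbR_+ = S^1 \times \wtM_0$, and the conformal structure $\mbc$ induced there is represented by the pullback of $h$; this is precisely the Fefferman conformal structure associated to $(\wtM_0, \wtH_0, \tilde J_0)$ by \cite{Fefferman}. Finally, by Theorem D(a) the CR structure underlying this Fefferman conformal structure, once the parallel $\bbJ$ is taken into account, is recovered as the given $(\wtM_0, \wtH_0, \tilde J_0)$. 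The main technical burden in the whole argument is the bookkeeping required to check that the identifications of \cite[Theorem 6.5]{GPW} carry $\bbJ$ and $\vol_\bbC$ from ambient-metric data to genuine parallel tractor objects compatibly with the Hermitian structure $h$; everything else is a matter of assembling results already in the literature.
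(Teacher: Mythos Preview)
Your proposal is correct and follows essentially the same route as the paper: Fefferman's approximate solution refined via \cite[Theorem 2.11]{Graham} to an exact solution $v$ with $K(v)=1$, the resulting Ricci-flat K\"ahler metric $h$ interpreted as an ambient metric, the passage to a projective structure via \cite[Theorem 6.5]{GPW}, and the observation that the ambient complex structure and the form $(z^0)^m\,dz^0\wedge\cdots\wedge dz^m$ descend to the required parallel tractor objects. The one minor difference is that the paper closes the identification of the underlying CR structure directly ``by construction'' (the conformal structure on $M_0$ is, by Fefferman's original construction, the Fefferman conformal structure of the given CR hypersurface), whereas you route this through Theorem~D(a); both are valid, though the direct appeal is slightly cleaner and avoids having to check that the c-projective leaf space $\wtM$ really is the ambient $\bbC^m$ with its standard complex structure.
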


\begin{remark}
In fact, in Proposition \ref{proposition:realisation} we may drop the condition that the CR obstruction $\mcO$ vanishes if we are willing to relax the smoothness required of the resulting projective tractor metric $h$ and projective structure $\mbp$.

If we no longer assume that $\mcO = 0$, specializing \cite[Theorem 2.11]{Graham} to the real-analytic setting gives that, for a real-analytic reference solution $u$ to \eqref{equation:Dirichlet-problem} and function $a \in C^{\omega}(\wtM_0)$ as above, there is a unique function $v$ satisying $K(v) = 1$ and having asymptotic expansion
\[
	v \sim u \sum_{k = 0}^{\infty} \eta_k (u^{m + 1} \log |u|)^k
\]
with $\eta_0 = 1 + a u^{m + 1} + O(u^{m + 2})$ and $\eta_k \in C^{\omega}(\wtM_0)$ for all $k$. We may identify $\mcO$ with $\eta_1 \vert_{\wtM_0}$ (which does not depend on the choice of reference solution $u$), so if $\mcO \neq 0$ then $v$ is $C^{m + 1}$ but not $C^{m + 2}$. In that case, \eqref{equation:Monge-Ampere-to-Kahler} gives that $h$ is $C^{m - 1}$ but not $C^m$, and thus $\nabla^{\mcT}$, and hence $\mbp$, is $C^{m - 2}$ but not $C^{m - 1}$ (here, we say a connection is $C^{\ell}$ if in any [equivalently, every] smooth coordinate chart the Christoffel symbols of any representative connection are all $C^{\ell}$).
\end{remark}

\begin{remark}
The procedure in this subsection is similar to the one given in \cite[\S7]{GPW} for producing examples of projective $6$-manifolds equipped with a holonomy reduction to a split real form $\G_2$ of the complex simple Lie group $\G_2^{\bbC}$, but there are some significant qualitative differences.

First, in place of a CR structure, one starts with an (oriented) $2$-plane distribution on a $5$-manifold $M$ satisfying a genericity condition, to which one can again associate a conformal structure via a construction of Nurowski \cite{Nurowski}; unlike in the special unitary case here, the conformal structure is defined on same underlying manifold as the distribution, rather than the total space of some bundle over $M$ with positive-dimensional fibre. Next, since this conformal structure is defined on an odd-dimensional manifold, there is no analogue of the obstruction, and correspondingly there is no additional datum to specify analogous to the function $a$, and so a choice of real-analytic distribution alone determines an essentially unique projective structure and holonomy reduction to $\G_2$. Finally, the reduction of holonomy from the special orthogonal group to $\G_2$ is established by means of a general theorem guaranteeing for odd-dimensional conformal structures the existence of a parallel projective tractor given a parallel conformal tractor on the hypersurface curved orbit; that theorem does not apply with the same strength in case of even-dimensional conformal structures, including in the special unitary case investigated here. In particular, it is not deducible from that result alone that (and in fact it is not clear whether) Proposition \ref{proposition:realisation} still holds if one replaces in the hypothesis the embedded CR structure of codimension $1$ in $\Bbb C^m$ with an (abstract) integrable CR manifold of hypersurface type.
\end{remark}

\end{document}